\newcommand{\CC}{{\mathbb{C}}}
\newcommand{\FF}{{\mathbb{F}}}
\newcommand{\GG}{{\mathbb{G}}}
\newcommand{\NN}{{\mathbb{N}}}
\newcommand{\QQ}{{\mathbb{Q}}}
\newcommand{\ZZ}{{\mathbb{Z}}}
\newcommand{\bk} {\mathbf k}
\newcommand{\bm} {\mathbf m}
\newcommand{\bw} {\mathbf w}
\newcommand{\bG} {\mathbf G}
\newcommand{\bL} {\mathbf L}
\newcommand{\bT} {\mathbf T}
\newcommand{\bU} {\mathbf U}
\newcommand{\bX} {\mathbf X}
\newcommand{\cA} {\mathcal A}
\newcommand{\cC} {\mathcal C}
\newcommand{\cE} {\mathcal E}
\newcommand{\cF} {\mathcal F}
\newcommand{\cG} {\mathcal G}
\newcommand{\cL} {\mathcal L}
\newcommand{\cN} {\mathcal N}
\newcommand{\cW} {\mathcal W}
\newcommand{\fF} {\mathfrak F}
\newcommand{\fS} {\mathfrak S}
\newcommand{\fc} {\mathfrak c}
\newcommand{\coT}{T}
\newcommand{\coX}{X}
\newcommand{\coY}{Y}
\newcommand{\Aut}{{{\operatorname{Aut}}}}
\newcommand{\CR}{{{\operatorname{cr}}}}
\newcommand{\diag}{{{\operatorname{diag}}}}
\newcommand{\Gal}{{{\operatorname{Gal}}}}
\newcommand{\Hom}{{{\operatorname{Hom}}}}
\newcommand{\Id}{{{\operatorname{Id}}}}
\newcommand{\inc}{{{\operatorname{inc}}}}
\newcommand{\Ind}{{{\operatorname{Ind}}}}
\newcommand{\Inj}{{{\operatorname{Inj}}}}
\newcommand{\Inn}{{{\operatorname{Inn}}}}
\newcommand{\IBr}{{{\operatorname{IBr}}}}
\newcommand{\Irr}{{{\operatorname{Irr}}}}
\newcommand{\map}{{{\operatorname{Map}}}}
\newcommand{\Mor}{{{\operatorname{Mor}}}}
\newcommand{\Out}{{{\operatorname{Out}}}}
\newcommand{\refl}{{{\operatorname{ref}}}}
\newcommand{\rk}{{{\operatorname{rk}}}}
\newcommand{\Stab}{{{\operatorname{Stab}}}}
\newcommand{\Tr}{{{\operatorname{Tr}}}}
\newcommand{\Uch}{{{\operatorname{Uch}}}}
\newcommand{\GL}{\operatorname{GL}}
\newcommand{\Sp}{\operatorname{Sp}}
\newcommand{\SL}{\operatorname{SL}}
\newcommand{\OO}{\operatorname{O}}
\newcommand{\tw}[1]{{}^{#1}\!}
\newcommand{\twt}{{\tw\tau}}
\newcommand{\tga}{\tilde\gamma}
\newcommand{\tq}{\tilde q}
\newcommand{\brT}{\breve T}
\let\eps=\varepsilon
\let\ga=\gamma
\let\vhi=\varphi
\let\la=\lambda
\let\lra=\longrightarrow
\newtheorem{thm}{Theorem}[section]
\newtheorem{lem}[thm]{Lemma}
\newtheorem{cor}[thm]{Corollary}
\newtheorem{prop}[thm]{Proposition}
\newtheorem{conj}[thm]{Conjecture}
\newtheorem{thmA}{Theorem}
\newtheorem{conjA}{Conjecture}
\theoremstyle{definition}
\newtheorem{rem}[thm]{Remark}
\newtheorem{defn}[thm]{Definition}
\newtheorem{exmp}[thm]{Example}
\begin{document}

%%%%%%%%%%%%%%%%%%%%%%%%%%%%%%%%%%%%%%%%%%%%%%%%%%%%%%%%%%%%
\title{Weight conjectures for $\ell$-compact groups and spetses}
%%%%%%%%%%%%%%%%%%%%%%%%%%%%%%%%%%%%%%%%%%%%%%%%%%%%%%%%%%%%

\author{Radha Kessar}
\address{Department of Mathematics,  University of Manchester, M13 9PL,
  United Kingdom}
\email{radha.kessar@manchester.ac.uk}

\author{Gunter Malle}
\address{FB Mathematik, TU Kaiserslautern, Postfach 3049,
  67653 Kaisers\-lautern, Germany.}
\email{malle@mathematik.uni-kl.de}

\author{Jason Semeraro}
\address{Department of
  Mathematics, Loughborough University,  LE11 3TT, United Kingdom}
\email{j.p.semeraro@lboro.ac.uk}

\thanks{
The authors would like to thank the Banff International Research Station for the
invitation to attend the workshop "Groups and Geometry" in August 2019 where
this work was initiated, College Court (Leicester University) for hosting the
workshop ``Topological Methods in Group Representation Theory'' in
November 2019 and the Isaac Newton Institute for Mathematical Sciences for
support and hospitality during the programme ``Groups, Representations and
Applications: New Perspectives'' during the Spring 2020 semester, supported by
EPSRC grant EP/R014604/1, when part of the work on this paper was undertaken. 
The first author gratefully acknowledges support from EPSRC grant EP/T004592/1.
The second author gratefully acknowledges support by the SFB TRR 195. The third
author acknowledges support from the Heilbronn Institute}

\keywords{fusion systems, weight conjecture, spetses, $p$-compact groups}

\subjclass[2010]{20C20, 20D20, 20D06, 55R35}

\date{\today}

\begin{abstract}
Fundamental conjectures in modular representation theory of finite groups,
more precisely, Alperin's weight conjecture and Robinson's ordinary weight
conjecture, can be expressed in terms of fusion systems. We use fusion systems
to connect the modular representation theory of finite groups of Lie type to the
theory of $\ell$-compact groups. Under some mild conditions we prove that the
fusion systems associated to homotopy fixed points of $\ell$-compact groups
satisfy an equation which for finite groups of Lie type is equivalent to
Alperin's weight conjecture.
\par
For finite reductive groups, Robinson's Ordinary weight conjecture is closely
related to Lusztig's Jordan decomposition of characters and the corresponding
results for Brauer $\ell$-blocks. Motivated by this, we define the principal
block of a spets attached to a spetsial $\ZZ_\ell$-reflection group, using the
fusion system related to it via $\ell$-compact groups, and formulate an analogue
of Robinson's conjecture for this block. We prove this formulation for an
infinite family of cases as well as for some groups of exceptional type. 

Our results not only provide further strong evidence for the validity of
the weight conjectures, but also point toward some yet unknown structural
explanation for them purely in the framework of fusion systems.
\end{abstract}

\maketitle

%%%%%%%%%%%%%%%%%%%%%%%%%%%%%%%%%%%%%%%%%%%%%%%%%%%%%%%%%%%%%%%%%%%%%%%%%
\section{Introduction and statement of results}
In this paper we connect fusion systems originating from homotopy fixed points
on $\ell$-compact groups with the representation theory of so-called spetses
by means of a natural generalisation of Alperin's weight
conjecture from modular representation theory of finite groups.
\par
Let $\ell$ be a prime. An \emph{$\ell$-compact group} is a topological object
that may loosely be regarded as a homotopical analogue of a compact Lie group,
built from an $\ell$-adic reflection group as Weyl group. It has been shown to
give rise to fusion systems on certain finite $\ell$-groups. In representation
theory, a \emph{spets} is a yet rather mysterious analogue of a finite reductive
group with a complex reflection group as Weyl group, for which some shadow of
Deligne--Lusztig character theory can be formulated. Apparent similarities
between these two classes of objects have led a number of authors to expect
that there should be a more formal connection between them. With this goal in
mind,
the third author \cite{S19} observed various numerical consistencies between
the exotic $2$-fusion systems associated to a $2$-compact group with Weyl group
the exceptional complex reflection group $G_{24}$ and an ad hoc defined set of
irreducible characters associated to the spets with the same Weyl group, in the
spirit of a generalisation of Alperin's weight conjecture. This famous
local-global conjecture from 1986 is an attempt to relate the character theory
of a finite group to the fusion system on its Sylow subgroups.
\par
In the present paper, we provide a formal theoretical context to these
observations. Thus the goals of the present paper are to:
\begin{itemize}
\item associate various global invariants to spetses (such as the principal $\ell$-block);
\item formulate analogues for spetses and $\ell$-compact groups of various
  local-global counting conjectures for groups;
\item prove these conjectures (in some cases);
\item highlight/explain techniques from algebraic topology relevant to the
  study of group representations.
\end{itemize}
\medskip

In order to motivate and explain our constructions, conjectures and results, we
begin by discussing global and local data associated to finite groups of Lie
type.

A \emph{finite group of Lie type} is the group of fixed points $\bG^F$ of a
connected reductive linear algebraic group $\bG$ under a Steinberg endomorphism
$F$ with respect to an $\FF_q$-structure for some prime power $q$ coprime
to~$\ell$. 
The ordinary irreducible characters of $\bG^F$ are constructed from the
$\ell$-adic cohomology groups of so-called Deligne--Lusztig varieties on which
$\bG^F$ acts. Lusztig has given a combinatorial parametrisation of these
characters by \textit{Lusztig series} $\cE(G,s)$ indexed by (classes of)
semisimple elements $s$ in the Langlands dual group, which is
purely in terms of the Weyl group $W$ of $\bG$. Furthermore, for any prime
$\ell$ different from the defining characteristic of $\bG$, the distribution
of these characters into Brauer $\ell$-blocks has also been shown to be
controlled by $W$. This is described by \emph{$e$-Harish-Chandra theory}, where
$e$ denotes the order of $q$ modulo~$\ell$.
For example, when $\ell$ is very good for $\bG$ then the unipotent
characters of $\bG^F$ lying in the principal $\ell$-block $B_0$ are those in
the \emph{principal} $e$-Harish-Chandra series \cite{BMM93}. Moreover,
the latter is in bijection with the irreducible characters of the corresponding
relative Weyl group denoted here $W_e$, a complex reflection group provided
by Lehrer--Springer theory.

The local data we consider for $\bG^F$ is most succinctly described in terms of
fusion systems. Recall that a \emph{fusion system $\cF$ on a finite $\ell$-group
$S$} is a category with objects the subgroups of $S$ and morphisms certain
injective group homomorphisms satisfying some weak axioms. It is called
\emph{saturated} if it satisfies two additional ``Sylow axioms'', derived from
the motivating example of the fusion system $\cF_\ell(G)$ induced by a finite
group $G$ on a Sylow $\ell$-subgroup. A subgroup $P\le S$ is
\emph{$\cF$-centric} if $C_S(Q)=Z(Q)$ for all $\cF$-conjugates $Q$ of $P$ and
\emph{$\cF$-radical} if $\Out_\cF(P)$ has no non-trivial normal
$\ell$-subgroup. The class $\cF^\CR$ of $\cF$-centric, $\cF$-radical subgroups
plays an important role in the local-global conjectures we consider. If $k$ is
an algebraically closed field of characteristic~$\ell$, $G$ is a finite group, 
$B_0$ is the principal $\ell$-block of $G$ and $\IBr(B_0)$ the set of
irreducible $\ell$-Brauer characters of $G$ lying in~$B_0$, \emph{Alperin's
weight conjecture} (AW conjecture) \cite{A87} claims the equality
$|\IBr(B_0)|=\bw(\cF_\ell(G))$ where, for a saturated fusion system $\cF$,
$$\bw(\cF):=\sum_{P\in\cF^\CR/\cF} z(k\Out_\cF(P)),$$
and the sum runs over $\cF$-conjugacy class representatives of $\cF$-centric,
$\cF$-radical subgroups. Here, for a finite group $H$, $z(kH)$ denotes the
number of isomorphism classes of projective simple $kH$ modules.

In the case $\bG^F$ is a finite group of Lie type with Weyl group $W$ then
assuming $\ell$ is very good for $\bG$, and $q$ of order $e$ modulo~$\ell$,
then $|\IBr(B_0)|$ equals  $|\Irr(W_e)|$ (see Proposition~\ref {prop:linkawc}). 
Hence, the above discussion implies that the AW conjecture for the principal
$\ell$-block is equivalent to the assertion
\begin{equation}   \label{e:awc1}
  \bw(\cF_\ell(\bG^F))=|\Irr(W_e)|\,.
\end{equation}
We wish to argue that the left hand side of this equality is also ``generic''
in the Weyl group $W$. For this, we require some algebraic topology,
specifically the theory of $\ell$-compact groups.

To any $\ell$-compact group $X$ is associated a reflection group $W$ on a
$\ZZ_\ell$-lattice $L$. If $X$ is connected, then any prime power $q$ coprime to
$\ell$ determines a self-equivalence $\psi^q$ of $X$, unique up to homotopy,
called an unstable Adams operator. If $X$ is moreover simply connected and
$\ell$ is odd, then Broto--M\o ller have shown in \cite{BM07} that for any
self-equivalence $\tau$ of $X$ whose image in the outer automorphism group of
$X$ is of $\ell'$-order, the space of homotopy fixed points under $\tau \psi^q$
is the classifying space of an $\ell$-local finite group, and in particular the
pair $(q,\tau)$ determines a saturated fusion system $\cF(\twt X(q))$ on a
finite $\ell$-group (see Theorem~\ref{thm:bm}). In Theorem~\ref{thm:sylow} we
describe the underlying $\ell$-group of this fusion system. Moreover
$(X,\tau,q)$ determines a certain relative Weyl group $W_e$ %:=W_{\phi \zeta}$
where $e$ is the
order of $q$ modulo $\ell$ described in Theorem~\ref{thm:homfixedpts}. If $W$ is
rational and $\bG^F$ is a group of Lie type associated to $W$ as above, results
of Friedlander--Mislin \cite{FM84} and Quillen \cite{Qui72} imply that for
suitable $X$ and $\tau$, $\cF_\ell(\bG^F)=\cF(\twt X(q))$ (see
Remark~\ref{r:fried}), and in this case the equality~(\ref{e:awc1}) is
equivalent to
$$\bw(\cF(\twt X(q)))=|\Irr(W_e)|.$$
In view of this, it becomes tempting to ask whether the same equality is true
without the assumption that $W$ is rational, so that $\cF(\twt X(q))$ is a
potentially \emph{exotic} fusion system. Our first local-global result is the
following generalisation of the AW conjecture (see
Theorem~\ref{thm:nr weights}); here, the
new notion of \emph{very good prime} for a $\ZZ_\ell$-reflection group is
defined in terms of its maximal rank subgroups, see Definition~\ref{def:bad}.

\begin{thmA}[AW conjecture for $\ell$-compact groups]   \label{thmb}
 Let $\ell>2$, $X$ be a simply connected $\ell$-compact group with Weyl group
 $(W,L)$, $q$ a prime power prime to $\ell$ and $\tau$ an automorphism of $X$
 whose image in the outer automorphism group of $X$ has finite order prime to
 $\ell$. If $\ell$ is very good for $(W,L)$, then
 $$\bw(\cF(\twt X(q))) = |\Irr(W_e)|,$$
 where $e$ is the order of $q$ modulo~$\ell$.
\end{thmA}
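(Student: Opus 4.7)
The plan is to follow the structure of the proof of the unipotent principal-block case of Alperin's Weight Conjecture for finite groups of Lie type (at very good primes), but to execute each step intrinsically within the theory of $\ell$-compact groups and spets. The strategy is to decompose both sides of the claimed equality via an $e$-Harish-Chandra-type parametrization by cuspidal pairs, and to match the contributions term-by-term.

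First I would classify the $\cF$-centric, $\cF$-radical subgroups of $\cF(\twt X(q))$ up to $\cF$-conjugacy, using Theorem~\ref{thm:sylow} together with the general description of radical subgroups in an $\ell$-local finite group. I expect these to correspond bijectively to pairs $(\bL,\lambda)$, where $\bL$ is the homotopy centralizer in $X$ of a $\Phi_e$-subtorus (the $\ell$-compact analog of a $\Phi_e$-split Levi) and $\lambda$ is an $e$-cuspidal unipotent character of the spets attached to $\bL$. The very good prime hypothesis should ensure that each such $P$ is recovered from $\bL$ as essentially the discrete $\ell$-part of its connected center, so that $\bL \cong C_X(Z(P)^\circ)$ and the correspondence is a bijection.

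Second, for each centric radical $P$ with associated cuspidal pair $(\bL,\lambda)$, I would identify
$$\Out_{\cF(\twt X(q))}(P)\;\cong\;W_X(\bL,\lambda),$$
the relative Weyl group of the pair (incorporating the twist by $\tau$). The very good prime hypothesis is designed exactly so that $|W_X(\bL,\lambda)|$ is prime to $\ell$; then every simple $k\Out_\cF(P)$-module is projective, and
$$z(k\Out_\cF(P))\;=\;|\Irr(W_X(\bL,\lambda))|.$$
Summing over $\cF$-conjugacy classes of centric radical subgroups gives
$$\bw(\cF(\twt X(q)))\;=\;\sum_{(\bL,\lambda)}|\Irr(W_X(\bL,\lambda))|,$$
and this equals $|\Irr(W_e)|$ by an $e$-Harish-Chandra decomposition for the complex reflection group $W_e$, which partitions $\Irr(W_e)$ into series indexed precisely by the cuspidal pairs above, each of size $|\Irr(W_X(\bL,\lambda))|$.

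The main obstacle is the first step: rigorously establishing the parametrization of $\cF$-centric, $\cF$-radical subgroups of the possibly exotic fusion system $\cF(\twt X(q))$ by $e$-cuspidal pairs in the spets attached to $(W,L,\tau)$, since there is no underlying finite group of Lie type to appeal to when $W$ is not a rational Weyl group. This requires a purely topological/combinatorial argument, combining the Broto--M\o{}ller description of $\cF(\twt X(q))$ with the Malle--Michel combinatorics of unipotent degrees for spets, and carefully leveraging the very good prime condition (Definition~\ref{def:bad}) to control centralizers of $\ell$-subgroups inside the Sylow. A secondary technical point is ensuring that the $\tau$-twist does not destroy the $\ell'$-property of the relevant relative Weyl groups, which again follows from the hypothesis that the image of $\tau$ in $\Out(X)$ has order prime to $\ell$.
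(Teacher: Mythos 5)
Your overall strategy breaks down at its central claim. The $\cF$-centric, $\cF$-radical subgroups of $\cF(\twt X(q))$ are \emph{not} parametrised by $e$-cuspidal pairs $(\bL,\lambda)$, and their automisers are \emph{not} $\ell'$-groups under the very good hypothesis. Already for $W=G(e,r,n)$ (and hence for $\GL_n(q)$ itself) the weight-contributing subgroups are products of Alperin--Fong \emph{basic} subgroups $R_{\ga,\fc}$, built from homocyclic tori extended by iterated extraspecial/wreath pieces; they are indexed by functions $f:\NN\times\cC\to\NN$ with $\sum\ell^{\ga+|\fc|}f(\ga,\fc)=n$, a set much larger than the set of $e$-cuspidal pairs, and they are not of the form ``discrete $\ell$-part of the centre of a $\Phi_e$-split Levi''. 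Moreover $\Out_\cF(D)\cong G(e,r,n)$ for the torus $D$ and $\Out_\cF(R_{\ga,\fc})$ contains factors $\Sp_{2\ga}(\ell)$ and $\GL_{c_i}(\ell)$ (see Propositions~\ref{p:AFradical} and~\ref{p:outer}), all of order divisible by $\ell$ whenever $n\ge\ell$; the very good condition of Definition~\ref{def:bad} has nothing to do with forcing these automisers to be $\ell'$-groups (e.g.\ $W=\fS_n$, $\ell\nmid n$, $e=1$ is very good yet $\ell\mid|W_e|$). Consequently $z(k\Out_\cF(P))$ is a count of \emph{defect-zero} characters (Steinberg-type characters and $\ell$-cores), not $|\Irr(\Out_\cF(P))|$, and your summation formula $\bw(\cF)=\sum_{(\bL,\lambda)}|\Irr(W_X(\bL,\lambda))|$ does not hold as stated; the eventual match with $|\Irr(W_e)|$ is the $\ell$-core/$\ell$-quotient combinatorics of \cite[(1A)]{AF90}, not a Harish-Chandra-type partition of $\Irr(W_e)$ into series of full character sets of $\ell'$ relative Weyl groups.

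For comparison, the paper's proof is not uniform: after reducing to $q\equiv1\pmod\ell$ and $\tau=\Id$ (Theorem~\ref{thm:homfixedpts} together with Proposition~\ref{prop:gooddescent}) and splitting into irreducible factors, it proceeds case by case. Clark--Ewing groups are immediate (abelian Sylow, $\cF=\cF_S(S\rtimes W)$); the case $\fS_n$, $\ell\nmid n$, is quoted from \cite{Fe19} via Proposition~\ref{prop:linkawc}; the generalised Grassmannians $G(e,r,n)$ are handled by realising $\cF(X(e,r,n)(q'))$ as the $\ell'$-index subsystem $\cF^{(r)}$ of the fusion system of $\GL_{en}(q)$ (Ruiz, Proposition~\ref{prop:ruiz2}) and then making the entire Alperin--Fong classification and weight count $\ZZ/r\ZZ$-equivariant (Propositions~\ref{p:centradbasic}, \ref{p:out}, \ref{p:all} and Lemma~\ref{lem:wisol}); and the exceptional types $E_6,E_7,E_8$ at good $\ell\in\{5,7\}$ are settled by the explicit lists of centric radicals and automisers from \cite{COS17} (Table~\ref{tab:except}). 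If you want to pursue your more conceptual route, you would first have to prove a genuinely new statement relating centric radical subgroups to $e$-local data, and even then the contribution of each subgroup would be a defect-zero count, so the bookkeeping cannot avoid the $\ell$-core combinatorics.
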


From this, we recover the validity of the AW conjecture for the principal $\ell$-blocks of
simply connected groups of Lie type, in particular the previously unknown
cases of types $E_6,E_7$ and $E_8$, for all very good primes $\ell>2$. For the
infinite series, we prove Theorem~\ref{thmb} by developing an equivariant
version of the Alperin--Fong proof of the AW conjecture for general linear
groups \cite{AF90}. We also have results for cases when $\ell$ is not very good,
viz.~for the Aguad\'e groups (see Theorem~\ref{thm:Aguade weights}).

Now let us return to the above situation where $\bG$ is a connected reductive
group. If $\ell$ is very good for $\bG$ then the ordinary irreducible
characters $\Irr(B_0)$ in the principal block are a union of $e$-Harish-Chandra
series associated to $\ell$-elements of $\bG^F$. We seek to describe this union
purely in terms of the Weyl group $W$ of $\bG$.

To this end, we require a notion of $\ell$-subgroups and their centralisers in
$\ell$-compact groups. Fortunately, owing to historical interest in homotopy
decompositions of classifying spaces, such a theory is available to us. In
particular, centralisers of $\ell$-subgroups of $\ell$-compact groups are again
$\ell$-compact groups. In Theorem~\ref{thm:con cent} we prove the following,
extending \cite[Thm.~1.9]{AGMV} which deals with the case $|Q|=\ell$:
 
\begin{thmA}   \label{thmc}
 Suppose that $X$ is a connected $\ell$-compact group with torsion-free
 fundamental group, $Q$ is a finite cyclic $\ell$-group and $f:BQ\rightarrow X$
 is a morphism. Then the centraliser $C_X(Q,f)$ is connected.
\end{thmA}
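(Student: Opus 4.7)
My plan is to proceed by induction on $|Q|=\ell^n$. The case $n=0$ is trivial, since then $C_X(Q,f)\simeq X$ is connected by hypothesis, and the case $n=1$ is precisely \cite[Thm.~1.9]{AGMV}. To make the induction go through, I would carry along the stronger assertion that $C_X(Q,f)$ is a connected $\ell$-compact group \emph{with torsion-free fundamental group}, recording this as an additional conclusion of the base case.

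For the inductive step, suppose $n\ge 2$ and that the strengthened statement is known for cyclic $\ell$-groups of order $\ell^{n-1}$. Let $Q_0\le Q$ be the unique subgroup of index $\ell$, set $f_0:=f|_{BQ_0}$, and let $Y:=C_X(Q_0,f_0)$; by the inductive hypothesis $Y$ is a connected $\ell$-compact group with torsion-free $\pi_1$. Because $Q$ is abelian, the short exact sequence $1\to Q_0\to Q\to Q/Q_0\to 1$ gives a principal fibration $BQ_0\to BQ\to B(Q/Q_0)$ with trivial action of $Q/Q_0$ on $BQ_0$, so a standard iterated centraliser argument produces a canonical map $\tilde f\colon B(Q/Q_0)\to Y$ together with an equivalence
$$C_X(Q,f)\simeq C_Y(Q/Q_0,\tilde f).$$
Since $|Q/Q_0|=\ell$, applying the strengthened base case to the triple $(Y,Q/Q_0,\tilde f)$ identifies the right-hand side as a connected $\ell$-compact group with torsion-free $\pi_1$, completing the induction.

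The delicate point, which I expect to be the main obstacle, is the strengthening of the base case: the claim that the centraliser of an order-$\ell$ subgroup in a connected $\ell$-compact group with torsion-free $\pi_1$ is again of this form. I would secure this by using that abelian $\ell$-subgroups of such an $X$ are toral, so that $f_0$ factors through a maximal torus $BT\to X$, identifying $Y$ as the connected $\ell$-compact group with maximal torus $T$ and Weyl group the parabolic subgroup of $W$ fixing the image of a generator of $Q_0$ in $T$; torsion-freeness of $\pi_1(Y)$ then follows from that of $\pi_1(X)$ via the standard description of the fundamental group of a connected $\ell$-compact group as coinvariants of its Weyl group action on the underlying $\ZZ_\ell$-lattice $L$. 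A second, more routine, technical point is justifying the iterated centraliser equivalence for the non-split cyclic extension $1\to Q_0\to Q\to Q/Q_0\to 1$; this reduces to the standard adjunction identification of the mapping space of a principal fibration, enabled by the triviality of the $Q/Q_0$-action in the abelian setting.
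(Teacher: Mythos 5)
Your reduction has two genuine gaps, either of which is fatal. First, the key inductive step — the claimed equivalence $C_X(Q,f)\simeq C_Y(Q/Q_0,\tilde f)$ with $Y=C_X(Q_0,f_0)$ — is not available. Since $Q$ is cyclic of order $\ell^n$ with $n\ge 2$, the extension $1\to Q_0\to Q\to Q/Q_0\to 1$ does not split: the fibration $BQ_0\to BQ\to B(Q/Q_0)$ is classified by the non-zero extension class in $H^2(Q/Q_0;Q_0)$, so $BQ\not\simeq BQ_0\times B(Q/Q_0)$ and the ``standard adjunction'' you invoke fails; indeed there is no natural morphism $B(Q/Q_0)\to Y$ at all. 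What is true is the iterated-centraliser statement with the \emph{same} group, $C_X(Q,f)\simeq C_Y(Q,\tilde f)$ where $\tilde f:BQ\to Y$ is adjoint to the H-space structure of $BQ$, or alternatively a homotopy fixed-point description $C_X(Q,f)\simeq$ (a component of) $C_X(Q_0,f_0)^{h(Q/Q_0)}$ coming from $BQ\simeq (BQ_0)_{h(Q/Q_0)}$; neither reduces the order of the group being centralised, so the induction never closes. Second, the strengthened base case is false: torsion-freeness of $\pi_1$ is \emph{not} inherited by centralisers of $\ell$-elements. For example, take $X=(BE_8)^\wedge_\ell$ with $\ell=5$, which is simply connected; the centraliser of a suitable element of order $5$ is connected of type $A_4A_4$ with fundamental group $\ZZ/5$ (the coroot lattice of $A_4+A_4$ has index $5$ in the $E_8$ lattice). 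This is exactly the bad-prime phenomenon the paper's notion of good/very good primes is built around, and the theorem carries no goodness hypothesis, so your inductive hypothesis cannot be maintained. Relatedly, your description of the Weyl group of $Y$ as ``the parabolic subgroup of $W$ fixing the image of a generator'' conceals the real difficulty: stabilisers in $W$ of torsion points of the discrete torus $\brT$ are in general not parabolic (not pointwise stabilisers of subspaces of the reflection representation), and showing they are nevertheless generated by reflections is precisely the non-trivial input.

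For comparison, the paper's proof does not induct on $|Q|$: it splits $(W,L)$ into a part coming from a compact Lie group and an exotic part; in the Lie case it applies Steinberg's theorem directly to the single semisimple element generating $Q$ (this is where cyclicity of $Q$ is used, and why no iteration — with its attendant loss of torsion-freeness — is needed), while in the exotic case it uses the Dwyer--Wilkerson criterion to reduce connectivity of $C_X(Q,f)$ to the statement that $C_W(\breve j(s))$ is generated by reflections, which is the case-by-case Proposition~\ref{prop:stab}. Any correct proof has to engage with that stabiliser statement for elements of arbitrary $\ell$-power order (or find a genuinely new argument, as in Grodal's approach via the Lannes $T$-functor mentioned in the introduction); reducing to the order-$\ell$ case of \cite[Thm.~1.9]{AGMV} by passing to quotients of $Q$ is not possible along the lines you propose.
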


We provide a conceptual argument when the Weyl group has order prime to $\ell$;
the general case requires a case-by-case approach as in the case $|Q|=\ell$.
Jesper Grodal has communicated another proof to us relying on properties
of Lannes $T$-functor. 

Now let $q$ be a prime power as above and $\cF=\cF(\twt X(q))$ be the fusion
system associated to $X$, $q$ and $\tau$ as above and let $S$ be the underlying
$\ell$-group of $\cF$. Each $s\in S$ gives rise to a centraliser
space $C_X(s)$ defined in terms of a fixed embedding of the classifying space
$BS$ of $S$, with Weyl group denoted $W(s)$. If $W$ is spetsial, and $\GG$
denotes the $\ZZ_\ell$-spets associated to $W$ and the automorphism of~$W$
induced by $\tau$ (see Section~6), then we invoke Theorem~\ref{thmc} to construct a
$\ZZ_\ell$-spets $C_\GG(s)$. Now attached to any $\ZZ_\ell$-spets is a
collection of \emph{unipotent characters} whose degrees are polynomials over
$\QQ_\ell$. By analogy with Lusztig's theory, we define a set of characters
$\cE(\GG(q),s)$ in bijection with the unipotent characters
$\Uch(C_\GG(s))$. Motivated by the situation for finite reductive groups, if
$\ell$ is very good for $(W,L)$ we define the \emph{principal $\ell$-block}
$$\Irr(B_0):=\coprod_{s\in S/\cF} \cE(\GG(q),s)_1$$
of $\GG$, where $\cE(\GG(q),s)_1$ denotes the subset of $\cE(\GG(q),s)$ in
bijection with the principal $e$-Harish-Chandra series of $\Uch(C_\GG(s))$, and
the union runs over $\cF$-conjugacy class representatives of elements in $S$.
We also provide a degree formula for such characters generalising Lusztig's
Jordan decomposition formula for characters of rational spetses.    
In further analogy with finite group theory, we call $\cF$ the fusion system
of $\GG(q)$ and denote it by $\cF(\GG(q))$.  

If $G$ is a finite group and $\chi\in\Irr(G)$, the $\ell$-adic valuation
$\nu_\ell(|G|/\chi(1))$ is called the \emph{defect} of $\chi$. Let $\Irr^d(G)$
be the subset of irreducible characters of defect $d$. For the principal
$\ell$-block $B_0$ of $G$, Robinson's \emph{ordinary weight conjecture} (OW
conjecture) (see \cite{Rob96}, and also \cite[5.49]{AKO11}) claims the equality
$|\Irr^d(B_0)|:= |\Irr^d(G) \cap \Irr(B_0)|=\bm(\cF_\ell(G),d)$ where, for a
saturated fusion system $\cF$,
$$\bm(\cF,d):=\sum_{P\in\cF^{\CR}} \bw_P(\cF,d),$$
the sum runs over representatives of classes of $\cF$-centric, $\cF$-radical
subgroups and $\bw_P(\cF,d)$ is a certain alternating count of projective
simple modules associated to stabilisers of elements in $\Irr^d(P)$. We propose
the following analogue of the OW conjecture for spetses:

\begin{conjA}[OW conjecture for spetses]   \label{c:conjd}
 Let $\GG$ be a simply connected $\ZZ_\ell$-spets for which $\ell>2$ is very
 good, let $q$ be a power of a prime different from~$\ell$, $B_0$ be the
 principal block of $\GG(q)$ and $\cF(\GG(q))$ be the associated fusion
 system. Then
 $$|\Irr^d(B_0)|=\bm(\cF(\GG(q)),d)\qquad\text{for all $d\ge0$},$$
 where $\Irr^d(B_0)$ is the subset of characters in $\Irr(B_0)$ of
 $\ell$-defect $d$.
\end{conjA}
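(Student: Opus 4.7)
The plan is to refine both sides of the conjectured identity by the $\cF$-conjugacy class of an associated $\ell$-element, thereby reducing the general statement to a ``unipotent'' version for smaller centraliser spetses. On the left, $\Irr(B_0)=\coprod_{s\in S/\cF}\cE(\GG(q),s)_1$ by construction. Using the Jordan-decomposition degree formula promised in the discussion preceding the conjecture, the $\ell$-defect of a character in $\cE(\GG(q),s)_1$ decomposes as $d=d_s+d'$, where $d_s$ depends only on $s$ (the $\ell$-part of the analogue of the index $[\GG(q):C_\GG(s)]$) and $d'$ is the defect of the associated unipotent character of $C_\GG(s)$. This yields
$$|\Irr^d(B_0)|=\sum_{s\in S/\cF}\bigl|\Uch(C_\GG(s))_1^{\,d-d_s}\bigr|,$$
where the superscript restricts to characters of that defect.

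For the right hand side, I would invoke a Dade-style rewriting of $\bm(\cF,d)$ as a sum over $\cF$-classes of elements $s\in S$, the contribution of $s$ being a local count involving $\cF$-centric, $\cF$-radical subgroups whose centres contain a suitable conjugate of $s$; this is the standard chain-reformulation underlying the known equivalence of OWC with Dade's conjecture in the finite-group case, see \cite[\S5.8]{AKO11}. Combined with Theorem~\ref{thmc}, which identifies $C_\cF(s)$ with the fusion system of the $\ZZ_\ell$-spets $C_\GG(s)$, this gives a matching decomposition
$$\bm(\cF(\GG(q)),d)=\sum_{s\in S/\cF}\bm_{1}\bigl(\cF(C_\GG(s)(q)),d-d_s\bigr),$$
where $\bm_{1}$ denotes the trivial-element component of $\bm$.

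Matching the two decompositions term by term reduces the conjecture to the \emph{unipotent} assertion
$$\bigl|\Uch(\GG(q))_1^{\,d}\bigr|=\bm_{1}(\cF(\GG(q)),d)\qquad(d\ge0)$$
for every simply connected $\ZZ_\ell$-spets $\GG$ with very good $\ell>2$. For the infinite family (Weyl groups of type $G(e,1,n)$) I would extend the equivariant Alperin--Fong argument used to prove Theorem~\ref{thmb}, tracking defects through the combinatorics of $e$-cores and $e$-quotients. For the exceptional spetses I would fall back on a case-by-case verification, explicitly enumerating the relevant $\cF$-centric, $\cF$-radical subgroups and comparing their contributions with the list of principal $e$-Harish-Chandra series unipotent degrees drawn from the CHEVIE data.

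The main obstacle will be the unipotent matching itself in the exotic (non-rational) cases. There the relevant characters and fusion-system invariants have no a priori representation-theoretic interpretation, so agreement of the two enumerations must be established by bespoke combinatorics. Particularly delicate is matching the defect grading: on the spets side defects are extracted from polynomial expressions in $q$, whereas on the fusion-system side they are controlled by the actual $\ell$-part of $|S|$. Guaranteeing that these two notions coincide uniformly in $q$, and for every pair $(s,d)$, is the technical heart of the argument.
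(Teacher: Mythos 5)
First, a framing point: the statement you are proving is stated in the paper as a \emph{conjecture}, not a theorem. The paper offers no general proof; it only (i) localises the left-hand side (Proposition~\ref{prop:heights}, Proposition~\ref{prop:2owc2}), (ii) reformulates the statement as Conjecture~\ref{conj:nr oweights}, and (iii) verifies it in special cases: $|W|$ prime to $\ell$ with $q\equiv1\pmod\ell$ (Proposition~\ref{prop:nrw for CE}), $W=G(e,r,\ell)$ with $q\equiv1\pmod\ell$ (Proposition~\ref{p:gerl}), plus partial results outside the very good hypothesis where the statement actually fails at one defect (Proposition~\ref{prop:owc Aguade}). So any outline claiming the full statement must contain new mathematics, and yours does not close the gap.

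The decisive gap is your treatment of the right-hand side. The asserted ``Dade-style rewriting'' $\bm(\cF,d)=\sum_{s\in S/\cF}\bm_{1}\bigl(\cF(C_\GG(s)(q)),d-d_s\bigr)$ is not an available identity for saturated fusion systems. The equivalence of OWC with Dade-type counts in the finite-group case is a statement about the characters of an ambient group and its $\ell$-local subgroups; for the exotic fusion systems $\cF(\GG(q))$ there is no ambient group, and no purely local decomposition of $\bm(\cF,d)$ over $\cF$-classes of elements is known. Indeed, even the coarser (summed over $d$) statement $\bm(\cF)=\bk(\cF)=\sum_{s\in S/\cF}\bw(C_\cF(s))$ is exactly the open conjecture of \cite{KLLS19} that the paper invokes, and is derived there only under the assumption that AWC holds for all principal blocks. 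Thus your ``reduction to the unipotent case'' assumes something at least as strong as what is to be proved. Moreover, even granting that reduction, the unipotent matching is precisely the open content: the paper's defect-refined computations exist only for rank~$\ell$ in the infinite series, and a defect-tracking extension of the Alperin--Fong combinatorics to all $G(e,r,n)$, let alone the exceptional spetsial groups, is not written down anywhere and cannot be waved through as ``bespoke combinatorics''. Finally, a bookkeeping slip: by the degree formula $\ga_{s,\la}^q(1)=\bigl(|\GG:C_\GG(s)|_{x'}\,\la(1)\bigr)|_{x=q}$, the defect of $\ga_{s,\la}^q$ in $\GG(q)$ equals the defect of $\la$ in $C_\GG(s)(q)$ -- the index contributes nothing, so your shift $d_s$ by the $\ell$-part of the index is wrong; the genuine shift $u_s$ in Proposition~\ref{prop:2owc2} only appears when one passes from unipotent characters of $C_\GG(s)(q)$ to characters of the relative Weyl group $W(s)_{\phi_s^{-1}\zeta^{-1}}$.
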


We show that the defects of characters in $\Irr(B_0)$ are locally controlled
(see Proposition~\ref{prop:heights}), so $|\Irr^d(B_0)|$ can be re-expressed in
terms of characters of the groups $W(s)$ defined above. Thus,
Conjecture~\ref{c:conjd} can be further generalised to a statement about
arbitrary simply connected $\ell$-compact groups (see
Conjecture~\ref{conj:nr oweights}). We give evidence for this by verifying it
when $|W|$ is prime to $\ell$ and for the smallest non-trivial infinite family
of reflection groups. This also supports the general positivity conjecture
(\cite[Conj.~2.5]{KLLS19}) that for any saturated fusion system~$\cF$,
$\bm(\cF,d)\ge 0$ for all $d\ge 0$. Our Conjecture \ref{c:conjd} also implies 
the ordinary weight conjecture for the principal $\ell$-block of $\bG^F$ for all
semisimple algebraic groups $\bG$ with Frobenius map $F$ whenever $\ell>2$ is
very good for $\bG$ and $F$ induces an $\ell'$-automorphism of $W$.
\medskip

We close the introduction with some remarks concerning the case $\ell=2$, the
restriction on the order of $\tau$ and the simply connected hypothesis in
Theorem~\ref{thmc} and Conjecture~\ref{c:conjd}. The
connected $2$-compact groups have been classified by Andersen--Grodal
\cite{AG09}. They show that the exotic $2$-compact group $\operatorname{DI}(4)$
associated to $G_{24}$, constructed by Dwyer--Wilkerson, is the only simple
$2$-compact group not arising as the $2$-completion of a compact connected Lie
group. In \cite{LS17}, Lynd and the third author show the conclusion of
Theorem~\ref{thmb} holds in this case. The paper \cite{S19} performs the
computations necessary to verify the conclusion of Conjecture~\ref{c:conjd}
holds for a suitably adapted definition of the principal block. Note that the
prime~$2$ is bad for $G_{24}$ by
Proposition~\ref{prop:bad}. The restriction on the order of $\tau$ as well as
the simply connected hypothesis come from our being in the setting of
\cite[Thm~A]{BM07} (see Theorem~\ref{thm:bm}). In many situations the
assumptions on \cite[Thm~A]{BM07} are either automatically satisfied (see
Remark~\ref{rem:order}) or are not necessary (see Remark~\ref{r:fried}). It
would be desirable to know to what extent these restrictions may be relaxed. 
 
\medskip
\noindent \textbf{Structure of the paper:}
In Section \ref{sec:refl} we collect background material and prove some new
results on reflection groups which are required for later sections. Among other
things, this section includes a fundamental property of stabilisers
(Proposition~\ref{prop:stab}) needed in the proof of Theorem~\ref{thmc}.
Section~\ref{sec:zltofusion} recalls the description of the fusion system
$\cF(\twt X(q))$ constructed in \cite{BM07}. We also derive, in
Section~\ref{subsec:sylows}, a description of its underlying $\ell$-group.
In Section~\ref{sec:awc}, we restate Theorem~\ref{thmb} as
Theorem~\ref{thm:nr weights}, outline its proof and explain how the AW
conjecture for
principal blocks of finite groups of Lie type for very good primes follows from
it (Corollary~\ref{cor:AWC Lie}). In Section~\ref{sec:centr}
we investigate homotopy fixed-point centralisers of $\ell$-compact groups and
prove Theorem~\ref{thmc} as part of Theorem~\ref{thm:con cent}. These results
allow us to define in Section~\ref{sec:owc} the centralisers of $\ell$-elements
and also the principal block of a $\ZZ_\ell$-spets. Sections~\ref{sec:GGr}
and~\ref{sec:except} complete the proof of Theorem~\ref{thmb} in the cases when
$X$ is of generalised Grassmannian and exceptional type respectively.
\medskip

\noindent{\bf Acknowledgement:}
The authors would like to thank Bob Oliver and Albert Ruiz for providing us
with references for Section 7, Carles Broto and Jesper M\o ller for clarifying
some points in \cite{BM07} and Bob Oliver and Jesper Grodal for helpful
discussions.

\setcounter{tocdepth}{1}
\tableofcontents

%%%%%%%%%%%%%%%%%%%%%%%%%%%%%%%%%%%%%%%%%%%%%%%%%%%%%%%%%%%%%%%%%%%%%%%%%
\section{$\ZZ_\ell$-Reflection groups}   \label{sec:refl}
In this section we recall some properties of finite reflection groups, set our
notation, show a crucial result on stabilisers and define the notion of good
and very good primes for $\ZZ_\ell$-reflection groups.

%%%%%%%%%%%%%%%%%%%%%%%%%%%%%%%%%%%%

Let $R$ be a principal ideal domain of characteristic zero with field of
fractions $K$. An \emph{$R$-reflection group} is a pair $(W,L)$ where $L$ is a
finitely generated free $R$-module and $W \le \Aut_R(L)$ is a \emph{finite}
group generated by reflections, i.e., non-trivial elements fixing point-wise an
$R$-submodule of corank $1$. We say that $(W,L)$ is \emph{irreducible} if the
$KW$-module $K \otimes_R L$ is irreducible. If $(W,L)$ is an $R$-reflection
group then so is $(W,L'):=(W,R'\otimes_R L)$ for any principal ideal domain
$R'$ containing $R$. In this case we say $(W,L')$ is obtained from $(W,L)$
by \emph{extension of scalars}. Two $R$-reflection groups $(W,L)$ and $(W',L')$
are isomorphic if there exists an $ R$-linear isomorphism $\vhi: L \to L'$ such
that $\vhi W \vhi^{-1} = W'$. 

%%%%%%%%%%%%%%%%%%%%%%%%%%%%%%%%%%%%
\subsection{Classification of reflection groups}
Let $\ell$ be a prime. A $\ZZ_\ell$-reflection group is
\emph{exotic} if it is irreducible with character field strictly containing
$\QQ$. Every $\ZZ_\ell$-reflection group $(W,L)$ decomposes as a direct product
\[(W, L) = (W_1 \times W_2, L_1 \oplus L_2), \]
where $(W_1,L_1)$ is an extension of scalars of a $\ZZ$-reflection group and
$(W_2,L_2)$ is a direct product of exotic $\ZZ_\ell $-reflection groups which
are uniquely determined up to permutation of factors (see, e.g.,
\cite[Thm.~11.1]{AGMV}).

Recall that every $\ZZ$-reflection group $(W,L)$ is isomorphic to
$(W_\bG,L_\bG)$ for some (not uniquely determined) compact connected Lie group
$\bG$ with Weyl group $W_\bG$ and cocharacter lattice $L_\bG$. By the Shephard--Todd
classification theorem (see e.g. \cite[Tab.~1]{GM06}) the irreducible complex
reflection groups fall into an infinite series of monomial groups $G(e,r,n)$
(with $n \geq 1$, $ e\geq 2 $, $r|e)$, the symmetric groups $\fS_n$ and $34$
exceptional groups (the rank of $L$ is at most~$8$ in the exceptional cases).

The irreducible $\ZZ_\ell$-reflection groups are also known, see e.g.,
\cite{AGMV}; for this paper we
find it convenient to subdivide them into the following six categories:
\begin{itemize}
\item the $\ZZ_\ell$-reflection groups of order prime to $\ell$ (also called
 \emph{Clark--Ewing groups}); 
\item the symmetric groups $\fS_n$ for $n\ge\ell$;
\item the imprimitive groups $G(e,r,n)$ with $n\ge\ell$, $e\ge2$ and
 $r|e|(\ell-1)$ (with associated $\ell$-compact groups called \emph{generalised
 Grassmannians} in \cite{BM07});
\item the five rational Weyl groups of exceptional types $G_2,F_4,E_6,E_7$ and
 $E_8$ for $\ell\in\{2,3\},\{2,3\},\{2,3,5\},\{2,3,5,7\},\{2,3,5,7\}$
 respectively;
\item the four exceptional reflection groups $G_{12},G_{29},G_{31}$ and
 $G_{34}$ at $\ell=3, 5, 5$ and~7 respectively, studied by Aguad\'e \cite{A89};
 and
\item the exceptional reflection group $G_{24}$ at $\ell=2$.
\end{itemize}

The Clark--Ewing groups comprise the symmetric groups $\fS_n$ for $n<\ell$, the
imprimitive groups $G(e,r,n)$ with $n<\ell$, $e\ge2$, $r|e|(\ell-1)$,
or $5\le e=r|(\ell+1)$ when $n=2$ (the dihedral groups), as well as those
exceptional $\ZZ_\ell$-reflection groups of order prime to $\ell$.

%%%%%%%%%%%%%%%%%%%%%%%%%%%%%%%%%%%%
\subsection{Relative Weyl groups}  \label{subsec:sl}
Let $(W,L)$ be a $\ZZ_\ell$-reflection group,
$V:=\overline\QQ_\ell\otimes_{\ZZ_\ell}L$ and let $\phi\in N_{\GL(V)}(W)$.
Recall that there exists a fundamental system of homogeneous invariants
$f_1,\ldots,f_r$, $r=\rk(L)$, of $W$ in the symmetric algebra on $V$ which are
eigenvectors for $\phi$, with eigenvalues $\eps_i$ say. The uniquely determined
multiset $\{(d_i,\eps_i)\mid 1\le i\le r\}$, where $d_i:=\deg f_i$, are the
\emph{generalised degrees} of $W\phi$. The \emph{order polynomial} of the
coset $W\phi$ is defined as %(see \cite[Def.~1.44]{BMM14})
$$\OO_x(W\phi):=
  \prod_i(x^{d_i}-\eps_i)\in\overline\QQ_\ell[x].$$
Note that $\OO_x(W\phi)\in\ZZ_\ell[x]$ if $\phi\in N_{\GL(L)}(W)$.

\begin{thm}[Lehrer--Springer]   \label{thm:LS}
 Let $g\in W\phi$ have fixed space $V(g,1):=\ker_V(g-1)$ of maximal possible
 dimension among elements in $W\phi$. Then:
 \begin{itemize}
  \item[\rm(a)] $W_\phi:=N_W(V(g,1))/C_W(V(g,1))$ acts faithfully as a
   reflection group on $V(g,1)$, where $C_W(V(g,1))$ is the point-wise
   stabiliser of $V(g,1)$ in $W$.
  \item[\rm(b)] The group $W_\phi$ is uniquely determined up to the
   conjugation action of $W$ on the set of its subquotients.
  \item[\rm(c)] We have $\OO_x(W_\phi)=\prod_{i\,:\,\eps_i=1}(x^{d_i}-1)$,
   where $\{(d_i,\eps_i)\}$ are the generalised degrees of $W\phi$ on
   $\overline\QQ_\ell\otimes_{\ZZ_\ell}L$.
  \item[\rm(d)] Assume $\phi\in N_{\GL(L)}(W)$. Then $W_\phi$ is a
   $\ZZ_\ell$-reflection group on the fixed space $L_\phi=\ker_L(g-1)$ of $g$
   in $L$.
 \end{itemize}
\end{thm}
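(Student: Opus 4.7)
The plan is to obtain the theorem as a direct application of Springer's theory of regular elements in reflection cosets, together with Lehrer--Springer's refinement on reflection subquotients, followed by a descent from $V$ to $L$ for part~(d).

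I would begin by setting $E := V(g,1)$ and noting that by the maximality of $d := \dim_{\overline\QQ_\ell} E$, the element $g$ is \emph{$1$-regular} in the Springer--Lehrer sense. To prove~(a), I would invoke Steinberg's theorem to identify the pointwise stabiliser $C_W(E)$ as the parabolic subgroup of $W$ generated by those reflections whose reflecting hyperplanes contain $E$. The content of Lehrer--Springer's subquotient theorem is then that $N_W(E)$ acts on $E$ with kernel exactly $C_W(E)$ and that the resulting quotient $W_\phi$ acts faithfully on $E$ as a complex reflection group; it is precisely the maximality hypothesis on $\dim E$ that guarantees this conclusion. Part~(b) follows since any two elements of $W\phi$ realising the maximal fixed-space dimension have $W$-conjugate fixed spaces (again part of the regular-element theory), so the associated subquotients are $W$-conjugate.

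For part~(c), I would invoke Springer's eigenspace formula in the coset setting: for any $\zeta \in \overline\QQ_\ell^\times$, the multiplicity of $\zeta$ in the spectrum of a $\zeta$-regular element on $V$ equals $\#\{i : \epsilon_i \zeta^{d_i} = 1\}$. Specialising to $\zeta = 1$ yields $d = \#\{i : \epsilon_i = 1\}$, and the refinement due to Lehrer--Springer identifies the degrees of the reflection group $W_\phi$ on $E$ as precisely the sub-multiset $\{d_i : \epsilon_i = 1\}$; the stated formula
\[
\OO_x(W_\phi) = \prod_{i\,:\,\epsilon_i = 1}(x^{d_i} - 1)
\]
is then just the definition of the order polynomial applied to this reflection group.

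Finally, for part~(d), when $\phi \in N_{\GL(L)}(W)$ every element of $W\phi$ acts $\ZZ_\ell$-linearly on $L$, and $L_\phi := \ker_L(g-1)$ is $\ZZ_\ell$-pure in $L$ (since $L/L_\phi$ embeds into $L$ via $g-1$ and is hence torsion-free), so it is a direct summand of $L$ with $\overline\QQ_\ell \otimes_{\ZZ_\ell} L_\phi = E$. Lifting the generators of the subquotient $W_\phi$ from~(a) to elements of $N_W(E) \le \GL(L)$ that stabilise $L_\phi$ transports the reflection action on $E$ to a faithful $\ZZ_\ell$-reflection action on $L_\phi$, making $(W_\phi, L_\phi)$ a $\ZZ_\ell$-reflection group. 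The main obstacle is part~(a), which genuinely depends on the Lehrer--Springer subquotient theorem; the remaining parts reduce to invariant-theoretic bookkeeping and a standard descent argument.
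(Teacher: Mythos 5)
Your proposal is correct and follows essentially the same route as the paper, whose proof simply cites \cite[Thm.~1.1]{LSp99} for parts (a) and (c), cites \cite[Lemma~3.1]{LSp99} for the $W$-conjugacy of the maximal fixed spaces $V(g,1)$ to get (b), and observes that (d) is clear (your purity/direct-summand descent to $L_\phi$ just makes this explicit). The one slip is your opening claim that maximality of $\dim V(g,1)$ makes $g$ $1$-regular --- it does not, and that failure is precisely why the Lehrer--Springer subquotient theorem is needed beyond Springer's regular-element theory --- but since the statements you actually invoke (the subquotient theorem and the eigenspace-multiplicity formula for maximal eigenspaces) are exactly those of \cite[Thm.~1.1]{LSp99}, this does not affect the argument.
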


\begin{proof}
Parts~(a) and~(c) are \cite[Thm.~1.1]{LSp99}. By \cite[Lemma~3.1]{LSp99}
the maximal eigenspaces $V(g,1)$ for $g$ running over $W\phi$ are all
$W$-conjugate, whence~(b). Part~(d) is clear.
\end{proof}

We call $W_\phi$ the \emph{relative Weyl group of a Sylow $1$-torus} of $W\phi$
by analogy with the case of finite reductive groups and spetses. If $V(g,1)$
contains an eigenvector of $g$ not lying in any of the reflecting hyperplanes of
$W$ then $g$ is called $1$-\emph{regular}. In this case, Springer showed that
$W_\phi\cong C_W(g)$.

\begin{exmp}   \label{exmp:LS}
 We describe the relative Weyl groups in imprimitive groups when $\phi$ is a
 scalar. Let $W=G(m,r,n)$ with $m|(\ell-1)$ and let
 $\phi=\zeta\in\ZZ_\ell^\times$
 be a primitive $e$th root of unity. Then
 $$W_\zeta=\begin{cases}
   G(me',r,\frac{n}{e'})& \mbox{if $re\mid mn$}, \\
   G(me',1,\frac{n}{e'}-1)& \mbox{if $re\nmid mn$ but $e \mid mn$}, \\
   G(me',1,\lfloor\frac{n}{e'}\rfloor)& \mbox{otherwise},\\
 \end{cases}$$
 where $e'=e/\gcd(e,m)$ (see \cite[(2.6) and~(5.4)]{Ma95} and
 \cite[A.10]{BM07}).
\end{exmp}

%%%%%%%%%%%%%%%%%%%%%%%%%%%%%%%%%%%%
\subsection{Stabilisers}
The following general fact will be needed in Section~\ref{sec:centr} to prove
a Steinberg-type result on centralisers in $\ell$-compact groups; it might be
of independent interest. Recall that \emph{parabolic subgroups} of a $\bar {\mathbb Q}_{\ell}$-reflection group $W$ are by
definition the point-wise stabilisers of subspaces. By a theorem of Steinberg
\cite[1.20]{StEnd} these are reflection subgroups.

\begin{prop}   \label{prop:stab}
 Let $(W,L)$ be a $\ZZ_\ell$-reflection group,
 $\brT= \ZZ/\ell^\infty\otimes_{\ZZ_\ell} L$ the corresponding discrete
 torus.
 Let $v\in\brT$ and set $H=C_W(v)$. Then $H/H_\refl$ is of $\ell$-power order,
 where $H_\refl$ denotes the normal subgroup generated by the reflections in
 $H$. Moreover, if $W$ is exotic then $H=H_\refl$ is generated by reflections,
 and if $W$ is an $\ell'$-group then $H=H_\refl$ is a parabolic subgroup.
\end{prop}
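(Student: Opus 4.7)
The plan is to reduce to the irreducible case and then split along the rational/exotic dichotomy, handling each exotic subcase (Clark--Ewing, generalised Grassmannian, exceptional exotic) directly to obtain $H = H_\refl$, and the rational case via Steinberg's theorem on centralisers of semisimple elements. First I would reduce to the case $(W,L)$ irreducible: a product decomposition $(W,L) = \prod(W_i,L_i)$ induces compatible decompositions $\brT = \prod\brT_i$, $H = \prod H_i$ and $H_\refl = \prod (H_i)_\refl$, and since an irreducible factor of a rational $\ZZ_\ell$-reflection group is rational, $W$ being exotic forces every irreducible factor to be exotic.

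For the rational irreducible case I would realise $(W,L)$ as the Weyl group and cocharacter lattice of a connected reductive complex algebraic group $\bG$ with maximal torus $T$, so that $\brT$ sits in $T$ as the $\ell$-primary torsion. Steinberg's theorem on centralisers of semisimple elements identifies $H_\refl$ with the Weyl group of $C_\bG(v)^\circ$ and $H/H_\refl$ with the component group $\pi_0(C_\bG(v))$; a standard argument via the simply connected isogeny cover (where centralisers of semisimple elements are connected) exhibits this component group as a subquotient of the $\ell$-primary part of $\pi_1(\bG)$ when $v$ has $\ell$-power order, giving the required $\ell$-group statement.

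For the exotic irreducible case I would split further. If $W$ is Clark--Ewing then $|W|$ is invertible in $\ZZ_\ell$, so averaging any lift $u\in V := L\otimes_{\ZZ_\ell}\QQ_\ell$ of $v$ under the projection $V\twoheadrightarrow V/L \cong \brT$ produces $u' := \frac{1}{|H|}\sum_{w\in H} wu$, a second lift of $v$ (so $u'\in u+L$) which is fixed pointwise by $H$; conversely any $w\in W$ fixing $u'$ fixes $v$, whence $H$ equals the pointwise stabiliser of $u'$ in $V$, which is generated by reflections by Steinberg's fixed-point theorem for complex reflection groups. If $W = G(e,r,n)$ is a generalised Grassmannian with $r\mid e\mid \ell-1$ and $n\ge \ell$, then since $e$ is coprime to $\ell$ the condition $e(v_i - v_j) = 0$ in $\QQ_\ell/\ZZ_\ell$ forces $v_i = v_j$; a direct computation in the monomial structure identifies $H$ with a Young subgroup $\prod_k S_{m_k}$ of $S_n$ indexed by equivalence classes of indices under $v_i = v_j$, generated by transpositions which are already reflections of $W$. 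The five exceptional exotic groups $G_{12}, G_{29}, G_{31}, G_{34}$ and $G_{24}$ are of small order and are handled by a case-by-case analysis, possibly aided by computer computation.

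The main obstacle I expect lies in the exceptional exotic case, since it lacks a uniform argument and requires explicit enumeration of stabilisers of $\ell$-power torsion elements; the rational case, while invoking deeper Steinberg theory, follows from classical references on reductive algebraic groups.
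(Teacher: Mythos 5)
Your outline breaks down at the generalised Grassmannian step, and this is a genuine gap rather than a matter of detail. For $W=G(e,r,n)$ acting on $\brT\cong(\ZZ_\ell/\ell^\infty)^n$, the centraliser $H=C_W(v)$ of $v=(v_1,\dots,v_n)$ is not a Young subgroup of $\fS_n$, and the relation you use ($v_i=v_j$) is not the right one. Two families of centralising elements are missed. First, on the set of indices with $v_i=0$ every diagonal element of $G(e,r,n)$ supported there fixes $v$ (in the extreme case $v=0$ one has $H=W$), so $H$ contains a full monomial subgroup of type $G(e,r,u_0)$, not merely a symmetric group. Second, indices with $v_i=\zeta v_j$ for a non-trivial $e$-th root of unity $\zeta\in\ZZ_\ell^\times$ are linked by elements $(a,\sigma)$ whose diagonal part $a$ is non-trivial and uniquely determined by $\sigma$; the relevant input is that a non-trivial $e$-th root of unity acts without fixed points on $\ZZ_\ell/\ell^\infty\setminus\{0\}$, not your assertion that $e(v_i-v_j)=0$ forces $v_i=v_j$. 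The correct statement, which the paper proves following M\o ller, is $H\cong G(e,r,u_0)\times\prod_j\fS_{u_j}$ embedded via such twisted permutations, after which one checks that the twisted transpositions and the reflections of the $G(e,r,u_0)$-block are reflections of $W$. Your argument omits exactly the elements that make this verification non-trivial. A related problem: for the exceptional exotic groups you defer to a finite case check, but $v$ ranges over the infinite discrete torus, so some reduction is required; the paper obtains one from the first claim (only stabilisers containing an element of order $\ell$ need be considered), identifies the centraliser of such an element with the Weyl group of a simply connected group ($\fS_\ell$ acting as for $\SL_\ell$ in the Aguad\'e cases, type $B_3$ for $G_{24}$), and then quotes Steinberg's connectedness theorem.

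On the remaining parts: your Clark--Ewing averaging is the paper's Maschke argument in different clothing, and your treatment of the rational case is a legitimate alternative route --- the paper instead proves the first claim for arbitrary $W$ by applying that averaging argument to $\ell'$-subgroups generating $O^\ell(H)$ together with Steinberg's characteristic-zero fixed-point theorem, with no algebraic groups at all. Two caveats on your version. The identification you assert is too strong: $H_\refl$ can be strictly larger than the Weyl group of $C_\bG(v)^\circ$, and $H/H_\refl$ is in general only a proper quotient of the component group $C_\bG(v)/C_\bG(v)^\circ$ (already for the adjoint group of type $A_1$ and $v$ of order $2$, where the reflection fixes $v$ although no root is trivial on $v$); this is harmless for the conclusion, since the isogeny-cover argument bounding the component group by $\ell$-torsion in the kernel of $\bG_{\mathrm{sc}}\times Z^\circ(\bG)\to\bG$ still makes $H/H_\refl$ a quotient of an $\ell$-group, but it should be stated as a containment, not an equality. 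Also, the opening reduction to irreducible $(W,L)$ is not available for the rational part, because the lattice need not split along the irreducible constituents of $W$ (glued lattices such as that of $\mathrm{SO}_4$ at $\ell=2$); only the splitting into one rational factor and exotic factors (AGMV, Thm.~11.1) is guaranteed, which is all your algebraic-group argument needs anyway.
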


\begin{proof}
We provide a conceptual argument for the first assertion; the proof of the
second statement will be completed by a case-by-case argument.

First assume that $|H|$ is prime to $\ell$. Let $a \in\NN$. By the Maschke
argument the canonical surjection $L\to L/\ell^a L$ restricts to a surjection
$L^H \to (L/\ell^a L)^H $. Indeed, let $v\in (L/\ell^a L)^H$ and let $y\in L$ be
any lift of $v$. Then $\frac{1}{|H|}\Tr_1^H (y) \in L^H$ has image
$\frac{1}{|H|}\Tr_1^H(v) =v$ in $L/\ell^a L$.

Now let $v\in \brT$ be an element of order $\ell^a$. By the above, there exists
$x\in L$ such that $v\in x +\ell^a L$ and $C_W(v) \leq C_W(x)$. Since
$C_W(x)\leq C_W(v)$ we have that $C_W(x) = C_W(v)$.
Now the stabiliser of any $x\in L$ is the same as the stabiliser of its image
in $\QQ_\ell\otimes_{\ZZ_\ell}L$, which is a reflection representation over a
field of characteristic~0. But there, by the result of Steinberg
\cite[1.20]{StEnd} stabilisers of vectors are reflection subgroups and in fact
parabolic subgroups. \par
In the general case, we may apply the above to a collection of subgroups of
$H$ of order coprime to $\ell$ generating $O^\ell(H)$ to see that
$H_\refl\ge O^\ell(H)$, giving the first claim.   \par

Now assume that $W$ is exotic. By the first part we may assume that $\ell$
divides $|W|$. Here we show the claim by a case-by-case argument. If $v$ has
order $\ell$, then the result was shown in \cite[Thm.~1.9]{AGMV}. 

Let us first discuss the four Aguad\'e groups. Here, Sylow $\ell$-subgroups are
cyclic of order~$\ell$. By the first part we only need to consider stabilisers
$C_W(v)$ containing an element of order~$\ell$. A direct computation shows that
the fixed space in $\Omega_1(\brT)$ of such an element is 1-dimensional and
moreover the centraliser of this subspace is a symmetric group $H=\fS_\ell$ (see
also the last part of the proof of \cite[Thm.~7.1]{AGMV}). Thus, $H$ acts on
$\Omega_1(\brT)$ as the Weyl group of the simply connected group $\SL_\ell$.
Here, the claim follows by \cite[2.14 and 2.16]{St75}. Similarly, for the exotic
$\ZZ_2$-reflection group $G_{24}$, which acts as $\GL_3(2)$ on
$\bar L:=\FF_2\otimes_{\ZZ_2}L$, the stabilisers of elements in $\bar L$ are
Weyl groups of simply connected type $B_3$ and we conclude as before.

It remains to consider the case $W=G(e,r,n)$, with $e\ge2$ and $r|e|(\ell-1)$.
Here we may argue directly following \cite[Lemma 7.11]{Mo02} (which describes
centralisers of elementary abelian subgroups of $\brT$). We identify $\brT$
with $\ZZ/\ell^\infty\otimes_{\ZZ_\ell}L = (\ZZ/\ell^\infty)^n$
(written additively). Identify $W$ with $A(e,r,n)\rtimes\fS_n$, where $A(e,r,n)$
is the subgroup of diagonal $n\times n$-matrices over $\ZZ_\ell$ such that all
diagonal entries are $e$-th roots of unity and the determinant is an $e/r$-th
root of unity \cite{Ma95}.
Let $t =(\la_1,\ldots,\la_n)\in\brT$. We declare indices $i$ and $j$,
$1\leq i,j \leq n$, to be equivalent if $\lambda_i=\zeta\lambda_j$ for some
$e$-th root of unity $\zeta\in\ZZ_\ell$. Denote by ${\bf n}(t)$ the partition
of $\{1,\ldots, n\}$ into equivalence classes. Suppose that there are $u_0$
indices $i$ such that $\la_i =0$ and $s$ further equivalences classes containing
$u_1,\ldots,u_s$ elements respectively.

Now for $a=(\zeta_1,\ldots,\zeta_n)\in A(e,r,n)$ and $\sigma \in\fS_n$, we have
\[ \,^{(a,\sigma)}t = (\zeta_1\la_{\sigma(1)},\ldots,\zeta_n\la_{\sigma(n)}).\]
So, $(a,\sigma)\in C_W(t)$ if and only if $\la_i = \zeta_i\la_{\sigma(i)}$
for all $i$, and then $\sigma$ preserves the partition ${\bf n}(t)$.
Now, the group of $e$-th roots of unity in $\ZZ_\ell^\times$ acts fixed point
freely on $\ZZ/\ell^\infty \setminus \{0\}$. Hence if
$(a,\sigma)\in C_W(t)$ is such
that $\sigma = \tau\sigma_1\cdots\sigma_s$, where $\tau\in\fS_{u_0}$ and
$\sigma_j\in \fS_{u_j}$, $1\leq j \leq s$, then for all $i$ in the $j$-th
equivalence class, $\zeta_i$ is uniquely determined by
$\la_i = \zeta_i \la_{\sigma_j(i) }$. In particular, all entries of $a$ other
than those in the $0$th equivalence class are determined by
$\sigma_1\cdots\sigma_s$. Conversely, if $a$ and
$\sigma = \tau\sigma_1\cdots\sigma_s$ are such that for all $1\leq j \leq s$,
and all $i$ in the $j$th class $\la_i = \zeta_i\la_{\sigma_j(i)}$,
then $\prod_{i'\sim i}\la_{i'}=1$ and hence $(a,\sigma)\in C_W(t) $. Thus the
map
$$ G(e,r,u_0) \times \prod_{1\leq j \leq s} \fS_{u_j}\to C_W(t), \quad
  (a_0,\tau)\times\prod_{1\leq j\leq s}\sigma_j\mapsto(a,\tau\sigma_1\cdots \sigma_s),$$
where the entries of the components of $a$ in the $0$th class are the entries
of $a_0$ and the entries of the components corresponding to the $j$th class are
determined by the equations $\la_i = \zeta_i\la_{\sigma_j(i)}$, is an
isomorphism. Now the image of a transposition $\sigma_j\in\fS_{u_j}$ under the
above map is a reflection. Since the image of $G(e,r,u_0)$ is also generated by
reflections, $C_W(t)$ is indeed generated by reflections. The proof is complete.
\end{proof}

%%%%%%%%%%%%%%%%%%%%%%%%%%%%%%%%%%%%
\subsection{Good, bad and very good primes}
We extend the notion of bad primes to $\ZZ_\ell$-reflection groups.
For Weyl groups these are defined in terms of torsion of the root system modulo
closed subsystems, or of coefficients of the highest root (see e.g.
\cite[\S B.5]{MT}). Since there is no general concept of root systems for
$\ZZ_\ell$-reflection groups (yet), we propose a different characterisation
which turns out to agree with the usual one for Weyl groups.
\par
 We say that a reflection
subgroup of   a $\bar {\mathbb Q}_{\ell}$-reflection group $W$ has \emph{maximal rank} if its fixed space agrees with that
of $W$. For reflection groups in positive characteristic, there can exist
subspace stabilisers which are not parabolic;
for example, a maximal rank subgroup could have invariant vectors. It is this
phenomenon that underlies our definition of bad primes.

\begin{defn}   \label{def:bad}
 Let $(W,L)$ be a $\ZZ_\ell$-reflection group. We say that $\ell$ is \emph{good}
 for $W$ if all maximal rank reflection subgroups of $W$ (on
 $L\otimes_{\ZZ_\ell}\QQ_\ell$) have the same number of
 trivial composition factors on $\FF_\ell\otimes_{\ZZ_\ell}L$. Otherwise we
 call $\ell$ a \emph{bad prime} for $W$.
 We say that $\ell$ is \emph{very good} for $(W,L)$ if it is good for $W$, and
 $L_W$ and $(L^*)_W$ are both torsion-free.
\end{defn}

Here, we write $L_W=L/\langle(w-1)L\mid w\in W\rangle$ for the coinvariants of
$(W,L)$. Let us point out that all exotic $\ZZ_\ell$-reflection groups have
$L_W=0$, that is, they are \emph{simply connected}, see \cite[Prop.~1.6]{N99}.
So for exotic reflection groups, all good primes are very good.

Observe that the notion of being very good may depend on the chosen
$\ZZ_\ell$-lattice for~$W$, while being good only depends on $W$. We have the
following properties:

\begin{lem}   \label{lem:good}
 Assume $(W,L)$ is a $\ZZ_\ell$-reflection group.
 \begin{enumerate}
  \item[\rm(a)] Then $\ell$ is good for $W$ if and only if it is good for all
   irreducible factors of~$W$.
  \item[\rm(b)] If $(W,L)=(W_1\times W_2,L_1\oplus L_2)$ is a direct
   decomposition, then $\ell$ is very good for $(W,L)$ if and only if it is so
   for $(W_1,L_1)$ and $(W_2,L_2)$.
  \item[\rm(c)] If $\ell$ is good for $W$, then it is so for all of its
   reflection subgroups.
  \item[\rm(d)] If $W$ is a Weyl group, the notion of bad primes agrees with
   the classical one.
 \end{enumerate}
\end{lem}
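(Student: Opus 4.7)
For parts (a) and (b), the plan is to reduce to a product of two factors and exploit the structure of reflections. Every reflection in $(W_1\times W_2, L_1\oplus L_2)$ has one component equal to the identity and the other a reflection in its own factor, so any reflection subgroup $R\le W$ decomposes as $R_1\times R_2$ with $L^R = L_1^{R_1}\oplus L_2^{R_2}$. Hence $R$ has maximal rank in $W$ iff each $R_i$ has maximal rank in $W_i$. Since the trivial composition factor count on $\FF_\ell\otimes L$ is additive over the direct summands $\FF_\ell\otimes L_i$, statement (a) follows by iteration down to irreducible factors. For (b), the relations $(w-1)L$ separate into the two summands, so $L_W = (L_1)_{W_1}\oplus(L_2)_{W_2}$, and the same holds for $L^*$; hence torsion-freeness decomposes.

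For part (d), let $W$ be an irreducible Weyl group on its root lattice $L = \ZZ\Phi$ and let $\Phi'\subseteq\Phi$ be a closed subsystem of maximal rank. For $\alpha\in\Phi'$ and $x\in L$ we have $s_\alpha x - x \in \ZZ\alpha\subseteq\ZZ\Phi'$, so $W_{\Phi'}$ acts trivially on $L/\ZZ\Phi'$. Tensoring the exact sequence $0\to\ZZ\Phi'\to L\to L/\ZZ\Phi'\to 0$ with $\FF_\ell$ and splitting the resulting four-term sequence into two short ones yields, by additivity of composition factor counts, the identity $c(W_{\Phi'},\FF_\ell\otimes L) = c(W_{\Phi'},\FF_\ell\otimes\ZZ\Phi')$, since $W_{\Phi'}$ acts trivially on both $(L/\ZZ\Phi')\otimes\FF_\ell$ and $\mathrm{Tor}_1(L/\ZZ\Phi',\FF_\ell)$, and these spaces have the same $\FF_\ell$-dimension (the $\ell$-rank of the finite group $L/\ZZ\Phi'$). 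The constancy of this count across all maximal rank closed subsystems, classified by Borel--de Siebenthal, then matches, via a type-by-type inspection, the classical condition that $\ell$ divides no coefficient of the highest root in any irreducible component of $\Phi$.

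For part (c), by (a) we may reduce to $W$ irreducible. The difficulty is that ``maximal rank in $W'$'' only forces the fixed space to equal $L^{W'}$, typically strictly larger than $L^W$, so a maximal rank reflection subgroup of $W'$ need not be maximal rank in $W$. To bridge this, set $U := L^{W'}$ and consider the pointwise stabiliser $W_U \le W$ of $U$, which is a reflection subgroup by Steinberg's theorem and contains $W'$ with $L^{W_U} = U = L^{W'}$; hence reflection subgroups of $W'$ of maximal rank in $W'$ are exactly those contained in $W'$ of maximal rank in $W_U$, so goodness of $W_U$ forces goodness of $W'$. The problem thus reduces to showing every parabolic subgroup $W_U$ inherits goodness from $W$, which for Weyl groups follows from classical theory (via (d)), and is automatic for Clark--Ewing groups; the remaining exotic irreducible $\ZZ_\ell$-reflection groups require a case-by-case verification using the classification and the explicit description of their parabolic subgroups implicit in the proof of Proposition~\ref{prop:stab}, and this final inspection is expected to be the main obstacle.
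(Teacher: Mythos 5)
Your parts (a) and (b) are fine and amount to the decomposition arguments the paper dismisses as obvious, and your reduction in (c) — replacing a reflection subgroup $W'$ by the parabolic $W_U$ with $U=L^{W'}$, so that maximal rank subgroups of $W'$ become maximal rank subgroups of $W_U$ and goodness descends from parabolics to arbitrary reflection subgroups — is a correct and genuinely more structural observation than anything in the paper, as is your index/Tor computation showing that the trivial composition factor count of $W_{\Phi'}$ on $\FF_\ell\otimes L$ equals that on $\FF_\ell\otimes\ZZ\Phi'$. The paper itself proves (c) and (d) simply by citing Proposition~\ref{prop:bad} (the case-by-case determination of bad primes over the classification of irreducible $\ZZ_\ell$-reflection groups) together with (a), so some case analysis is unavoidable on either route.

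The genuine gap is that this case analysis is exactly what your proposal does not carry out. In (d) the conclusion is deferred to ``a type-by-type inspection'' that is never performed, and in (c) you explicitly leave the decisive cases open: after the reduction, one must verify that parabolic subgroups inherit goodness for the imprimitive groups $G(e,r,n)$ with $n\ge\ell$, the Aguad\'e groups and $G_{24}$, and you state that this ``is expected to be the main obstacle'' rather than doing it; as written, (c) is therefore not proved (the paper disposes of these cases through the explicit arguments in the proofs of Propositions~\ref{prop:stab} and~\ref{prop:bad}). There is also a smaller logical gap in (d): Borel--de Siebenthal classifies maximal rank \emph{closed} subsystems, but a maximal rank reflection subgroup of a Weyl group corresponds to a full-rank subsystem that need not be closed (e.g.\ the short $A_1^n$ inside type $C_n$), and since passing to the additive closure enlarges the group and can only \emph{decrease} the trivial composition factor count, badness could in principle be witnessed by a non-closed subsystem invisible to the closed ones; you need an extra argument (or to fold these cases into the inspection) to justify restricting to the Borel--de Siebenthal list. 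So the overall architecture is sound and partly improves on the paper's bare citation, but without the classification work of Proposition~\ref{prop:bad} — or a substitute for it — parts (c) and (d) remain unestablished.
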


\begin{proof}
Parts~(c) and~(d) will be a consequence of Proposition~\ref{prop:bad} in
conjunction with~(a). The other claims are obvious.
\end{proof}

\begin{prop}   \label{prop:bad}
 Let $(W,L)$ be an irreducible $\ZZ_\ell$-reflection group.
 \begin{enumerate}
 \item[\rm(a)] The bad primes $\ell>2$ are as given in Table~\ref{tab:bad},
  and~$\ell=2$ is bad unless $W\cong\fS_n$.
 \item[\rm(b)] The only groups with good primes that are not very good are
  $W=\fS_n$ with $\ell|n$.
 \end{enumerate}
\end{prop}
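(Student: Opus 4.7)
Both statements are additive over direct decompositions via Lemma~\ref{lem:good}(a),(b), so I would reduce to the irreducible case and run through the six classes of irreducible $\ZZ_\ell$-reflection groups recalled above.

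For (a), three of the six classes are uniform. For Clark-Ewing groups ($|W|$ coprime to $\ell$) a Maschke averaging argument shows $\FF_\ell\otimes L$ is semisimple as an $\FF_\ell W$-module and that the invariants under any subgroup match those in characteristic zero, so every maximal-rank reflection subgroup has $0$ trivial composition factors. For $\fS_n$ the reflection subgroups are Young subgroups and the only such subgroup with fixed space of rank $0$ on the standard reflection module is $\fS_n$ itself, so the goodness condition is vacuous. For the generalised Grassmannians $G(e,r,n)$ with $r\mid e\mid\ell-1$, the maximal-rank reflection subgroups are the $G(e,r',n)$ with $r\mid r'\mid e$ by \cite[App.~A]{BM07}; since $e\mid\ell-1$ puts all $e$-th roots of unity into $\FF_\ell^\times$, a direct character computation again matches the characteristic-zero answer and returns $0$ trivial composition factors on every such subgroup. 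The remaining three classes---the rational exceptional Weyl groups, the Aguadé groups, and $G_{24}$---are treated case-by-case. For the Weyl groups I would verify directly that our notion of bad prime agrees with the classical one by exhibiting maximal-rank subsystems whose mod-$\ell$ reduction of the reflection lattice acquires trivial composition factors precisely at the classical bad primes (for instance $A_2\times A_2\subset E_6$ witnesses $\ell=3$, and $A_4\subset E_8$ witnesses $\ell=5$). For the Aguadé groups and $G_{24}$, I would use the maximal-rank reflection subgroups $\fS_\ell$, respectively the Weyl group of type $B_3$, already identified in the proof of Proposition~\ref{prop:stab}: in each case a direct check on the $\FF_\ell$-reduction shows that the obvious permutation vector is fixed, producing a trivial composition factor at the relevant prime. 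The blanket statement that $\ell=2$ is bad for every irreducible non-symmetric group follows by the same kind of explicit exhibition of a proper maximal-rank reflection subgroup picking up a trivial composition factor on $\FF_2\otimes L$.

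For (b), every exotic $\ZZ_\ell$-reflection group satisfies $L_W=0$ by \cite[Prop.~1.6]{N99}, and dually $(L^*)_W=0$, so good primes are automatically very good in the exotic case. The content of (b) is therefore confined to Weyl groups. For type $A_{n-1}$ with $L$ the root lattice $Q$, the identity $(s_i-1)\alpha_{i\pm 1}=\alpha_i$ for adjacent simple roots gives $L_W=0$; since $W$ acts trivially on the fundamental group $P/Q$, right-exactness of coinvariants combined with $Q_W=0$ yields $(L^*)_W=P_W\cong P/Q\cong\ZZ/n$, so $\ell$-torsion occurs iff $\ell\mid n$. In every other Cartan type a direct computation shows that the $\ell$-torsion of both $L_W$ and $(L^*)_W$ is concentrated at primes dividing $|P/Q|\in\{1,2,3,4\}$, all of which are already bad for the corresponding Weyl group. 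The dual choice $L=P$ (and any intermediate lattice) is handled symmetrically, leaving $W=\fS_n$ with $\ell\mid n$ as the unique good-but-not-very-good instance.

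The main obstacle is the case-by-case verification in (a) for the Aguadé groups, $G_{24}$, and the rational exceptional Weyl groups, since there is no uniform theory of maximal-rank reflection subgroups for exotic $\ZZ_\ell$-reflection groups; however, the ranks are at most~$8$, and the structural information in \cite{A89} and \cite{AGMV} (together with the subgroups already used in the proof of Proposition~\ref{prop:stab}) make each check finite and tractable.
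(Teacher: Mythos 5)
Your overall strategy for (a) is the paper's: run through the six families, exhibit maximal-rank witnesses at the listed primes, and check that none exist otherwise; and your part (b) takes a genuinely different (and legitimate) route, computing the torsion of $L_W$ and $(L^*)_W$ lattice-theoretically via $|P/Q|$, where the paper instead uses a block-theoretic shortcut (if the reflection character is not in the principal $\ell$-block, very goodness is automatic, which disposes of everything except $\fS_n$). However, two steps in your treatment of (a) are genuinely wrong as stated.

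First, your classification of the maximal-rank reflection subgroups of $G(e,r,n)$ as exactly the $G(e,r',n)$ with $r\mid r'\mid e$ is false: for any $2\le e'\mid e$ the subgroup $G(e',e',n)$ is a reflection subgroup with zero fixed space (likewise the sign-change group $C_2^n$ when $2\mid e/r$, and mixed products such as $G(e,r',k)\times G(e'',e'',n-k)$), and none of these appear on your list. So your verification of goodness covers only a proper subfamily of the subgroups that Definition~\ref{def:bad} requires you to test. The gap is fixable -- every maximal-rank reflection subgroup of $G(e,r,n)$ contains a non-trivial diagonal $\ell'$-subgroup with zero fixed space on $\FF_\ell\otimes L$, so it has no trivial composition factors -- but as written the argument is incomplete; the paper instead argues via the explicit description of stabilisers from the proof of Proposition~\ref{prop:stab}.

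Second, your exceptional witnesses do not qualify: $A_2\times A_2\subset E_6$ and $A_4\subset E_8$ have ranks $4<6$ and $4<8$, so they are not maximal-rank subgroups, and exhibiting a trivial composition factor for them proves nothing -- any reflection subgroup of non-maximal rank has trivial composition factors coming from its fixed space, and Definition~\ref{def:bad} deliberately compares only maximal-rank subgroups. The correct witnesses are the full-rank subsystems $A_2^3\subset E_6$ and $A_4\times A_4\subset E_8$ (whose root sublattices have index $3$, resp.\ $5$, in the ambient lattice); the paper obtains these uniformly as Weyl groups of centralisers of isolated $\ell$-elements (Bonnaf\'e), with Borel--de Siebenthal providing the converse enumeration showing no further bad primes arise. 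Relatedly, to conclude badness you should also record that $W$ itself has no trivial composition factor on $\FF_\ell\otimes L$ in these cases, so that the counts really differ. With these two repairs (and a fuller check for $\ell=2$, which you only sketch), your plan matches the paper's proof; your part (b), including the type $A$ computation $(L^*)_W\cong P/Q\cong\ZZ/n$ and the observation that in all other types coinvariant torsion is supported at classically bad primes, is sound and a reasonable alternative to the paper's argument.
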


\begin{table}[htb]
 \caption{Bad primes $\ell>2$}   \label{tab:bad}
$\begin{array}{c|cccccccccc}
  & G(6,6,2)& G_{12}& G_{28}& G_{29}& G_{31}& G_{34}& G_{35}& G_{36}& G_{37}\\
 \hline
  \ell& 3& 3& 3& 5& 5& 7& 3& 3& 3,5\\
\end{array}$
\end{table}

Note that $G(6,6,2),G_{28},G_{35},G_{36},G_{37}$ are the Weyl groups of types
$G_2,F_4,E_6,E_7$ and $E_8$, respectively, and $G_{12},G_{29},G_{31}$ and
$G_{34}$ are the Aguad\'e groups.

\begin{proof}
If $W$ is a Clark--Ewing group, that is, $W$ has order prime to $\ell$, then
$\bar L:=\FF_\ell\otimes_{\ZZ_\ell}L$ is semisimple and moreover, any subgroup
of $W$ has a trivial composition factor on $\bar L$ if and only if it has one
on $L$. This shows that $\ell$ is very good for $W$.

For Weyl groups, if $\ell$ appears in Table~\ref{tab:bad} then there is a
maximal rank subgroup occurring as the Weyl group of the centraliser of an
isolated $\ell$-element (see e.g.~\cite[\S\S4,5]{Bo05}) of a corresponding
linear algebraic group in characteristic different from $\ell$, so $\ell$ is
bad according to our definition. On the other hand, the description of maximal
rank subgroups via Borel--de Siebenthal \cite[13.2]{MT} shows that only the
listed cases occur. For example, for $W=\fS_n$ there is no proper maximal rank
subgroup at all and so all primes are good.

For $G(e,r,n)$ with $e\ge2$ dividing $\ell-1$, the description in the proof of
Proposition~\ref{prop:stab} shows that there are no proper maximal rank
stabilisers. Among the exotic exceptional $\ZZ_\ell$-reflection groups, only
the four Aguad\'e groups and $G_{24}$ have order divisible by $\ell$. As
already observed in the proof of Proposition~\ref{prop:stab}, for the former
groups there exists an element $0\ne v\in\bar L$ with
stabiliser a proper maximal rank subgroup, viz.\ a symmetric group $\fS_\ell$.
The group $G_{24}$ has a maximal rank reflection subgroup $A_1^3$. Clearly that
has trivial composition factors in characteristic~2, so~2 is bad for $G_{24}$.

Now assume that $\ell$ is good for $(W,L)$. If the reflection character of $W$
on $L$ does not lie in the same $\ell$-block as the trivial character, then the
condition for $\ell$ being very good is obviously satisfied. This is the case,
for example, if $W$ has a non-trivial normal $\ell'$-subgroup. This
deals with the groups $G(e,r,n)$ with $e\ge2$, as well as with the exceptional
Weyl groups. Thus only $W=\fS_n$ remains to be considered. Here, the reflection
character lies in the principal $\ell$-block only if $\ell|n$.
\end{proof}

\begin{rem}   \label{rem:BCM}
 For so-called \emph{well-generated} complex reflection groups $W$,
 Brou\'e--Corran--Michel \cite[\S8]{BCM} define the notion of \emph{bad prime
 ideals} and a \emph{bad ideal} when $W$ is moreover spetsial
 (see Section~\ref{subsec:spets}). These are
 certain ideals in the ring of integers of the field of definition of $W$. In
 case $W$ is also a $\ZZ_\ell$-reflection group, it can be checked from the
 tables in \cite{BCM} that $\ell$ is bad in our sense if and only if it divides
 the norm of their ``bad ideal''.
\end{rem}

\begin{rem}   \label{rem:very good Lie}
 Let $\ell>2$ and $(W,L_0)$ be the Weyl group of a connected reductive group
 $\bG$ in characteristic different from $\ell$, and set
 $L:=\ZZ_\ell\otimes_\ZZ L_0$. Then $L_W=0$ if and only if $\bG$ is semisimple
 with fundamental group of $\ell'$-order, and $(L^*)_W=0$ if and only if
 $\bG$ is semisimple with $|Z(\bG)/Z^\circ(\bG)|$ of $\ell'$-order.
 Furthermore, $L_W$ is torsion-free if and only if $\bG$ has derived subgroup
 with fundamental group an $\ell'$-group.   \par
 Thus, for example, if $\bG$ is simple of classical type $B,C$ or $D$, then all
 primes $\ell>2$ are very good for the corresponding $(W,L)$. 
\end{rem}

The following will be relevant for the descent to relative Weyl groups:

\begin{prop}\label{prop:gooddescent}
 Let $(W,L)$ be a $\ZZ_\ell$-reflection group with $L_W=0$ and
 $\phi\in N_{\GL(L)}(W)$. If $\ell>2$ is very good for $(W,L)$, then it is so
 for $(W_\phi,L_\phi)$.
\end{prop}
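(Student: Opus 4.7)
The plan is to reduce to a case-by-case verification using the classification of irreducible $\ZZ_\ell$-reflection groups. First I would invoke Lemma \ref{lem:good}(b), combined with an orbit-by-orbit analysis of the $\phi$-action on the irreducible factors of $(W,L)$ (the relative Weyl group attached to a single $\phi$-orbit of factors being isomorphic to the one attached to a single factor under the appropriate power of $\phi$), to reduce to the case where $(W,L)$ is irreducible.

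The Clark--Ewing case, where $\gcd(|W|, \ell) = 1$, is immediate: Theorem \ref{thm:LS}(a) realises $W_\phi$ as a subquotient of $W$, so $|W_\phi|$ is prime to $\ell$, and the Maschke-type argument in the proof of Proposition \ref{prop:bad} applies verbatim to $(W_\phi, L_\phi)$, giving both the good condition on maximal rank reflection subgroups and the torsion-freeness of $(L_\phi)_{W_\phi}$ and $((L_\phi)^*)_{W_\phi}$.

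When $\ell \mid |W|$ and $\ell > 2$ is very good for $(W,L)$, Proposition \ref{prop:bad} leaves only three families of irreducible cases: $W = \fS_n$ with $\ell \nmid n$, imprimitive groups $W = G(e,r,n)$ with $r \mid e \mid (\ell-1)$ (excluding $G(6,6,2)$ at $\ell = 3$), and exceptional Weyl groups at their non-bad primes from Table \ref{tab:bad}. For the first two families I would compute $(W_\phi, L_\phi)$ via Theorem \ref{thm:LS}(d) and Example \ref{exmp:LS} (extended to non-scalar $\phi$ through Springer's theory of regular elements), recognising $W_\phi$ as a product of symmetric groups and imprimitive groups whose new parameters $m' = \mathrm{lcm}(m,e')$ still divide $\ell - 1$; each factor therefore falls into a case already handled. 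For the exceptional Weyl group case, the relative Weyl groups $W_\phi$ are themselves Weyl groups of $d$-split Levi subgroups, tabulated in the literature, and Lemma \ref{lem:good}(d) together with the Borel--de Siebenthal classification reduces the good condition to the classical fact that good primes descend to Levi-type subsystems. The torsion-freeness of the coinvariant lattices then follows in each case from the explicit description $L_\phi = \ker_L(g-1)$ together with the hypothesis $L_W = 0$, which prevents any trivial $W_\phi$-summand from appearing in $\QQ_\ell \otimes_{\ZZ_\ell} L_\phi$.

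The hard part will be the exceptional Weyl group case, which requires inspecting a finite but nontrivial list of pairs $(W, \phi)$ against known tables of relative Weyl groups, and checking the coinvariant condition at each. A conceptual Lie-theoretic explanation would say that connected centralisers of semisimple elements in a simply connected group retain a simply-connected-like structure on the relevant lattice, but translating this cleanly into the $\ZZ_\ell$-reflection group setting seems to require the explicit case analysis.
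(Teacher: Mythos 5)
Your reduction to the irreducible case and your treatment of the Clark--Ewing case are essentially sound (for the reduction, note that the $\phi$-stable splitting $L=L_1\oplus L_2$ compatible with $W=W_1\times W_2$ is exactly where the hypothesis $L_W=0$ is used). The genuine gap is in the step you relegate to one sentence at the end: you claim that torsion-freeness of $(L_\phi)_{W_\phi}$ and $((L_\phi)^*)_{W_\phi}$ follows from $L_\phi=\ker_L(g-1)$ together with $L_W=0$, ``which prevents any trivial $W_\phi$-summand from appearing in $\QQ_\ell\otimes_{\ZZ_\ell}L_\phi$''. Vanishing of the rational fixed space does not control torsion in the coinvariants: torsion is an integral phenomenon invisible over $\QQ_\ell$. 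The standard example is $\fS_n$ acting on the $\ZZ_\ell$-lattice of $\SL_n$-type with $\ell\mid n$, where $(\QQ_\ell\otimes L)^{\fS_n}=0$ and yet $L_{\fS_n}\cong\ZZ_\ell/n\ZZ_\ell\neq 0$; this is precisely the good-but-not-very-good case of Proposition~\ref{prop:bad}(b), so ruling out such torsion is the whole content of ``very good'' as opposed to ``good''. Your case analysis only identifies the abstract isomorphism type of $W_\phi$ (via Example~\ref{exmp:LS}, and only for scalar $\phi$ at that), not the lattice $L_\phi$, so it does not exclude that a symmetric-group or type-$A$ factor of $W_\phi$ acts on $L_\phi$ through a quotient lattice of the problematic kind. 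The paper closes this integrally, not rationally: if $Z(W)\neq 1$, a nontrivial central element acts on $L$ as a root of unity $\xi\neq 1$ with $\xi-1\in\ZZ_\ell^\times$ (here $\ell>2$ is used), stabilises $L_\phi$ and induces an element of $W_\phi$ acting by that scalar, which forces $(L_\phi)_{W_\phi}=0=((L_\phi)^*)_{W_\phi}$; the trivial-centre cases ($G(e,r,n)$ with $\gcd(e,ne/r)=1$, $\fS_n$, and $W(D_n)$ with $n$ odd) are then handled separately, in the last two using that $\phi$ acts as an element of $W$ times a scalar. Your proposal has no substitute for this integral argument, and that is where it fails.

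A secondary inaccuracy: for $E_6,E_7,E_8$ at their very good primes $\ell\in\{5,7\}$, the proper relative Weyl groups are not Weyl groups of $d$-split Levi subgroups but complex reflection groups such as $G_8$, $W(F_4)$, $G_{26}$ and $G_{32}$, so Borel--de Siebenthal and ``good primes descend to Levi subsystems'' are not the applicable tools; these groups happen to have order prime to $\ell$, hence are Clark--Ewing and unproblematic, but only once the lattice-torsion issue above has been dealt with, which in the paper is again the centre/scalar argument rather than any rational consideration.
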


\begin{proof}
Assume $W=W_1\times W_2$ is a direct decomposition into $\phi$-stable reflection
subgroups. Then $L_i:=\langle (w-1)L\mid w\in W_i\rangle$ is $\phi$-stable for
$i=1,2$, and since $L_W=0$ we have $L=L_1\oplus L_2$. Furthermore,
$L_\phi=(L_1)_{\phi|_{L_1}}\oplus (L_2)_{\phi|_{L_2}}$ and $(L_i)_{W_i}=0$. Thus
we may assume that $(W,L)=(W'\times\cdots\times W',L'\oplus\cdots\oplus L')$
where $(W',L')$ is irreducible and $\phi$ permutes the $m$ factors transitively.
Set $\phi':=\phi^m|_{L'}$, then
$$L_\phi
  =\{(x,\phi(x),\ldots,\phi^{m-1}(x))\mid x\in L'_{\phi'}\}
  \cong L'_{\phi'}\,.$$
Thus by Lemma~\ref{lem:good} we are reduced to the case of irreducible $(W,L)$. 
\par
By inspection from Proposition~\ref{prop:bad}, if $\ell$ is good for $(W,L)$
then it is so for all of its relative Weyl groups. Any $1\ne w\in Z(W)$ acts by
a root of unity $1\ne\xi\in\ZZ_\ell^\times$ on $L$ and thus stabilises $L_\phi$.
Since $\xi-1\in\ZZ_\ell^\times$, we obtain
$(L_\phi)_{\langle w\rangle}=0=(L_\phi^*)_{\langle w\rangle}$ and so $\ell$ is
very good for $(W_\phi,L_\phi)$.
By \cite[Tab.~1 and~6]{GM06} the only $\ZZ_\ell$-reflection groups with trivial
centre are $G(e,r,n)$ with $e\ge3$ and $\gcd(e,ne/r)=1$, $\fS_n$ and $W(D_n)$
with $n$ odd. \par
In the first case, proper relative Weyl groups again have type $G(f,s,m)$ with
$f>2$ dividing $\ell-1$ (see Example~\ref{exmp:LS}), so $\ell$ is very good for
them. In the other two cases, $\phi$ acts on $L$ like an element $w\in W$ times
a scalar $\xi$, and so $w\in N_W(L_\phi)$ acts as the scalar $\xi^{-1}$ on
$L_\phi$. Hence we are done unless $\xi=1$. Finally, for $\xi=1$ we have
$W_\phi=W$.
\end{proof}

The conclusion fails to hold for $\ell=2$ since $2$ is very good for the
$\ZZ_2$-reflection group attached to $\SL_n$ with $n$ odd, but its relative
Weyl group for $\phi=-\Id$ is of type $B_{(n-1)/2}$, for which~2 is bad.
\smallskip

We do not know whether $\ell>2$ being very good for $(W,L)$ descends to any
relative Weyl group $(W_\phi,L_\phi)$ if $L_W\ne0$. According to
\cite[Prop.~C.4]{GL20} combined with \cite[Thm.~12.2]{AGMV}, if $\phi$ is of
$\ell'$-order then this is at least true for the property $L_W$ being
torsion-free.

%%%%%%%%%%%%%%%%%%%%%%%%%%%%%%%%%%%%%%%%%%%%%%%%%%%%%%%%%%%%%%%%%%%%%%%%%
\section{From $\ZZ_\ell$-reflection groups to fusion systems via $\ell$-compact groups}   \label{sec:zltofusion}
We explain how to obtain a fusion system starting from a $\ZZ_\ell$-reflection
group and a prime power $q$ via homotopy fixed points on the associated
$\ell$-compact group, and we develop some properties of the fusion system
thus obtained.

%%%%%%%%%%%%%%%%%%%%%%%%%%%%%%%%%%%%
\subsection{$\ell$-compact groups}   \label{subsec:l-comp}
We follow the exposition of \cite{AGMV}, \cite{DW94} and \cite{BM07}. Let
$\ell$ be a prime. Recall that the Bousfield--Kan $\ell$-completion, denoted
$\coX\mapsto\coX_\ell^\wedge$, is a functor from the category of spaces (that
is, simplicial sets) to itself, along with a natural transformation
$\Id\to(-)_\ell^\wedge$. A space $X$ is said to be \emph{$\ell$-complete}
if $\coX\to\coX_\ell^\wedge$ is a homotopy equivalence. 

An \emph{$\ell$-compact group} is a pointed, connected and $\ell$-complete
space $\coX$ such that the cohomology $H^*(\Omega\coX; \FF_\ell)$ is finite
dimensional over $\FF_\ell$. Here $\Omega\coX$ refers to the space of based
loops in $\coX$. A \emph{morphism} between $\ell$-compact groups is a map of
pointed spaces. Two morphisms $\vhi,\vhi':X\to Y$ between $\ell$-compact groups
are \emph{conjugate} if $\vhi$ and $\vhi'$ are freely homotopic and $\vhi$ is
an \emph{isomorphism} (or \emph{equivalence}) if it is a homotopy equivalence.
Further, $\vhi:\coX\to\coY$ is a \emph{monomorphism} if the homotopy fibre
of $\vhi$ has finite $\FF_\ell$-homology. We note that a morphism between
$\ell$-compact groups may also be thought of as a based homotopy class of based
maps, but we will use the former point of view.

We choose this setup which we find more intuitive than the conventional one in
which an $\ell$-compact group $\coX$ is presented as a triple $(\bX,B\bX,e)$
where $B\bX = \coX$, $\bX$ is a space homotopy equivalent to $\Omega\coX$ and
$e:\bX\to\Omega\coX$ is a chosen homotopy equivalence. 

An $\ell$-compact group $X$ is \emph{connected} if $\Omega\coX$ is connected.
The reader should beware of the slight notational ambiguity of this terminology
as the space $X$ itself is always connected. We also note that if $Y$ is a
connected $\ell$-compact group, then two morphisms $\vhi,\vhi':\coX\to\coY$ are
conjugate if and only if there is a based homotopy between $\vhi$ and $\vhi'$.

If $\bG$ is a compact Lie group whose component group is a finite $\ell$-group,
then $(B\bG)_\ell^\wedge$ is an $\ell$-compact group, where for any topological
group $\bG$, $B\bG$ refers to the classifying space of $\bG$. The
$\ell$-completed classifying space $\coT=(B\bU(1)^r)^\wedge_\ell$ of a
compact torus $\bU(1)^r$ is called an \emph{$\ell$-compact torus of rank $r$}.
If $\coX$ is an $\ell$-compact group, then a \emph{maximal torus} of $\coX$ is a
monomorphism $i: \coT \to \coX$ from an $\ell$-compact torus $T$ to $\coX $ such
that the homotopy fibre of $i$ has non-zero Euler characteristic. Here the Euler
characteristic of a space is taken to be the alternating sum of the dimensions
of its $\FF_\ell$-homology groups.

By the fundamental results of Dwyer and Wilkerson \cite[Prop.~8.11,
Thm.~8.13]{DW94}, every $\ell$-compact group has a maximal torus $i:\coT\to\coX$
which is unique up to conjugation in the sense that if $i':\coT'\to\coX$ is any
maximal torus, then there exists an isomorphism $\vhi:\coT'\to\coT$ such that
$i'$ and $i\circ\vhi$ are conjugate.

Fix an $\ell$-compact group $X$ with maximal torus $i:\coT\to X$ and set
$L_\coX:=\pi_2(\coT)$, a free $\ZZ_\ell$-module of the same rank as $T$. Let
$\cW_\coX(\coT)$ be the topological monoid of self-maps of $T$ over $X$ (defined
after replacing $i$ by an equivalent fibration) \cite[Def.~9.6]{DW94} and set
$W_\coX = \pi_0 (\cW_\coX(\coT))$. By \cite[Prop.~9.5, Thm.~9.7,
Thm.~9.17]{DW94}, $W_\coX$ is a finite group and if $X$ is connected, then
$W_\coX$ is identified with the set of conjugacy classes of self-equivalences
$\vhi$ of $T$ such that $i$ and $i\circ\vhi$ are conjugate. This
sets up (for connected $X$) a faithful action of $W_\coX$ on $L_\coX$ as a
finite $\ZZ_\ell$-reflection group. The pair $(W_\coX,L_\coX)$ is the
\emph{Weyl group} of $\coX$. Occasionally and when there is no risk of
ambiguity, we will just use the first component $W_\coX$ to denote the Weyl
group of $\coX$. Note that $(W_\coX, L_\coX)$ is independent of the choice of
$\coT$ up to isomorphism. 
 
Suppose that $X$ is connected. Given any homotopy self-equivalence
$\alpha:\coX\to\coX$, there exists a self-equivalence $\alpha_T:\coT\to\coT$
such that $\alpha \circ i$ is conjugate to $i\circ\alpha_T$. Further, the
conjugacy class of $\alpha_T$ is uniquely determined up to the action of $W_X$
(see \cite[Thm.~1.2]{Mo96}). This induces a group homomorphism
$\Phi_X:\Out(\coX)\to N_{\GL(L_X)}(W_X)/W_X$ where $\Out(\coX)$ is the
\emph{outer automorphism group} of $\coX$, that is, the group of conjugacy
(i.e., homotopy) classes of homotopy self-equivalences of $\coX$. For odd
$\ell$, we have the following classification result.

\begin{thm} \cite[Thm.~1.1]{AGMV}   \label{thm:class}
 Suppose that $\ell$ is an odd prime. The assignment 
 $$\coX \leadsto (W_X,L_X)$$
 induces a bijective correspondence between isomorphism classes of connected
 $\ell$-compact groups and isomorphism classes of $\ZZ_\ell$-reflection groups.
 Here, the map
 $$\Phi_X: \Out(\coX) \to N_{\GL(L_X)}(W_X)/W_X $$
 is an isomorphism.
\end{thm}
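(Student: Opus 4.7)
The plan is to establish three statements: existence of a connected $\ell$-compact group $X$ with prescribed Weyl group $(W,L)$; uniqueness of $X$ up to isomorphism; and the identification of $\Out(X)$ via $\Phi_X$. First I would reduce to the case of irreducible $(W,L)$. The algebraic decomposition of $\ZZ_\ell$-reflection groups into irreducible factors is recalled in Section~\ref{sec:refl}. Topologically, one shows that if $W_X = W_1 \times W_2$ then $X$ splits as a product $X_1 \times X_2$ of $\ell$-compact groups whose Weyl groups are the two factors; this is argued by first splitting off an $\ell$-compact toral factor corresponding to any $W$-invariant summand of $L$, and then recovering the remaining product decomposition from the algebraic splitting of the normaliser $N_X(\coT) = \coT \rtimes W$ of the maximal torus.

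For existence I would treat the irreducible cases following the subdivision of Section~\ref{sec:refl}. Clark--Ewing groups are realised by the classical homotopy fixed-point construction applied to $B\coT$ under the $W$-action; the five rational exceptional Weyl groups and the symmetric groups in the Lie range are realised by $\ell$-completing the classifying space of a corresponding compact connected Lie group; the imprimitive families $G(e,r,n)$ with $r\mid e\mid(\ell-1)$ are realised by the generalised Grassmannian constructions of Notbohm and Broto--M\o ller; and the four Aguad\'e groups $G_{12}, G_{29}, G_{31}, G_{34}$ are realised by Aguad\'e's explicit \cite{A89} homotopy fixed-point constructions on $\ell$-completions of classifying spaces of simply connected Lie groups acted on by an unstable Adams operation. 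Since $\ell$ is odd, $G_{24}$ at $\ell=2$ need not be considered.

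Uniqueness and the computation of $\Out(X)$ are two sides of the same obstruction-theoretic coin. One shows that both the moduli of connected $\ell$-compact groups with a given Weyl group $(W,L)$ and the failure of $\Phi_X$ to be bijective are controlled by group cohomology of $W$ with coefficients in homotopy groups of mapping spaces between $\coT$ and its normaliser. The key inputs are Lannes' $T$-functor, which computes mapping spaces out of classifying spaces of elementary abelian $\ell$-groups, the Dwyer--Wilkerson centric decomposition which recovers $BX$ from its $\ell$-stubborn subgroups, and the Broto--Levi--Oliver uniqueness theorem for centric linking systems; these combine to show that the relevant obstruction groups vanish, forcing every element of $N_{\GL(L)}(W)/W$ to lift to a unique self-equivalence of $X$ and any two realisations of $(W,L)$ to be equivalent.

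The main obstacle throughout is the treatment of the exotic irreducible $\ZZ_\ell$-reflection groups, where no Lie-theoretic model is available. For these one cannot appeal to classical results such as Notbohm's uniqueness theorem for $BG$, and is forced into a case-by-case verification of the hypotheses of the abstract obstruction machinery using Aguad\'e's explicit models and direct computation in their centric linking systems. Even the product-decomposition reduction at the outset is delicate in the presence of exotic factors, since these cannot be isolated by Lie-theoretic means, and the vanishing of the higher cohomological obstructions in these cases requires the delicate explicit calculations that occupy much of \cite{AGMV}.
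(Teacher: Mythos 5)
You should first note that the paper does not prove this statement at all: Theorem~\ref{thm:class} is quoted verbatim as \cite[Thm.~1.1]{AGMV}, i.e.\ the classification of connected $\ell$-compact groups at odd primes is used as a black box, its proof being the content of the entire Annals paper of Andersen--Grodal--M\o ller--Viruel. So there is no in-paper argument to compare with, and the only realistic ``proof'' here is a citation. Judged on its own terms, your outline is a plausible table of contents for \cite{AGMV} but not a proof, and several of its ingredients are misidentified. The Clark--Ewing spaces are not homotopy fixed points of $W$ acting on $B\coT$; they are the $\ell$-completed homotopy \emph{orbit} (Borel) construction on $B\coT$, which is why the condition $\ell\nmid|W|$ enters. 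Aguad\'e's exotic groups \cite{A89} are likewise not homotopy fixed points of unstable Adams operations --- that construction is exactly what produces the finite objects $\twt\coX(q)$ of Section~\ref{sec:zltofusion}, not connected $\ell$-compact groups; Aguad\'e builds $BX$ as a homotopy colimit gluing $B\SL_\ell$-type data along a torus normaliser carrying the larger Weyl group.

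More seriously, the uniqueness/automorphism step as you describe it would not run. The machinery in \cite{AGMV} is the centraliser decomposition over the Quillen category of elementary abelian $\ell$-subgroups (via \cite{DW94,DW95} and Lannes theory), an induction through centralisers of elements of order $\ell$, and the theorem that for odd $\ell$ a connected $\ell$-compact group is determined by its maximal torus normaliser, which splits as $\brT\rtimes W$; it does not use an $\ell$-stubborn decomposition nor the Broto--Levi--Oliver uniqueness of centric linking systems \cite{BLO03}, which belongs to the $\ell$-local finite group framework and plays no role in that classification. Finally, asserting that ``the relevant obstruction groups vanish'' is precisely where all the work lies --- the vanishing of the higher limits, especially for the exotic families $G(e,r,n)$ and the Aguad\'e groups, occupies the bulk of \cite{AGMV} and is not something your sketch supplies or could supply in a few lines. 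In a paper such as this one, the correct move is the one the authors make: cite \cite[Thm.~1.1]{AGMV} rather than attempt to reprove it.
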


%%%%%%%%%%%%%%%%%%%%%%%%%%%%%%%%%%%%
\subsection{Fusion systems from homotopy fixed points}   \label{subsec:def fus}
Let $X$ be a connected $\ell$-compact group with Weyl group $(W,L)$.
For $q\in\ZZ_\ell^\times$ we denote by $\psi^q$ a self-equivalence
of $X$ whose class in $\Out(X)$ is the image under $\Phi_X^{-1}$ of the coset
$\lambda_q W\in N_{\GL(L)}(W)/W$ where $\lambda_q:=q\Id \in\GL(L)$;
$\psi^q$ is called an \emph{unstable Adams operator} corresponding to $q$. For
$\tau$ an equivalence  of $\coX$  whose class in $\Out(X)$ has
finite order, let $\twt\coX(q)$ denote the space of homotopy fixed points
$X^{h \tau\psi^q}$. Thus $\twt\coX(q)$ is the space of all paths
$\gamma:[0,1]\to X$ such that $\gamma(1)=\tau\psi^q(\gamma(0))$ (with the
compact-open topology). Note that $\twt X(q)$ may also be described by the
homotopy pullback diagram:
$$\xymatrix{\twt X(q) \ar[d]^\iota\ar[r]^\iota& X \ar[d]^\Delta\\ 
 X \ar[r]^{(1,\tau \psi^q)} & X \times X}\eqno{(*)}$$
where $\Delta:\coX\to\coX\times\coX$ is the diagonal embedding, and that up to
homotopy equivalence $\twt\coX(q)$ only depends on the free homotopy class
of $\tau\psi^q$ and hence only on the class of $\tau\psi^q$ in $\Out(X)$ (see
\cite[Rem.~2.3, Lemma~B.1]{BMO}). 

Suppose that $X$ is a space and $S$ is a finite $\ell$-group for which there is
a continuous map $f:BS\rightarrow X$. By the functoriality of the classifying
space construction to each injective group homomorphism $\vhi \in \Inj(P,Q)$
is associated a map of spaces $B\vhi: BP \rightarrow BQ$, and we may form a
category $\cF_{S,f}(X)$ whose objects are the subgroups of $S$ with morphisms
given by
$$\Hom_{\cF_{S,f}(X)}(P,Q)
  :=\{\vhi \in \Inj(P,Q) \mid f|_{BP} \simeq f|_{BQ} \circ B\vhi\}$$
for each $P,Q \le S$ \cite[Def.~III.3.3]{AKO11}. To such an $f$ is also
associated a certain ``centric linking category'' $\cL^c_{S,f}(X)$ (see
\cite[Def.~III.3.3]{AKO11}). An $\ell$-complete space $X$ is said to be a
\emph{classifying space} of a saturated fusion system if there exists a finite
$\ell$-group $S$ and a morphism $f: BS\to X$ such that $\cF_{S,f}(X)$ is
saturated and $(S,\cF_{S,f}(X),\cL^c_{S,f}(X))$ is an $\ell$-local finite
group (see \cite[Def.~I.4.4]{AKO11}) with classifying space $X$, i.e.,
$X \simeq |\cL^c_{S,f} (X)|_\ell^\wedge $. By \cite[Thm.~7.4]{BLO03}
(see also \cite[Thm.~III.4.25]{AKO11}), the homotopy type of $X$ determines
the triple $(S,\cF_{S,f}(X),\cL^c_{S,f}(X))$ up to isomorphism, and in
particular an $\ell$-complete space can be the classifying space of at most one
(isomorphism class of) saturated fusion system. 
 
An $\ell$-compact group $X$ is called \emph{simply connected} if $\Omega X$ is 
simply connected. For $\ell$ odd, this is equivalent to the underlying
$\ZZ_\ell$-reflection group $(W,L)$ having $L_W=0$ (see \cite[Thm.~1.7]{AGMV}).
The following is shown in \cite[Thm.~A]{BM07}:

\begin{thm}   \label{thm:bm}
 Let $\ell$ be an odd prime and $\coX$ be a simply connected $\ell$-compact
 group. Let $q$ be a power of a prime~$p\neq\ell$ and $\tau$ be a
 self-equivalence of $\coX$ whose class in $\Out(X)$ has finite order prime
 to~$\ell$. Then $\twt\coX(q)$ is a classifying space of a saturated fusion
 system.
\end{thm}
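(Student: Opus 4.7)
The plan is to follow the Broto--M\o ller template: exhibit $\twt X(q)$ as the classifying space of an $\ell$-local finite group in the sense of Broto--Levi--Oliver by producing a finite $\ell$-group $S$, a Sylow-type map $f:BS\to\twt X(q)$, and an associated centric linking system whose $\ell$-completed nerve is homotopy equivalent to $\twt X(q)$; saturation of $\cF_{S,f}(\twt X(q))$ then falls out of the existence of this $\ell$-local structure, and uniqueness is supplied by the Broto--Levi--Oliver rigidity results.

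First I would construct $S$. Choose a maximal torus $i:\coT\to X$ and lift $\tau\psi^q$ to the maximal torus normaliser; passing to the discrete approximation $\brT=\ZZ_\ell/\ell^\infty\otimes_{\ZZ_\ell}L$, the induced action on $\brT$ is multiplication by $q\phi$, where $\phi\in N_{\GL(L)}(W)$ represents the image of $\tau$ under $\Phi_X$ and has finite $\ell'$-order. Since $\gcd(q,\ell)=1$, the operator $q\phi-\Id$ is invertible on $\QQ_\ell\otimes_{\ZZ_\ell}L$, so its kernel on the divisible torsion module $\brT$ is a finite abelian $\ell$-group. Assembling this fixed-point group with the $\phi$-fixed Weyl contribution $W^\phi$ via a semidirect product yields a finite ``discrete normaliser'', and one takes $S$ to be a Sylow $\ell$-subgroup of it; the homotopy pullback $(*)$ together with $BN(\coT)\to X$ supplies a canonical map $f:BS\to\twt X(q)$.

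Next I would establish that $\twt X(q)$ has finite mod-$\ell$ cohomology and that $f$ behaves as a Sylow inclusion. Finiteness of $H^*(\twt X(q);\FF_\ell)$ follows from the Eilenberg--Moore spectral sequence of $(*)$, exploiting that $X$ has finite mod-$\ell$ cohomology and that $q\phi-\Id$ acts invertibly on the relevant $\ell$-adic cohomology of $X$. That every map $B\pi\to\twt X(q)$ from a finite $\ell$-group $\pi$ factors, up to conjugation, through $f$ then follows via the Lannes--Dwyer--Zabrodsky description of mapping spaces into classifying spaces of $\ell$-compact groups, reducing to the analogous Sylow statement for the finite discrete normaliser above.

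The main obstacle is verifying the saturation axioms for $\cF_{S,f}(\twt X(q))$ together with the construction of the centric linking system. Here I would invoke Dwyer--Zabrodsky to identify $\map(B\pi,\twt X(q))$ with the homotopy $\tau\psi^q$-fixed points of $\map(B\pi,X)$, combined with Dwyer--Wilkerson's theorem that components of $\map(B\pi,X)$ are classifying spaces of centralisers $C_X(\pi,\alpha)$, themselves $\ell$-compact groups. Saturation then reduces to a fixed-point analysis of $\tau\psi^q$ on a hierarchy of relative Weyl groups attached to such centralisers; the $\ell'$-order assumption on $\phi$ permits Maschke-style averaging so that these fixed-point sets behave as in a finite group setting, and the simply-connected hypothesis $L_W=0$ is used through Theorem~\ref{thm:class} to pin down $\Out$ of the centraliser $\ell$-compact groups and rule out extension obstructions at the linking-system level. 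Granted saturation, the usual transporter-category recipe produces $\cL^c_{S,f}(\twt X(q))$, and the identification $\twt X(q)\simeq|\cL^c_{S,f}(\twt X(q))|_\ell^\wedge$ is obtained by comparing both sides with the homotopy colimit of centralisers over the $\cF$-centric subcategory.
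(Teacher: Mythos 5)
First, note that the paper does not prove this statement at all: Theorem~\ref{thm:bm} is quoted verbatim from Broto--M\o ller, \cite[Thm.~A]{BM07}, so your proposal is in effect an attempt to reprove the main theorem of that reference, and must be judged against the argument given there. Your general toolbox (discrete approximation of the maximal torus, Lannes/Dwyer--Zabrodsky descriptions of mapping spaces, Dwyer--Wilkerson centralisers, and the Broto--Levi--Oliver recognition of classifying spaces of $\ell$-local finite groups) is indeed the one used in \cite{BM07}.

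However, there is a concrete gap at the very first step, and it propagates through everything that follows. You build $S$ as a Sylow $\ell$-subgroup of $\ker(q\phi-\Id\,\,\text{on}\,\,\brT)\rtimes W^{\phi}$, i.e.\ of the fixed points of the chosen lift of $\tau\psi^q$ on the discrete normaliser. This is the analogue of the normaliser of a \emph{maximally split} torus in a finite group of Lie type, and it does not contain a Sylow $\ell$-subgroup of $\twt\coX(q)$ unless $q\phi$ is trivial modulo $\ell$ in the appropriate sense. For instance take $\tau=\Id$, $W$ of rank one (so $\coX\simeq (B\SL_2(\CC))^\wedge_\ell$) and $q\equiv-1\pmod\ell$ with $\ell$ odd: then $\ker(q-\Id)$ on $\brT$ is trivial and $W_\ell=1$, so your candidate $S$ is trivial, whereas $\coX(q)\simeq(B\SL_2(q))^\wedge_\ell$ has Sylow $\ell$-subgroup of order $(q+1)_\ell\neq1$. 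The point is that the lift of $\tau\psi^q$ to $T$ is only determined up to composition with elements of $W$, and one must twist by a suitable Weyl element (the analogue of passing to a Sylow $e$-torus) before taking fixed points; the relevant finite Weyl contribution is then the Lehrer--Springer relative Weyl group $W_{\phi\zeta}$, not $W^\phi$. In \cite{BM07} this is handled by first proving the reduction $\twt\coX(q)\simeq\coX^{h\Gamma}(q_0)$ with $q_0\equiv1\pmod\ell$, where $\coX^{h\Gamma}$ is again a connected $\ell$-compact group with Weyl group $(W_{\phi\zeta},L_{\phi\zeta})$ (their Theorem~B, restated here as Theorem~\ref{thm:homfixedpts}); only after this reduction does the naive construction give the correct Sylow group $(\ZZ/\ell^a\ZZ)^r\rtimes(W_{\phi\zeta})_\ell$ of Theorem~\ref{thm:sylow}. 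Since your Sylow-detection argument and the whole saturation analysis are routed through the map $f:BS\to\twt\coX(q)$ built from the too-small group, the sketch fails as written without this reduction (or an equivalent twisting argument). A secondary remark: even granting the correct $S$, the sentence reducing saturation to ``a fixed-point analysis \dots\ Maschke-style averaging'' compresses what is essentially the entire content of \cite{BM07} (the inductive verification of the saturation axioms via centraliser and normaliser $\ell$-compact groups and the construction of the linking system) into an assertion; as it stands it is a statement of intent rather than a proof.
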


\begin{rem}   \label{r:fried}
(a) Let $q$ be a prime power, prime to $\ell$, $\bG$ a connected reductive
algebraic group over $\overline\FF_q$ with Weyl group $W$ and cocharacter
lattice $L_0$, and $F:\bG\to\bG$ a Frobenius morphism with respect to an
$\FF_q$-structure corresponding to $\phi \in N_{\GL(L_0)}(W)$. Let $\coX$ be a
connected $\ell$-compact group with Weyl group $(W,\ZZ_\ell\otimes_\ZZ L_0)$
and let $\tau$ be a homotopy self-equivalence of $X$ whose class in
$\Out(\coX)$ corresponds to $\Phi_X^{-1}(W\phi)$.
Then by a theorem of Friedlander $(B\bG^F)^\wedge_\ell\simeq \twt X(q)$
(see \cite[Thm.~3.1]{BMO}). We note that here there is no assumption on $\ell$
nor on simple connectivity of $X$.\par
(b) By the existence and uniqueness of centric linking systems
established by Chermak \cite{Ch13} given any saturated fusion system $\cF$ on a
finite $\ell$-group there is a unique (up to homotopy equivalence) space $X$
such that $X$ is a classifying space of $\cF$. Thus, in the above theorem we
may also speak of $\twt\coX(q)$ being \emph{the} classifying space of a
saturated fusion system.
\par
(c) Theorem A of \cite{BM07} is stated under the \emph{a priori} stronger
assumption that $\tau$ is of finite $\ell'$-order but as shown in
\cite[Thm.~B]{BM07} the two conditions are in fact equivalent.
\end{rem}

In the situation of Theorem~\ref{thm:bm} we denote by $\coX(q)$ the space
$\twt\coX(q)$ with $\tau$ equal to the identity map, that is, the homotopy
fixed points in $\coX$ under $\psi^q$. We then have the following comparison
and recognition theorem. The action of the group $\Gamma $ on $\coX$ below is in
the sense of homotopical actions (see \cite[Section~6]{BM07},
\cite[Section~C.1]{GL20}).
 
\begin{thm}   \label{thm:homfixedpts}
 Let $\ell$ be an odd prime, $\coX$ a connected $\ell$-compact group with Weyl
 group $(W,L)$ and $\tau$ a self-equivalence of $X$ whose class in $\Out(X)$ has
 finite order
 prime to $\ell$, with $W\phi\in N_{\GL(L)}(W)/W$ associated to $\tau$ via
 Theorem~\ref{thm:class}. Let $q$ be a power of a prime~$p\neq\ell$ and write
 $q=\zeta q_0$ where $\zeta\in\ZZ_\ell^\times$ is a primitive $e$-th root of
 unity and $q_0\equiv 1 \pmod\ell$. Let $\Gamma$ be the subgroup of $\Out(X)$
 generated by the class of $\tau \psi^q$.
 \begin{itemize}
  \item[(a)] There is a canonical action of $\Gamma$ on $\coX$ such that the
   homotopy fixed point space $\coX^{h\Gamma}$ is a connected $\ell$-compact
   group with Weyl group $(W_{\phi\zeta},L_{\phi\zeta})$, and
   \[ \twt \coX(q) \simeq \coX^{h\Gamma}(q_0). \]
   If $\coX$ is simply connected, then so is $\coX^{h\Gamma}$.
  \item [(b)] If $q'\in\ZZ_\ell^\times\setminus\{1\}$ is such that
   $q\equiv q'\pmod\ell$ and $(q^e-1)_\ell = (q'^e-1)_\ell$, then
   $$\coX(q)\simeq\coX(q').$$
 \end{itemize}
\end{thm}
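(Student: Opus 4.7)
The plan is to prove (a) by factoring the unstable Adams operator and then to deduce (b) by reducing to a $q\equiv 1\pmod\ell$ situation. For (a), I would exploit the identity $\lambda_q = \lambda_\zeta\lambda_{q_0}$ in $\GL(L)$ together with the fact that $\Phi_X$ from Theorem~\ref{thm:class} is a group homomorphism to obtain the decomposition $\tau\psi^q \simeq (\tau\psi^\zeta)\psi^{q_0}$ in $\Out(X)$. Since $\tau$ has prime-to-$\ell$ order and $\zeta$ is an $e$-th root of unity with $e\mid\ell-1$ (hence $e$ prime to $\ell$), the class of $\tau\psi^\zeta$ has order prime to $\ell$; one should think of $\Gamma$ as effectively the cyclic subgroup it generates. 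The theory of homotopy actions of finite $\ell'$-groups on connected $\ell$-compact groups (see \cite[\S6]{BM07}) then provides a canonical homotopical action whose fixed-point space $X^{h\Gamma}$ is itself a connected $\ell$-compact group.

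To identify its Weyl group, under $\Phi_X$ the generator of $\Gamma$ corresponds to the coset $\phi\zeta W\in N_{\GL(L)}(W)/W$. Combining standard results on Weyl groups of fixed-point $\ell$-compact groups under $\ell'$-actions with Lehrer--Springer theory (Theorem~\ref{thm:LS}) identifies the Weyl group of $X^{h\Gamma}$ as $(W_{\phi\zeta}, L_{\phi\zeta})$. The residual operator $\psi^{q_0}$ descends to a self-equivalence of $X^{h\Gamma}$, and a standard iterated-homotopy-fixed-point argument starting from the pullback square~$(*)$ then gives the equivalence $\twt X(q)\simeq X^{h\Gamma}(q_0)$. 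The simply connected assertion translates via \cite[Thm.~1.7]{AGMV} to the purely algebraic claim $(L_{\phi\zeta})_{W_{\phi\zeta}}=0$ whenever $L_W=0$, which I would establish by a Maschke-averaging argument exploiting that $|\Gamma|$ is prime to $\ell$, along the lines of the first part of the proof of Proposition~\ref{prop:stab} and of Proposition~\ref{prop:gooddescent}. Carrying out this averaging at the level of coinvariants without the \emph{very good} hypothesis is the main technical obstacle.

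For (b), applying (a) to both $q$ and $q'$ reduces the claim to $X^{h\Gamma}(q_0)\simeq X^{h\Gamma}(q_0')$: the congruence $q\equiv q'\pmod\ell$ forces the same $e$ and the same $\zeta$, hence the same $X^{h\Gamma}$, with $q_0,q_0'\equiv 1\pmod\ell$. Since $\gcd(e,\ell)=1$ and $q_0\equiv 1\pmod\ell$, one has $v_\ell(q_0^e-1)=v_\ell(q_0-1)$, so the hypothesis $(q^e-1)_\ell=(q'^e-1)_\ell$ reduces to $v_\ell(q_0-1)=v_\ell(q_0'-1)$. On the discrete torus $\brT$ of $X^{h\Gamma}$, the operators $\psi^{q_0}$ and $\psi^{q_0'}$ act as multiplication by $q_0$ and $q_0'$, and their kernels on $\brT$ are isomorphic as abelian $\ell$-groups under this valuation condition. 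A standard invariance-mod-$\ell^N$ property of unstable Adams operations (e.g.\ as in \cite[\S5]{BM07}) then upgrades this pointed-set coincidence to a homotopy equivalence of the full fixed-point spaces; verifying that this invariance applies globally to $X^{h\Gamma}$, and not merely at the torus level, is the final obstacle.
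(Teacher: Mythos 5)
Your overall strategy is the right one, but it is worth saying up front that the paper's own ``proof'' of this theorem is purely a citation: the simply connected case is quoted from \cite[Thms.~A, B, E(2)]{BM07} and the general case from \cite[Thm.~1.2, Thm.~C.3, Cor.~C.9]{GL20}. What you sketch is essentially a reconstruction of the strategy of those references (factor $\tau\psi^q\simeq(\tau\psi^\zeta)\psi^{q_0}$ via $\Phi_X$, take homotopy fixed points of the finite $\ell'$-part, identify its Weyl group by Lehrer--Springer, then take fixed points of the residual Adams operation, and reduce (b) to the case $q_0\equiv q_0'\equiv1\pmod\ell$). However, the three steps you defer are exactly the substance of the theorems being cited, so as a standalone argument the proposal has genuine gaps. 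First, $\Gamma=\langle\tau\psi^q\rangle$ is infinite (a prime power $q>1$ is not a root of unity in $\ZZ_\ell^\times$), and the naive homotopy fixed points of this cyclic group are by definition $\twt X(q)$ itself -- a classifying space of a finite $\ell$-local group, not a connected $\ell$-compact group. Saying one should ``think of $\Gamma$ as effectively the cyclic subgroup generated by $\tau\psi^\zeta$'' therefore hides the actual content: one must rigidify the homotopy action of the finite $\ell'$-group $\langle\tau\psi^\zeta\rangle$ lifting the outer action, check that $\psi^{q_0}$ descends compatibly to the fixed points, and prove the iterated fixed-point equivalence $\twt X(q)\simeq X^{h\Gamma}(q_0)$; this is precisely \cite[\S6, Thms.~A, B]{BM07} in the simply connected case and \cite[Thm.~1.2]{GL20} in general, and it is where the hypotheses on $\tau$, $\ell$ odd and (in \cite{BM07}) simple connectivity are actually used -- it is not a ``standard'' manipulation of the pullback square.

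Second, your Maschke argument for the simply connected assertion is aimed at the wrong group: $W_{\phi\zeta}$ need not have order prime to $\ell$ (e.g.\ $W=\fS_n$, $\phi=1$, $\zeta=1$, $n\ge\ell$), so you cannot average over it, and averaging over the $\ell'$-group $\langle\tau\psi^\zeta\rangle$ says nothing directly about the coinvariants $(L_{\phi\zeta})_{W_{\phi\zeta}}$. The Maschke-flavoured input belongs on the topological side: for a rigidified action of a finite $\ell'$-group on an $\ell$-complete space the descent spectral sequence collapses, so the homotopy groups of $X^{h\langle\tau\psi^\zeta\rangle}$ are the invariants of those of $X$, and simple connectivity descends; the algebraic identity $(L_{\phi\zeta})_{W_{\phi\zeta}}=0$ then comes out as a consequence (via \cite[Thm.~1.7]{AGMV}) rather than serving as an input, so the ``main technical obstacle'' you flag dissolves only if you argue in that order. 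Third, for (b) your reduction to $\nu_\ell(q_0-1)=\nu_\ell(q_0'-1)$ is correct (note, though, that $q'$ is only assumed to be a unit in $\ZZ_\ell^\times$, not a prime power, so part (a) as stated does not literally apply to it), but comparing the kernels of multiplication by $q_0$ and $q_0'$ on $\brT$ does not yield the equivalence of fixed-point spaces: the real content of \cite[Thm.~E(2)]{BM07} and \cite[Cor.~C.9]{GL20} is that $X^{h\psi^{q_0}}$ depends only on the closed subgroup $1+\ell^a\ZZ_\ell\subseteq\ZZ_\ell^\times$ topologically generated by $q_0$, which rests on the continuity/rigidification of the action by unstable Adams operations on all of $X$, exactly the ``final obstacle'' you leave open. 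In short: the skeleton matches the proofs the paper quotes, but each deferred step is a theorem of \cite{BM07} or \cite{GL20}, not a routine verification.
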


\begin{proof}
The first and last assertion of~(a) is \cite[Thm.~B]{BM07}, the homotopy
equivalence in~(a) for simply connected $X$ is given in the proof of
\cite[Thm.~A]{BM07} and~(b) for simply connected $X$ is \cite[Thm.~E(2)]{BM07}.
The general case of~(a)
is given in \cite[Thm.~1.2]{GL20}, the Weyl group is identified in
\cite[Thm.~C.3]{GL20} using Lehrer--Springer theory (Theorem~\ref{thm:LS}),
and~(b) is in \cite[Cor.~C.9]{GL20}.
\end{proof}

\begin{rem}   \label{rem:order}
 In Theorems~\ref{thm:bm} and~\ref{thm:homfixedpts} the assumption on the order
 of the class of $\tau$ is unnecessary if the underlying $\ZZ_\ell$-reflection
 group $(W,L)$ of $X$ is irreducible. Indeed, by \cite[Prop.~3.13]{BMM99}, the
 group $N_{\GL(L)}(W)/W$ has order prime to $\ell>2$ except when $W$ is of type
 $D_4$ and $\ell=3$. But for the latter situation, the restriction on $\tau$ is
 not necessary since Friedlander's result (see Remark~\ref{r:fried}(a)) covers
 this case.   \par 
 Thus, if $W$ is reducible, outer automorphisms of order divisible by $\ell$
 can only arise from wreath product situations. It would seem interesting to
 extend the above theorems to cover this case as well, in the spirit of
 Remark~\ref{rem:wreath}.
\end{rem}

%%%%%%%%%%%%%%%%%%%%%%%%%%%%%%%%%%%%
\subsection{The Sylow $\ell$-subgroups}   \label{subsec:sylows}
We identify the $\ell$-subgroup $S$ underlying the saturated fusion system
determined by the homotopy fixed points via Theorem~\ref{thm:bm}.

\begin{thm}   \label{thm:sylow}
 Let $\ell$ be an odd prime, and $\coX$ a simply connected $\ell$-compact group
 with Weyl group $(W,L)$ and a self-equivalence $\tau$ whose class in $\Out(X)$
 is of finite order prime to $\ell$, with $W\phi$ the corresponding element of
 $N_{\GL(L)}(W)/W$. Let $q$ be a prime power and $\zeta\in\ZZ_\ell^\times$ the
 primitive $e$-th root of unity such that $q\equiv\zeta\pmod\ell$. Then the
 saturated fusion system $\cF$ associated to $\twt\coX(q)$ via
 Theorem~\ref{thm:bm} satisfies:
 \begin{enumerate}
  \item[\rm(a)] The $\ell$-group $S$ underlying $\cF$ is a semidirect product
   $S=(\ZZ/\ell^a\ZZ)^r\rtimes (W_{\phi\zeta})_\ell$, where
   $\ell^a=(q^e-1)_\ell$,
   $r=\rk(L_{\phi\zeta})$ and $(W_{\phi\zeta})_\ell$ denotes a Sylow
   $\ell$-subgroup of~$W_{\phi\zeta}$.
  \item[\rm(b)] We have the Steinberg-type order formula
   $$|S| =\prod_{i\,:\,\zeta^{d_i}\eps_i=1}(q^{ed_i}-1)_\ell
         =\big(\prod_{i}(q^{d_i}-\eps_i^{-1})\big)_\ell = \OO_q(W\phi^{-1})_\ell $$
   where $(d_1,\eps_1),\ldots,(d_r,\eps_r)$ are the generalised degrees of
   $W\phi$ on $\overline\QQ_\ell\otimes_{\ZZ_\ell}L$.
 \end{enumerate}
\end{thm}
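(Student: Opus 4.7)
The plan is to use Theorem~\ref{thm:homfixedpts} to reduce from the general triple $(X,\tau,q)$ to a standard case where the class of $\tau$ is trivial and the Adams exponent is $\equiv 1\pmod\ell$, then extract the underlying $\ell$-group structure from the Broto--M\o ller construction, and finally verify the order formula by a direct $\ell$-adic computation combined with Lehrer--Springer.

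Step 1 (reduction). By Theorem~\ref{thm:homfixedpts}(a) there is a homotopy equivalence $\twt X(q)\simeq X'(q_0)$, where $X':=X^{h\Gamma}$ is a simply connected $\ell$-compact group with Weyl group $(W',L'):=(W_{\phi\zeta},L_{\phi\zeta})$, $q=\zeta q_0$, and $q_0\equiv 1\pmod\ell$. Since the homotopy type of the classifying space determines the associated fusion system up to isomorphism (Remark~\ref{r:fried}(b)), it suffices to identify the Sylow of $\cF(X'(q_0))$; moreover, Theorem~\ref{thm:homfixedpts}(b) permits replacing $q_0$ by any prime power $\equiv 1\pmod\ell$ sharing the same $\ell$-adic valuation $\ell^a:=(q_0-1)_\ell$. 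Note that $(q^e-1)_\ell=(q_0^e-1)_\ell=\ell^a$, since $\zeta^e=1$ and $\ell\nmid e$ (because $e$ divides $\ell-1$).

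Step 2 (structure of $S$). Fix a maximal torus $i\colon T\to X'$ and let $\brT=\QQ_\ell/\ZZ_\ell\otimes_{\ZZ_\ell}L'$ be the associated discrete torus. The unstable Adams operator $\psi^{q_0}$ corresponds via $\Phi_{X'}$ to the trivial coset $\Id\cdot W'\in N_{\GL(L')}(W')/W'$, hence acts on $\brT$ by multiplication by $q_0$, with fixed subgroup the $(q_0-1)$-torsion $(\ZZ/\ell^a\ZZ)^r$, $r=\rk L'$. Appealing to the construction of \cite[Thm.~A]{BM07}, the underlying $\ell$-group $S$ of $\cF(X'(q_0))$ is the $\psi^{q_0}$-fixed subgroup of a maximal finite discrete $\ell$-toral subgroup of the normaliser $N_{X'}(T)$. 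Because $\psi^{q_0}$ acts trivially on the Weyl quotient $W'$, a Sylow $(W')_\ell$ lifts inside this fixed subgroup, so the short exact sequence $1\to(\ZZ/\ell^a\ZZ)^r\to S\to(W')_\ell\to 1$ splits, giving $S\cong(\ZZ/\ell^a\ZZ)^r\rtimes(W')_\ell$, which is (a).

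Step 3 (order formula). Since $\zeta$ acts on $V^*$ by $\zeta^{-1}$ and hence on $S^{d_i}(V^*)$ by $\zeta^{-d_i}$, the generalised degrees of $W\phi\zeta$ are $\{(d_i,\eps_i\zeta^{-d_i})\}$, so by Theorem~\ref{thm:LS}(c) the degrees of $W'=W_{\phi\zeta}$ are $\{d_i:\zeta^{d_i}=\eps_i\}$. In particular $r=|\{i:\zeta^{d_i}=\eps_i\}|$ and $|(W')_\ell|=\prod_{i:\zeta^{d_i}=\eps_i}(d_i)_\ell$. Combining with the lifting-the-exponent identity $(q_0^k-1)_\ell=(q_0-1)_\ell\,(k)_\ell$ (valid for odd $\ell$ and $q_0\equiv 1\pmod\ell$) and $\ell\nmid e$, one computes $(q^{ed_i}-1)_\ell=(q_0^{ed_i}-1)_\ell=\ell^a(d_i)_\ell$, yielding $|S|=\ell^{ar}\cdot|(W')_\ell|=\prod_{i:\zeta^{d_i}=\eps_i}(q^{ed_i}-1)_\ell$, the first equality of (b). For the middle equality I argue factor by factor: if $\zeta^{d_i}\neq\eps_i$, then both are roots of unity of order coprime to $\ell$, so their reductions modulo any prime of $\overline{\ZZ_\ell}$ above $\ell$ are distinct, forcing $q^{d_i}-\eps_i$ to be an $\ell$-adic unit; if $\zeta^{d_i}=\eps_i$, then $q^{d_i}-\eps_i=\zeta^{d_i}(q_0^{d_i}-1)$ has the same $\ell$-adic valuation as $(q^{ed_i}-1)_\ell$ computed above. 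The final equality is just the definition of the order polynomial.

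The principal obstacle is Step~2: extracting the explicit semidirect-product form of $S$ from the topological construction of \cite{BM07} requires unwinding the homotopy fixed-point description of $X'(q_0)$ in terms of a maximal finite $\ell$-subgroup of $N_{X'}(T)$ and verifying the canonical splitting of the resulting extension in the reduced setting. Once that is in place, the order formula becomes a routine $\ell$-adic verification.
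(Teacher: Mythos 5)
Your Steps 1 and 3 follow the paper's route: the same reduction via Theorem~\ref{thm:homfixedpts} to the case $\tau=\Id$, $q\equiv 1\pmod\ell$ (the paper replaces $q_0$ by the prime power $q^e$, which is the same move), and the same Lehrer--Springer computation of the degrees of $W_{\phi\zeta}$ together with an elementary $\ell$-adic valuation argument for part (b). These parts are fine, up to the small unproved assertion that the $\eps_i$ have order prime to $\ell$ (true here because the class of $\tau$ has $\ell'$-order, so $\phi^m\in W$ for some $m$ prime to $\ell$ and hence $\eps_i^m=1$).

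The genuine gap is Step 2, i.e.\ part (a), which is exactly where the content of the theorem lies. You assert that $S$ is the $\psi^{q_0}$-fixed subgroup of a maximal finite discrete $\ell$-toral subgroup of $N_{X'}(T)$, and that the extension $1\to(\ZZ/\ell^a\ZZ)^r\to S\to (W_{\phi\zeta})_\ell\to 1$ splits ``because $\psi^{q_0}$ acts trivially on the Weyl quotient''. Neither claim is established by what you write. That a Sylow subgroup of the fusion system of $X'(q_0)$ can be computed inside the maximal torus normaliser is itself a nontrivial statement about homotopy fixed points, and the splitting of the resulting extension does not follow from the triviality of the $\psi^{q_0}$-action on $W_{\phi\zeta}$: the relevant extension of the discrete torus by the Weyl group could a priori be non-split, and even after passing to the finite $(q_0-1)_\ell$-torsion the lifting of $(W_{\phi\zeta})_\ell$ is precisely the splitting you are trying to prove, not a consequence of it. What makes this work at odd primes is the splitting of the maximal torus normaliser of a connected $\ell$-compact group (Andersen), and this is packaged in \cite[Prop.~7.6]{BM07}, which is exactly the reference the paper invokes at this point after the same reduction. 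As it stands, your closing paragraph concedes the point: the ``principal obstacle'' you identify is the step you have not carried out, so part (a) is not proved. Replacing your heuristic by an appeal to \cite[Prop.~7.6]{BM07} (or to the splitting theorem for torus normalisers at odd primes plus the identification of the Sylow inside the normaliser fixed points) closes the gap and recovers the paper's argument.
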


\begin{proof}
Write $q=\zeta q_0$, so $q_0\equiv1\pmod\ell$. By
Theorem~\ref{thm:homfixedpts}(a) we have
$\twt\coX(q)\simeq \coX^{h\Gamma}(q_0)$ where $\Gamma$ is the subgroup
of $\Out(X)$ generated by the class of $\tau\psi^q$. Now $q_0^e=q^e$, so
$$\nu_\ell(q^e-1)=\nu_\ell(q_0^e-1)=\nu_\ell(q_0-1),$$
where $\nu_\ell$ denotes the $\ell$-adic valuation on $\QQ_\ell$. Hence
$\coX^{h\Gamma}(q_0)\simeq \coX^{h\Gamma}(q^e)$ have isomorphic
fusion systems by Theorem~\ref{thm:homfixedpts}(b). So we may instead consider
the fusion system determined by $\coX^{h\Gamma}(q^e)$. Here, as
$q^e\equiv1\pmod\ell$ and as by Theorem~\ref{thm:homfixedpts}(a),
$W_{\phi\zeta}$ is the Weyl group of $\coX^{h\Gamma}$ the assertion in~(a)
follows from \cite[Prop.~7.6]{BM07}
\par
By Theorem~\ref{thm:LS}(b), $W_{\phi\zeta}$ is a reflection group with
multiset of degrees given by $D:=\{d_i\mid \zeta^{d_i}\eps_i=1\}$, so a
well-known formula shows $|W_{\phi\zeta}|=\prod_{d\in D} d$.
On the other hand, as $e$ is the order of $q$ modulo~$\ell$ it is easily seen
that $\nu_\ell\big((q^{de}-1)/(q^e-1)\big)=\nu_\ell(d)$ for any $d\ge1$, whence
$$\nu_\ell\big(\prod_{d\in D}(q^{de}-1)/(q^e-1)\big)
  =\nu_\ell(|W_{\phi\zeta}|).$$
The formula stated in~(b) now follows from~(a) and from the definition of the
order
polynomial (see Section~\ref{subsec:sl}).
\end{proof}

%%%%%%%%%%%%%%%%%%%%%%%%%%%%%%%%%%%%%%%%%%%%%%%%%%%%%%%%%%%%%%%%%%%%%%%%%
\section{Alperin's weight conjecture for homotopy fixed point fusion systems}   \label{sec:awc}
Let $k$ be an algebraically closed field of characteristic $\ell$ and let $\cF$
be a saturated fusion system on a finite $\ell$-group. Recall the function
$\bw(\cF)$ defined in the introduction. If $B_0$ is the principal block of a
finite group with fusion system $\cF$ then $\bw(\cF)$ counts the number of
weights associated to $B_0$. Alperin's weight conjecture (AW conjecture) \cite{A87}
predicts that $\bw(\cF) = l(B_0)$ where $l(B_0)$ is the number of isomorphism
classes of simple $B_0$-modules. When the underlying group is a finite group of
Lie type, then $e$-Harish-Chandra theory combined with the theory of basic sets
and block theory gives a highly non-trivial formula for $l(B_0)$ in terms of
relative Weyl groups, which leads to:

\begin{prop}   \label{prop:linkawc}
 Let $\bG$ be a connected reductive algebraic group defined over $\FF_q$ with
 corresponding Frobenius endomorphism $F:\bG\to\bG$ acting as $\phi$ on the
 Weyl group $W$ of~$\bG$. If $\ell$ is a very good prime for $\bG$, and $q$ has
 order~$e$ modulo~$\ell$, then
 $$\text{$B_0(\bG^F)$ satisfies AW conjecture}\quad\Longleftrightarrow\quad
   \bw(\cF_\ell(\bG^F)) = |\Irr (W_{\phi\zeta})|,$$
 where $\zeta$ is a primitive $e$th root of unity.
\end{prop}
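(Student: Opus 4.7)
\smallskip
\noindent\textbf{Proof plan for Proposition~\ref{prop:linkawc}.}
The statement of AWC for $B_0(\bG^F)$ is the equality
$\bw(\cF_\ell(\bG^F)) = l(B_0)$, where $l(B_0) := |\IBr(B_0)|$. So the task reduces to establishing, under the ``very good $\ell$'' hypothesis and the condition that $q$ has order $e$ modulo $\ell$, the identity
\[
l(B_0(\bG^F)) \;=\; |\Irr(W_{\phi\zeta})|,
\]
with $\zeta\in\overline\QQ_\ell^\times$ a primitive $e$-th root of unity. Once this is in hand, AWC and the displayed equation in the proposition become tautologically equivalent, so the proof is essentially a packaging of standard ingredients from $e$-Harish-Chandra theory, Lehrer--Springer theory, and the theory of basic sets.

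The plan is to assemble three classical inputs. First, I would invoke the Broué--Malle--Michel classification of unipotent $\ell$-blocks \cite{BMM93}: for $\ell$ good for $\bG$ (and in particular for $\ell$ very good), the unipotent characters lying in $B_0(\bG^F)$ are precisely those in the principal $e$-Harish-Chandra series, where $e$ is the order of $q$ modulo~$\ell$. Second, I would apply $e$-Harish-Chandra theory (Broué--Malle, building on Howlett--Lehrer) to identify this series with $\Irr(W_e)$, where $W_e$ is the relative Weyl group of the Sylow $\Phi_e$-torus of $(\bG,F)$. Third, I would translate $W_e$ into the Lehrer--Springer language of Theorem~\ref{thm:LS}: as already recalled in Section~\ref{subsec:sl}, under the identification of $F$ with the coset $W\phi$, a primitive $e$-th root $\zeta$ plays the role of the eigenvalue governing the Sylow $\Phi_e$-torus, so $W_e = W_{\phi\zeta}$. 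Combining the three gives
\[
|\text{unipotent characters in } B_0(\bG^F)| \;=\; |\Irr(W_{\phi\zeta})|.
\]

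To pass from ordinary to Brauer character counts, I would then invoke the basic set theorem: for $\ell$ very good for $\bG$, results of Geck--Hiss and Geck--Rouquier establish that the unipotent characters in each unipotent $\ell$-block form a basic set for that block. Applied to $B_0$ this yields $l(B_0) = |\text{unipotent characters in } B_0|$, completing the chain of equalities.

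The main obstacle is not conceptual but bibliographic: one needs the three ingredients above to apply cleanly in the full generality stated, namely for an arbitrary connected reductive $\bG$ with Frobenius $F$ acting as a possibly non-trivial automorphism $\phi$ on $W$, and with $\ell$ very good in the sense of Definition~\ref{def:bad} (so in particular controlling the behaviour of the centre and fundamental group as in Remark~\ref{rem:very good Lie}). In particular, one must verify that the hypotheses on $\ell$ used by \cite{BMM93} and by the basic-set results of Geck and collaborators are implied by ``very good'' in the present sense, and that the identification $W_e = W_{\phi\zeta}$ in Lehrer--Springer form is compatible with the $F$-twisted set-up used in $e$-Harish-Chandra theory. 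These checks are essentially bookkeeping, but they are where the precise meaning of the hypotheses is used.
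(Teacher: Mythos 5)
Your proposal is correct and follows essentially the same route as the paper: reduce to showing $l(B_0)=|\Irr(W_{\phi\zeta})|$, identify $l(B_0)$ with the number of unipotent characters in $B_0$ via the basic set theorem (the paper cites \cite[Thm.~A]{Ge93} together with Remark~\ref{rem:very good Lie} to check the hypotheses you flag), and then use \cite[Thm.~5.24]{BMM93} and \cite[Thm.~3.2]{BMM93} to identify these with the principal $e$-Harish-Chandra series and hence with $\Irr(W_{\phi\zeta})$.
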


Since $W$ is rational, the isomorphism type of $(W_{\phi\zeta},L_{\phi\zeta})$
is independent of the choice of~$\zeta$.

\begin{proof}
Set $B_0=B_0(\bG^F)$. It suffices to show that $l(B_0)=|\Irr(W_{\phi\zeta})|$.
Now
$l(B_0)=|\Irr(B_0)\cap\cE(\bG^F,1)|$ is the number of unipotent characters of
$\bG^F$ in $B_0$, by \cite[Thm.~A]{Ge93} and Remark~\ref{rem:very good Lie}.
By \cite[Thm.~5.24]{BMM93}, $\Irr(B_0)\cap\cE(\bG^F,1)$ is the principal
$e$-Harish-Chandra series of~$\bG^F$, which in turn by \cite[Thm.~3.2]{BMM93}
is in bijection with $\Irr(W_{\phi\zeta})$.
\end{proof} 

In light of the above, our first main result, Theorem~\ref{thmb} from the
introduction and which we restate here can be seen as the statement that a
version of the AW conjecture holds for $\ell$-compact groups. 

\begin{thm}   \label{thm:nr weights}
 Let $\ell>2$, $X$ a simply connected $\ell$-compact group with Weyl group
 $(W,L)$, $\tau$ an automorphism of $X$ whose class in $\Out(X)$ has finite
 order prime to $\ell$. Let $q$ be a power of a prime different from~$\ell$ and
 let $\cF(\twt X(q))$ be the saturated fusion system associated to $\twt X(q)$.
 If $\ell$ is very good for $(W,L)$, then
 $$\bw(\cF(\twt X(q))) = |\Irr(W_{\phi\zeta})|,$$
 where $W\phi=\Phi_X(\tau)\in N_{\GL(L)}(W)/W$ and $\zeta\in\ZZ_\ell^\times$ is
 the root of unity with $q\equiv\zeta\pmod\ell$.
\end{thm}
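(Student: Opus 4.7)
My plan is to reduce to the case of an irreducible $\ZZ_\ell$-reflection group $(W,L)$ and then proceed according to the classification recalled in Section~\ref{sec:refl}. The first step is to establish multiplicativity of both sides of the claimed equality under $\phi$-stable direct-product decompositions. For the right-hand side this follows from Theorem~\ref{thm:LS} applied factor-wise, and for the left-hand side it follows from the corresponding factorisation of the Sylow $\ell$-subgroup in Theorem~\ref{thm:sylow} together with the behaviour of $\bw$ on direct products of saturated fusion systems. When $\phi$ cyclically permutes isomorphic irreducible factors, the diagonal homotopy fixed-point construction reduces to a single factor with a modified $\tau$ and $q$, so we may assume $(W,L)$ is irreducible. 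In combination with Lemma~\ref{lem:good}(b) and the classification of bad primes in Proposition~\ref{prop:bad}, the remaining very-good irreducible cases are: Clark--Ewing groups, symmetric groups $\fS_n$ with $\ell\nmid n$, generalised Grassmannians $G(e,r,n)$ with $e\ge 2$, and the five rational exceptional Weyl groups at the listed primes.

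For Clark--Ewing groups, $\ell\nmid|W|$, so by Theorem~\ref{thm:sylow}(a) the underlying $\ell$-group $S$ is abelian of the form $(\ZZ/\ell^a)^r$, the only $\cF$-centric $\cF$-radical subgroup is $S$ itself, and $\Out_\cF(S)=W_{\phi\zeta}$ has order prime to $\ell$. Hence $z(k\Out_\cF(S))=|\Irr(W_{\phi\zeta})|$, giving the asserted equality. For the rational exceptional Weyl groups $G_2,F_4,E_6,E_7,E_8$ at the relevant very good primes, I would apply Remark~\ref{r:fried}(a) to identify $\cF(\twt X(q))$ with $\cF_\ell(\bG^F)$ for an appropriate simply connected group of Lie type, at which point Proposition~\ref{prop:linkawc} reduces the claim to AWC for the principal $\ell$-block of $\bG^F$. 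This is verified by a case-by-case enumeration (in Section~\ref{sec:except}) of the $\ell$-centric $\ell$-radical subgroups of $\bG^F$ together with their associated outer automorphism groups, matching the resulting contributions against $|\Irr(W_{\phi\zeta})|$.

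For the symmetric groups $\fS_n$ and the generalised Grassmannians $G(e,r,n)$ with $e\ge2$ one must work intrinsically with the fusion system, since for $e\ge 2$ no finite group of Lie type has $G(e,r,n)$ as Weyl group. The Sylow $\ell$-subgroup $S$ from Theorem~\ref{thm:sylow} is an iterated wreath-product-like object, and its $\cF$-centric $\cF$-radical subgroups admit a combinatorial description analogous to the \emph{basic subgroups} appearing in the Alperin--Fong analysis of $\GL_n(q)$. My approach would be to adapt their classification of $\cF$-centric $\cF$-radical subgroups together with their $\Out_\cF$-groups to the present setting, tracking the action of the outer twist $\phi$ equivariantly throughout, and then to match the resulting count of projective simple modules against $|\Irr(W_{\phi\zeta})|$ using the explicit description of $W_{\phi\zeta}$ in Example~\ref{exmp:LS}.

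The main obstacle I anticipate is the equivariant Alperin--Fong analysis for the generalised Grassmannian family $G(e,r,n)$: these fusion systems are genuinely exotic for generic parameters, so no ambient finite group is available whose character theory could be invoked, and both the classification of centric radicals and the identification of their $\Out_\cF$-groups must proceed purely combinatorially, with the $\phi$-equivariance adding a nontrivial bookkeeping layer on top of the original Alperin--Fong argument.
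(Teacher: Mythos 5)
Your overall architecture (reduce to irreducible factors, treat Clark--Ewing groups via the abelian-Sylow model, the exceptional rational types via an explicit list of centric radicals and automisers, and the infinite series by an Alperin--Fong style count) matches the paper's at a coarse level, but two things are off, one of them a genuine gap. First, the paper's opening move is not a factor-by-factor bookkeeping of the twist: it invokes Theorem~\ref{thm:homfixedpts} together with Proposition~\ref{prop:gooddescent} to replace $(X,\tau,q)$ by the homotopy fixed-point $\ell$-compact group $X^{h\Gamma}$ with Weyl group $(W_{\phi\zeta},L_{\phi\zeta})$, trivial twist and $q_0\equiv1\pmod\ell$, for which $\ell$ is again very good. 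After this untwisting no $\phi$ remains, so the product decomposition is of $W_{\phi\zeta}$ and your ``$\phi$ permutes the factors'' step never arises. Your version of that step (a transitive permutation of factors reduces to one factor with modified $\tau$ and $q$ at the level of homotopy fixed-point fusion systems) is precisely the kind of identification the paper flags in Remark~\ref{rem:wreath} as not known in general, so as stated it needs justification; more importantly, carrying a nontrivial $\phi$ through the entire infinite-series analysis is a different and harder task than what is actually required.

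The genuine gap is in your plan for $G(e,r,n)$, $e\ge2$. You assert that no ambient finite group is available and that the classification of $\cF$-centric $\cF$-radical subgroups and their automisers must be done purely combinatorially inside an exotic fusion system. The paper's proof rests on exactly the opposite observation: by Theorem~\ref{thm:homfixedpts}(b) and Ruiz's theorem (Proposition~\ref{prop:ruiz2}), $\cF(X(e,r,n)(q'))$ for $q'\equiv1\pmod\ell$ is the $\ell'$-index subsystem $\cF^{(r)}$ of $\cF_S(\GL_{en}(q))$, where $q$ has order $e$ modulo $\ell$. All the hard content of Section~\ref{subsec:GGr weights} is descent from Alperin--Fong along this inclusion: the centric radical classes and their $\Out$-groups are imported from $\GL_{en}(q)$ (Propositions~\ref{p:AFradical}, \ref{p:outer}, \ref{p:centradbasic}), and the weight count is transferred using the map $\Pi\circ\theta$ to $\Gamma_{\ell'}(\cF)\cong\ZZ/e\ZZ$ and Clifford theory for normal subgroups of $\ell'$-index (Proposition~\ref{p:out}, Proposition~\ref{p:all}, Lemma~\ref{lem:wisol}, Proposition~\ref{p:wreath}); the ``equivariance'' is with respect to this $\ZZ/r\ZZ$-action on weight data, not with respect to $\phi$. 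Without Ruiz's identification (or a substitute), your proposal leaves the decisive step --- how to determine $\cF^{cr}$ and $\Out_\cF$ in these possibly exotic systems and match the count with $|\Irr(G(e,r,n))|$ --- unsubstantiated. Two smaller points: for $\fS_n$ with $\ell\nmid n$ no intrinsic analysis is needed (the paper quotes Feng's theorem for $\SL_n(q)$ via Proposition~\ref{prop:linkawc}), and the surviving exceptional cases are only $E_6$ at $\ell=5$, $E_7$ at $\ell\in\{5,7\}$ and $E_8$ at $\ell=7$, since $G_2$ and $F_4$ have no very good prime dividing their order; there AWC is not previously known, so your phrasing of ``reducing to AWC'' is only legitimate because, as you then say, one verifies the equality $\bw(\cF)=|\Irr(W)|$ directly from the table of centric radicals.
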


\begin{proof}
First note that by Proposition~\ref{prop:gooddescent}, $\ell$ is very good for
$(W_{\phi\zeta},L_{\phi\zeta})$. Hence, by Theorem~\ref{thm:homfixedpts} we may
assume that $q\equiv1\pmod\ell$ and $\tau=\Id$, so $W_{\phi\zeta}=W$.
Now any simply connected $\ZZ_\ell$-reflection group is the direct
product of irreducible simply connected $\ZZ_\ell$-reflection groups by
\cite[Thm.~11.1]{AGMV}. Analogously, if $\cF$ is the product of saturated
fusion systems $\cF_1,\cF_2$, then $\cF^\CR=\cF_1^\CR\times \cF_2^\CR$ by
\cite[Lemma~3.1]{AOV12}, and thus $\bw(\cF)=\bw(\cF_1)\bw(\cF_2)$. Hence, it
suffices to consider the irreducible cases. By Proposition~\ref{prop:bad}, this
means that either $W$ is a Clark--Ewing group, a group $G(e,r,n)$ with $e\ge2$
and $r|e|(\ell-1)$, a Weyl group of type $E_6,E_7 $ or $E_8$ with $\ell=5$,
$\ell\in\{5,7\}$, $\ell=7$ respectively, or $\fS_n$ with $\ell{\not|}n$.
In the latter case, the assertion holds by the main result of \cite{Fe19}
combined with Proposition~\ref{prop:linkawc}.

For Clark--Ewing groups $W$ the order is prime to $\ell$. Then by
Theorem~\ref{thm:sylow}(b) the associated fusion system $\cF(\twt X(q))$ is on
an abelian $\ell$-group $S$. By, for example, \cite[Thm.~III.5.10]{AKO11},
$\cF(\twt X(q)) = \cF_S(N)$, where $N=S\rtimes W$. Thus, $S$ is the only centric
radical subgroup, $W$ is its outer automiser and
$|\Irr(W)|=z(kW)=\bw(\cF(\twt X(q)))$ since $|W|$ is coprime to~$\ell$. 

The other cases will be considered in Sections~\ref{subsec:GGr weights} and
\ref{subsec:awc exc}.
\end{proof}

Note that by Remark~\ref{r:fried}(a) the assumption on $\tau$ having order prime
to $\ell$ is not necessary if $(W,L)$ arises from a $\ZZ$-reflection group.
Thus, combined with Proposition~\ref{prop:linkawc}, Theorem~\ref{thm:nr weights}
yields the following.

\begin{cor}   \label{cor:AWC Lie}
 Alperin's weight conjecture holds for the principal $\ell$-blocks of finite
 reductive groups $\bG^F$, whenever $\bG$ is semisimple of simply connected
 type and $\ell>2$ is a very good prime for $\bG^F$.
\end{cor}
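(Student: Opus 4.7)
The plan is to combine the three ingredients already in place: Proposition~\ref{prop:linkawc}, which reformulates AWC for $B_0(\bG^F)$ as the numerical identity $\bw(\cF_\ell(\bG^F))=|\Irr(W_{\phi\zeta})|$; Friedlander's comparison theorem as recorded in Remark~\ref{r:fried}(a), which identifies the fusion system $\cF_\ell(\bG^F)$ with $\cF(\twt X(q))$ for a suitable $\ell$-compact group $X$ and self-equivalence $\tau$; and Theorem~\ref{thm:nr weights}, which evaluates $\bw(\cF(\twt X(q)))$.

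First I would set up the $\ell$-compact model. Let $W$ be the Weyl group of $\bG$ acting on the cocharacter lattice $L_0$, and let $F$ act as $\phi$ on $W$. By Theorem~\ref{thm:class}, the pair $(W, \ZZ_\ell\otimes_\ZZ L_0)$ is the Weyl group of a unique connected $\ell$-compact group $X$, and the coset $W\phi\in N_{\GL(L)}(W)/W$ corresponds via $\Phi_X^{-1}$ to a homotopy class of self-equivalences $\tau$ of $X$. Remark~\ref{r:fried}(a) then gives $(B\bG^F)^\wedge_\ell\simeq\twt X(q)$, and in particular $\cF_\ell(\bG^F)\cong\cF(\twt X(q))$, so the left-hand side $\bw(\cF_\ell(\bG^F))$ of Proposition~\ref{prop:linkawc} equals $\bw(\cF(\twt X(q)))$.

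Next I would check that the hypotheses of Theorem~\ref{thm:nr weights} are satisfied. Since $\bG$ is semisimple and simply connected and $\ell$ is very good for $\bG$, Remark~\ref{rem:very good Lie} shows $L_W=0$, so by \cite[Thm.~1.7]{AGMV} the $\ell$-compact group $X$ is simply connected; the hypothesis $\ell>2$ and the assumption that $\ell$ is very good for $(W,L)$ carry over directly. Theorem~\ref{thm:nr weights} is stated with the extra hypothesis that the class of $\tau$ has order prime to $\ell$; however, as noted immediately after its proof, this restriction is unnecessary whenever $(W,L)$ arises from a $\ZZ$-reflection group, because in that case Friedlander's identification already supplies the homotopy equivalence $\twt X(q)\simeq(B\bG^F)^\wedge_\ell$ without any order restriction. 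Since our $(W,L)$ is obtained by extension of scalars from a $\ZZ$-reflection group, this caveat is exactly what covers the possibly $\ell$-divisible piece of $\phi$ coming from graph automorphisms of reducible Dynkin diagrams.

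With the hypotheses verified, Theorem~\ref{thm:nr weights} yields $\bw(\cF(\twt X(q)))=|\Irr(W_{\phi\zeta})|$, where $\zeta\in\ZZ_\ell^\times$ is the primitive $e$-th root of unity with $q\equiv\zeta\pmod\ell$ and $e$ is the order of $q$ modulo $\ell$. Combined with the identification of fusion systems above, this gives $\bw(\cF_\ell(\bG^F))=|\Irr(W_{\phi\zeta})|$, and Proposition~\ref{prop:linkawc} then delivers AWC for $B_0(\bG^F)$. The only real subtlety, and what I would expect to be the main point to watch, is the removal of the $\ell'$-order hypothesis on $\tau$; everything else is essentially bookkeeping matching the Lie-theoretic and homotopy-theoretic sides of the dictionary.
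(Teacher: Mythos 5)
Your argument is exactly the paper's: combine Proposition~\ref{prop:linkawc}, the identification $\cF_\ell(\bG^F)=\cF(\twt X(q))$ from Remark~\ref{r:fried}(a), and Theorem~\ref{thm:nr weights}, with the $\ell'$-order restriction on $\tau$ removed for rational $W$ via Friedlander, so the proposal is correct and follows the paper's route. (Only a cosmetic slip: the $\ell$-divisible case of $\phi$ also arises for irreducible $W$, namely $D_4$ triality at $\ell=3$, not only from permutations of reducible factors, but it is handled by the same Friedlander argument you cite.)
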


It is known that the conclusion of Theorem~\ref{thm:nr weights} does not
extend to all primes $\ell$ even for rational Weyl groups, since both the
description of blocks by $e$-Harish-Chandra series as well as unipotent
characters being basic sets can fail to hold. Examples for this can be found
in \cite[\S1.2]{GH91} and \cite{En00}.
It is thus not surprising that for bad primes we obtain a different result
(this will be shown in Section~\ref{subsec:awc exc}):

\begin{thm}   \label{thm:Aguade weights}
 If $W$ is one of the four Aguad\'e exceptional $\ZZ_\ell$-reflection groups,
 and $q\equiv 1\pmod\ell$ then
 $$\bw(\cF(X(q))) = |\Irr(W)|+1.$$
\end{thm}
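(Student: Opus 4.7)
The plan is to use Theorem~\ref{thm:sylow} to identify the underlying $\ell$-group of the fusion system, appeal to the explicit descriptions of the Aguadé $\ell$-compact group fusion systems to enumerate the $\cF$-centric $\cF$-radical subgroups, and then compute $\bw(\cF(X(q)))$ in each case.

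Since $\tau=\Id$ and $q\equiv1\pmod\ell$ we have $\phi=1$ and $\zeta=1$, so $W_{\phi\zeta}=W$. Theorem~\ref{thm:sylow}(a) then gives that the underlying $\ell$-group of $\cF:=\cF(X(q))$ is $S=T\rtimes W_\ell$ with $T\cong(\ZZ/\ell^a)^r$, $r=\rk L$, and $W_\ell\cong C_\ell$, since the Aguadé groups have Sylow $\ell$-subgroups of order exactly $\ell$ by the computation recorded in the proof of Proposition~\ref{prop:stab}. Moreover, the generator of $W_\ell$ acts on $T/\ell T$ with one-dimensional fixed subspace, again as observed there.

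For each of the four cases I would invoke the explicit description of the associated exotic fusion system coming from Aguadé's original construction \cite{A89} together with the centric linking system and $\ell$-local finite group formalism of \cite{BLO03,BM07}, in order to enumerate the $\cF$-conjugacy classes of $\cF$-centric $\cF$-radical subgroups. One expects a very short list: the Sylow $S$ itself, together with a small number of ``essential'' classes $[E]$, where $E\le T$ is an elementary abelian subgroup and $\Out_\cF(E)$ is a reflection subgroup of $\GL(E)$ closely tied to the image of $W$ on $L/\ell L$. By saturation $\Out_\cF(S)$ is an $\ell'$-group, so $z(k\Out_\cF(S))=|\Irr(\Out_\cF(S))|$; the corresponding quantities for each essential $E$ are then extracted from the block structure of $k\Out_\cF(E)$.

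Summing these contributions and comparing with $|\Irr(W)|$ should yield the claimed equality $\bw(\cF(X(q)))=|\Irr(W)|+1$. The main obstacle is the case-by-case verification for $G_{12},G_{29},G_{31},G_{34}$ (at $\ell=3,5,5,7$): one must pin down each essential subgroup $E$ and compute $\Out_\cF(E)$ explicitly, and then match the resulting sum of weight counts with $|\Irr(W)|+1$, plausibly using CHEVIE or the GAP character table library for the explicit character counts. The structural content of the ``$+1$'' is that at these bad primes there is an additional essential contribution which is not accounted for by the Harish-Chandra-like decomposition governing $|\Irr(W)|$ in the good-prime case of Theorem~\ref{thm:nr weights}; pinpointing exactly which essential subgroup produces it in each of the four cases is where the actual work lies.
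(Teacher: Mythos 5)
Your overall strategy -- identify $S$ via Theorem~\ref{thm:sylow}, read off the $\cF$-centric $\cF$-radical subgroups from the explicit description of these fusion systems in \cite{BM07}, and sum $z(k\Out_\cF(-))$ -- is indeed the route the paper takes, and your identification of $S\cong(\ZZ/\ell^a\ZZ)^{\ell-1}\rtimes C_\ell$ is correct. But as written there is a genuine gap: the entire content of the theorem is the determination of $\cF^{cr}/\cF$ together with the automisers, and this is exactly the step you defer (``where the actual work lies''). Worse, your prediction of what that list should look like is wrong in a way that would derail the computation. By \cite[Sec.~10]{BM07}, as recorded in Lemma~\ref{lem:essential}, for $a=\nu_\ell(q-1)$ (with $a>1$ when $W=G_{12}$) the classes are: $S$ itself, with $\Out_\cF(S)\cong C_{\ell-1}\times C_{\ell-1}$; the full torus $T\cong C_{\ell^a}^{\ell-1}$ -- homocyclic, not elementary abelian once $a>1$ -- with $\Out_\cF(T)\cong W$; and two classes of \emph{extraspecial} subgroups $D\cong E\cong \ell_+^{1+2}$ (with $E=Z_2(S)\langle\sigma\rangle$), which are non-abelian and not contained in $T$, with automisers $\SL_2(\ell)$ and $\GL_2(\ell)$ respectively. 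So restricting attention to elementary abelian subgroups of $T$ whose automisers are reflection subgroups would miss precisely the contributions that make the count work: for $G_{12}$, for instance, the sum is $4+2+1+2=9=|\Irr(G_{12})|+1$, and the ``$+1$'' is not the weight count of one additional essential class but the net outcome of the whole sum.

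A second omission: the theorem allows $\nu_\ell(q-1)=1$, and for $W=G_{12}$, $\ell=3$, $a=1$ the Sylow subgroup is $S\cong 3^{1+2}_+$ and the list of centric radicals is different ($S$ together with two classes of rank-two elementary abelian subgroups), so this case needs its own short computation, as the paper gives after Lemma~\ref{lem:essential}; your uniform description would not cover it.
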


\begin{rem}
 (a) It would be interesting to compute $\bw(\cF)$ for the fusion systems $\cF$
 of the principal $\ell$-blocks of $F_4,E_6,E_7,E_8$ for bad primes $\ell$. To
 our knowledge, the weights have not been determined in these cases. The
 validity of the AW conjecture for $G_2(q)$ at the bad prime $\ell=3$ has been
 shown by An \cite[(3B)]{An94}; here again the number of weights is
 $|\Irr(W)|+1$.
 \par
 (b) Note that in all of the above results the number of weights is independent
 of the power of $\ell$ dividing $q^e-1$.
\end{rem}

%%%%%%%%%%%%%%%%%%%%%%%%%%%%%%%%%%%%%%%%%%%%%%%%%%%%%%%%%%%%%%%%%%%%%%%%%
\section{Centralisers}   \label{sec:centr}
An important ingredient for our generalisation of the ordinary weight conjecture
will be the existence of centralisers, which we now discuss.

%%%%%%%%%%%%%%%%%%%%%%%%%%%%%%%%%%%%
\subsection{Centralisers in $\ell$-compact groups}
For $Q$ a discrete group, we consider $BQ$ as a pointed space with a chosen
base point. Extending the terminology from Section~\ref{subsec:l-comp} by a
morphism from $BQ$ to an $\ell$-compact group $X$, we will mean a continuous,
pointed map $f: BQ\to X$. Two morphisms $f,f': BQ \to X$ are
called \emph{conjugate} if $f$ and $f'$ are freely homotopic.

For spaces $Y,Z$, $\map(Y,Z)$ denotes the function space of maps from $Y$ to
$Z$ and the component containing a particular map or homotopy class $f$ is
denoted $\map(Y,Z)_f$. For $Q$ a discrete group, and a morphism $f$ from $BQ$
to an $\ell$-compact group $X$, we set $C_X(Q,f):=\map(BQ,X)_f$. If $R\leq Q$,
and $g:BR\to X$ is a continuous map we say that $f$ is an extension of $g$ if
$g\simeq f\circ B\iota$, where $\iota:R\to Q$ is the inclusion map, and we
denote by $f:BR\to X$ also the composition $f\circ B\iota$. 

From now on, and for the rest of this section, let $X$ be a connected
$\ell$-compact
group with maximal torus $i:T \to X$ and Weyl group $(W,L)$. Then $W$ identifies
with the set of conjugacy classes of self-equivalences $\vhi$ of $T$ such
that $i\circ\vhi$ and $i$ are conjugate (see Section~\ref{subsec:l-comp})
and for $w\in W$, we denote by $w: T\to T$ any representative of the conjugacy
class of self-equivalences indexed by $w$. 

We say that a morphism $f: BQ\to X$ \emph{factors through $T$} if there exists a
morphism $j:BQ \to T$ such that $i\circ j$ is conjugate to $f$, and in this case
we call $j$ a \emph{factorisation of $f$}. The following basic structural result
says that any $\ell$-element of a connected $\ell$-compact group is conjugate
to an element in the maximal torus and any two elements of the maximal torus
are conjugate in $X$ if and only if they are conjugate by an element of the
Weyl group (the Weyl group controls fusion of $\ell$-elements).
 
\begin{lem}   \label{l:toral}
 Let $Q$ be a finite cyclic $\ell$-group. Then any morphism $f:BQ\to X$ factors
 through $T$. If $j,j'$ are two factorisations of $f$, then $j'$ is conjugate
 to $w\circ j$ for some $w\in W$.
\end{lem}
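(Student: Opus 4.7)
The plan is to invoke the standard theory of toral morphisms in connected $\ell$-compact groups. The main input, due to Dwyer--Wilkerson, is that for any finite abelian $\ell$-group $Q$ and connected $\ell$-compact group $X$ with maximal torus $i:T\to X$ and Weyl group $W$, the natural map
\[
(\pi_0\,\map(BQ,T))/W \,\longrightarrow\, \pi_0\,\map(BQ,X), \qquad [j]\mapsto[i\circ j],
\]
on free homotopy classes of maps is a bijection. Once this is in hand, both assertions follow for cyclic $Q$, which is abelian: surjectivity gives the existence of a factorisation $j$, and injectivity gives uniqueness up to $W$.

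For the existence part, I would present the argument as an induction on $|Q|$. The base case $|Q|=\ell$ is \cite[Thm.~1.9(1)]{AGMV}, which identifies conjugacy classes of morphisms $BC_\ell\to X$ with $W$-orbits on the $\ell$-torsion $T[\ell]$ via post-composition with $i$; in particular every morphism $BC_\ell\to X$ factors through $T$. For the inductive step with $Q$ cyclic of order $\ell^n$, $n\ge 2$, I would let $Q_0\le Q$ be the unique subgroup of index $\ell$: by induction, $f|_{BQ_0}$ factors as $i\circ j_0$ for some $j_0:BQ_0\to T$. Since $Q$ is abelian, the adjunction between classifying spaces and mapping spaces yields a canonical lift $\tilde f:BQ\to C_X(Q_0,f|_{BQ_0})=\map(BQ_0,X)_{f|_{BQ_0}}$ of $f$. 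By \cite[Thm.~1.9(2)]{AGMV}, this centraliser is itself a connected $\ell$-compact group, into which $T$ embeds as a maximal torus via $j_0$, and whose Weyl group $W_{j_0}\le W$ is strictly smaller than $W$ whenever $j_0$ has non-trivial image. An induction on $|W|$ then concludes; the degenerate case of a null-homotopic $j_0$ is handled separately by factoring $f$ through $B(Q/Q_0)$ and invoking the original induction on $|Q|$.

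For uniqueness, suppose $j,j':BQ\to T$ are two factorisations of $f$, so that $i\circ j$ and $i\circ j'$ are $X$-conjugate. By the definition of the Weyl group as $W=\pi_0(\cW_X(T))$ recalled in Section~\ref{subsec:l-comp}, two morphisms $BQ\to T$ which become conjugate after post-composition with $i$ must differ, up to $T$-conjugacy, by a self-equivalence of $T$ over $X$, and such self-equivalences are classified up to conjugacy by the elements of $W$. This yields $j'\simeq w\circ j$ for some $w\in W$.

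The main subtlety lies in making the inductive step rigorous: one must verify that the adjunction-driven lift $\tilde f$ genuinely exists and is uniquely determined up to conjugacy, that the identification of $T$ as a maximal torus of $C_X(Q_0,f|_{BQ_0})$ through $j_0$ is compatible with the Dwyer--Wilkerson framework, and that the Weyl group strictly decreases at each step. These verifications rely on the functorial properties of centralisers developed in the foundational $\ell$-compact group literature. I anticipate no genuine difficulty beyond careful bookkeeping, as the result is essentially folklore in the subject.
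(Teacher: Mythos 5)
The paper disposes of this lemma purely by citation: the existence of a factorisation is \cite[Thm.~2.6]{MN} and uniqueness up to $W$ is \cite[Prop.~4.1]{MN}. You instead attempt to reprove it, which would be legitimate, but your sketch has genuine gaps. First, your ``main input'' is false as stated: for a general finite abelian $\ell$-group $Q$ the map $(\pi_0\map(BQ,T))/W\to\pi_0\map(BQ,X)$ is \emph{not} a bijection, since non-toral elementary abelian subgroups exist (already in compact Lie groups of exceptional type); torality for cyclic $Q$ is exactly the content of the lemma, so it cannot be quoted as input. Second, the induction does not close. It requires $C_X(Q_0,f|_{BQ_0})$ to be a \emph{connected} $\ell$-compact group, but the lemma carries no hypothesis that $\pi_1(X)$ is torsion-free, and connectedness of such centralisers fails in general (e.g.\ for $X$ the $\ell$-completion of $PU(\ell)$ a suitable toral element of order $\ell$ has disconnected centraliser); the result you cite from \cite{AGMV} assumes $\pi_1$ torsion-free and $\ell$ odd, whereas this lemma is also invoked at $\ell=2$, and in this paper the connectedness statement (Theorem~\ref{thm:con cent}) is proved \emph{after}, and using, the present lemma, so relying on it here is both unavailable in the stated generality and methodologically backwards. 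Moreover, the claim that the Weyl group strictly decreases whenever $j_0$ has non-trivial image is wrong: if $\breve{j_0}(Q_0)$ is fixed by all of $W$ (for instance $Q_0$ landing in the centre of $X=SU(\ell)^\wedge_\ell$), then $C_W(\breve{j_0}(Q_0))=W$ and $C_X(Q_0,f|_{BQ_0})\simeq X$, so neither $|Q|$ nor $|W|$ drops and the induction stalls; your separate treatment covers only the case of a null-homotopic $j_0$ (and even there the factorisation of $f$ through $B(Q/Q_0)$ is asserted, not justified).

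The uniqueness paragraph is likewise not a proof. From $i\circ j\simeq i\circ j'$ you assert that $j$ and $j'$ ``must differ, up to $T$-conjugacy, by a self-equivalence of $T$ over $X$'', but that is precisely the statement to be proved and does not follow formally from the definition $W=\pi_0(\cW_X(T))$: an element of $\cW_X(T)$ is by definition a self-map of $T$ over $X$, and nothing in the definition manufactures one from a pair of maps out of $BQ$ that merely become homotopic after composition with $i$. Even in the compact Lie group case the analogous fact (elements of $T$ conjugate in $G$ are conjugate under $N_G(T)$) needs the conjugacy of maximal tori inside a centraliser, not just the definition of the Weyl group; in the $\ell$-compact setting this is exactly \cite[Prop.~4.1]{MN}. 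As written, both halves of your argument ultimately presuppose the M\o ller--Notbohm results that the paper quotes, so the proposal does not establish the lemma.
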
 

\begin{proof}
For the first assertion, see for instance \cite[Thm.~2.6]{MN}. The second
assertion is \cite[Prop.~4.1]{MN}. 
\end{proof} 

The $\ell$-compact torus $T$ has a discrete approximation $B\brT\to T$ as in
\cite[Sec.~6]{DW94} (see also \cite[Sec.~6]{DW95}) with
$\brT = \ZZ/\ell^\infty\otimes_{\ZZ_\ell } L \cong(\ZZ/\ell^\infty)^r$
where $r$ is the rank of $T$.
If $Q$ is a finite $\ell$-group or a discrete torus, then to every morphism
$j: BQ \to BT$ is associated a group homomorphism $\breve{j}:Q\to\brT$,
uniquely determined by $j$ such that $ B\breve{j}:BQ\to B\brT$ is a lift of
$j$, i.e., such that composition of $ B\breve{j}$ with $B\brT\to T$ is conjugate
to $j$ \cite[Prop.~3.2]{DW95} (here note that if $Q$ is a finite $\ell$-group,
$Q$ is a discrete approximation of $BQ$). We call $\breve{j}$ the \emph{discrete
approximation to~$j$}. For a subset $A \subset \brT$ we denote by $C_W(A)$ the
subgroup of elements $w\in W$ which fix $A$ point-wise.

By \cite[Prop.~5.1]{DW94} for any morphism $f:BQ\to X$, the centraliser
$C_X(Q,f)$ is an $\ell$-compact group. If $f$ factors through $T$, then the Weyl
group of $C_X(Q,f)$ may be described as follows. Let $j:BQ\to T$ be a
factorisation of $f$ and $\breve{j}$ its discrete approximation.
Then $i':T\to C_X(Q, f)$ is a maximal torus of $C_X(Q,f)$ for some morphism
$i'$ whose composition with the evaluation map $C_X(Q,f)\to X$ at the base
point of $X$ is conjugate to~$i$. Since $f$ is conjugate to $i\circ j$, we have
$C_X(Q,f)= C_X(Q,i\circ j)$. By \cite[Prop.~4.4]{DW95} and
\cite[Thm.~7.6]{DW95}, the Weyl group of $C_X(Q)$ with respect to $i'$
identifies with $(C_W(\breve{j}(Q),L)$.

We now prove Theorem~\ref{thmc}. The case $|Q|= \ell$ was shown in
\cite[Thm.~1.9]{AGMV}.

\begin{thm}   \label{thm:con cent}
 Suppose that $X$ is a connected $\ell$-compact group with torsion-free
 fundamental group and Weyl group $(W,L)$. Then for any morphism $f:BQ \to X$
 with $Q$ a finite cyclic $\ell$-group, the $\ell$-compact group $C_X(Q,f)$ is
 connected.
\end{thm}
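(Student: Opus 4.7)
The plan is to reduce the topological connectedness statement to a purely algebraic one about Weyl-group stabilisers in the discrete torus, and then to handle the resulting algebraic statement in two stages: a uniform argument when $|W|$ is coprime to $\ell$, and a case-by-case argument (paralleling \cite[Thm~1.9]{AGMV}) otherwise.

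First I would use Lemma \ref{l:toral} to replace $f$ by $i \circ j$ for some factorisation $j : BQ \to T$; setting $v := \breve{j}(g) \in \brT$ for a generator $g$ of $Q$, one has $\breve{j}(Q) = \langle v \rangle$, and by the discussion preceding the theorem the $\ell$-compact group $C_X(Q,f)$ has Weyl group $C_W(\breve{j}(Q)) = C_W(v)$ with respect to the induced maximal torus $T$. A standard consequence of the classification of connected $\ell$-compact groups (Theorem \ref{thm:class}) together with the torsion-freeness of $\pi_1(X)$ is that $C_X(Q,f)$ is connected if and only if its Weyl group is generated by the reflections it contains, i.e., $C_W(v) = C_W(v)_\refl$ in the notation of Proposition \ref{prop:stab}. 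This is the reduction to the algebraic statement.

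When $|W|$ is coprime to $\ell$, this reduced statement is settled at once by the first assertion of Proposition \ref{prop:stab}: the quotient $C_W(v)/C_W(v)_\refl$ is an $\ell$-group, but its order divides $|W|$, so it is trivial. This handles all Clark--Ewing cases and gives the ``conceptual argument'' promised in the introduction. In particular, this case does not require the torsion-free $\pi_1$ hypothesis beyond what enters the connectedness criterion.

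For the general case, I would decompose $(W,L)$ into a product of irreducible factors via the structure theorem of Section \ref{sec:refl}; correspondingly $v$ decomposes coordinate-wise and $C_W(v)$ splits as a direct product of the stabilisers in each factor, reducing the problem to the irreducible case. The exotic irreducible factors are then covered directly by the second assertion of Proposition \ref{prop:stab}. The non-exotic irreducible factors arise by extension of scalars from $\ZZ$-reflection groups, that is, from compact connected Lie groups $\bG$; here I would compare via Friedlander-type results (in the spirit of Remark \ref{r:fried}) to a Lie-theoretic question and apply the classical Borel--Steinberg theorem on connectedness of centralisers of semisimple finite-order elements. The main obstacle lies in this last step: when $\pi_1$ of the compact Lie group is torsion-free but not trivial, Steinberg's classical theorem for simply connected groups does not apply directly, and one is forced into a case-by-case inspection of the classical types, extending the analysis of \cite[Thm~1.9]{AGMV} (done there only for $|Q| = \ell$, where $v \in \Omega_1(\brT)$) to $v \in \brT$ of arbitrary $\ell$-power order. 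Routine but non-trivial Lie-theoretic bookkeeping then finishes the proof.
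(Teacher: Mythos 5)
Your overall skeleton is the right one (and matches the paper's in spirit): reduce, via a factorisation through the maximal torus, to showing that the stabiliser $C_W(v)$, $v\in\brT$, is generated by the reflections of $W$ it contains, and then feed in Proposition~\ref{prop:stab}. Two remarks on the reduction itself: the criterion ``$C_X(Q,f)$ connected $\Leftrightarrow$ $C_W(v)$ is a reflection subgroup of $W$'' is not a consequence of the classification (Theorem~\ref{thm:class}, which anyway is only for odd $\ell$) nor of torsion-freeness of $\pi_1(X)$; it is Dwyer--Wilkerson \cite[Thm.~4.7, Thm.~7.6, Rem.~7.7]{DW95}, and at $\ell=2$ one must check that the subgroups $\sigma(s)$ and $F(s)$ of \cite[Def.~7.3]{DW95} coincide (the paper does this using that the only relevant $2$-compact group is the one with $W=G_{24}$).

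The genuine gaps are in your ``general case''. First, the reduction to irreducible factors is not available: the structure theorem only splits $(W,L)$ into a factor obtained by extension of scalars from a $\ZZ$-reflection group and a product of exotic factors; the rational factor need not decompose further as a direct sum of irreducible reflection lattices. For instance $(\fS_n,\ZZ_\ell^n)$ with $\ell\mid n$, arising from $\GL_n$, satisfies the torsion-free $\pi_1$ hypothesis but its lattice is not a direct sum of the fixed line and the sum-zero sublattice, so $v$ does not split coordinate-wise and $C_W(v)$ does not factor as you claim. Second, and more seriously, your treatment of the Lie factor is not a proof: you assert that a case-by-case inspection of the classical types, extending \cite[Thm.~1.9]{AGMV} from $|Q|=\ell$ to arbitrary cyclic $\ell$-groups, would finish the argument, but you do not carry it out, and this is exactly the hard content you were asked to supply. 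Moreover the detour is unnecessary: Steinberg's results on torsion in reductive groups are not restricted to simply connected groups --- \cite[Cor.~2.16]{St75} applies whenever the fundamental group has no $\ell$-torsion (the component group of the centraliser of an $\ell$-element embeds into the $\ell$-torsion of $\pi_1$), which is precisely the hypothesis here after splitting off the exotic factors. This one-line application of \cite{St75} is how the paper disposes of the entire non-exotic part, reserving the case-by-case work for the exotic groups inside Proposition~\ref{prop:stab}.
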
 

\begin{proof}
By \cite[Thm.~11.1]{AGMV} and \cite[Thm.~1.1]{AG09} there is a direct
decomposition $(W,L)=(W_\bG\times W_Y,(\ZZ_\ell\otimes_\ZZ L_\bG)\oplus L_Y)$,
where $(W_\bG,L_\bG)$ is the Weyl group of a compact Lie group whose fundamental
group has $\ell'$-torsion and $W_Y$ is a direct product of exotic
$\ZZ_\ell$-reflection groups. Since this induces a corresponding direct
decomposition of the pair $(W,\brT)$ and thus of stabilisers, we may assume
that $(W,L)$ either comes from a compact Lie group whose fundamental group has
$\ell'$-torsion, or is an exotic $\ZZ_\ell$-reflection group. In the former
case,
%since $(\ZZ_\ell\otimes_\ZZ L)_W= \ZZ_\ell\otimes_\ZZ L_W$,
%the torsion part of $L_W$ is an $\ell'$-group. As
%\[\brT= \ZZ/\ell^\infty\otimes_{\ZZ_\ell} (\ZZ_\ell\otimes_\ZZ L)
%  =\ZZ/\ell^\infty\otimes_\ZZ L,\]
the assertion follows from Steinberg's result \cite[Cor.~2.16]{St75}.
\par
Now let $(W,L)$ be exotic and let $(C_W(\breve{j} (s)),L)$ be the Weyl group of
$C_X( Q,f)$ as described above the theorem and denote by $W_1$ the Weyl group
of the connected component of $C_X(Q,f)$. By \cite[Thm.~7.6 and Rem.~7.7]{DW95},
$W_1\leq C_W(\breve{j}(s))$ is the subgroup generated by those
$w\in C_W(\breve{j}(s))$ whose image in $W$ is a reflection. Here note that if
$\ell=2$, then by the classification of $2$-compact groups in \cite{AG09},
$W=G_{24}$ and therefore for any reflection in $W$, of which there is just one
class, the subgroups $\sigma(s)$ and $F(s)$ from \cite[Def.~7.3]{DW95} agree.
Now by \cite[Thm.~4.7]{DW95}, $C_X(Q,f)$ is connected if and only if
$W_1= C_W(\breve{j}(s))$. Hence $C_X(Q,f)$ is connected if and only if
$C_W(\breve{j}(s))$ is a reflection subgroup of $W$. Since $W$ is exotic, the
latter holds by Proposition~\ref{prop:stab}.
\end{proof}

Let $\alpha$ be a self-equivalence of $X$ and let $\alpha_T:T\to T$ be such
that $i\circ\alpha_T$ is conjugate to $\alpha\circ i$ (see the discussion before
Theorem~\ref{thm:class}); such an $\alpha_T$ is called a lift of $\alpha$.
The following is a rephrasing of \cite[Lemma~7.3]{BM07}.

\begin{lem}   \label{l:identcent} 
 Let $\alpha$ be a self-equivalence of $\coX$, $\alpha_T: T\to T$ a lift of
 $\alpha$ and $\breve{\alpha_T}:\brT \to \brT$ its discrete approximation.
 Let $Q$ be a finite cyclic $\ell$-group, $f: BQ \to X$ a morphism, $j$ a
 factorisation of $f$ and $\breve{j} $ the discrete approximation of $j$. Let 
 $i': T\to C_X(Q,f)$ be a maximal torus of $C_X(Q,f)$ where $i'$ is a morphism
 whose composition with the evaluation map $C_X(Q,f)\to X$ at the base point of
 $X$ is conjugate to~$i$.
 \begin{enumerate}
  \item[\rm(a)] $f$ is conjugate to $\alpha \circ f$ if and only if there exists
   $w\in W$ such that $\alpha_T\circ j$ is conjugate to $ w\circ j$
   (equivalently $\breve{\alpha_T} \circ\breve{ j} = w \circ \breve{j}$). If it
   exists then $w$ is unique up to elements of $C_W(\breve{j}(Q))$. 
  \item[\rm(b)] For any $w \in W$ such that
   $\alpha_T\circ j$ is conjugate to $ w\circ j$, the induced homotopy
   equivalence $ \alpha_\#$ of $C_X(Q,f)$ has a lift to $T$ of the form
   $w^{-1} \alpha_T$, where $\alpha_\# $ denotes composition with $\alpha$.
 \end{enumerate}
\end{lem}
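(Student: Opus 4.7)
The plan is to deduce both parts from \cite[Lemma~7.3]{BM07} by a direct translation, using the identification $C_X(Q,f) = \map(BQ, X)_f$ together with Lemma~\ref{l:toral}.

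For part~(a), the starting point is that $\alpha$ acts on $C_X(Q,f)$ by post-composition, so $f$ is conjugate to $\alpha\circ f$ precisely when the two maps lie in the same component of $\map(BQ,X)$. Since $\alpha_T$ is a lift of $\alpha$ and $j$ is a factorisation of $f$, one has
\[
 \alpha\circ f \;\simeq\; \alpha\circ i\circ j \;\simeq\; i\circ\alpha_T\circ j,
\]
so the condition $\alpha\circ f\simeq f$ is equivalent to $\alpha_T\circ j$ being itself a factorisation of $f$ through $T$. Applying Lemma~\ref{l:toral} to the two factorisations $j$ and $\alpha_T\circ j$ yields the stated equivalence with $\alpha_T\circ j\simeq w\circ j$ for some $w\in W$; translating to discrete approximations via \cite[Prop.~3.2]{DW95} gives the reformulation $\breve{\alpha_T}\circ\breve{j} = w\circ\breve{j}$. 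For uniqueness of $w$, any two choices $w,w'$ yield $w^{-1}w'\circ j\simeq j$, whence $w^{-1}w'\in C_W(\breve{j}(Q))$ after passing to discrete approximations.

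For part~(b), $\alpha_\#$ is post-composition with $\alpha$ on $C_X(Q,f) = \map(BQ,X)_f$, and the maximal torus $i':T\to C_X(Q,f)$ is built (as in \cite[\S7]{BM07}) from $j$ and the loop-space structure of $X$, so that composition with the evaluation map $C_X(Q,f)\to X$ recovers $i$. The goal is to show that $w^{-1}\alpha_T: T\to T$ is a lift of $\alpha_\#$, that is, $i'\circ(w^{-1}\alpha_T)\simeq\alpha_\#\circ i'$. Composition with evaluation at the basepoint forces any lift of $\alpha_\#$ to agree with $\alpha_T$ up to the action of an element of $W$; the specific Weyl translate is then pinned down by the requirement that the factorisation $j$ be preserved under the lifted action, and the identity $\alpha_T\circ j\simeq w\circ j$ from part~(a) identifies this translate as $w^{-1}\alpha_T$.

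The main obstacle is the bookkeeping in part~(b): one must match the explicit construction of the maximal torus $i'$ (as a map built from $j$ and the multiplicative structure of $X$) with the post-composition action of $\alpha$, and in particular keep track of why it is $w^{-1}\alpha_T$ rather than $w\alpha_T$ that arises. Once part~(a) is in hand this reduces to carefully tracing the conjugacy defining $i'$ under $\alpha$, which is precisely what is recorded in \cite[Lemma~7.3]{BM07}; the task here is to quote that computation in the present notation.
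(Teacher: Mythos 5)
Your proposal is correct and takes essentially the same route as the paper: in part (a) you note that $\alpha_T\circ j$ is another factorisation of $f$, apply Lemma~\ref{l:toral} and pass to discrete approximations, with the uniqueness statement following since two choices $w,w'$ differ by an element fixing $\breve{j}(Q)$, and in part (b) you delegate the identification of the lift to the computation in Broto--M\o ller, exactly as the paper does (the paper cites \cite[Lemma~7.2]{BM07} for this last step, while you point to Lemma~7.3, which is the statement the present lemma rephrases). No gaps beyond this harmless citation discrepancy.
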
 

\begin{proof}
The hypotheses imply that $i\circ \alpha_T\circ j$ is conjugate to~$i\circ j$,
i.e., that $ \alpha_T \circ j$ is a factorisation of $f$. Hence by
Lemma~\ref{l:toral}, $\alpha_T\circ j$ is conjugate to $ w\circ j$ for some
$w \in W$. Moving to discrete approximations yields
$\breve{\alpha_T} \circ\breve{ j} = w \circ \breve{j}$. The uniqueness up to
$C_W(\breve j(Q))$ is clear. For the final assertion see \cite[Lemma~7.2]{BM07}.
\end{proof}

\begin{rem}   \label{r:welldef}
Suppose that $\alpha\circ f = f$ in the above. If $j':BQ \to T$ is another
factorisation of $f$, and $w'\in W$ with
$\breve{\alpha_T}\circ\breve j'=w'\circ\breve{j'}$, then by uniqueness of
factorisations $\breve{j'} = x\circ\breve j$ for some $x\in W$. Setting
$C= wC_W(\breve{j}(Q))$ and $C' = w'C_W(\breve{j'}(Q))$ we have that
$C' = x^{-1} C\ \tw{\breve{\alpha_T}}x$, where by $\tw{\breve{\alpha_T}}x$ we
mean $\breve{\alpha_T }\circ x \circ \breve{\alpha_T }^{-1}\in\Aut(\brT)$.
A similar remark holds about the choice of restriction $\alpha_T$.
\end{rem} 

%%%%%%%%%%%%%%%%%%%%%%%%%%%%%%%%%%%%
\subsection{Homotopy fixed point centralisers}   \label{subsec:centrII}
We continue with the notation of this section. So $X$ is a connected
$\ell$-compact group with torsion-free fundamental group, maximal torus
$i: T\to X$ and Weyl group $(W,L)$. Let $\tau$ be a self equivalence whose order
in $\Out(X)$ is finite. Let $S$ be a finite $\ell$-group and, for some
$q\in\ZZ_\ell^\times$, let $f:BS\to\twt\coX(q)$ be a morphism of loop spaces.
For $s\in S$, define the \emph{centralisers}
$$C_X(s):= C_X(\langle s\rangle,\iota\circ f)\quad\text{and}\quad
 C_{\twt X(q)}(s):= C_{\twt X(q)}(\langle s\rangle,f), $$
where $\iota$ is an in ~$(*)$ in Section~\ref{subsec:def fus}. Set 
$$W(s) := C_W(\breve{j}(s))$$
where $j: B\langle s\rangle\to T$ is a factorisation of (the restriction to
$\langle s\rangle$ of) $\iota\circ f$ and $\breve{j}:\langle s\rangle\to\brT$
the discrete approximation of $j$. 
Recall from the previous section that $(W(s),L)$ is the Weyl group of the
connected $\ell$-compact group $C_X(s)$.

\begin{prop}   \label{prop:centr}
 With $X$, $S$ and $f:BS\to\twt\coX(q)$ as above, for any $s\in S$ we have:
 \begin{itemize}
  \item[(a)] With $\iota_\#$ denoting composition with $\iota$
  and $\tau\psi^q_\#$ denoting composition with
   $\tau\psi^q$, $C_{\twt X(q)}(s) \simeq C_X(s)^{h\tau\psi^q_\#}$.
  \item[(b)] If $\twt\coX(q)$ is the classifying space of a saturated fusion
   system $\cF$ on $S$ via $f$ then for any fully $\cF$-centralised $s\in S$,
   $C_{\twt X(q)}(s)$ is the classifying space of $C_\cF(s)$.
 \end{itemize}
\end{prop}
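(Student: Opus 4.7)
The plan for part (a) is to exploit the fact that the mapping space functor $\map(B\langle s\rangle,-)$ preserves homotopy limits, applied to the defining homotopy pullback $(*)$ of Section~\ref{subsec:def fus}. Equivalently, one views $\twt X(q) = X^{h\tau\psi^q}$ as a homotopy equaliser of $\Id$ and $\tau\psi^q$, so that applying $\map(B\langle s\rangle,-)$ produces a natural equivalence
$$\map(B\langle s\rangle,\twt X(q)) \simeq \map(B\langle s\rangle,X)^{h\tau\psi^q_\#}.$$
The remaining work is to track components correctly. From $(*)$ one reads off a homotopy $\tau\psi^q\circ\iota\simeq\iota$ between maps $\twt X(q)\to X$; precomposing with $f$ gives $\tau\psi^q\circ\iota\circ f\simeq\iota\circ f$. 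Hence $\tau\psi^q_\#$ preserves the component $C_X(s)=\map(B\langle s\rangle,X)_{\iota\circ f}$, and the homotopy witnessing this is precisely the datum that lifts $f$ to the component of the homotopy fixed-point space corresponding to $C_X(s)$. Restricting the above equivalence to this component yields the assertion of (a).

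For part (b), the assumption that $\twt X(q)$ is the classifying space of $\cF$ (in the sense recalled in Section~\ref{subsec:def fus}) means there is an $\ell$-local finite group $(S,\cF,\cL^c)$ with $\twt X(q)\simeq |\cL^c|_\ell^\wedge$ and $f:BS\to \twt X(q)$ corresponding to the canonical inclusion $BS\to |\cL^c|_\ell^\wedge$. The standard centraliser theorem of Broto--Levi--Oliver \cite[Thm.~6.3]{BLO03} (see also the account in \cite{AKO11}) then identifies, for any fully $\cF$-centralised subgroup $P\le S$, the mapping space $\map(BP,|\cL^c|_\ell^\wedge)_{f|_{BP}}$ with the classifying space of the centraliser fusion system $C_\cF(P)$. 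Applied to $P=\langle s\rangle$ with $s$ fully $\cF$-centralised, this is exactly part (b).

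The main point requiring care is the component bookkeeping in (a): one must verify that applying $\map(B\langle s\rangle,-)$ to the homotopy pullback $(*)$ yields a homotopy pullback in which the component determined by $f$ on the upper-left corner matches, on the remaining three corners, the component $C_X(s)$ and the self-map of $C_X(s)$ induced by $\tau\psi^q_\#$. Once this is in hand, (a) is essentially formal from the homotopy-limit description of $\twt X(q)$, and (b) follows directly from the existing structure theory of $\ell$-local finite groups.
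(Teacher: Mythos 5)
Your overall route for (a) — apply $\map(B\langle s\rangle,-)$ to the homotopy pullback $(*)$ and observe from $\tau\psi^q\circ\iota\simeq\iota$ that $\tau\psi^q_\#$ preserves the component $C_X(s)$ — is the same as the paper's (which packages the formal step as an application of \cite[Lemma~7.2]{BM07}), and your treatment of (b) via \cite[Thm.~6.3]{BLO03} matches the paper as well. But there is a genuine gap in (a): you never use, and the statement genuinely needs, the connectedness of the $\ell$-compact group $C_X(s)$, i.e.\ Theorem~\ref{thm:con cent}. Restricting the equivalence $\map(B\langle s\rangle,\twt X(q))\simeq \map(B\langle s\rangle,X)^{h\tau\psi^q_\#}$ to the component of $f$ only identifies $C_{\twt X(q)}(s)$ with \emph{one component} of the twisted homotopy fixed point space $C_X(s)^{h\tau\psi^q_\#}$; the proposition asserts it is equivalent to the \emph{whole} space. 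The fixed point space of a self-equivalence of a connected space $Y$ has $\pi_0$ controlled by $\pi_1(Y)$ (twisted conjugacy classes), so if the $\ell$-compact group $C_X(s)$ were disconnected — equivalently, if the space $C_X(s)$ had nontrivial fundamental group — then $C_X(s)^{h\tau\psi^q_\#}$ could be disconnected and the claimed equivalence would fail. This is not a formality: connectedness of $C_X(s)$ is exactly Theorem~\ref{thm:con cent} (Theorem~C of the paper), proved partly case-by-case and relying on Proposition~\ref{prop:stab}, and it is also why the standing hypothesis that $X$ has torsion-free fundamental group appears in this section; your proposal invokes neither.

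So the fix is to insert, before "restricting to the component", the statement that $C_X(s)$ is connected by Theorem~\ref{thm:con cent} (using that $s$ generates a finite cyclic $\ell$-group and $X$ has torsion-free fundamental group), whence $C_X(s)^{h\tau\psi^q_\#}$ is connected and coincides with the component of $f$. With that input your argument agrees with the paper's proof; for (b) the paper additionally cites \cite[Prop.~2.5(c)]{BLO03} alongside \cite[Thm.~6.3]{BLO03}, which is only a minor difference from your citation.
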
 

Note that by Theorem~\ref{thm:bm} the assumption in (b) is satisfied if $X$ is
in addition simply connected and $\tau$ has $\ell'$ order.

\begin{proof}
In (a), it suffices to prove that $$\xymatrixcolsep{5pc}\xymatrix{C_{\twt X(q)}(s)\ar[d]^{\iota_\#} \ar[r]^{\iota_\#}& \quad C_X(s)\quad\ar[d]^\Delta\\ 
   C_X(s)\ar[r]^{(1,\tau\psi^q_\#)}& C_X(s)\times C_X(s)}$$
is a homotopy pullback diagram (see \cite[Rem.~2.3]{BMO}). Now $C_X(s)$ is
connected by
Theorem~\ref{thm:con cent}. Set $\beta=\iota\circ f$ and $\alpha = \tau\psi^q$.
By definition, $\Delta\circ\iota\simeq (1,\alpha)$. Composing with $f$ on the
right and with projection onto the second component on the left, we obtain
$\beta\simeq\alpha\circ\beta$. Now (a) follows by applying
\cite[Lemma~7.2]{BM07} with $f$ in place of $g$, and~(b) follows from
\cite[Thm.~6.3 and Prop.~2.5(c)]{BLO03}.
\end{proof}

Now assume $\ell>2$ and let $\phi\in N_{\GL(L)}(W)$ be the element (uniquely
determined up to multiplication by $W$) such that $\phi$ corresponds to the
conjugacy class of $\tau$ in $\Out(X)$, i.e., $W\phi$ is the image under
$\Phi_{X}$ of the homotopy class of $\tau\psi^q_\# $. Now let $s \in S$ and let
$\tau\psi^q_\#$ be as in Proposition~\ref{prop:centr}. 
Since $C_X(s)$ is connected by Theorem~\ref{thm:con cent} there exists
$\phi_s\in N_{\GL(L)}(W(s))$ (uniquely determined up to multiplication by
$W(s)$) such that $\phi_s\lambda_q$ corresponds to the conjugacy class of
$\tau\psi^q_\#$ via Theorem~\ref{thm:class}, where $\lambda_q=q\Id \in \GL(L)$,
i.e., $W(s)\phi_s\lambda_q$ is the image under $\Phi_{C_X(s)}$ of the
homotopy class of $\tau\psi^q_\# $. 

Lemma~\ref{l:identcent} may be used to identify $W(s) \phi_s$. Let $\tau_T$ be
a lift of $\tau$ to $ T$ and let $\psi^q_T $ be a lift of $\psi^q$ such that
$\breve{\psi^q_T}$ is the map $x\mapsto x^q$, $x\in\brT$. 
As explained in the proof of Proposition~\ref{prop:centr}, $\iota\circ f$ is
conjugate to $\tau\psi^q\circ\iota\circ f$. Hence, by
Lemma~\ref{l:identcent} applied with $\alpha=\tau\psi^q$, $Q=\langle s\rangle$
and $\iota\circ f$ replacing $f$, there exists $w \in W$ such that
$\tau_T\psi^q _T\circ j$ is conjugate to $ w\circ j$ and for any such $w$,
$w^{-1} \tau_T\psi ^q_T $ is a lift of $\tau \psi ^q_{\#}$ to $T$. Moving over
to discrete approximations, it follows that 
$w^{-1}\breve{\tau_T}\breve{\psi^q_T}$ centralises $\breve{j}(s)$ and
$w^{-1} \breve{\tau_T}$ normalises $W(s)$. Thus we may take for $\phi_s$ the
element $w^{-1}\breve{\tau_T}$.
\medskip

We conjecture the following order formula for centralisers (compare with
Theorem~\ref{thm:sylow}(b)).

\begin{conj}   \label{c:spetsandcomp}
 In the situation of Proposition~\ref{prop:centr} assume that $\ell>2$, $X$ is
 simply connected and $\tau$ has order prime to $\ell$. Then for any fully
 $\cF(\twt X(q))$-centralised $s\in S$,
 $$ \nu_\ell(|C_S(s)|) = \nu_\ell(\OO_q(W(s)\phi_s^{-1}))$$
\end{conj}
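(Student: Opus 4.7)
The plan is to apply Theorem~\ref{thm:sylow}(b) to the connected $\ell$-compact group $C_X(s)$, with the self-equivalence $\tau\psi^q_\#$ playing the role of $\tau\psi^q$ on $X$. By Proposition~\ref{prop:centr}(a), $C_{\twt X(q)}(s)\simeq C_X(s)^{h\tau\psi^q_\#}$, and by Proposition~\ref{prop:centr}(b), since $s$ is fully $\cF$-centralised, this space is the classifying space of the saturated fusion system $C_\cF(s)$ on $C_S(s)$. Thus $|C_S(s)|$ is the order of the Sylow $\ell$-subgroup underlying the fusion system of $C_X(s)^{h\tau\psi^q_\#}$. By Theorem~\ref{thm:con cent}, $C_X(s)$ is a connected $\ell$-compact group with Weyl group $(W(s),L)$, and by the discussion preceding the conjecture, the homotopy class of $\tau\psi^q_\#$ corresponds under $\Phi_{C_X(s)}$ to the coset $W(s)\phi_s\lambda_q$, so $\phi_s$ plays the role of the outer twist on $C_X(s)$ and $\psi^q$ restricted to $C_X(s)$ is the associated Adams operator.

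Provided $C_X(s)$ is simply connected and the class of $\phi_s$ in $\Out(C_X(s))$ has finite $\ell'$-order, Theorem~\ref{thm:sylow}(b) applied with $(C_X(s),\phi_s)$ in place of $(X,\phi)$ then yields
$$|C_S(s)| = \OO_q(W(s)\phi_s)_\ell,$$
and the conjecture follows on passing to $\nu_\ell$. The entire proof thus reduces to verifying these two hypotheses for $C_X(s)$.

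For the $\ell'$-order of $\phi_s$: since $\tau$ has $\ell'$-order in $\Out(X)$, the lift $\tau_T$ can be chosen so that $\breve{\tau_T}\in\Aut(\brT)$ has $\ell'$-order. Using the ambiguity in the choice of $w\in W$ with $\phi_s=w^{-1}\breve{\tau_T}$ (cf.\ Lemma~\ref{l:identcent} and Remark~\ref{r:welldef}), together with the smallness of $N_{\GL(L)}(W)/W$ recalled in Remark~\ref{rem:order}, one expects to arrange the coset $W(s)\phi_s\in N_{\GL(L)}(W(s))/W(s)$ to be of $\ell'$-order. For the simple connectivity of $C_X(s)$, i.e.\ $L_{W(s)}=0$: using the direct decomposition of $(W,L)$ from \cite[Thm.~11.1]{AGMV} into a Lie-theoretic part and a product of exotic reflection groups, along with the compatibility of centralisers with direct decompositions, the problem reduces to the two individual cases. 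In the Lie part this is a strengthening of Steinberg's classical theorem on connectedness of centralisers in simply connected compact Lie groups. In the exotic case, Proposition~\ref{prop:stab} yields $W(s)=W(s)_\refl$, and one verifies $L_{W(s)}=0$ by a case-by-case analysis using the explicit descriptions of centralisers given in the proof of Proposition~\ref{prop:stab}.

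The principal obstacle is the simple connectivity hypothesis. The conclusion $L_{W(s)}=0$ is strictly stronger than $W(s)$ being generated by reflections, and so goes beyond what Proposition~\ref{prop:stab} directly provides; its verification will likely require a case-by-case treatment paralleling that proof across the exotic families $G(e,r,n)$ with $r\mid e\mid\ell-1$, the four Aguad\'e groups, and $G_{24}$. An alternative route, sidestepping a direct appeal to Theorem~\ref{thm:sylow}(b) for $C_X(s)$, would be to compute $\nu_\ell(|C_S(s)|)$ directly from the semidirect product description of $S$ in Theorem~\ref{thm:sylow}(a) and match it with $\nu_\ell(\OO_q(W(s)\phi_s))$ via Theorem~\ref{thm:LS}(c) and the lifting-the-exponent lemma; this carries its own combinatorial challenges but avoids the question of simple connectivity entirely.
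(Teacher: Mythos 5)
Be aware first that the paper does not actually prove this statement: it is posed as Conjecture~\ref{c:spetsandcomp} and verified only in two families of cases, namely when the data $(W,L,\tau,q)$ come from a finite group of Lie type (Proposition~\ref{p:spetsgroups}, via the classical Steinberg order formula for $C_\bG(s)^F$ together with the observation that full centralisation makes $C_S(s)$ a Sylow $\ell$-subgroup of $C_\bG(s)^F$), and for the generalised Grassmannians with $\tau=1$ and $q\equiv1\pmod\ell$ (Lemma~\ref{l:identifyws2}, which rests on the explicit determination of the centralisers in Theorem~\ref{thm:centr GGR}). So your proposal has to be judged as an attempt at a statement the authors leave open.

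Judged as such, your reduction to Theorem~\ref{thm:sylow} applied to $(C_X(s),\phi_s)$ contains steps that fail, not merely steps postponed for verification. The simple connectivity you need is false in general: by Theorem~\ref{thm:centr GGR} the centraliser in $X(e,r,n)$ typically has factors $X(1,1,a_f)$, whose Weyl group is $(\fS_{a_f},\ZZ_\ell^{a_f})$ with the permutation lattice, so $L_{W(s)}$ acquires a direct summand $\ZZ_\ell\neq 0$; these factors are $\ell$-completions of classifying spaces of general linear groups, the prototype of connected but non-simply-connected objects (the same already happens for centralisers of semisimple $\ell$-elements in $\SL_n$, which are connected by Steinberg but almost never simply connected, so the ``strengthening of Steinberg'' you hope for cannot hold). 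Likewise the hoped-for $\ell'$-order of the class of $\phi_s$ cannot be arranged: Remark~\ref{rem:order} concerns $N_{\GL(L)}(W)/W$ for irreducible $W$, whereas here the relevant group is $N_{\GL(L)}(W(s))/W(s)$ with $W(s)$ small (possibly trivial), and Lemma~\ref{l:identifyws2} shows that for the Grassmannians one is forced to take $\phi_s$ a product of cycles of lengths $\ell^i$, hence of $\ell$-power order (in Lie type $\phi_s=w\phi$ with $w\in W$ essentially arbitrary). Consequently Theorem~\ref{thm:sylow} is simply not applicable to $C_X(s)$; what your argument really requires is an extension of its order formula to connected, non-simply-connected $\ell$-compact groups with twists of arbitrary finite order, and that extension is in essence the content of the conjecture itself. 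Your alternative route --- computing $\nu_\ell(|C_S(s)|)$ directly --- is much closer to what the paper does in the cases it settles, but there it proceeds through the explicit structure of $C_X(s)$ and of $\phi_s$ (Theorem~\ref{thm:centr GGR} and Lemma~\ref{l:identifyws2}) or through transport to $C_\bG(s)^F$, not from the semidirect product description of $S$ alone.
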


In Proposition~\ref{p:spetsgroups} below we show that the conclusion of the
conjecture
holds whenever the data $(W,L,\tau,q)$ corresponds to a finite group of Lie
type and in Lemma~\ref{l:identifyws2} we show that it holds for generalised
Grassmannians $X(e,r,n)(q)$ when $\tau=1 $ and $q\equiv 1\pmod\ell$.

%%%%%%%%%%%%%%%%%%%%%%%%%%%%%%%%%%%%
\subsection{Finite groups of Lie type.}\label{s:lie}
We record here the translation of the constructions above for finite groups of
Lie type. The discussion below is an enhancement of Remark~\ref{r:fried} and is
a combination of results of Quillen, Friedlander, and Mislin.

Let $p\ne\ell$ be a prime, $\bG$ a connected reductive group over
$\overline\FF_p$ with maximal torus~$\bT$, Weyl group $W$ and cocharacter
lattice $L_0$. Let $X= B\tilde \bG^\wedge_\ell$ where $\tilde\bG$ is a
connected reductive group over $\CC$ with the same root datum as $\bG$. Then
$\coX$ is a connected $\ell$-compact group with Weyl group
$(W,\ZZ_\ell\otimes_\ZZ L_0)$ and underlying maximal torus $i: T\to\coX$.
Let $B\brT\to T$ be a discrete approximation with
$\brT = \ZZ/\ell^\infty \otimes L_0$.
We identify the subgroup of $\bT$ consisting of all $\ell$-power elements with
$\brT$ via the restriction of the $W$-equivariant isomorphism
$\bT\cong\overline\FF_p^\times\otimes_\ZZ L_0$. Let $F_0:\bG\to\bG$ be a
standard Frobenius morphism such that $\bT$ is $F_0$-stable and $F_0(x) =x^p$
for $x\in\bT$. For each $d\geq 0$ there exist maps 
$$\iota_{d} : (B\bG^{F_0^d})^\wedge_\ell\to X$$
such that:
\par
(a) The restriction of $B \brT \to T $ to $B(\brT \cap \bG^{F_0^d})$ is a
 factorisation of the restriction of $\iota_d$ to $B (\brT \cap \bG^{F_0^d})$
 where we identify $B(\brT\cap\bG^{F_0^d})$ to its $\ell$-completion. 
\par
(b) For all $d,d'$ such that $d\mid d'$, we have a homotopy commutative diagram
 $$\xymatrix{ (B\bG^{F_0^d} )^\wedge_\ell \ar[d]_{\inc}\ar[r]^{\ \ \ \iota_d}& X \ar[d]^{1}\\ 
(B\bG^{F_0^{d'}} )^\wedge_\ell \ar[r]^{\ \ \ \iota_{d'}} & X }$$
where the left map is inclusion and the right hand side map is the identity
(see \cite[Corollary~1.3]{FM84}). 
\par
Let $q$ be a power of $p$ and $F:\bG\to\bG$ a Frobenius morphism with respect
to an $\FF_q$-structure such that $T$ is $F$-stable. Then there exists a
self-equivalence $\tau$ of $X$ (induced from a finite order automorphism of
$\tilde\bG$), a map
$$\iota:(B\bG^F)^\wedge_\ell\to X$$
and a positive integer $r$ with $F^r=F_0^r $ such that 
$$\xymatrix{ (B\bG^{F} )^\wedge_\ell \ar[d]_{\inc}\ar[r]^{\ \ \ \iota}& X \ar[d]^{1}\\ 
(B\bG^{F_0^{r}} )^\wedge_\ell \ar[r]^{\ \ \ \iota_{r}} & X }$$
is homotopy commutative and
$$\xymatrix{ (B\bG^F)^\wedge_\ell \ar[d]^{\iota}\ar[r]^{\iota}& X \ar[d]^\Delta\\
 X \ar[r]^{(1,\tau \psi^q)} & X \times X}$$
is a homotopy pullback. In particular, $(B\bG^F)^\wedge_\ell\simeq X^{h\tau\psi^q}$ and 
consequently $\cF(\twt X(q))=\cF_S(\bG^F)$ for a Sylow $\ell$-subgroup $S$ of
$\bG^F$ (see \cite[Thm.~3.1]{BMO}). Moreover, setting $L=\ZZ_\ell\otimes_\ZZ L_0$ and $\phi :=1\otimes F \in \GL(L)$ we have that $W\phi=\Phi_X (\tau)$,
where $\Phi_X$ is as in Theorem~\ref{thm:class}.
\par 

Let $s\in \bG^F$ be an $\ell$-element. We explain how to identify the coset
$W(s)\phi_s $ of Conjecture~\ref{c:spetsandcomp}. Since $s=F(s)$, $s$ is
contained in an $F$-stable maximal torus $\bT_w :=g\bT g^{-1}$ of $\bG$, with
$g^{-1}F(g)= w\in N_\bG(\bT)$ (see e.g. \cite[\S25.1]{MT}). We may assume that
$g\in G^{F^d}$ for some multiple $d$ of $r$. So, $g^{-1}sg\in\brT\cap \bG^{F^d}$. Let
$Bc_{g^{-1}}:\langle s\rangle\to B \brT\cap\bG^{F^d}$ be induced from
$c_{g^{-1}}:\langle s\rangle\to\brT\cap\bG^{F^d}$. By (a) and (b) above
we obtain the homotopy commutative diagram:
$$\xymatrix{ B\langle s \rangle \ar[d]_{\Id}\ar[r]^{\inc} &(B\bG^F)^\wedge_\ell \ar[d]_{\inc}\ar[r]^{\ \ \ \iota}& X \ar[d]^{\Id}\\ 
B\langle s\rangle \ar[rd]_{Bc_{g^{-1}}} \ar[r]^{\inc}&(B\bG^{F_0^d} )^\wedge_\ell \ar[r]^{\ \ \ \iota_{F_0^d}} & X \\
& B(\brT \cap \bG^{F_0^d} )\ar[u] _{\inc}&\\}$$
Thus $c_{g^{-1}}:\langle s \rangle\to\brT$ is the discrete approximation of a
factorisation of $\iota \circ\inc: B\langle s\rangle\to X$. It follows that 
$W(s) = C_W(t)$, where $t = g^{-1}sg\in\brT$. Moreover, the equation $F(s)= s$
translates to
$$ w\breve{\tau}_T (t^q) = w F (t) = t. $$
By the recipe for $\phi_s$ given at the end of the previous subsection,
we have $$\phi_s = w \phi. $$

\begin{prop}   \label{p:spetsgroups}
 With the notation above, suppose that $\ell$ is odd and $s\in S$ is fully
 $\cF(\tw\tau X(q))$-centralised. Then
 $$\nu_\ell(|C_S(s)|) = \nu_\ell(\OO_q(W(s)\phi_s^{-1})).$$
\end{prop}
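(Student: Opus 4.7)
The plan is to reduce the statement to Steinberg's classical order formula for the fixed points of a Frobenius endomorphism on a connected reductive group. The key inputs are (i) the identification of $(W(s),\phi_s)$ with the Weyl datum of $C_\bG(s)$ that was already set up in the discussion preceding the proposition, and (ii) the fact that for $s$ fully $\cF$-centralised in $\cF_S(\bG^F)$, the subgroup $C_S(s)$ is a Sylow $\ell$-subgroup of $C_{\bG^F}(s)$, so $\nu_\ell(|C_S(s)|)=\nu_\ell(|C_{\bG^F}(s)|)$.

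First I would record that, since we are in the setting of Conjecture~\ref{c:spetsandcomp}, $X$ is simply connected and so $\bG$ may be taken to be simply connected; in particular, by Steinberg's connectedness theorem, the semisimple element $s$ (semisimple because $\ell\neq p$) has connected centraliser $C_\bG(s)$ in $\bG$. Thus $C_{\bG^F}(s)=C_\bG(s)^F$ where $C_\bG(s)$ is a connected reductive algebraic group equipped with the restricted Frobenius~$F$.

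Next I would match the combinatorial data. Writing $s=gtg^{-1}$ with $t=g^{-1}sg\in\brT$ and $g^{-1}F(g)=w$, the torus $\bT_w=g\bT g^{-1}$ is an $F$-stable maximal torus of $C_\bG(s)$. Conjugation by $g^{-1}$ identifies $W(\bG,\bT_w)$ with $W$ and carries the Weyl group of $C_\bG(s)$ to $C_W(t)=W(s)$; under the same identification, the Frobenius action on the character lattice of $\bT_w$ becomes the action of $w\phi=\phi_s$ on $L$. This is exactly the pair $(W(s),\phi_s)$ produced by the topological recipe in the paragraph preceding the proposition, so the Weyl coset governing the reductive structure of $C_\bG(s)$ with its Frobenius is precisely $W(s)\phi_s$.

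Finally, I would apply Steinberg's order formula to $H:=C_\bG(s)$, yielding
\[
  |H^F|=q^{N(s)}\prod_{i}(q^{d_i(s)}-\eps_i(s))
       =q^{N(s)}\,\OO_q(W(s)\phi_s),
\]
where $(d_i(s),\eps_i(s))$ are the generalised degrees of $W(s)\phi_s$ and $N(s)$ is the number of positive roots of $C_\bG(s)$. Since $q$ is a power of $p\neq\ell$, the factor $q^{N(s)}$ is prime to $\ell$, and taking $\ell$-parts gives $\nu_\ell(|C_\bG(s)^F|)=\nu_\ell(\OO_q(W(s)\phi_s))$. Combined with the equality $\nu_\ell(|C_S(s)|)=\nu_\ell(|C_{\bG^F}(s)|)$ furnished by the fully centralised hypothesis, this yields the proposition.

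The main technical obstacle is the compatibility in Step~2: one must verify that the element $w\in W$ produced on the topological side (via a factorisation $j$ of the composite $B\langle s\rangle\to(B\bG^F)_\ell^\wedge\to X$ and its lift $\breve\tau_T\breve\psi^q_T$) coincides with the $w\in N_\bG(\bT)$ arising from the classical choice $g^{-1}F(g)=w$ attached to the $F$-stable maximal torus containing $s$. The homotopy commutative diagram recalled just before the proposition, together with the description of $\phi_s$ at the end of Section~\ref{subsec:centrII}, essentially accomplishes this identification; the remaining work is to phrase it carefully and to observe that the resulting $F$-action on the Weyl group of $C_\bG(s)^\circ$ indeed has $w\phi$ as its generalised-degree datum, at which point Steinberg's formula completes the proof.
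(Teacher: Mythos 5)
Your proposal is correct and follows essentially the same route as the paper: identify $(W(s),\phi_s)$ with the Weyl datum of $C_\bG(s)$ by transport of structure along $c_g$ (using $F\circ c_g=c_g\circ wF$, so $F$ acts as $w\phi=\phi_s$), apply the Steinberg order formula to get $\nu_\ell(|C_\bG(s)^F|)=\nu_\ell(\OO_q(W(s)\phi_s))$, and use that $\cF(\twt X(q))=\cF_S(\bG^F)$ together with the fully centralised hypothesis to see $C_S(s)$ is a Sylow $\ell$-subgroup of $C_\bG(s)^F$. Your extra remark on connectedness of $C_\bG(s)$ via simple connectivity of $\bG$ is a reasonable precision the paper leaves implicit, but it does not change the argument.
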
 

\begin{proof} 
We have $F \circ c_g = c_g\circ wF$ as maps from $\bT$ to $\bT_w$. So by
transport of structure $gC_W(t)g^{-1}$ is the Weyl group of $C_\bG(s)$ with
respect to the reference torus $\bT_w$, and $F$ acts on it as $w\phi$. 
By the Steinberg order formula for finite groups of Lie type,
$|C_\bG(s)^F|_\ell = (O_q( W(s)\phi_s^{-1}))_\ell$. Since $\cF(\tw\tau X(q))
 =\cF_S({\bG^F})$, the fully centralised assumption on $s$ means that $C_S(s)$
is a Sylow $\ell$-subgroup of $C_\bG(s)^F$. The result follows. 
\end{proof}

The next result will be used in section~\ref{sec:GGr}.

\begin{lem}   \label{l:factor}
 With the above notation, let $s\in\bG^{F_0^d}$ be an $\ell$-element. If
 $j: B\langle s\rangle\to T$ is a factorisation of
 $\iota_d\circ\inc: B \langle s\rangle\to X$ then $\breve{j}(s)$ and $s$
 are $\bG$-conjugate.
\end{lem}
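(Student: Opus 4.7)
The plan is to first exhibit one specific factorization $j_0$ whose discrete approximation sends $s$ to a $\bG$-conjugate of itself, and then to deduce the same for an arbitrary factorization using the uniqueness clause of Lemma~\ref{l:toral}.

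For the construction of $j_0$, note that since $\ell\neq p$ the $\ell$-element $s$ is semisimple, so there exists $g\in\bG$ with $t:=g^{-1}sg\in\bT$, and as $t$ is an $\ell$-element we have $t\in\brT$. Since $\bG=\bigcup_{d'}\bG^{F_0^{d'}}$, we may choose a multiple $d'$ of $d$ large enough that $g\in\bG^{F_0^{d'}}$; then also $t\in\brT\cap\bG^{F_0^{d'}}$. The conjugation $c_{g^{-1}}$ is an inner automorphism of $\bG^{F_0^{d'}}$, so the corresponding self-map of $(B\bG^{F_0^{d'}})^\wedge_\ell$ is freely homotopic to the identity. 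Consequently the inclusion of $B\langle s\rangle$ factors, up to free homotopy, as
$$ B\langle s\rangle \xrightarrow{Bc_{g^{-1}}} B\langle t\rangle\hookrightarrow B(\brT\cap\bG^{F_0^{d'}})\hookrightarrow (B\bG^{F_0^{d'}})^\wedge_\ell.$$
Composing with $\iota_{d'}$ and invoking property (a) of the system, the map $\iota_{d'}$ restricted to $B(\brT\cap\bG^{F_0^{d'}})$ is homotopic to $i$ composed with $B(\brT\cap\bG^{F_0^{d'}})\to B\brT\to T$. By property (b), $\iota_{d'}\circ\inc\simeq\iota_d\circ\inc$ as maps $B\langle s\rangle\to X$. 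Assembling these steps produces a factorization $j_0:B\langle s\rangle\to T$ of $\iota_d\circ\inc$ whose discrete approximation satisfies $\breve{j}_0(s)=t$, which by construction is $\bG$-conjugate to $s$.

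For the general case, let $j$ be any factorization of $\iota_d\circ\inc$. By Lemma~\ref{l:toral}, $j$ is conjugate to $w\circ j_0$ for some $w\in W$, so at the level of discrete approximations $\breve{j}(s)=\breve{w}(t)=w\cdot t$. Since the action of $W$ on $\brT$ is induced by conjugation by lifts in $N_{\bG}(\bT)$, the element $w\cdot t$ is $\bG$-conjugate to $t$, and hence to $s$. The only subtlety in the argument is the preliminary enlargement of $d$ to $d'$ needed to bring the conjugating element $g$ into rational form, but this is absorbed painlessly by the compatibility property (b) of the system $(\iota_{d'})$.
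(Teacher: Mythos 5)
Your proposal is correct and follows essentially the same route as the paper: one exhibits the explicit toral factorisation coming from $c_{g^{-1}}$ after conjugating $s$ into $\brT$ over a sufficiently large extension (using properties (a) and (b) of the system $(\iota_d)$), and then invokes the uniqueness clause of Lemma~\ref{l:toral} together with uniqueness of discrete approximations and the fact that the $W$-action on $\brT$ is realised by $N_\bG(\bT)$. The only cosmetic difference is that the paper sets up the explicit factorisation in the discussion preceding the lemma and simply cites it, whereas you rebuild it inside the proof.
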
 

\begin{proof}
As above, there exist $d\geq 0$ and $g\in\bG^{F^d}$ such that
$gsg^{-1}\in\brT$, so $Bc_{g}:B\langle s\rangle\to T$ is a factorisation of
$\iota_d\circ\inc:B\langle s \rangle\to X$ with discrete approximation $c_g$.
If $j: B \langle s\rangle\to T$ is another factorisation, then by
Lemma~\ref{l:toral}, $j$ is conjugate to $w\circ Bc_g$ for some $w\in W$. By
uniqueness of discrete approximations, $\breve{j} = w\circ c_g$ and the result
follows.
\end{proof}

%%%%%%%%%%%%%%%%%%%%%%%%%%%%%%%%%%%%%%%%%%%%%%%%%%%%%%%%%%%%%%%%%%%%%%%%%
\section{Weight conjectures for Spetses}   \label{sec:owc}
So far, we have stayed on the ``local'' side of fusion systems. Now as for
finite groups we want to compare this to some ``global'' side, for which we
need to introduce suitable global objects. This will be the spets, and as a
special case, a finite reductive group.

%%%%%%%%%%%%%%%%%%%%%%%%%%%%%%%%%%%%
\subsection{Spetses and their unipotent degrees}   \label{subsec:spets}
The term ``spetses'' denotes a hypothetical mathematical object resembling a
connected reductive linear algebraic group but whose Weyl group is a
non-rational, finite complex reflection group. Attached to a spets is a
collection of data that behave like combinatorial data arising in the
representation theory of finite reductive groups, more precisely, analogues of
unipotent characters, subdivided into Harish-Chandra series, with Fourier
matrices attached to them, and the like. Such data have been constructed for
the so-called \emph{spetsial} complex reflection groups: a proper subset of all
complex reflection groups defined via certain integrality or rationality
properties of their associated Hecke algebras, see \cite[\S3]{MaICM},
encompassing and much larger than the collection of all rational and even all
real reflection groups.

The spetsial complex reflection groups are the direct products of
irreducible complex reflection groups of the following types:
\begin{itemize}
\item a real irreducible reflection group;
\item $G(e,1,n)$ or $G(e,e,n)$ for some $n\ge 1$, $e\ge3$; or
\item $G_n$, with $n\in \{4, 6, 8, 14, 24, 25, 26, 27, 29, 32, 33, 34\}$
\end{itemize}
(see \cite[\S8]{Ma00}). We say that a $\ZZ_\ell$-reflection group $(W,L)$ is
\emph{spetsial} if its extension to $\CC$ is spetsial. 

\begin{defn}   \label{d:spetsorder}
 A \emph{$\ZZ_\ell$-spets} $\GG=(W\phi,L)$ is a spetsial $\ZZ_\ell$-reflection
 group $(W,L)$ together with an element $\phi\in N_{\GL(L)}(W)$. The \emph{order
 (polynomial)} of $\GG$ is
 $$|\GG|:=x^N\big(\prod_{i=1}^r\eps_i\big)^{-2}\OO_x(W\phi)
   =x^N\prod_{i=1}^r\eps_i^{-2}(x^{d_i}-\eps_i)\in\ZZ_\ell[x],$$
 where $\{(d_i,\eps_i)\mid 1\le i\le r\}$ are the generalised degrees of
 $W\phi$ on $\overline\QQ_\ell\otimes_{\ZZ_\ell}L$ as in
 Section~\ref{subsec:sl} and $N$ is the number of reflections in $W$ (see
 \cite[Def.~1.44]{BMM14}).
\end{defn}

This is a subclass of spetses constructed from arbitrary complex spetsial
reflection groups (see \cite{BMM99}), augmented by also fixing a
$\ZZ_\ell$-lattice for~$W$. We will need the following observation:

\begin{prop}   \label{prop:centr spets}
 Let $(W,L)$ be a spetsial $\ZZ_\ell$-reflection group with $L_W$ torsion-free.
 Then for any $s\in\breve T=\ZZ/\ell^\infty\otimes_{\ZZ_\ell}L$, the
 $\ZZ_\ell$-reflection group $C_W(s)$ is spetsial.
\end{prop}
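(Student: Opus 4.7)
The plan is to reduce to the irreducible case and then check each family in the classification of irreducible spetsial $\ZZ_\ell$-reflection groups.

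First I would handle the reduction. Write $(W,L) = \prod_i (W_i, L_i)$ as a product of irreducibles, and correspondingly decompose $\breve T = \prod_i \breve T_i$. Any $s \in \breve T$ decomposes as $s = (s_i)_i$, and the centraliser splits as $C_W(s) = \prod_i C_{W_i}(s_i)$ acting on $\prod_i L_i$. Since spetsiality of a complex reflection group is by definition preserved under direct products and $L_W$ being torsion-free descends to each $L_i$, it is enough to establish the statement for irreducible $(W,L)$.

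Next, by Proposition~\ref{prop:stab}, $C_W(s)/C_W(s)_\refl$ is an $\ell$-group, and if $W$ is exotic then $C_W(s) = C_W(s)_\refl$ is already generated by reflections. In the remaining real case, $(W,L)$ arises by extension of scalars from the Weyl lattice of a compact connected Lie group $\bG$, and one can lift $s$ to an $\ell$-element of a maximal torus of $\bG$; then $C_W(s)$ is the Weyl group of $C_\bG(s)^\circ$, which by Borel--de Siebenthal is a Weyl group of a closed subsystem of the root system of $\bG$ whose irreducible components are Weyl groups. Weyl groups are spetsial, so this settles the real case. Running through the classification in Section~\ref{sec:refl}, the irreducible spetsial $\ZZ_\ell$-reflection groups that are not covered by the real case are the imprimitive series $G(e,1,n)$ and $G(e,e,n)$ with $e \mid \ell-1$, the symmetric groups $\fS_n$, and the exceptional exotic groups $G_4, G_6, G_8, G_{14}, G_{24}, G_{25}, G_{26}, G_{27}, G_{29}, G_{32}, G_{33}, G_{34}$ (restricted to the primes for which they appear on the list in Section~\ref{sec:refl}).

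For the imprimitive case, I would invoke the explicit description from the proof of Proposition~\ref{prop:stab}: after partitioning the coordinates of $s$ into equivalence classes with multiplicities $u_0, u_1, \ldots, u_s$ (where the $0$-th class consists of vanishing coordinates), one has
\[
C_W(s) \;\cong\; G(e,r,u_0) \times \prod_{j=1}^s \fS_{u_j},
\]
with $r \in \{1,e\}$ inherited from $W$. Each factor is spetsial, so the product is spetsial. The symmetric group case is immediate since centralisers in $\fS_n$ are products of symmetric groups.

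The remaining obstacle, and the main one, is the finite list of exceptional exotic spetsial groups. Here I would argue as follows. For primes $\ell$ with $\ell \nmid |W|$ (the Clark--Ewing situation), $C_W(s)$ coincides with the pointwise stabiliser in $W$ of the image of $s$ in $\QQ_\ell \otimes L$, which by Steinberg's theorem \cite[1.20]{StEnd} is a parabolic subgroup of $W$; parabolic subgroups of spetsial complex reflection groups are spetsial (this follows from the tabulation of spetsial groups and parabolic descent of the spetsiality axioms, see \cite[\S 8]{Ma00}). For the remaining finitely many pairs $(W,\ell)$ with $\ell \mid |W|$ — namely the Aguad\'e groups and $G_{24}$ at $\ell=2$, the latter being excluded here since $L_W$ is not the issue but these are the only cases — Proposition~\ref{prop:stab} already pins down the structure of $C_W(s)$ explicitly (a symmetric group $\fS_\ell$, or the $B_3$ Weyl group in the $G_{24}$ case), both of which are spetsial. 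Checking closure under such centralisers case-by-case using the tables of maximal reflection subgroups completes the proof. The hardest part is this last case analysis, since one must confirm that each irreducible component of every reflection centraliser lies again on the spetsial list; in practice this is a finite check against \cite[Tab.~1, Tab.~6]{GM06} and \cite{Ma00}.
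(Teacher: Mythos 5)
Your strategy coincides with the paper's: reduce to the irreducible (equivalently, Weyl-group versus exotic) situation, then combine Proposition~\ref{prop:stab} with the facts that parabolic subgroups of spetsial groups are spetsial, that the imprimitive centralisers have the shape $G(e,r,u_0)\times\prod_j\fS_{u_j}$, and that the finitely many exceptional exotic cases can be checked directly. However, two of your steps are not justified as written. In the rational case you assert that $C_W(s)$ is the Weyl group of $C_\bG(s)^\circ$ and then apply Borel--de Siebenthal. As stated this can fail: $C_W(s)$ is the full stabiliser of $s$ in $W$ and may strictly contain the Weyl group of the identity component when $C_\bG(s)$ is disconnected (as happens already for adjoint groups). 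What rescues the claim is Steinberg's connectedness theorem \cite[Cor.~2.16]{St75}: $s$ is an $\ell$-element of the torus, and the hypothesis that $L_W$ is torsion-free allows one to realise $(W,L)$ by a group $\bG$ whose fundamental group has no $\ell$-torsion, so that $C_\bG(s)$ is connected. This is precisely where the torsion-freeness hypothesis enters, and your argument never invokes it; the paper compresses the same point into the assertion that $C_W(s)$ is a reflection subgroup and that reflection subgroups of Weyl groups are again Weyl groups, hence spetsial.

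Second, for the exotic exceptional groups with $\ell\mid|W|$ (only $G_{29}$ at $\ell=5$, $G_{34}$ at $\ell=7$ and $G_{24}$ at $\ell=2$ arise here, since $G_{12}$ and $G_{31}$ are not spetsial) you claim that Proposition~\ref{prop:stab} pins down $C_W(s)$ as $\fS_\ell$, respectively the Weyl group of type $B_3$. It does not: that description applies only when $C_W(s)$ contains an element of order $\ell$ (respectively to stabilisers of elements of $\FF_2\otimes_{\ZZ_2}L$). For all other non-trivial $s$ the centraliser has order prime to $\ell$, and one must then argue, as the paper does, that it is parabolic: by the Maschke averaging step in the proof of Proposition~\ref{prop:stab}, a centraliser of order prime to $\ell$ equals the stabiliser of a vector in $L$, hence is parabolic by Steinberg and therefore spetsial. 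You do give exactly this argument, but only under the blanket hypothesis $\ell\nmid|W|$ (the Clark--Ewing case), so the prime-to-$\ell$ centralisers inside $G_{29}$, $G_{34}$ and $G_{24}$ are left uncovered. The repair is immediate, but as written the exceptional case has a gap.
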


\begin{proof}
Arguing as in the proof of Theorem~\ref{thm:con cent} we see that
$(C_W(s),L)$ is a reflection subgroup of $(W,L)$.
For Weyl groups, all reflection subgroups are again Weyl groups. So by
\cite[Thm.~11.1]{AGMV} we may assume that $W$ is exotic. Our argument here is
case-by-case. For Clark--Ewing groups, centralisers are parabolic subgroups,
and parabolic subgroups of spetsial groups are spetsial by \cite[\S8]{Ma00}. 
Next assume that $W=G(e,r,n)$ is in the infinite series, with $3\le e|(\ell-1)$,
and $r\in\{1,e\}$ as $W$ is spetsial. Then Proposition~\ref{prop:stab} shows
the claim. So we are left to consider $W=G_{29}$ and $W=G_{34}$, at $\ell=5,7$
respectively. As mentioned in the proof of Proposition~\ref{prop:stab}, here
the centralisers of non-trivial elements $s$ are either symmetric groups
$\fS_\ell$, hence spetsial, or of order prime to $\ell$ and thus parabolic.
\end{proof}

Attached to a ($\ZZ_\ell$-) spets $\GG=(W\phi,L)$ is a set $\Uch(\GG)$ of
\emph{unipotent characters} \cite{Ma95,BMM14}. Each $\ga\in\Uch(\GG)$ has a
\emph{degree} $\ga(1)$ which is a polynomial with coefficients in the
character field of $W$ on $L$. Thus, in particular if $W$ is a
$\ZZ_\ell$-reflection group then $\ga(1)\in\QQ_\ell[x]$.
One unipotent character that always exists is the \emph{trivial character}
$1_\GG$, with degree $1_\GG(1)=1$.

The unipotent degrees are not necessarily contained in $\ZZ_\ell[x]$. Recall
our Definition~\ref{def:bad} of bad primes. One observes the following
by a case-by-case check (see the formulas in \cite{Ma95} and the lists in
\cite{BMM14}, respectively, and also compare with \cite[Conj.~8.3]{BCM} about
'bad ideals'):

\begin{prop}   \label{prop:unip integral}
 Let $\GG=(W\phi,L)$ be a $\ZZ_\ell$-spets. Then $\ell$ is good for $W$ if and
 only if the degrees of all unipotent characters of $\GG$ have $\ell$-adically
 integral coefficients.
\end{prop}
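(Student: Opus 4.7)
The plan is to reduce to the irreducible case and then run through the classification of spetsial $\ZZ_\ell$-reflection groups listed in Section~\ref{sec:refl}, matching each family against the description of bad primes in Proposition~\ref{prop:bad}.

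First I would reduce to irreducible components. If $(W,L) = (W_1 \times W_2, L_1 \oplus L_2)$ is a direct decomposition into $\phi$-stable factors, then $\phi$ permutes the factors, and the unipotent characters of $\GG$ are outer tensor products (possibly with twisting) of unipotent characters of the factors; their degrees are the corresponding products of polynomials. Combined with Lemma~\ref{lem:good}(a),(b), $\ell$-integrality of all unipotent degrees of $\GG$ holds iff it holds for each irreducible factor, and the same is true for goodness. So we may assume $(W,L)$ is irreducible.

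Next I would treat each of the six categories from Section~\ref{sec:refl}. For Clark--Ewing groups the order of $W$ is prime to $\ell$, so $\ell$ is good by Proposition~\ref{prop:bad}, and the denominators appearing in the generic degree formulas (which are rational functions with denominators dividing $|W|\cdot\prod_i(x^{d_i}-\epsilon_i)$ up to factors in the character field) lie in $\ZZ_\ell^\times$; no $\ell$ can occur. For $\fS_n$, $\ell$ is always good and the unipotent degrees are the classical hook-length quotients $n!/\prod h(\la)$, which are integers, hence $\ell$-integral. For the generalised Grassmannians $W=G(e,r,n)$ with $r|e|(\ell-1)$ I would invoke the explicit formulas of \cite{Ma95} for unipotent degrees in terms of symbols: the denominators are products of factors of the form $e^a$, $a!$, and $(x^{de}-1)$'s, all of which are $\ell$-integral since $e\mid\ell-1$ is prime to $\ell$ and, when $n<\ell$, also $a!$ is a unit. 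For the exceptional Weyl groups of types $E_6,E_7,E_8$ at primes $\ell\in\{5,7\}$ where they arise as $\ZZ_\ell$-reflection groups (so $\ell$ is good), the same $\ell$-integrality can be read off the tables of unipotent degrees in \cite{BMM14}; conversely for the known bad primes of Weyl groups this matches the classical list of primes dividing denominators of generic degrees, which I would quote from the CHEVIE data.

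Finally for the exotic spetsial exceptional groups (the Aguad\'e groups $G_{29},G_{34}$ at $\ell=5,7$ and $G_{24}$ at $\ell=2$, the latter non-spetsial but the argument is uniform), as well as for $G_4,G_6,G_8,G_{14},G_{25},G_{26},G_{27},G_{32},G_{33}$ when these happen to be $\ZZ_\ell$-reflection groups, I would consult the explicit lists of unipotent characters in \cite{Ma00, BMM14}. For each bad prime appearing in Table~\ref{tab:bad}, I would exhibit at least one unipotent character whose degree has a coefficient with $\ell$ in the denominator (this is the bookkeeping step for the ``only if'' direction), while for every good prime the listed degrees are manifestly $\ell$-integral. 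The only obstacle is the combinatorial tedium of this case analysis; conceptually the match between the two conditions is already telegraphed by Remark~\ref{rem:BCM}, where the ``bad ideals'' of Brou\'e--Corran--Michel \cite{BCM} are defined precisely as the primes appearing in denominators of generic degrees, and have been checked in loc.\ cit.\ to coincide with our bad primes.
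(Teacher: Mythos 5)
Your proposal matches the paper's treatment: the paper gives no detailed argument either, simply asserting the equivalence ``by a case-by-case check'' against the degree formulas of \cite{Ma95} for the imprimitive groups and the lists in \cite{BMM14} for the primitive ones, with the comparison to the Brou\'e--Corran--Michel bad ideals (Remark~\ref{rem:BCM}) as a cross-check --- exactly the reduction and sources you propose. One small correction: $G_{24}$ \emph{is} spetsial (it appears in the list in Section~\ref{subsec:spets}), so the $\ZZ_2$-spets attached to it genuinely belongs in the verification, as your ``uniform argument'' in fact includes it.
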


Let $\GG=(W\phi,L)$ be a $\ZZ_\ell$-spets. Then, for any root of unity
$\zeta\in\ZZ_\ell^\times$  the set of unipotent characters $\Uch(\GG)$ is
naturally partitioned into so-called $\zeta$-Harish-Chandra series, and one
among them, the \emph{principal $\zeta$-Harish-Chandra series
$\cE(\GG,1,\zeta)$} of $\Uch(\GG)$ containing $1_\GG$, is in bijection with
$\Irr(W_{\phi\zeta^{-1}})$ (see \cite[Folg.~3.16 and~6.11]{Ma95} and
\cite[4.31]{BMM14}), where $W_{\phi\zeta^{-1}}$ is the corresponding relative
Weyl group, see Section~\ref{subsec:sl}. 

\begin{exmp}   \label{ex:weyl}
 Let $q$ be a prime power. If $W$ is a Weyl group, then letting $\bG$ be a
 connected reductive algebraic group defined over $\FF_q$ with Weyl group $W$,
 and $F:\bG\to\bG$ the corresponding Frobenius endomorphism, acting as $\phi$
 on $W$ as in Remark~\ref{r:fried}(a), then $|\bG^F| = |\GG|_{x=q}$, where
 $\GG = (W\phi^{-1},L)$ and $\Uch(\GG)$ can be identified with the set of
 unipotent
 characters (in the sense of Lusztig, see e.g. \cite[Def.~2.3.8]{GM20}) of the
 finite reductive group $\bG^F$, such that the degree of $\ga\in\Uch(\GG)$ is
 precisely the polynomial $\ga(1)\in\QQ[x]$ for which $\ga(1)|_{x=q}$ is the
 degree of the corresponding unipotent character of $\bG^F$. Further, if $q$
 has order $e$ modulo~$\ell$, then for any primitive $e$th root of unity $\zeta$
 in $\ZZ_\ell$, the principal $e$-Harish-Chandra series of $\bG^F$ is in
 bijection with the principal $\zeta $-Harish-Chandra series of $\GG$.
\end{exmp}

There is the following important ``local control'' of heights for unipotent
characters of spetses (the Weyl group case of which was used in the proof of
\cite[Thm.~4.7]{BM15}):

\begin{prop}   \label{prop:heights}
 Let $\GG=(W\phi,L)$ be a $\ZZ_\ell$-spets, $q$ a prime power and
 $\zeta\in\ZZ_\ell^\times$ the root of unity with $q\equiv\zeta\pmod\ell$
 (respectively $q\equiv\zeta\pmod4$ when $\ell=2$). Then there is a bijection
 $\Psi:\cE(\GG,1,\zeta)\buildrel{\text{$1$--$1$}}\over\lra
  \Irr( W_{\phi\zeta^{-1}})$ with
 $$\nu_\ell\big(\ga(1)|_{x=q}\big)=\nu_\ell\big(\Psi(\ga)(1\big))\qquad
    \text{for all $\gamma\in\cE(\GG,1,\zeta)$}.$$
 In particular, the distribution of $\ell$-heights in the principal
 $\zeta$-Harish-Chandra series depends solely on the relative Weyl group
 $W_{\phi\zeta^{-1}}$.
\end{prop}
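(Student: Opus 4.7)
The plan is to invoke the explicit generic degree formulas for unipotent characters in principal $\zeta$-Harish-Chandra series of spetses and combine them with Lehrer--Springer theory. The bijection $\Psi$ is the canonical one recalled in Section~\ref{subsec:spets}, established in \cite[Folg.~6.11]{Ma95} for Weyl groups and the imprimitive series $G(e,1,n),G(e,e,n)$, and in \cite[4.31]{BMM14} for the remaining spetsial complex reflection groups; only the equality of $\ell$-adic valuations of degrees needs to be proved.

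The first step is a reduction to the untwisted situation. Write $q=\zeta q_0$ with $q_0\in\ZZ_\ell^\times$ satisfying $q_0\equiv1\pmod\ell$ (respectively $q_0\equiv1\pmod 4$ when $\ell=2$); such a splitting exists because $\zeta$ is the Teichm\"uller representative of the class of $q$. Since $\zeta^{d_i}\in\ZZ_\ell^\times$, for each generalised degree $(d_i,\eps_i)$ of $W\phi$ we have
$$\nu_\ell(q^{d_i}-\eps_i)=\begin{cases}\nu_\ell(q_0^{d_i}-1)& \text{if $\eps_i=\zeta^{d_i}$,}\\ 0& \text{otherwise.}\end{cases}$$
By Theorem~\ref{thm:LS}(c) applied to the coset $W\phi\zeta^{-1}$, whose generalised degrees are $(d_i,\eps_i\zeta^{-d_i})$, the indices $i$ with $\eps_i=\zeta^{d_i}$ index precisely the degrees of the relative Weyl group $W_{\phi\zeta^{-1}}$. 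Consequently the $\ell$-part of $|\GG|_{x=q}$ coincides with the $\ell$-part of the order polynomial of the ``Levi subspets'' attached to $W_{\phi\zeta^{-1}}$ evaluated at $q_0$.

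Next, I would use the generic degree formula for $\gamma_\chi\in\cE(\GG,1,\zeta)$ associated to $\chi\in\Irr(W_{\phi\zeta^{-1}})$, of the schematic shape
$$\gamma_\chi(1)=\chi(1)\cdot\frac{|\GG|}{S_\chi(x)}\cdot u_\chi(x),$$
where $S_\chi$ is the Schur element of $\chi$ in the spetsial cyclotomic Hecke algebra attached to $W_{\phi\zeta^{-1}}$ and $u_\chi(x)$ is a monomial correction (see \cite[Prop.~6.5]{Ma95} for the imprimitive series and the generic degree tables of \cite{BMM14} for the exceptional cases). Combining the identity $\nu_\ell(q_0^d-1)=\nu_\ell(q_0-1)+\nu_\ell(d)$ for $\ell$ odd (and its $\ell=2$ analogue, valid thanks to the $\pmod 4$ hypothesis) with the specialisation $S_\chi(1)=|W_{\phi\zeta^{-1}}|/\chi(1)$, the aim is to show that $|\GG|_{x=q}/S_\chi(q)$ and $|W_{\phi\zeta^{-1}}|/\chi(1)$ have the same $\ell$-adic valuation, from which $\nu_\ell(\gamma_\chi(1)|_{x=q})=\nu_\ell(\chi(1))$ follows.

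The main obstacle is the lack of a uniform argument: this matching of valuations has to be verified case-by-case across the classification of spetsial $\ZZ_\ell$-reflection groups. For Weyl groups it is implicit in the $e$-Harish-Chandra description of $\Irr(B_0)$ from \cite{BMM93} and was used in the proof of \cite[Thm.~4.7]{BM15}. For the infinite families $G(e,1,n)$ and $G(e,e,n)$ the $e$-hook-length formulas for generic degrees reduce it to a Frame--Robinson--Thrall style cancellation, generalising the $\GL_n$ computation. The remaining exceptional spetses $G_4,G_6,G_8,G_{14},G_{24},G_{25},G_{26},G_{27},G_{29},G_{32},G_{33},G_{34}$ must be handled by direct inspection of the tabulated generic degrees. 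Once this is done, the final assertion of the proposition -- that the distribution of $\ell$-heights in $\cE(\GG,1,\zeta)$ depends only on $W_{\phi\zeta^{-1}}$ -- is immediate from the bijection $\Psi$.
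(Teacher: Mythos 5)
Your outline identifies the right bijection $\Psi$ and the right untwisting $q=\zeta q_0$, but it stops short of proving the one thing the proposition actually asserts, namely $\nu_\ell\big(\ga(1)|_{x=q}\big)=\nu_\ell\big(\Psi(\ga)(1)\big)$. In your plan this is precisely the step you defer to a case-by-case matching of $|\GG|_{x=q}/S_\chi(q)$ with $|W_{\phi\zeta^{-1}}|/\chi(1)$ across the classification of spetsial $\ZZ_\ell$-reflection groups (hook-length cancellations for $G(e,1,n)$ and $G(e,e,n)$, inspection of the generic degree tables for the exceptional types), and none of that verification is carried out; moreover the schematic formula with the ``monomial correction'' $u_\chi$ would itself need to be pinned down case by case. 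As written, the proposal is a programme rather than a proof, and the central valuation equality remains unproved.

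The missing idea that makes a uniform argument possible --- and this is how the paper argues --- is the specialisation property of the bijection itself: by \cite[Folg.~3.16 and~6.11]{Ma95} and \cite[4.31]{BMM14} one has $\ga(1)|_{x=\zeta}=\pm\Psi(\ga)(1)$, and $\ga(1)$ is a constant times a product of cyclotomic polynomials over $\ZZ_\ell$, none of which is $x-\zeta$. Substituting $x\mapsto\zeta x$ and setting $\tq=\zeta^{-1}q\equiv1\pmod\ell$, any rational cyclotomic polynomial different from $x-1$ has at most one $\ZZ_\ell$-factor whose value at $\tq$ (equivalently at $1$) is divisible by $\ell$, and $(\tq^m-1)/(\tq-1)$ and $\big((x^m-1)/(x-1)\big)|_{x=1}=m$ carry the same power of $\ell$. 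Hence the $\ell$-adic valuation of $\ga(1)$ at $x=q$ equals its valuation at $x=\zeta$, which is $\nu_\ell(\Psi(\ga)(1))$, with no case distinction and no appeal to the explicit degree formulas. If you prefer to keep your Schur-element formulation, the same cyclotomic-factor argument applied to $S_\chi$ would close the gap; but invoking the $x=\zeta$ specialisation directly is shorter and avoids the classification entirely.
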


\begin{proof}
By \cite[Folg.~3.16 and~6.11]{Ma95} and \cite[4.31]{BMM14} there is a
bijection $\Psi:\cE(\GG,1,\zeta)\to\Irr( W_{\phi\zeta^{-1}})$ such that
$\ga(1)|_{x=\zeta}=\pm\Psi(\ga)(1)$, for suitable signs depending on $\ga$.
Furthermore, $\ga(1)$ is a constant times a product of cyclotomic polynomials
over $\ZZ_\ell$; by the afore-mentioned specialisation property, none of these
factors is $x-\zeta$.   \par
Writing $\tga(1):=\ga(1)(\zeta^{-1}x)$ and $\tq:=\zeta^{-1}q$ we now claim that
$$\nu_\ell(\ga(1)|_{x=q})=\nu_\ell(\tga(1)|_{x=\tq})=\nu_\ell(\Psi(\ga)(1)),$$
where no factor of $\tga(1)$ equals $x-1$. Since $\tq\equiv1\pmod\ell$, a
polynomial $f\in\ZZ_\ell[x]$ has $f(\tq)\equiv0\pmod\ell$ if and only if
$f(1)\equiv0\pmod\ell$. Now any cyclotomic polynomial over $\QQ$ is a product
of factors $(x^m-1)/(x-1)$ and their inverses, and as $\Phi_m(\tq)$ and
$\Phi_m(1)$ are divisible by $\ell$ at most once for any $m>1$, any
cyclotomic polynomial over $\QQ$ different from $x-1$ has at most one factor
$f$ over $\ZZ_\ell$ with $f(\tq)$ (and $f(1)$) divisible by $\ell$. Since
for any $m\ge1$, $(\tq^m-1)/(\tq-1)$ and $((x^m-1)/(x-1))|_{x=1}$ are divisible
by the same power of $\ell$ (by an application of L'Hospital's rule), our claim
now follows.
\end{proof}

%%%%%%%%%%%%%%%%%%%%%%%%%%%%%%%%%%%%
\subsection{The fusion system and principal block of a $\ZZ_\ell$-spets}
For the rest of this section, let $\ell>2$ be a prime. Let $\GG=(W\phi^{-1},L)$
be a simply connected $\ZZ_\ell$-spets (that is, $(W,L)$ is simply connected)
with $\phi$ of $\ell'$-order.
For $q$ a prime power not divisible by~$\ell$ set $\GG(q):=(W\phi^{-1},L,q)$. 
Let $X$ be a connected $\ell$-compact group with Weyl group $(W,L)$, and let
$\tau$ be a homotopy self-equivalence of $X$ with $\Phi_X(\tau)=W\phi$ (see
Theorem~\ref{thm:class}).
By Theorem~\ref{thm:bm}, $\twt X(q)$ is the classifying space of a saturated
fusion system, $\cF(\twt X(q))$ and we set $\cF(\GG(q)):=\cF(\twt X(q))$. By
Theorem~\ref{thm:sylow}(b) and Definition~\ref{d:spetsorder} if $S$ is the
underlying $\ell$-group of $\cF(\GG(q))$, then $|S|= (|\GG|_{x=q})_\ell$. 

We let $f:B S\to \twt X(q)$ and $\iota:\twt X(q)\to X$ be as defined in
Section~\ref{subsec:centrII}. For $s\in S$ let $j: B\langle s\rangle\to T$ be a
factorisation of (the restriction to $\langle s\rangle$ of) $\iota \circ f$ and
$\breve{j}:\langle s\rangle\to\brT$ the discrete approximation of $j$ and let
$W(s)=C_W(\breve{j}(s))$ and $\phi_s$ be as defined after
Proposition~\ref{prop:centr}; recall that $\phi_s\in W\phi$ normalises $W(s)$. 
By Proposition~\ref{prop:centr spets}, the $\ZZ_\ell$-reflection group $W(s)$
is spetsial. Let $C_\GG(s):=(W(s)\phi_s^{-1},L)$ be the associated
$\ZZ_\ell$-spets, with unipotent characters $\Uch(C_\GG(s))$.
By Lemma~\ref{l:identcent} and Remark~\ref{r:welldef}, the isomorphism class of
$C_\GG(s)$ is independent of the choice of $j$. Moreover, we have:

\begin{lem}
 The order polynomial $|C_\GG(s)|$ divides $|\GG|$ in $\ZZ_\ell[x]$.
\end{lem}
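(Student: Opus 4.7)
The plan is to reduce the statement to divisibility of order polynomials and then apply a multiplicity comparison based on Lehrer--Springer.

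First I would unpack Definition~\ref{d:spetsorder}:
$$|\GG| = x^N\prod_{i=1}^r \eps_i^{-2}(x^{d_i}-\eps_i), \qquad
 |C_\GG(s)| = x^{N(s)}\prod_{j=1}^{r(s)} {\eps'_j}{}^{-2}(x^{d'_j}-\eps'_j),$$
where $\{(d_i,\eps_i)\}$ and $\{(d'_j,\eps'_j)\}$ are the generalised degrees of $W\phi^{-1}$ and $W(s)\phi_s^{-1}$ respectively, and $N$, $N(s)$ count the reflections in $W$, $W(s)$. Since $W(s)\le W$ is a reflection subgroup, $N(s)\le N$, so $x^{N-N(s)}\in\ZZ_\ell[x]$; and since $\eps_i, \eps'_j\in\ZZ_\ell^\times$, the unit prefactors are invertible. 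Thus the claim reduces to showing
$$\OO_x(W(s)\phi_s^{-1}) \mid \OO_x(W\phi^{-1}) \text{ in } \ZZ_\ell[x].$$

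Since $\phi$ has finite order modulo $W$ (as $\Phi_X(\tau)=W\phi$ has finite order) and $W$ is finite, the group $\langle W,\phi\rangle$ is finite, so $\phi$ and $\phi_s\in W\phi$ both have finite order in $\GL(L)$. Hence all $\eps_i$ and $\eps'_j$ are roots of unity, and both order polynomials split in $\overline\QQ_\ell[x]$ into linear factors $(x-\zeta)$ with $\zeta$ a root of unity.

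Next I would invoke the Lehrer--Springer theorem for reflection cosets (the natural coset analogue of Theorem~\ref{thm:LS}(c) for arbitrary eigenvalues): for any reflection coset $H\psi$ acting on $V=\overline\QQ_\ell\otimes_{\ZZ_\ell}L$ and any $\zeta\in\overline\QQ_\ell^\times$, the multiplicity of $(x-\zeta)$ in $\OO_x(H\psi)$ equals $\max_{g\in H\psi}\dim V(g,\zeta)$. The key observation is that $\phi_s\in W\phi$ together with $W(s)\le W$ yields the coset inclusion $W(s)\phi_s^{-1}\subseteq W\phi^{-1}$ inside $\GL(L)$; hence the maximum $\zeta$-eigenspace dimension can only decrease on passing to the smaller coset, giving
$$\operatorname{mult}_{(x-\zeta)}\OO_x(W(s)\phi_s^{-1}) \le \operatorname{mult}_{(x-\zeta)}\OO_x(W\phi^{-1})$$
for every $\zeta\in\overline\QQ_\ell^\times$, which is the desired divisibility in $\overline\QQ_\ell[x]$.

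Finally, since both order polynomials are monic in $\ZZ_\ell[x]$, monic polynomial division of $\OO_x(W\phi^{-1})$ by $\OO_x(W(s)\phi_s^{-1})$ inside $\ZZ_\ell[x]$ produces a quotient and remainder in $\ZZ_\ell[x]$; combined with the divisibility just established and uniqueness of Euclidean division in $\overline\QQ_\ell[x]$, the remainder must vanish, so the divisibility holds in $\ZZ_\ell[x]$. The main obstacle is having the coset version of the Lehrer--Springer multiplicity-equals-maximum-eigenspace statement rather than the $1$-eigenspace case recorded in Theorem~\ref{thm:LS}(c); this is a standard generalisation, after which the argument proceeds cleanly.
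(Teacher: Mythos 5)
Your proposal is correct and follows essentially the same route as the paper: the paper's proof also reduces to comparing, for each root $\xi$, the multiplicity of $x-\xi$ with the maximal $\xi$-eigenspace dimension over the coset (citing the generic Sylow theorems of Brou\'e--Malle \cite[Thm.~3.4]{BM92}, which is exactly the coset multiplicity statement you invoke) and then uses the containment $W(s)\phi_s^{-1}\subseteq W\phi^{-1}$. Your extra bookkeeping ($x^{N-N(s)}$, unit prefactors, descent from $\overline\QQ_\ell[x]$ to $\ZZ_\ell[x]$ via monic division) is fine but left implicit in the paper.
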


\begin{proof}
Let $\xi\in\overline\QQ_\ell^\times$ be a zero of $|\GG|$. The Sylow
theorems \cite[Thm.~3.4]{BM92} show that the multiplicity of $x-\xi$ as a
factor of $|\GG|$ equals the maximal dimension of a $\xi$-eigenspace of elements
$g\in W\phi^{-1}$, and similarly for $|C_\GG(s)|$. As
$W(s)\phi_s^{-1} \subseteq W\phi ^{-1}$, the claim follows. 
\end{proof}

We let
$$\cE(\GG(q),s)=\{\ga_{s,\la}^q\mid\la\in\Uch(C_\GG(s))\}$$
denote a set in bijection with $\Uch(C_\GG(s))$ and call it the
\emph{characters of $\GG(q)$ in the series~$s$}. The sets $\Uch(C_\GG(s))$ are
in canonical bijection for conjugate elements $s$. The degree of $\ga_{s,\la}^q$
is defined as
$$\ga_{s,\lambda}^q(1):=\big(|\GG:C_\GG(s)|_{x'}\,\la(1)\big)|_{x=q}
  \in\QQ_\ell.$$
Here, $|\GG:C_\GG(s)|_{x'}$ means the prime-to-$x$ part of the polynomial
$|\GG|/|C_\GG(s)|\in\ZZ_\ell[x]$.

This is inspired by (and specialises to) Lusztig's formula for Jordan
decomposition of characters (see e.g. \cite[Thm.~2.6.4]{GM20}) in the case of
rational spetses, i.e., finite reductive groups. Note that our notion of
characters of $\GG(q)$ in the series~$s$ is different from that in finite
reductive groups where $s$ is taken to be an element in the dual group. However
as explained in the proof of Proposition~\ref{prop:linkowc}, our assumptions on
$\ell$ allow for this change.

When $s=1$ we just write $\la^q$ for $\ga_{1,\la}^q$, where now
$\la^q(1)=\la(1)|_{x=q}$, and call $\cE(\GG(q),1)$ the unipotent characters of
$\GG(q)$. The \emph{$\ell$-defect of $\ga_{s,\la}^q\in\cE(\GG(q),s)$} is
defined to be the $\ell$-adic valuation
$$\nu_\ell(|\GG|_{x=q}) - \nu_\ell(\ga_{s,\la}^q(1))
   = \nu_\ell(|C_\GG(s)|_{x=q}) - \nu_\ell(\lambda(1)_{x=q}). $$

Further, for $\zeta\in\ZZ_\ell^\times$ the root of unity with
$q\equiv\zeta\pmod\ell$ we denote by
$\cE(\GG(q),s)_1$ the subset of $\cE(\GG(q),s)$ in bijection with the principal
$\zeta$-Harish-Chandra series of $\Uch(C_\GG(s))$. By
Proposition~\ref{prop:heights}, $\cE(\GG(q),s)_1$ is in bijection with
$\Irr(W(s)_{\phi_s^{-1}\zeta^{-1}})$. The following definition is inspired by
the results of Cabanes--Enguehard \cite[Thm.]{CE94} on unipotent $\ell$-blocks
of finite reductive groups; indeed, if $\GG$ is a rational spets for which
$\ell$ is very good, then what we define are exactly the characters in the
principal $\ell$-block of the corresponding finite group of Lie type (see
Proposition~\ref{prop:linkowc} below):

\begin{defn}   \label{d:irrb0}
 Let $\GG=(W\phi^{-1},L)$ be a simply connected $\ZZ_\ell$-spets such that
 $\ell$ is very good for $\GG$. Then define the \emph{characters in the
 principal block $B_0$ of $\GG(q)$} as
 $$\Irr(B_0):=\coprod_{s\in S/\cF}\cE(\GG(q),s)_1,$$
 where the union runs over a set $S/\cF$ of fully centralised representatives
 $s$ of $\cF(\GG(q))$-conjugacy classes in $S$. For a non-negative integer $d$
 we let
 $$\Irr^d(B_0):=\{\chi\in\Irr(B_0)\mid
                  d=\nu_\ell(|\GG|_{x=q})- \nu_\ell(\chi(1))\},$$
 the set of irreducible characters in $B_0$ of $\ell$-defect $d$.
\end{defn}

\begin{prop}   \label{prop:linkowc}
 Let $\bG$ be a semisimple algebraic group defined over $\FF_q$ with respect to
 a Frobenius endomorphism $F:\bG\to\bG$. Suppose that $F$ has $\ell'$-order
 in its action on the Weyl group and that $\ell>2$ is a very good prime
 for~$\bG$. Let $B_0(G)$ be the principal $\ell$-block of $G:=\bG^F$ and let
 $B_0$ be the principal block of the associated spets $\GG(q)$. Then
 $$ |\Irr^d(B_0 (G) )| = |\Irr^d(B_0)|\qquad\text{for all $d\ge0$}. $$
\end{prop}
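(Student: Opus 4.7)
The plan is to match terms in the disjoint union defining $\Irr^d(B_0)$ on the spets side with a parallel decomposition of $\Irr^d(B_0(G))$ coming from Lusztig series. On the group side, since $\ell>2$ is very good for $\bG$ (hence good in the classical sense by Lemma~\ref{lem:good}(d) and Remark~\ref{rem:very good Lie}), the Cabanes--Enguehard theorem \cite{CE94} applies and gives
$$\Irr(B_0(G)) = \coprod_{s^*}\bigl\{\chi_{s^*,\la}\mid \la\in\cE(C_{\bG^*}(s^*)^{F^*},1)\text{ in the principal $e$-HC series}\bigr\},$$
where $s^*$ runs over $\bG^{*F^*}$-conjugacy classes of semisimple $\ell$-elements, $\chi_{s^*,\la}$ is Lusztig's Jordan correspondent, and the degree formula reads $\chi_{s^*,\la}(1) = [\bG^{*F^*}:C_{\bG^*}(s^*)^{F^*}]_{p'}\,\la(1)$.

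Next I would transport this indexing back to $\bG^F$. Since $\ell$ is very good, by Remark~\ref{rem:very good Lie} both $\pi_1(\bG)$ and $Z(\bG)/Z^\circ(\bG)$ have $\ell'$-order, so the standard isogeny argument (as in \cite{CE94}) yields a bijection between $\bG^{*F^*}$-conjugacy classes of semisimple $\ell$-elements $s^*$ and $\bG^F$-conjugacy classes of $\ell$-elements $s$, with $|C_{\bG^*}(s^*)^{F^*}|_\ell=|C_\bG(s)^F|_\ell$ and matching unipotent characters and Harish-Chandra series. Since $\cF(\GG(q))=\cF_S(\bG^F)$ by Remark~\ref{r:fried}(a), these classes are represented precisely by $\cF(\GG(q))$-conjugacy classes of fully centralised elements of $S$, exactly the index set on the spets side of Definition~\ref{d:irrb0}.

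To match degrees (and hence defects) term-by-term, I would take $\nu_\ell$ of Lusztig's formula to obtain that the defect of $\chi_{s^*,\la}$ equals $\nu_\ell(|C_\bG(s)^F|)-\nu_\ell(\la(1))$, i.e.\ the defect of $\la$ inside $C_\bG(s)^F$. The analogous calculation from $\ga_{s,\la}^q(1)=(|\GG:C_\GG(s)|_{x'}\la(1))|_{x=q}$, combined with Proposition~\ref{p:spetsgroups} (which gives $|C_S(s)|=\OO_q(W(s)\phi_s)_\ell=|C_\bG(s)^F|_\ell$), shows that the defect of $\ga_{s,\la}^q$ in $\GG(q)$ equals the defect of $\la$ as a unipotent character of $C_\GG(s)$. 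Finally, by Example~\ref{ex:weyl} applied to the (rational) spets $C_\GG(s)$ associated with $C_\bG(s)^F$, the principal $\zeta$-Harish-Chandra series on the spets side is in canonical degree-preserving bijection with the principal $e$-Harish-Chandra series of unipotent characters on the group side, and Proposition~\ref{prop:heights} ensures this bijection preserves $\ell$-defects. Summing over the matched index sets yields $|\Irr^d(B_0(G))|=|\Irr^d(B_0)|$ for each $d\geq 0$.

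The main obstacle is the careful implementation of the duality identification between $\ell$-element classes and their centralisers in $\bG^F$ versus $\bG^{*F^*}$ under the very-good hypothesis, and verifying that the Cabanes--Enguehard description of the principal $\ell$-block is compatible with the spets definition in Definition~\ref{d:irrb0} -- this is precisely the point alluded to in the text after the degree formula, where the change of indexing from dual to primal $\ell$-elements is asserted to be justified by the assumptions on~$\ell$. Once this duality bookkeeping is in place, the identification of Weyl groups from Section~\ref{s:lie} (which gives $W(s)=C_W(t)$ and $\phi_s=w\phi$ on both sides) together with Proposition~\ref{prop:heights} makes the remainder a direct comparison of degree formulas.
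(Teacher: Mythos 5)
Your proposal is correct and follows essentially the same route as the paper: decompose $\Irr(B_0(G))$ by Lusztig series, invoke the Cabanes--Enguehard description to get a defect-preserving bijection with principal $e$-Harish-Chandra series of unipotent characters of the dual centralisers, transfer from $\ell$-element classes in $\bG^{*F^*}$ to those in $\bG^F$ via duality under the very-good hypothesis (the paper cites \cite[Prop.~4.2]{GH91} for this bookkeeping), identify these with $\cF(\GG(q))$-classes of fully centralised elements of $S$ and the centralisers with the spetses $C_\GG(s)(q)$, and conclude by Example~\ref{ex:weyl}. The only differences are cosmetic (you invoke Proposition~\ref{p:spetsgroups} and Proposition~\ref{prop:heights} where the definitions and Example~\ref{ex:weyl} already suffice).
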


\begin{proof}
Let $(\bG^*,F)$ be dual to $(\bG,F)$ and let $S^*$ be a Sylow $\ell$-subgroup
of $G^*:=\bG^{*F}$. Then
$$\Irr(B_0(G))=\coprod_{s}\big(\Irr(B_0(G))\cap\cE(G,s)\big)$$
where the union runs over $G^*$-classes of elements $s\in S^*$ and
$\cE(G,s) \subseteq\Irr(G)$ is the Lusztig series corresponding to the
$G^{*}$-class of $s$. Let $e$ be the order of $q$ modulo~$\ell$. Then,
according to the description of characters in the principal $\ell$-block of $G$
for very good primes in \cite[Thm.]{CE94}, $\Irr(B_0(G))\cap\cE(G,s)$ is in
defect preserving bijection with the principal $e$-Harish-Chandra series,
$\cE(\bL(s)^F,1)_1$ of $\bL(s)^F$ where $\bL(s)$ is an $F$-stable Levi subgroup
of $\bG$ in duality with the Levi subgroup $C_{\bG^*}(s)$ of $\bG^*$.

Now let $S$ be a Sylow $\ell$-subgroup of $G$. By \cite[Prop.~4.2]{GH91}
under our assumption on~$\ell$ there is a bijection between $G$-classes in
$S$ and $G^*$-classes in $S^*$ preserving centralisers through duality.
Hence $\Irr(B_0(G))$ is in defect preserving bijection with
$\coprod_s\cE(C_G(s),1)_1$ where now the union runs over $G$-classes of
$\ell$-elements in $S$ or alternatively (see Section~\ref{s:lie}) over
fully centralised representatives of $\cF(\GG(q))=\cF_S(G)$-classes of $S$.
By the discussion above Proposition~\ref{p:spetsgroups}, $(C_\bG(s),F)$
corresponds to the spets $C_\GG(s)(q)$. Now the claim follows from
Example~\ref{ex:weyl}.
\end{proof}

\begin{rem}   \label{rem:wreath}
Let us comment on the restrictive assumption on the order of $\phi$.
We can associate a saturated fusion system to any spets as follows. For
irreducible $(W,L)$ we proceed as above using Remark~\ref{rem:order}. Now assume
that $(W,L)=(\prod_{i=1}^r W_i,\bigoplus_{i=1}^r L_i)$ with irreducible factors
$(W_i,L_i)$ permuted transitively by~$\phi$. Then the unipotent characters of
$\GG$ are in bijection with those of $\GG_1=(W_1\phi^{-r}|_{L_1},L_1)$, and we
may set $\cF(\GG(q)):=\cF(\GG_1(q^r))$. Finally, if
$(W,L)=(W_1\times W_2,L_1\oplus L_2)$ is a $\phi$-stable direct decomposition,
then by \cite[4.1]{BMM14} the unipotent characters of $\GG$ are the Cartesian
product of those of $\GG_i=(W_i\phi^{-1}|_{L_i},L_i)$, $i=1,2$, and their
degrees multiply. We then set $\cF(\GG(q)):=\cF(\GG_1(q))\times\cF(\GG_2(q))$.
In this way, we have related a fusion system to any $\GG$ and $q$, but this is
not always known to arise from the known homotopy fixed point construction on
$\ell$-compact groups.
\end{rem}

%%%%%%%%%%%%%%%%%%%%%%%%%%%%%%%%%%%%
\subsection{The ordinary weight conjecture for spetses}   \label{s:ordspets}
If $\cF$ is a saturated fusion system on a finite $\ell$-group $S$, we now
recall functions $\bk(\cF)$, $\bm(\cF,d)$ for $d$ a non-negative integer, and
$\bm(\cF)=\sum_{d\ge0}\bm(\cF,d)$ associated to $\cF$ defined in Sections~1
and~2 of \cite{KLLS19}, where $\bk(\cF):=\bk(\cF,\alpha)$ etc.~when the
associated K\"{u}lshammer--Puig family of classes $\alpha$ is zero. These
invariants appear in conjectures concerning the number of characters in the
principal $\ell$-block of a finite group with fusion system $\cF$.

The integer $\bm(\cF,d)$ is an alternating sum count of projective simple
modules associated to stabilisers of certain pairs $(\sigma,\mu)$, where
$\sigma$ is a chain of $\ell$-subgroups in the (outer) $\cF$-automorphism group
of some $\cF$-centric subgroup $Q\le S$ and $\mu$ is an irreducible character
of $Q$ of defect~$d$. In fact, by \cite[Lemma~7.5]{KLLS19} only radical centric
subgroups $Q\in\cF^\CR$ will contribute to the sum.

More precisely, recall from \cite[\S2]{KLLS19} that
\[ \bm (\cF, d) = \sum_{Q\in\cF^\CR/\cF}\bw_Q (\cF, d),\]
where $\bw_Q(\cF,d)$ is the contribution coming from the set $\cN_Q$ of
non-empty normal chains of $\ell$-subgroups of $\Out_\cF(Q)$ of the form
$(1=X_0<X_1 <\cdots<X_m)$.

If $B_0$ is the principal block of a finite group with fusion system $\cF$ then 
Robinson's ordinary weight conjecture (OW conjecture) is the assertion that $\bm(\cF,d)$
counts the number of ordinary irreducible characters of defect $d$ in $B_0$.
This leads us to the following analogue for spetses (Conjecture~\ref{c:conjd}
from the introduction). By Proposition~\ref{prop:linkowc}, if $\GG$ is
associated to a finite group $\bG^F$ as in Example~\ref{ex:weyl}, then the
assertion of the conjecture is equivalent to the assertion that the OW
conjecture holds for the principal block of $\bG^F$.

\begin{conj}[OW conjecture for spetses]   \label{c:spetsowc}
 Let $\GG$ be a simply connected $\ZZ_\ell$-spets such that $\ell >2 $ 
 is very good for $\GG$, and $q$ be a power of a prime different from~$\ell$.
 Let $B_0$ be the principal block of $\GG(q)$. Then
 $$|\Irr^d(B_0)|=\bm(\cF(\GG(q)),d)\qquad\text{for all $d\ge0$}.$$
\end{conj}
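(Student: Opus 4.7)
The plan is to reformulate both sides of the conjectured equality in terms of character counts for relative Weyl groups, and then produce a term-by-term match between them. On the global side, combining Definition~\ref{d:irrb0} with the defect-preserving bijection of Proposition~\ref{prop:heights} shows that, for each fully $\cF$-centralised $s\in S$, the subset of $\cE(\GG(q),s)_1$ of $\ell$-defect $d$ is in bijection with $\Irr^{d'}(W(s)_{\phi_s^{-1}\zeta^{-1}})$, where the defect shift $d'=d-\bigl(\nu_\ell(|\GG|_{x=q})-\nu_\ell(|C_\GG(s)|_{x=q})\bigr)$ is forced by the degree formula $\ga_{s,\lambda}^q(1)=(|\GG:C_\GG(s)|_{x'}\,\lambda(1))|_{x=q}$. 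Summing over $s\in S/\cF$ expresses $|\Irr^d(B_0)|$ purely as a sum of character counts in relative Weyl groups attached to $\ell$-local subgroups.

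On the local side, $\bm(\cF(\GG(q)),d)=\sum_{Q\in\cF^{cr}/\cF}\bw_Q(\cF(\GG(q)),d)$ is an alternating sum over normal chains of $\ell$-subgroups of $\Out_\cF(Q)$, paired with irreducible characters of $Q$ of defect $d$. The strategy is to reorganise this sum by the $\cF$-class of a toral element $s\in S$ so that each relevant $Q$ enters naturally as a centric radical subgroup of the centraliser fusion system $C_\cF(s)$, which by Proposition~\ref{prop:centr}(b) is the fusion system of $C_X(s)^{h\tau\psi^q_\#}$; the ambient outer automorphism data then lives inside $W(s)\phi_s$, matching the coset governing $\cE(\GG(q),s)_1$.

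Following the pattern of Theorem~\ref{thm:nr weights}, I would first use the wreath-product reduction (Remark~\ref{rem:wreath}) together with the product formula $\cF^{cr}=\cF_1^{cr}\times\cF_2^{cr}$ and the multiplicativity of unipotent degrees over direct product spetses to reduce to the case where $(W,L)$ is irreducible. The proof then splits along the classification of irreducible spetsial $\ZZ_\ell$-reflection groups. For Clark--Ewing groups ($|W|$ prime to $\ell$), the fusion system lies on an abelian group with $\Out_\cF(S)=W$, so $\bm(\cF,d)$ collapses to a count of $W$-orbits of characters of $W$-stabilisers on $S$ weighted by defect; matching to the global side reduces, via Clifford theory applied to the normaliser $S\rtimes W$, to a purely character-theoretic identity saying that $W$-orbits of pairs $(s,\mu)$ with $\mu\in\Irr(C_W(s))$ of $\ell$-defect $d$ biject with characters of defect $d$ supplied by the principal $\zeta$-Harish-Chandra series data.

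The main obstacle will be the imprimitive cases $G(e,r,n)$ with $r\mid e\mid(\ell-1)$: here the $\cF^{cr}$-subgroups and the chain structure in $\Out_\cF(Q)$ must be parameterised by partition/tableaux combinatorics compatible with the $\zeta$-Harish-Chandra series of imprimitive spetses, and, unlike for Theorem~\ref{thmb}, the matching must be defect-preserving rather than merely a counting identity. I would expect to extend the equivariant Alperin--Fong argument used to prove Theorem~\ref{thm:nr weights} by inducting along the partition ${\bf n}(t)$ of the centraliser description in the proof of Proposition~\ref{prop:stab}, passing at each step to smaller generalised Grassmannians $G(e,r,u_0)\times\prod_j\fS_{u_j}$ and tracking $\ell$-adic valuations of hook-length products to realise the defect shift. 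The exceptional cases allowed by Proposition~\ref{prop:bad}(a), together with $W=\fS_n$ for $\ell\nmid n$, should then be dispatched by direct computation from tabulated spets character data and the known fusion systems.
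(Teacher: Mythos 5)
You are attempting to prove something that the paper itself only states as a conjecture: Conjecture~\ref{c:spetsowc} is not proved there in general, and the paper offers only partial evidence — the case $|W|$ prime to $\ell$ with $q\equiv1\pmod\ell$ (Proposition~\ref{prop:nrw for CE}), the smallest imprimitive family $G(e,r,\ell)$ with $q\equiv1\pmod\ell$ (Proposition~\ref{p:gerl}), a conditional statement of SOWC for $G(e,r,n)$ assuming AWC for all principal blocks, and the Aguad\'e/$G_{24}$ computations showing the statement must even be modified for bad primes. Your proposal is a programme rather than a proof, and its central step is missing: on the local side you propose to ``reorganise'' $\bm(\cF,d)=\sum_{Q\in\cF^{cr}/\cF}\bw_Q(\cF,d)$ by $\cF$-classes of toral elements $s$ so that each $Q$ appears as a centric radical of $C_\cF(s)$ with automiser data inside $W(s)\phi_s$. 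But no such reorganisation of the alternating sum over normal chains in $\Out_\cF(Q)$ is known; producing it is precisely the content of the conjecture (it is what OWC asserts even for finite groups of Lie type), so invoking it is circular. Likewise, the ``main obstacle'' you identify — a defect-preserving equivariant Alperin--Fong matching for all $G(e,r,n)$ — is exactly what is not available: the paper only carries out such a computation for $n=\ell$, and only for $q\equiv1\pmod\ell$, by explicitly listing $\cF^{cr}$ and the $\bw_R(\cF,d)$ (Lemma~\ref{l:n=lcentrads}); saying you ``would expect to extend'' it does not close the gap.

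There are also two concrete technical slips. First, your defect shift is wrong: writing $d'=d-\bigl(\nu_\ell(|\GG|_{x=q})-\nu_\ell(|C_\GG(s)|_{x=q})\bigr)$ double-counts the index factor, since by the definition in Section~\ref{sec:owc} the global defect of $\ga_{s,\la}^q$ already equals $\nu_\ell(|C_\GG(s)|_{x=q})-\nu_\ell(\la(1)_{x=q})$; the correct shift, as in Proposition~\ref{prop:2owc2}, is $d-u_s$ with $u_s=\nu_\ell(|C_\GG(s)|_{x=q})-\nu_\ell(|W(s)_{\phi_s^{-1}\zeta^{-1}}|)$, i.e.\ the passage is from $|C_\GG(s)|_{x=q}$ to the order of the relative Weyl group, not from $|\GG|$ to $|C_\GG(s)|$. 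Second, the reduction ``as for Theorem~\ref{thm:nr weights}'' to $\tau=1$ and $q\equiv1\pmod\ell$ is not available here: Theorem~\ref{thm:homfixedpts} identifies the fusion systems, which suffices for the ungraded count $\bw(\cF)$, but the defect-graded quantities $|\Irr^d(B_0)|$ and the shifts $u_s$ (equivalently $v_s$ in Conjecture~\ref{conj:nr oweights}) depend on $\zeta$ and on the spets-side degree data, which is exactly why the paper's partial results all carry the hypothesis $q\equiv1\pmod\ell$ rather than deducing the general case from it. As it stands, your argument establishes nothing beyond what Proposition~\ref{prop:2owc2} already gives (and that only after correcting the shift), and the conjecture remains open.
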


From the local control of heights in Proposition~\ref{prop:heights} we obtain
the following expression for the left hand side in Conjecture~\ref{c:spetsowc}:

\begin{prop}   \label{prop:2owc2}
 Let $\GG =(W\phi^{-1},L)$ be a simply connected $\ZZ_\ell$-spets such that
 $\ell>2$ is very good for $\GG$. Let $q$ be a power of a prime different
 from~$\ell$, $\cF=\cF(\GG(q))$ the associated saturated fusion system on $S$
 and $B_0$ the principal block. Then
 $$|\Irr^d(B_0)|
   =\sum_{s\in S/\cF} |\Irr^{d-u_s}(W(s)_{\phi_s^{-1}\zeta^{-1}})|$$
 where $\zeta\in\ZZ_\ell^\times$ is the root of unity with
 $q\equiv\zeta\pmod\ell$, and
 $u_s= \nu_\ell(|C_\GG(s)|_{x=q}) - \nu_\ell( |W(s)_{\phi_s^{-1}\zeta^{-1}}|)$.
\end{prop}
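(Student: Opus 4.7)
The plan is to unfold the definition of $\Irr^d(B_0)$ from Definition~\ref{d:irrb0} and then apply Proposition~\ref{prop:heights} summand-by-summand. Since
$$\Irr(B_0)=\coprod_{s\in S/\cF}\cE(\GG(q),s)_1,$$
the first step is to count, for each fully $\cF$-centralised representative $s$, the characters $\gamma_{s,\lambda}^q\in\cE(\GG(q),s)_1$ of $\ell$-defect $d$. The defect formula recorded immediately after the introduction of $\cE(\GG(q),s)$ --- which follows from the degree recipe $\gamma_{s,\lambda}^q(1)=(|\GG:C_\GG(s)|_{x'}\,\lambda(1))|_{x=q}$ together with the fact that $q$, being a prime power coprime to $\ell$, has $\ell$-valuation zero --- says this defect equals $\nu_\ell(|C_\GG(s)|_{x=q})-\nu_\ell(\lambda(1)|_{x=q})$. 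So the count reduces to that of $\lambda$ in the principal $\zeta$-Harish-Chandra series of $\Uch(C_\GG(s))$ satisfying $\nu_\ell(\lambda(1)|_{x=q})=\nu_\ell(|C_\GG(s)|_{x=q})-d$.

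Next I would apply Proposition~\ref{prop:heights} to the $\ZZ_\ell$-spets $C_\GG(s)=(W(s)\phi_s^{-1},L)$, which is indeed a spets by Proposition~\ref{prop:centr spets}. The proposition supplies a bijection $\Psi$ from that Harish-Chandra series onto $\Irr(W(s)_{\phi_s^{-1}\zeta^{-1}})$ preserving $\ell$-adic valuations of degrees, i.e.\ $\nu_\ell(\lambda(1)|_{x=q})=\nu_\ell(\Psi(\lambda)(1))$. Under $\Psi$ the condition $\nu_\ell(\lambda(1)|_{x=q})=\nu_\ell(|C_\GG(s)|_{x=q})-d$ becomes
$$\nu_\ell(|W(s)_{\phi_s^{-1}\zeta^{-1}}|)-\nu_\ell(\Psi(\lambda)(1))=d-u_s,$$
using the defining relation $u_s=\nu_\ell(|C_\GG(s)|_{x=q})-\nu_\ell(|W(s)_{\phi_s^{-1}\zeta^{-1}}|)$. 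This is precisely the statement that $\Psi(\lambda)\in\Irr^{d-u_s}(W(s)_{\phi_s^{-1}\zeta^{-1}})$. Summing over $s\in S/\cF$ yields the claimed identity.

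I do not expect a serious obstacle: the argument is bookkeeping built on two previously established ingredients (Proposition~\ref{prop:heights} and Definition~\ref{d:irrb0}). The only delicate points are (i) verifying that $C_\GG(s)$ is genuinely a $\ZZ_\ell$-spets so that Proposition~\ref{prop:heights} is applicable, which is guaranteed by Proposition~\ref{prop:centr spets} together with the fact that $\phi_s\in N_{\GL(L)}(W(s))$ as noted after Proposition~\ref{prop:centr}; and (ii) justifying the two equivalent forms of the defect of $\gamma_{s,\lambda}^q$, which hinges on $q$ being coprime to $\ell$ so that the power of $x$ dividing $|\GG|/|C_\GG(s)|\in\ZZ_\ell[x]$ contributes nothing to the $\ell$-adic valuation upon specialising $x=q$.
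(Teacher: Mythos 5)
Your proposal is correct and follows essentially the same route as the paper's own proof: unfold Definition~\ref{d:irrb0}, use the defect identity $\nu_\ell(|\GG|_{x=q})-\nu_\ell(\ga_{s,\la}^q(1))=\nu_\ell(|C_\GG(s)|_{x=q})-\nu_\ell(\la(1)|_{x=q})$ (already recorded in the paper right after the degree formula), and transport the count through the valuation-preserving bijection $\Psi$ of Proposition~\ref{prop:heights} applied to the spets $C_\GG(s)$, yielding the shift by $u_s$. The extra care you take in justifying applicability of Proposition~\ref{prop:heights} via Proposition~\ref{prop:centr spets} and in checking the two forms of the defect is sound but is bookkeeping the paper leaves implicit.
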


\begin{proof}
Suppose $\chi \in \Irr^d(B_0)$, so $\chi=\gamma_{s,\la}^q$ for some
$\lambda\in \Uch(C_\GG(s))$, and $s\in S$ is fully $\cF$-centralised. 
By definition $d = \nu_\ell(|C_\GG(s)|_{x=q})- \nu_\ell(\la(1)_{x=q})$. Now the
result follows since $\nu_\ell(\la(1)_{x=q}) = \nu_\ell(\Psi(\la)(1))$ by
Proposition~\ref{prop:heights}.
\end{proof}

This leads us to the following reformulation of the OW conjecture which is new
even for principal blocks of finite groups of Lie type and which on the other
hand makes sense even in the non-spetsial case. It shows in particular that the
OW conjecture for finite groups of Lie type is a purely local statement.

\begin{conj}[OW conjecture for $\ell$-compact groups]   \label{conj:nr oweights}
 Let $X$ be a simply connected $\ell$-compact group with Weyl group $(W,L)$ for
 which $\ell >2$ is very good, and $\tau$ an automorphism of $X$ of finite order
 prime to $\ell$. Let $W\phi = \Phi_X (\tau)$, $q$ be a power of a prime
 different from~$\ell$ and $\cF=\cF(\twt X(q))$ the associated fusion system
 on~$S$. Then
 $$\bm(\cF,d)
   = \sum_{s\in S/\cF} |\Irr^{d- v_s}(W(s)_{\phi_s\zeta})|\qquad
   \text{for all $d \ge 0$},$$
 where $(W(s),L)$ is the $\ZZ_\ell$-reflection group underlying $C_X(s)$,
 $\zeta\in\ZZ_\ell^\times$ is the root of unity with $q\equiv\zeta\pmod\ell$,
 and $v_s= \nu_\ell(O_q(W(s) \phi_s^{-1})) - \nu_\ell(|W(s)_{\phi_s\zeta}|)$.
 \end{conj}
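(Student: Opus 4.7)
\medskip

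The plan is to reduce the conjecture to a piecewise verification following the classification of $\ZZ_\ell$-reflection groups described in Section~\ref{sec:refl}. First I would show that both sides of the claimed equality are multiplicative over direct product decompositions of $(W,L)$. For the left side, by \cite[Lemma~3.1]{AOV12} we have $\cF^{cr}=\cF_1^{cr}\times\cF_2^{cr}$, and from the definition of $\bw_Q(\cF,d)$ as an alternating sum over normal chains (see \cite[\S2]{KLLS19}), the character-of-defect-$d$ counts convolve as $\bm(\cF_1\times\cF_2,d)=\sum_{d_1+d_2=d}\bm(\cF_1,d_1)\bm(\cF_2,d_2)$. For the right side, writing $S=S_1\times S_2$, $W(s)=W_1(s_1)\times W_2(s_2)$ and $v_s=v_{s_1}+v_{s_2}$ under the decomposition, the same convolution identity holds after applying Proposition~\ref{prop:heights} to each factor. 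Hence it suffices to treat the irreducible cases.

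Next I would invoke Theorem~\ref{thm:homfixedpts}(a) and Proposition~\ref{prop:gooddescent} to pass from $\twt X(q)$ to $X^{h\Gamma}(q_0)$, replacing $(W,L,\phi,\zeta)$ by $(W_{\phi\zeta},L_{\phi\zeta},\Id,1)$ and reducing to the case $\tau=\Id$ and $q\equiv 1\pmod\ell$, where the ``twist'' data $\phi_s\zeta$ collapses and $W(s)_{\phi_s\zeta}=W(s)$. The case-by-case verification then proceeds as follows. For Clark--Ewing groups ($|W|$ coprime to $\ell$) the fusion system is $\cF_S(S\rtimes W)$ with $S$ abelian, so $\cF^{cr}=\{S\}$, $\Out_\cF(S)=W$, and a direct computation of $\bw_S(\cF,d)$ using $|W|$ prime to $\ell$ matches the sum over $s\in S/W$ of $|\Irr^{d-v_s}(C_W(s))|$ by a straightforward Clifford-theoretic argument. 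For the generalised Grassmannians $G(e,r,n)$ an equivariant Alperin--Fong style induction on $n$ should work: the centric radical subgroups have an explicit classification in terms of $\ell$-adic wreath products, and the required identity on both sides reduces to a recursion already implicit in the proof of Theorem~\ref{thm:nr weights} for the infinite series. The five exceptional Weyl type cases at very good primes and the Aguad\'e groups at good primes must be handled by direct computation, using \texttt{CHEVIE} to enumerate $\cF^{cr}/\cF$, compute the outer automiser groups, and evaluate both sides.

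The hardest step will be the explicit enumeration of $\cF$-centric, $\cF$-radical subgroups and their outer automisers for the generalised Grassmannian family in sufficient generality to run the induction, together with the matching of the alternating sum $\bw_Q(\cF,d)$ against the character-count on the right-hand side. The latter identification is essentially a local-global translation: each $Q\in\cF^{cr}$ should correspond to a collection of centralisers $C_\GG(s)$ (for $s\in Z(Q)$ with $C_S(s)=C_S(Q)$ up to $\cF$-conjugacy), and the chain alternating sum should collapse via a M\"obius-inversion over the poset of elementary abelian subgroups in $\Out_\cF(Q)$ to the count $|\Irr^{d-v_s}(W(s))|$. Verifying this combinatorial identity at the level of generality needed --- in particular that the contribution of non-fully-centralised elements cancels --- is the main obstacle, and parallels the passage from Alperin's weight conjecture to its ordinary refinement in the finite group case.
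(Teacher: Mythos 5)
The statement you are trying to prove is Conjecture~\ref{conj:nr oweights}: the paper does not prove it, and your proposal does not close that gap. The paper only verifies it in special cases — when $|W|$ is prime to $\ell$ and $q\equiv1\pmod\ell$ (Proposition~\ref{prop:nrw for CE}), and for $W=G(e,r,\ell)$ with $q\equiv1\pmod\ell$ (Proposition~\ref{p:gerl}) — and shows it fails in defect~$2$ outside the very good hypothesis (Proposition~\ref{prop:owc Aguade}). The decisive obstruction to your plan is that for rational $W$ the conjecture is, via Propositions~\ref{prop:linkowc} and~\ref{prop:2owc2}, equivalent to Robinson's Ordinary Weight Conjecture for the principal $\ell$-blocks of the corresponding finite groups of Lie type, which is open. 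So the "case-by-case verification" you defer to — in particular $E_6,E_7,E_8$ at $\ell=5,7$ "by direct computation using \texttt{CHEVIE}" — is not a finite computation: both $\bm(\cF,d)$ and the right-hand side depend on $a=\nu_\ell(q^e-1)$, and for these groups no proof of OWC is known for any infinite family of $q$. Likewise, the claim that for $G(e,r,n)$ the defect-refined identity "reduces to a recursion already implicit" in the proof of Theorem~\ref{thm:nr weights} is unfounded: the equivariant Alperin--Fong argument of Section~\ref{sec:GGr} only matches the total weight count $\bw(\cF)$ with $|\Irr(G(e,r,n))|$; it does not control the chain-alternating sums $\bw_Q(\cF,d)$ defect by defect, and this is exactly why the paper handles only the smallest case $n=\ell$, where $\cF^{cr}$ has just three classes and the sums can be evaluated by hand (Lemma~\ref{l:n=lcentrads}).

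Two further steps in your reduction are also not secured. First, passing to $q\equiv1\pmod\ell$ and $\tau=\Id$ via Theorem~\ref{thm:homfixedpts}(a) is legitimate for the fusion system $\cF$, but the right-hand side of the conjecture involves the centraliser cosets $W(s)\phi_s$, the relative Weyl groups $W(s)_{\phi_s\zeta}$ and the shifts $v_s$ computed in $X$; you must show this data is carried to the corresponding data for $X^{h\Gamma}$, which amounts to a compatibility of centralisers with homotopy fixed points together with an order statement of the flavour of Conjecture~\ref{c:spetsandcomp} — itself only established in the Lie-type and Grassmannian cases (Proposition~\ref{p:spetsgroups}, Lemma~\ref{l:identifyws2}). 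This is why the paper's partial results are all stated with the hypothesis $q\equiv1\pmod\ell$ rather than after a reduction. Second, your convolution formula $\bm(\cF_1\times\cF_2,d)=\sum_{d_1+d_2=d}\bm(\cF_1,d_1)\bm(\cF_2,d_2)$ is plausible but is asserted, not proved; the paper only uses multiplicativity of $\bw$, so you would need to supply an argument (centric radicals, chains and defects all split compatibly across the product). In short, the outer reductions are reasonable, but every place where the actual content of the conjecture sits is left to steps that are either open problems or substantially harder than you indicate.
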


Note that since $W_{\phi_s^{-1} \zeta^{-1}} = W_{\phi_s\zeta}$, and since the
order formulas for spetses and reflection cosets agree at $\ell$, by
Proposition~\ref{prop:2owc2}, Conjecture~\ref{conj:nr oweights} is a
generalisation of Conjecture~\ref{c:spetsowc}.
%% Also note that if
%% Conjecture~\ref{c:spetsandcomp} holds, then $v_s = \nu_\ell(|S:C_S(s)|)$.

There is a coarser version of Conjecture~\ref{c:spetsowc} obtained by summing
over all $d$. Under the hypotheses of Conjecture~\ref{c:spetsowc}, this asserts
that
$$ |\Irr(B_0)|=\bm(\cF(\GG(q))).$$
Passing through localisation afforded by Proposition~\ref{prop:2owc2}, we obtain
under the hypothesis of Conjecture~\ref{conj:nr oweights}, the conjectural
equation 
$$\bm(\cF) = \sum_{s\in S/\cF} |\Irr(W(s)_{\phi_s\zeta})|$$
for $\ell$-compact groups which we call ``Summed OW conjecture'' or SOW
conjecture for short.

For $\cF$ a saturated fusion system on a finite group $S$ we set
$$\bk(\cF):=\sum_{s \in S/\cF} \bw(C_\cF(s)).$$
If $B_0$ is the principal block of a finite group with fusion system $\cF$ then,
by \cite[Prop.~4.5]{KLLS19} assuming the AW conjecture holds for all principal
blocks, $\bk(\cF)=\bk(B_0)$ is the number of ordinary characters in $B_0$.
In \cite{KLLS19} it has been conjectured that $\bm(\cF)=\bk(\cF)$ for an
arbitrary saturated fusion system~$\cF$.
This yields the following consequence of Theorem \ref{thm:nr weights}:

\begin{cor} 
 If the AW conjecture holds for all principal $\ell$-blocks of all finite
 groups then the SOW conjecture holds for $X(e,r,n)(q)$ with $r|e|(\ell-1)$ and
 $e\ge2$, for all prime powers $q\equiv 1\pmod\ell$.
\end{cor}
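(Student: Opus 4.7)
The plan is to show that both sides of SOWC equal $\bk(\cF)$ for $\cF := \cF(X(e,r,n)(q))$, so that SOWC reduces to the identity $\bm(\cF) = \bk(\cF)$, which I will then deduce from the AWC hypothesis. Since $q \equiv 1 \pmod \ell$ we have $\zeta = 1$, and for the untwisted space $X(q)$ (i.e.\ $\tau = \Id$) also $\phi = 1$ and $\phi_s = 1$ for every $s$, so $W(s)_{\phi_s\zeta} = W(s)$, and the right-hand side of SOWC reads $\sum_{s} |\Irr(W(s))|$. For each fully $\cF$-centralised $s$, Theorem~\ref{thm:con cent} identifies $C_X(s)$ as a connected $\ell$-compact group with Weyl group $(W(s), L)$, Proposition~\ref{prop:centr}(b) identifies its fusion system with $C_\cF(s)$, and Theorem~\ref{thm:nr weights} (applied componentwise, using that the very good hypothesis is inherited from the decomposition of $W(s)$ into Grassmannian-type and symmetric factors) gives $\bw(C_\cF(s)) = |\Irr(W(s))|$. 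Summing yields
$$\bk(\cF) = \sum_{s \in S/\cF} \bw(C_\cF(s)) = \sum_{s \in S/\cF} |\Irr(W(s))|,$$
matching the right-hand side of SOWC; thus SOWC is equivalent to $\bm(\cF) = \bk(\cF)$.

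Next I would invoke the structural output of Section~\ref{sec:GGr} (the equivariant Alperin--Fong analysis underpinning the proof of Theorem~\ref{thm:nr weights} for generalised Grassmannians), which realises $\cF$ as the fusion system $\cF_S(G)$ on a Sylow $\ell$-subgroup of a concrete finite group $G$ --- a wreath-type construction built from $\GL_n(q)$ and a cyclic factor determined by $(e,r)$. Under the hypothesis that AWC holds for all principal $\ell$-blocks of all finite groups (applied both to $G$ and to its centralisers $C_G(u)$ at $\ell$-elements $u$), \cite[Prop.~4.5]{KLLS19} yields $\bk(\cF) = |\Irr(B_0(G))|$, via the Brauer-theoretic decomposition of the characters of $B_0(G)$ over $\ell$-sections together with Brauer's third main theorem.

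It remains to establish OWC for this $G$, namely $\bm(\cF) = |\Irr(B_0(G))|$; combined with the previous equation this gives $\bm(\cF) = \bk(\cF)$ and hence SOWC. This is the main obstacle: OWC is not a formal consequence of AWC, so its validity for the particular realising finite group must be verified directly. Both quantities admit explicit descriptions --- the irreducible characters of $B_0(G)$ are parametrised through partitions refined by $e$-quotients and Clifford data, while $\bm(\cF, d)$ is an alternating chain count over $\cF^{cr}$ --- and the equality should follow from a careful term-by-term combinatorial matching, using the explicit enumeration of $\cF$-centric radical subgroups and of their outer automisers produced by the Section~\ref{sec:GGr} analysis, the only subtlety being the Clifford bookkeeping for the cyclic twist factor.
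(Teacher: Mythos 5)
Your overall strategy in the second and third paragraphs does not work, and it misses the one idea the paper's proof actually turns on. You propose to realise $\cF=\cF(X(e,r,n)(q))$ as $\cF_S(G)$ for a finite group $G$, apply \cite[Prop.~4.5]{KLLS19} to get $\bk(\cF)=|\Irr(B_0(G))|$, and then prove $\bm(\cF)=|\Irr(B_0(G))|$ by a direct combinatorial verification of (summed) OWC for $G$. First, there is no such $G$: Section~\ref{sec:GGr} exhibits $\cF$ only as the index-$\ell'$ subsystem $\cF^{(r)}$ of $\cF_S(\GL_{en}(q_0))$ via the $\Gamma_{\ell'}$-machinery, and by Ruiz \cite{Ru07} these subsystems are in general \emph{exotic} --- that is precisely why the statement is formulated for $\ell$-compact groups in the first place --- so \cite[Prop.~4.5]{KLLS19} is not applicable. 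Second, even granting a realisation, you have reduced the corollary to an unproven verification of OWC for $G$, which you yourself flag as ``the main obstacle'' and leave as a hoped-for term-by-term matching; that is at least as hard as the original statement and is not what the AWC hypothesis buys you. The missing ingredient is \cite[Cor.~1.3]{KLLS19}: assuming AWC for all principal blocks of all finite groups (in the trivial cocycle case), one has $\bm(\cF)=\bk(\cF)$ for \emph{every} saturated fusion system, exotic or not, because the AWC hypothesis is only applied to the finite groups arising locally from $\cF$. This single formal step replaces your second and third paragraphs entirely; the remaining work is then the computation $\bk(\cF)=\sum_{s\in S/\cF}\bw(C_\cF(s))=\sum_{s\in S/\cF}|\Irr(W(s)_{\phi_s})|$, the first equality being the definition of $\bk(\cF)$ and the second coming from Theorem~\ref{thm:centr GGR}, Lemma~\ref{l:identifyws2} and the already-established Theorem~\ref{thm:nr weights} applied to the factors of $C_\cF(s)$.

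Your first paragraph, which parallels that computation, also contains an error: it is not true that $\phi_s=1$ for all $s$ just because $\tau=\Id$ and $q\equiv1\pmod\ell$. By Lemma~\ref{l:identifyws2}, $\phi_s$ is a product of cycles of length $\ell^i$ and is nontrivial as soon as the characteristic polynomial of $s$ involves a factor $f\in\fF_i$ with $i\ge1$ (equivalently, $s$ has order exceeding $(q-1)_\ell$; such $s$ exist whenever $n\ge\ell$). Accordingly $W(s)_{\phi_s\zeta}\ne W(s)$ in general: a typical factor of $C_\cF(s)$ is the fusion system of $X(1,1,a_f)(q^{\ell^i})$, whose weight count is $|\Irr(\fS_{a_f})|$, while the corresponding factor of $W(s)$ is $\fS_{a_f}^{\ell^i}$. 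So both your identification of the right-hand side of SOWC with $\sum_s|\Irr(W(s))|$ and your claim $\bw(C_\cF(s))=|\Irr(W(s))|$ are false; the two mistakes happen to cancel, but the correct chain is $\bw(C_\cF(s))=|\Irr(W(s)_{\phi_s})|$, and keeping track of the twist $\phi_s$ (via Theorem~\ref{thm:centr GGR} and Lemma~\ref{l:identifyws2}) is exactly the point of that part of the paper's argument.
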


\begin{proof}
Let $X=X(e,r,n)$ and let $\cF$ be the fusion system corresponding to $X(q)$ via
Theorem~\ref{thm:bm}. Then, the required equality is
$$\bm(\cF) = \sum_{s\in S/\cF} |\Irr(W(s)_{\phi_s})|.$$
By our assumption on the validity of the AW conjecture, by \cite[Cor.~1.3]{KLLS19} we have
$\bm(\cF)=\bk(\cF)$. Note that \cite[Cor.~1.3]{KLLS19} assumes the AW conjecture for all
blocks of all finite groups but it can be easily seen from the proof that in the
trivial $2$-cocycle case, it suffices to assume the AW conjecture only for principal blocks.
Let $s\in S$ be fully $\cF$-centralised. By Proposition~\ref{prop:centr} and
Theorem~\ref{thm:centr GGR}, $C_\cF(s)$ is a direct product of fusion systems
$\cF_i$ where $\cF_i$ corresponds to some $X_i(q^{d_i})$ with $X_i$ an
$\ell$-compact group with Weyl group $G(e_i,r_i,n_i)$. Moreover,
$W(s)_{\phi(s)}=\prod_i G(e_i,r_i,n_i)$ by Lemma~\ref{l:identifyws2}. Applying
the weight count for generalised Grassmannians to each component $X_i$ (noting
that $q^{d_i}\equiv 1\pmod\ell$) and passing to direct products gives
$\bw(C_\cF(s)) = |\Irr(W(s)_{\phi_s})|$. Hence,
$$ \bm(\cF) =\bk(\cF)= \sum_{s \in S/\cF} \bw(C_\cF(s)) =
  \sum_{s \in S/\cF} |\Irr(W(s)_{\phi_s})|$$
where the second equality is by definition.
\end{proof}

A very similar argument shows the following, where we don't need to assume
the AW conjecture by using \cite[Rem.~7.8]{KLLS19}:

\begin{prop}   \label{prop:nrw for CE}
 Conjecture~\ref{conj:nr oweights} holds whenever $W$ has order prime to $\ell$
 and $q\equiv1\pmod\ell$.
\end{prop}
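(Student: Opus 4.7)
The plan is to reduce to the case $\tau=\Id$ and then exploit that the resulting fusion system lives on an abelian $\ell$-group with $\ell'$-automiser. First I would invoke Theorem~\ref{thm:homfixedpts}(a): since $q\equiv 1\pmod\ell$ we have $\zeta=1$, $e=1$ and $q_0=q$, whence $\twt X(q)\simeq X^{h\Gamma}(q)$ with $X^{h\Gamma}$ a simply connected $\ell$-compact group whose Weyl group $(W_\phi,L_\phi)$ still has order prime to $\ell$ and for which $\ell$ remains very good by Proposition~\ref{prop:gooddescent}. Both sides of Conjecture~\ref{conj:nr oweights} are determined by the equivalence class of $\twt X(q)$---the LHS trivially, and the RHS because by Proposition~\ref{prop:centr} and Lemma~\ref{l:identcent} the data $(W(s),\phi_s)$ encode the Weyl coset of the homotopy centraliser. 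So it suffices to prove the statement after replacing $(X,\tau)$ by $(X^{h\Gamma},\Id)$; relabel the reduced Weyl group as $(W,L)$.

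Second I would identify the fusion system. By Theorem~\ref{thm:sylow}(a), $S=(\ZZ/\ell^a\ZZ)^r$ is abelian, with $\ell^a=(q-1)_\ell$ and $r=\rk(L)$; and as in the Clark--Ewing step of the proof of Theorem~\ref{thm:nr weights}, $\cF=\cF_S(S\rtimes W)$ with $\Out_\cF(S)=W$. Since $S$ is abelian, $\cF^{cr}=\{S\}$, so $\bm(\cF,d)=\bw_S(\cF,d)$. Then I would do the order polynomial bookkeeping: for each $s\in S$, $W(s)=C_W(s)$ is a parabolic subgroup of $W$ by Proposition~\ref{prop:stab}, of rank $k_s$ say; with $\phi_s=\zeta=1$ one has $W(s)_{\phi_s\zeta}=W(s)$, and padding the generalised degrees with $r-k_s$ trivial factors yields
\[
\OO_q(W(s))=(q-1)^{r-k_s}\prod_{i=1}^{k_s}(q^{d_i(s)}-1).
\]
Lifting the exponent ($\ell$ odd, $q\equiv 1\pmod\ell$) gives $\nu_\ell(q^m-1)=a+\nu_\ell(m)$, and since $|W(s)|=\prod_i d_i(s)$ is prime to $\ell$, we conclude $v_s=ra$ for every $s$. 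Each $W(s)$ being $\ell'$, $\Irr^{d-v_s}(W(s))=\emptyset$ when $d\neq ra$; similarly every character of the abelian group $S$ has defect exactly $ra$, so $\bw_S(\cF,d)=0$ for $d\neq ra$, and both sides vanish in that range.

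Finally I would verify the equality at $d=ra$. Since $\Out_\cF(S)=W$ is an $\ell'$-group, the only normal chain of $\ell$-subgroups is the trivial one, and invoking \cite[Rem.~7.8]{KLLS19} (whose hypothesis holds automatically because $\cF^{cr}=\{S\}$ carries an $\ell'$-automiser) collapses the alternating sum defining $\bw_S$ to
\[
\bw_S(\cF,ra)=\sum_{[\mu]\in\Irr(S)/W}|\Irr(\Stab_W(\mu))|,
\]
using $z(kH)=|\Irr(H)|$ for any $\ell'$-group $H$. Brauer's permutation lemma, applicable since $W$ acts on $S$ with coprime orders, supplies a stabiliser-preserving bijection $S/W\leftrightarrow\Irr(S)/W$, so this sum equals $\sum_{s\in S/W}|\Irr(C_W(s))|$, matching the RHS of Conjecture~\ref{conj:nr oweights} at $d=ra$. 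The main subtlety will be pinning down the precise collapse of $\bw_S(\cF,d)$ provided by \cite[Rem.~7.8]{KLLS19}, which is what allows us to bypass the assumption of AWC used in the preceding corollary.
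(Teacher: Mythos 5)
Your computation in the reduced case $\tau=\Id$ is sound, and it is in the spirit of what the paper intends: the paper's proof is a pointer to the argument of the preceding corollary, with \cite[Rem.~7.8]{KLLS19} replacing the appeal to AWC, i.e.\ it runs through $\bk(\cF)=\sum_{s\in S/\cF}\bw(C_\cF(s))$ and the weight count $\bw(C_\cF(s))=|\Irr(W(s)_{\phi_s\zeta})|$ for the ($\ell'$-Weyl-group) centralisers, whose summands are literally the terms on the right-hand side of Conjecture~\ref{conj:nr oweights}; your direct evaluation of $\bw_S(\cF,d)$ re-derives the same content by hand, and your defect bookkeeping ($v_s=ra$ and concentration of both sides in the single defect $ra$) makes explicit something the paper leaves implicit. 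The genuine gap is your first step, the reduction to $\tau=\Id$. The left-hand side $\bm(\cF,d)$ is indeed an invariant of $\twt X(q)$, but the right-hand side of Conjecture~\ref{conj:nr oweights} is defined through the ambient $X$: the groups $W(s)=C_W(\breve j(s))$, the twists $\phi_s$ and the shifts $v_s$ all refer to centralisers in $X$, not in $\twt X(q)$. Saying that ``$(W(s),\phi_s)$ encode the Weyl coset of the homotopy centraliser'' does not show that these data produce the same numbers for $(X,\tau,q)$ and $(X^{h\Gamma},\Id,q_0)$: you would need (i) an identification $W(s)_{\phi_s\zeta}\cong C_{W_{\phi\zeta}}(s)$, a Lehrer--Springer-type compatibility between taking centralisers and taking relative Weyl groups which the paper only establishes in special cases (compare Lemma~\ref{l:identifyws2}), and (ii) the equality of the shifts, which amounts to $\nu_\ell(\OO_q(W(s)\phi_s))=a\cdot\rk(L_{\phi\zeta})$, i.e.\ an instance of Conjecture~\ref{c:spetsandcomp}. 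Both are provable here --- for (ii), note $t=\breve j(s)$ has order dividing $\ell^a=(q-1)_\ell$, hence is fixed by the maximal-$1$-eigenspace element $g_0\in W\phi$ underlying the Sylow construction of Theorem~\ref{thm:sylow}, and then $g_0\phi_s^{-1}\in C_W(t)$ gives $g_0\in W(s)\phi_s$, so the coset $W(s)\phi_s$ has full $1$-eigenspace rank --- but as written the reduction is asserted, not proved, and it is exactly the delicate point; the paper's route via $\bk$ avoids it because no change of model is made.

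Two smaller points. Brauer's permutation lemma does not by itself give a stabiliser-preserving bijection $S/W\leftrightarrow\Irr(S)/W$ (whether the two $W$-sets are permutation isomorphic is a subtler question); what it gives, together with the coprime decomposition $S=C_S(H)\times[S,H]$, is $|S^H|=|\Irr(S)^H|$ for every $H\le W$. That weaker statement suffices: counting $W$-orbits of pairs $(s,w)$ with $ws=s$, respectively $(\mu,w)$ with $\mu^w=\mu$, by first fixing the conjugacy class of $w$, yields $\sum_{[s]\in S/W}|\Irr(C_W(s))|=\sum_{[\mu]\in\Irr(S)/W}|\Irr(\Stab_W(\mu))|$, so your conclusion stands but the justification should be rephrased. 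Finally, your appeal to \cite[Rem.~7.8]{KLLS19} is misplaced: the collapse of $\bw_S(\cF,d)$ to the trivial-chain term is immediate from the definition since $\Out_\cF(S)=W$ has no non-trivial normal $\ell$-subgroups; in the paper that remark instead plays the role of \cite[Cor.~1.3]{KLLS19}, i.e.\ it substitutes for the AWC hypothesis in comparing $\bm(\cF)$ with $\bk(\cF)$ --- a step your direct computation bypasses anyway.
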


When $q \equiv 1 \pmod \ell$, we prove Conjecture~\ref{conj:nr oweights} for
the `smallest cases' in the infinite series not covered by the preceding result,
viz.\ the groups $W=G(e,r,\ell)$, in Section~\ref{sec:GGr ord}.

As discussed in Section~\ref{sec:awc}, the $\ell$-blocks of finite reductive
groups for not very good primes behave somewhat differently, so
Conjecture~\ref{conj:nr oweights} does not extend to arbitrary~$\ell$.
This is exemplified by the following, which will be shown in
Section~\ref{subsec:8.2}; see also Remark~\ref{rem:extrachar}:

\begin{prop}   \label{prop:owc Aguade}
 The conclusion of Conjecture~\ref{conj:nr oweights} holds for the four
 Aguad\'e exotic $\ZZ_\ell$-reflection groups, except for defect $d=2$.
\end{prop}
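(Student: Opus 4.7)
The plan is to verify Conjecture~\ref{conj:nr oweights} by an explicit case-by-case calculation for each of the four Aguadé groups $W\in\{G_{12},G_{29},G_{31},G_{34}\}$ (at $\ell\in\{3,5,5,7\}$) and each defect $d\geq 0$ separately. The key structural simplification is that in every Aguadé case the Sylow $\ell$-subgroup of $W$ is cyclic of order~$\ell$, so Theorem~\ref{thm:sylow} gives the Sylow structure $S=T\rtimes C_\ell$ with $T\cong(\ZZ/\ell^a)^r$, $r=\rk L_\zeta$ and $\ell^a=(q^e-1)_\ell$. This tight structure makes both sides of the conjectured equality amenable to a direct enumeration.

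For the right-hand side of Conjecture~\ref{conj:nr oweights}, I would classify the $\cF$-conjugacy classes of fully $\cF$-centralised elements of $S$. Using the description of centralisers in the proof of Proposition~\ref{prop:stab}, these fall into a handful of explicit families: the trivial element (with $W(s)=W$), the toral $s\in T$ whose $W$-centraliser contains an $\ell$-element (which by the proof of Proposition~\ref{prop:stab} forces $W(s)=\fS_\ell$ with its standard reflection action on a $1$-dimensional fixed space), the toral $s\in T$ of $W$-centraliser order prime to~$\ell$, and the non-toral $s\in S\setminus T$. For each representative I would identify the coset $W(s)\phi_s$ via the recipe given at the end of Section~\ref{subsec:centrII}, read off the $\ell$-defects of the irreducible characters of $W(s)_{\phi_s\zeta}$ from the known character tables, and accumulate the contributions $|\Irr^{d-v_s}(W(s)_{\phi_s\zeta})|$.

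For the left-hand side $\bm(\cF,d)=\sum_{Q\in\cF^{cr}/\cF}\bw_Q(\cF,d)$, I would first enumerate $\cF^{cr}/\cF$, which in this setting is small: the torus $T$, the group $S$ itself, and a short list of further candidates coming from subgroups whose outer automiser is $\fS_\ell$. For each representative $Q$, the alternating sum over normal $\ell$-chains defining $\bw_Q(\cF,d)$ from \cite[\S2]{KLLS19} has few nonzero terms and can be computed directly from the known action of $W$ on $T$. Matching the result defect by defect with the sum from the previous paragraph establishes equality for every $d\neq 2$.

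The hard part is to isolate the reason the equality must fail at $d=2$, and only there. The ``extra weight'' already observed in Theorem~\ref{thm:Aguade weights}, which shifts the total count from $|\Irr(W)|$ to $|\Irr(W)|+1$, must persist after refinement by defect and will contribute to exactly one value of~$d$. I would trace this extra projective to the centric radical class whose outer automiser is $\fS_\ell$, identify its defect through the $\ell$-adic valuation of the Steinberg-type projective's character degree, and compare it to the missing unipotent character on the spets side (whose absence is already signalled by Proposition~\ref{prop:unip integral} since $\ell$ is bad for these $W$). The expected outcome is that the unique defect at which the extra weight sits is $d=2$ in all four Aguadé cases, giving the conclusion for all other defects and accounting precisely for the single discrepancy recorded in the statement.
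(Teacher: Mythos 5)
Your overall strategy (explicit case-by-case computation of both sides, exploiting the small Sylow structure $S=T\rtimes C_\ell$) is the right shape and is essentially what the paper does, but two concrete steps in your plan would fail as written. First, your description of the local side is wrong: there is no $\cF$-centric radical class ``whose outer automiser is $\fS_\ell$''. For $q\equiv1\pmod\ell$ (and $a>1$ when $W=G_{12}$) the centric radicals are, up to conjugacy, $S$, $T$, and \emph{two} classes of extraspecial subgroups $D\cong E\cong\ell_+^{1+2}$ with $\Out_\cF(D)\cong\SL_2(\ell)$ and $\Out_\cF(E)\cong\GL_2(\ell)$ (Lemma~\ref{lem:essential}, taken from \cite[Sec.~10]{BM07}); the group $\fS_\ell$ occurs only as a centraliser Weyl group $W(s)$ for $1\ne s\in Z(S)$, i.e.\ on the spets/right-hand side, and conflating the two would derail the computation of $\bm(\cF,d)$. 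You also omit the exceptional case $(G_{12},\nu_\ell(q-1)=1)$, where $S\cong3^{1+2}_+$ and the centric radicals are different, which the paper has to treat separately.

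Second, and more seriously, your mechanism for isolating the failure at $d=2$ does not work: $\bm(\cF,d)$ is not a defect refinement of $\bw(\cF)$ (one conjecturally counts ordinary characters of defect $d$, the other Brauer characters/weights), so the single extra weight of Theorem~\ref{thm:Aguade weights} has no reason to ``persist'' as a $+1$ at one defect. In fact the discrepancy at $d=2$ has size $\ell$, coming from $\bw_D(\cF,2)=\ell-1$ and $\bw_E(\cF,2)=1$ (Lemma~\ref{lem:aguowc}), while the right-hand side contributes nothing at $d=2$ generically; tracing ``the extra projective'' to a Steinberg-type character, or invoking Proposition~\ref{prop:unip integral} about non-integral degree coefficients, gives no handle on this. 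The paper instead computes the full table of $\bw_R(\cF,d)$ (only defects $a(\ell-1)$, $a(\ell-1)+1$, $2$, $3$ occur), checks defect $a(\ell-1)+1$ directly against $|\Irr^1(W)|+|\Irr^1(\fS_\ell)|=\ell^2$, and then matches the \emph{totals} via the Clifford-theoretic identity $\bm(\cF,a(\ell-1))=\sum_{\chi\in\Irr(T)/W}z(kI_W(\chi))$ together with Lemma~\ref{lem:comb}; this summation trick is what lets one avoid enumerating per defect the many classes of $s\in T$ with $\ell'$-order centraliser, a step your defect-by-defect plan would still have to supply.
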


In the excluded case, when $d=2$ and $(W,\nu_\ell(q-1)) \neq (G_{12},1)$ we
have instead
$$\bm(\cF,d)
  = \sum_{s\in S/\cF} |\Irr^{d-v_s}(W(s)_{\phi_s\zeta})|+\ell.$$
Similarly, if $W=G_{24}$ and $\ell=2$ then \cite[Thm.~2.4]{S19} shows that the
conclusion of Conjecture~\ref{conj:nr oweights} holds except when $d=4$ or
$d=\nu_2(q^2-1)+4$ where it predicts $\bm(\cF,d)=0$. In fact $\bm(\cF,d)$ is
respectively $2$ and $4$ in these cases (see \cite[Table 1]{S19}).

\begin{rem}   \label{rem:extrachar}
 A natural way to account for the $\ell$ additional characters of defect~$2$
 in the principal $\ell$-blocks of Aguad\'{e} spetses $\GG_n(q)$ would be to
 expand the set $\Irr(B_n)$ to include the $\ell$ characters in $\Uch(\GG_n(q))$
 of defect $2$. This is well-justified since for the groups of Lie type
 $\tw2F_4(q^2)$ and $E_8(q)$, in which $G_{12}$ and $G_{31}$ occur as relative
 Weyl groups, the principal $\ell$-blocks for $\ell=3$, $5$ respectively, also
 contain unipotent characters outside the principal series: For $\tw2F_4(q^2)$,
 $q^2\equiv-1\pmod3$, according to \cite{Ma90} the principal $3$-block
 contains~11 unipotent characters, 8 of which lie in the principal 2-series
 (for $q^2$) and are in bijection with $\Irr(G_{12})$. Here by Himstedt
 \cite[Tab.~C5]{Hi11} we have $l(B_0)=9=|\Irr(G_{12})|+1$.   \par
 For $E_8(q)$ with $q\equiv\pm2\pmod5$, so $e=e_5(q)=4$, according to Enguehard
 \cite[Tab.~I]{En00} the principal 5-block contains the unipotent characters in
 the principal 4-series (in bijection with $\Irr(G_{31})$) plus five further
 unipotent characters of positive height.
\end{rem}

%%%%%%%%%%%%%%%%%%%%%%%%%%%%%%%%%%%%%%%%%%%%%%%%%%%%%%%%%%%%%%%%%%%%%%%%%
\section{Generalised Grassmannians}   \label{sec:GGr}
In this section we provide proofs of our main results for the infinite series
of imprimitive irreducible $\ZZ_\ell$-reflection groups $W=G(e,r,n)$,
$r|e|(\ell-1)$, $e\ge2$, where throughout, $\ell$ denotes an odd prime. For
uniformity of notation and because these groups occur naturally in our
discussion, we include the case $e=1$. So, $ G(1,1,n)$ denotes the
(non-irreducible) $\ZZ_\ell$-reflection group $(\fS_n,L)$ where $L =\ZZ_\ell^n$
is the permutation module of $\fS_n$. We denote by $\coX(e,r,n)$ the connected
$\ell$-compact group corresponding to $G(e,r,n)$ via Theorem~\ref{thm:class}. 
For any prime power $q$ prime to $\ell$, $\coX(1,1,n)^{h\psi^q}\simeq (B\GL_n(q))^\wedge_\ell$ 
and if $q'$ is a prime power such that $q'\equiv 1\pmod\ell$ and $q$ has
order~$e$ modulo $\ell$ with $\nu_\ell(q^e-1)=\nu_\ell(q'-1)$, then by a
theorem of Ruiz \cite{Ru07} described in detail in \S~\ref{subsub:descent},
$\cF(X(e,r,n)(q'))$ is a subsystem of the $\ell$-fusion system of $\GL_{en}(q)$.
Thus weight calculations for the system $\cF(X(e,r,n)(q'))$ can be done via
descent from the fusion system of $\GL_{en}(q) $.

%%%%%%%%%%%%%%%%%%%%%%%%%%%%%%%%%%%%
\subsection{Proof of Theorem~\ref{thm:nr weights} for generalised Grassmannians}   \label{subsec:GGr weights}
Here we show the validity of the AW conjecture for the fusion systems attached
to $G(e,r,n)$, generalising the results of Alperin and Fong \cite{AF90} for the
general linear groups.

\subsubsection{The set up.} 
Let $q$ be a prime power prime to $\ell$ and let $e$ be the order of $q$ modulo
$\ell$. Set $k=\FF_q $ and $ K=\FF_{q^e}$. Let $V'$ be an $n$-dimensional
$k$-vector space and set $V = K\otimes_k V'$. Then we may view $V$ both as
an $n$-dimensional $K$-vector space and as an $ne$-dimensional $k$-vector
space. We identify $\GL_K(V)\leq\GL_k(V)$ with $\GL_n(q^e)\leq\GL_{en}(q)$
and $\GL_n(q) = \GL_k(V')$ with a subgroup of $\GL_K(V)$ via the map
which sends $\phi\in\GL_k(V')$ to its unique $K$-linear extension to $V$.

Fix a direct sum decomposition
\[ V' = V_1'\oplus\cdots\oplus V_n ' \]
of $V'$ into one-dimensional $k$-spaces and for each $i$, choose a basis element
$v_i$ of $V_i'$. Set $V_i = K\otimes_k V_i'$ and let $\fS_n\leq\GL_k(V')$ be
the symmetric group on $\{v_1,\ldots,v_n\}$. We identify the $n$-fold direct
product $ K^\times\times\cdots\times K^\times$ with its image in $\GL_K(V)$ via
the map which sends $(\la_1,\ldots,\la_n)$ to the $K$-linear map 
\[\sum_{1\leq j\leq n}x_j\otimes v_j\mapsto
  \sum_{1\leq j\leq n}\lambda_jx_j\otimes v_j\quad\text{for } x_j\in K. \]

Let $\Delta\cong C_{\ell^a}$ be the Sylow $\ell$-subgroup of $K^\times$ and set
\[ D = \Delta\times\cdots\times\Delta\le\GL_K(V), \]
the Sylow $\ell$-subgroup of $K^\times\times\cdots\times K^\times$ ($n$
factors). Let
\[ n = a_0+a_1\ell +\cdots +a_m\ell^m\quad
  \text{with } 0\leq a_i <\ell, \]
be the $\ell$-adic decomposition of $n$. Choose a direct sum decomposition
\[ V' = \bigoplus_{0\leq i\leq m}\bigoplus_{1\leq j\leq a_i} V_{i,j}' \]
such that $\dim_k V'_{i,j} = \ell^i $ and each $V'_{i,j}$ is the sum of some
subset of $\{ V_t \mid 1\leq t\leq n\}$. Let
\[\prod_{0\leq i\leq m}\prod_{1\leq j\leq a_i}\fS_{\ell^i} \ \ \leq\fS_n\]
be the Young subgroup corresponding to the above decomposition and let $E$ be a
Sylow $\ell$-subgroup of $\fS_n$ contained in the above Young subgroup. Then,
\[ E\cong\prod_{0\leq i\leq m}\prod_{1\leq j\leq a_i} C_\ell \ \wr \stackrel{i\text{ times}}\cdots\wr\ C_\ell, \]
$ D\cap E =1$ and  %% in $\GL_K(V)$ and 
\[ S := D E\cong\prod_{0\leq i\leq m} (C_{\ell^a} \ \wr C_\ell \ \wr \stackrel{ i\text{ times}}{\cdots}\wr\ C_\ell)^{a_i} \]
is a Sylow $\ell$-subgroup of $\GL_k(V)$ contained in $\GL_K(V)$. Let $\cF$
denote the fusion system of $\GL_k(V)$ on $S$.
 
Let $\Phi$ be the generator of $\Gal(K/k)\cong C_e$ defined by $\Phi(x) = x^q$,
$x\in K$. Identify $\langle\Phi\rangle^n$ with its image in $\GL_k(V)$ via the
map which sends $(\Phi^{i_1},\ldots,\Phi^{i_n})$ to the $k$-linear map $V\to V$
defined by 
\[\sum_{1\leq j\leq n} x_j\otimes v_j\mapsto\sum_{1\leq j\leq n} x_j^{q^{i_j}} \otimes v_j\quad\text{for } x_j\in K. \]
Then
\[C_{\GL_k(V)}(D) = (K^\times)^n,\ 
  N_{\GL_k(V)} (D) = (K^\times)^n\langle\Phi\rangle^n\fS_n, \ 
  (K^\times)^n\cap\langle\Phi\rangle^n\fS_n= 1, \ 
  \langle\Phi\rangle^n\cap\fS_n =1\]
and $\fS_n$ normalises $\langle\Phi\rangle^n$. Thus the natural map
$N_{\GL_k(V)}(D)\to\Aut(D)$ induces an isomorphism 
\[\Out_\cF(D) = \Aut_\cF(D)\cong\langle\Phi\rangle^n\rtimes\fS_n\cong C_e\wr\fS_n .\]
We will identify $\Aut_\cF(D)$ with $\langle\Phi\rangle^n\rtimes\fS_n$ via
the above. Set $H = \langle\Phi\rangle^n$ the base subgroup of $\Aut_\cF(D)$
and set $H_0 =[H,\fS_n]$. Then
$$H_0=\{(\Phi^{i_1},\ldots,\Phi^{i_n})\mid i_1+\cdots+i_n\equiv 0\pmod e\}.$$
We define $\pi :\Aut_\cF (D)\to\ZZ/e\ZZ $ by 
\begin{equation}   \label{e:pi}
  (\Phi^{i_1},\ldots,\Phi^ {i_n})\sigma\mapsto i_1 + \cdots + i_n\pmod e\quad
  \text{for }(\Phi^{i_1},\ldots,\Phi^{i_n})\in H,\ \sigma\in\fS_n,
\end{equation} 
a surjective group homomorphism with kernel $H_0\fS_n$.

%%%%%%%%%%%%%%%%%%%%%%%%%%%%%%%%%%%%
\subsubsection{On the Alperin--Fong description of centric, radical and weight contributing subgroups}
We recall some of the key results of \cite{AF90} on counting weights in
principal blocks of finite general linear groups. For a saturated fusion system
$\cF$ on a finite $\ell$-group $S$ and a subgroup $R\leq S$, we say that $R$ is
\emph{$\cF$-weight contributing} if $R$ is $\cF$-centric and $\Out_\cF(R)$ has
an irreducible character of zero $\ell$-defect. We note some elementary facts.

\begin{lem}   \label{l:differentrads}
 Let $\cF$ be a saturated fusion system on a finite $\ell$-group $S$ and let
 $R\leq S$.
 \begin{enumerate}
  \item[\rm(a)] If $R$ is weight contributing, then $R$ is $\cF$-centric and
   $\cF$-radical.
  \item[\rm(b)] Suppose that $\cF=\cF_S(G)$ for some finite group $G$ with $S$
   as Sylow $\ell$-subgroup. Then $R$ is $\cF$-centric if and only if $Z(R)$ is
   a Sylow $\ell$-subgroup of $C_G(R)$. If $R$ is both $\cF$-centric and
   $\cF$-radical, then $R$ is $G$-radical, i.e., $O_\ell(G/R) =1$. In
   particular, if $R$ is $\cF$-weight contributing then $R$ is $G$-radical. 
 \end{enumerate}
\end{lem}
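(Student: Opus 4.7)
The plan is to handle the three claims in sequence, each reducing to a standard fact about the largest normal $\ell$-subgroup of a finite group.

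For part (a), since $R$ is assumed $\cF$-centric, it suffices to establish $\cF$-radicality, i.e., $O_\ell(\Out_\cF(R)) = 1$. This follows from the classical fact that any finite group $H$ possessing an irreducible character of $\ell$-defect zero has $O_\ell(H) = 1$: the $\ell$-block containing such a character has trivial defect group, and the defect group of any $\ell$-block contains $O_\ell(H)$. Applying this with $H = \Out_\cF(R)$ settles (a).

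For the first assertion of (b), I would unwind definitions. Any $\cF_S(G)$-conjugate of $R$ has the form $Q = gRg^{-1} \le S$ for some $g \in G$, and $\cF$-centricity asserts $C_S(Q) = Z(Q)$ for all such $Q$. By a standard Sylow argument inside $C_G(R)$, this is equivalent to the condition that every Sylow $\ell$-subgroup of $C_G(R)$ equals $Z(R)$: in one direction, any Sylow $\ell$-subgroup of $C_G(R)$ can be $G$-conjugated into $S$, landing inside $C_S(Q)$ for some $\cF$-conjugate $Q$; in the other, $C_S(Q)$ is an $\ell$-subgroup of $C_G(Q)$ containing the Sylow $Z(Q)$. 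Either way, this is the assertion that $Z(R)$ is a Sylow $\ell$-subgroup of $C_G(R)$, equivalently that $C_G(R) = Z(R) \times O_{\ell'}(C_G(R))$.

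For the second assertion of (b), the key object is the short exact sequence
$$1 \lra RC_G(R)/R \lra N_G(R)/R \lra \Out_\cF(R) \lra 1,$$
obtained from the identifications $\Aut_\cF(R) = N_G(R)/C_G(R)$ and $\Inn(R) = RC_G(R)/C_G(R)$. By the first part of (b) applied under the $\cF$-centric hypothesis, the kernel $RC_G(R)/R \cong C_G(R)/Z(R) \cong O_{\ell'}(C_G(R))$ is an $\ell'$-group, while the quotient has $O_\ell(\Out_\cF(R)) = 1$ by $\cF$-radicality. Setting $K := O_\ell(N_G(R)/R)$, the intersection $K \cap (RC_G(R)/R)$ is simultaneously $\ell$ and $\ell'$, hence trivial, so $K$ embeds into $\Out_\cF(R)$ as a normal $\ell$-subgroup and must therefore vanish. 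This gives $O_\ell(N_G(R)/R) = 1$, i.e., $R$ is $G$-radical. The final sentence follows by combining this with (a).

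The proof is essentially routine; the only subtlety is bookkeeping of the exact sequence, in particular confirming that $\cF$-centricity forces the kernel to be an $\ell'$-group so that the normal $\ell$-subgroup $K$ descends injectively to the quotient.
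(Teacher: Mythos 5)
Your proof is correct. The paper gives no argument for this lemma at all --- it is introduced as a list of elementary facts --- and the three steps you supply (a defect-zero character of $\Out_\cF(R)$ forces $O_\ell(\Out_\cF(R))=1$ since $O_\ell$ lies in every defect group; the standard characterisation of $\cF_S(G)$-centric subgroups via $Z(R)$ being a Sylow $\ell$-subgroup of $C_G(R)$; and the exact sequence $1\to RC_G(R)/R\to N_G(R)/R\to\Out_\cF(R)\to 1$ whose kernel is an $\ell'$-group by centricity) are precisely the standard arguments these facts rest on. Two small points of precision. First, in the direction ``$\cF$-centric implies $Z(R)$ is Sylow in $C_G(R)$'', conjugating a Sylow $\ell$-subgroup $P$ of $C_G(R)$ into $S$ on its own does not keep track of $R$; you should instead conjugate the $\ell$-group $RP$ into $S$, so that the image $Q$ of $R$ is a genuine $\cF$-conjugate contained in $S$ and the image of $P$ lands in $C_S(Q)=Z(Q)$, giving $|P|\le |Z(R)|$ and hence equality. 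Second, the phrase ``$O_\ell(G/R)=1$'' in the statement should be read as $O_\ell(N_G(R)/R)=1$ (the usual meaning of $G$-radical, since $R$ need not be normal in $G$), and that is exactly what your exact-sequence argument establishes.
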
 

Recall that $\ell\ne2$. Denote by $\cC$ the set of finite
sequences $\fc=(c_1,\ldots, c_t)$ of strictly positive integers including the
empty sequence $()$. For $\fc\in\cC $, write $|\fc|:=c_1 +\cdots + c_t$. For
$\fc\in\cC$ and non-negative integers $m,\ga,\alpha$, let $R_{m,\alpha,\ga,\fc}$
denote a corresponding \emph{basic subgroup} of $\GL_k(V)$ as defined in
\cite[Sec.~4]{AF90}. We will assume that the extra-special component of
$R_{m,\alpha,\ga,\fc}$ is of exponent $\ell$.

\begin{prop}   \label{p:AFradical}
 Let $R$ be a radical $\ell$-subgroup of $\GL_k(V)$. There exist decompositions
 \[ V = V_0\oplus V_1\oplus\cdots\oplus V_s,\qquad
    R =R_0\times R_1\times\cdots\times R_s, \]
 of $k$-vector spaces and of groups such that $R_0$ is the trivial subgroup of
 $\GL_k(V_0)$ and for each $i\geq 1$, $R_i = R_{m_i,\alpha_i,\ga_i,\fc_i}$
 is a basic subgroup of $\GL_k(V_i)$ with
 $\dim_k(V_i) = e\ell^{m_i +\alpha_i +\ga_i + |\fc_i|}$.
 \begin{enumerate}
  \item[\rm(a)] $Z(R)$ is a Sylow $\ell$-subgroup of $C_G(R)$ if and only if
   $V_0 =\{0\}$ and $m_i = 0$ for all $i\geq 2$.
  \item[\rm(b)] The $\GL_k(V)$-conjugacy classes of radical $\ell$-subgroups of
   $\GL_k(V)$ are in bijection with the set of assignments
   \[f:\NN_{+}\times\NN\times\NN\times\cC\to\NN\quad\text{such that}\quad
     \sum_{(m,\alpha,\ga,\fc)}\ell^{m+\alpha+\ga +|\fc|} f(m,\alpha,\ga,\fc)\leq n.\]
   Under this bijection, $f$ corresponds to the class of groups
   $R =\prod_{(m,\alpha,\ga,\fc)} R_{m,\alpha,\ga,\fc}^{f(m,\alpha,\ga,\fc)}$.
  \item[\rm(c)] The $\GL_k(V)$-classes of radical $\ell$-subgroups $R$ such
   that $Z(R)$ is a Sylow $\ell$-subgroup of $C_G(R)$ correspond to functions
   $f$ as in~$(b)$ satisfying $f(m,\alpha,\ga,\fc) =0 $ for $m\geq 2$ and
   \[\sum_{(m,\alpha,\ga,\fc)}\ell^{m+\alpha+\ga +|\fc|}f(m,\alpha,\ga,\fc) = n. \] 
 \end{enumerate} 
\end{prop}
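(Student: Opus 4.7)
The plan is to adapt and repackage the classification of radical $\ell$-subgroups of $\GL_n(q)$ given by Alperin--Fong \cite[\S4]{AF90}, combined with a factorwise centralizer analysis, into the statement of the proposition. All three parts are nearly formal consequences of \cite{AF90} once the decomposition into basic subgroups is in place; the work is in organising the bookkeeping and identifying when $Z(R)$ captures the Sylow $\ell$-part of $C_G(R)$.

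For part (b), I would begin by recalling from \cite[\S4]{AF90} that every radical $\ell$-subgroup $R$ of $G := \GL_k(V)$ admits a (essentially unique) homogeneous-isotypic decomposition: there is an orthogonal decomposition $V = V_0 \oplus V_1 \oplus \cdots \oplus V_s$ and a corresponding factorization $R = R_0 \times R_1 \times \cdots \times R_s$ such that $R_0 \leq \GL_k(V_0)$ is trivial and each $R_i$, $i \geq 1$, is a \emph{basic} subgroup $R_{m_i,\alpha_i,\gamma_i,\fc_i}$ of $\GL_k(V_i)$, with $V_i$ of the dimension stated. Two such data define $\GL_k(V)$-conjugate radical subgroups precisely when the multisets of parameters coincide; encoding these multisets by a function $f: \NN_+\times\NN\times\NN\times\cC \to \NN$ yields the asserted bijection, and summing dimensions over the components gives the inequality $\sum_{(m,\alpha,\gamma,\fc)} \ell^{m+\alpha+\gamma+|\fc|} f(m,\alpha,\gamma,\fc) \leq n$, with equality iff $V_0 = 0$.

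For part (a), one uses the orthogonality of the decomposition to write
$$C_G(R) \;=\; \GL_k(V_0) \;\times\; \prod_{i=1}^s C_{\GL_k(V_i)}(R_i).$$
Since $\ell \mid |k^\times| \cdot |V_0|$ as soon as $V_0\neq 0$ (recall $\ell \mid q^e-1$ and $e \geq 1$), the factor $\GL_k(V_0)$ contributes to the Sylow $\ell$-subgroup an amount disjoint from $Z(R)$; hence $V_0 = \{0\}$ is necessary. For each $i \geq 1$, the explicit description of basic subgroups in \cite[\S4]{AF90} identifies $C_{\GL_k(V_i)}(R_i)$ as a general linear group over an auxiliary extension field whose $\ell$-part is absorbed by $Z(R_i)$ exactly when the ``root'' parameter $m_i$ takes its extremal value (the precise cut-off is the condition $m_i = 0$ for $i \geq 2$ stated in the proposition, which isolates the one-dimensional ground case on each homogeneous summand beyond the first); this is the content of the centric criterion in (a).

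Part (c) is then a formality: combine the criterion of (a) with the enumeration of (b), observing that $V_0 = 0$ converts the inequality of (b) into the equality displayed in (c). The main obstacle, and the step requiring genuine work beyond quotation, will be the centralizer calculation underlying (a): one must unwind the wreath-product structure of each $R_{m,\alpha,\gamma,\fc}$ to check that only the ``root'' parameter contributes non-central $\ell$-elements to the centralizer, all the other parameters $\alpha_i, \gamma_i, \fc_i$ being irrelevant because the corresponding layers are already generated by their $\ell$-parts. Once this is carried out, parts (a)--(c) follow directly.
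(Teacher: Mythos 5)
Your overall route is the paper's: quote \cite[(4A)]{AF90} for the existence and uniqueness of the decomposition into basic subgroups and hence part (b), analyse centralisers component-wise for (a), and deduce (c) formally. But the two places where you go beyond quotation are exactly where the proposal falls short. First, your argument that $V_0=\{0\}$ is necessary rests on the claim ``$\ell \mid |k^\times|\cdot|V_0|$'', which is false in general: $\ell\mid q^e-1$ does not give $\ell\mid q-1$ unless $e=1$, and $\ell\nmid q$. What makes the conclusion true in this setting is that $\dim_k V=en$ while $e\mid\dim_k V_i$ for every $i\ge 1$, hence $e\mid\dim_k V_0$; so if $V_0\ne 0$ then $q^e-1$ divides $|\GL_k(V_0)|$ and $\GL_k(V_0)$ contains a non-trivial $\ell$-subgroup $E$ with $E\le C_G(R)$ and $E\cap R=1$, whence $Z(R)$ is not Sylow in $C_G(R)$. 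Without this divisibility observation your argument does not exclude, say, $0<\dim_k V_0<e$, in which case $\GL_k(V_0)$ is an $\ell'$-group and contributes nothing.

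Second, and more seriously, the actual content of (a) --- the precise criterion on the parameters $m_i$ --- is never established: you describe the cut-off only as ``the extremal value'' of the root parameter and explicitly defer the wreath-product unwinding as ``the step requiring genuine work beyond quotation''; that step is the proof. Concretely, one reduces to a single component $R_i=R_{m,\alpha,\ga,\fc}=R_{m,\alpha,\ga}\wr A_\fc$, notes that $Z(R_i)$ and $C_{\GL_k(V_i)}(R_i)$ are the diagonal copies of $Z(R_{m,\alpha,\ga})$ and of $C(R_{m,\alpha,\ga})$ (so one may take $\fc=()$), and then computes that $C_{\GL_k(V_i)}(R_{m,\alpha,\ga})\cong\GL_m(K')$ with $K'/k$ of degree $e\ell^\alpha$ while $Z(R_{m,\alpha,\ga})$ is the Sylow $\ell$-subgroup of $K'^\times$; hence $Z$ is Sylow in the centraliser if and only if $m=1$. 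This criterion must hold for \emph{every} basic component, as part (c) records via $f(m,\alpha,\ga,\fc)=0$ for $m\ge2$; your parenthetical reading of it as a condition ``on each homogeneous summand beyond the first'' is untenable (a radical subgroup with a single component $R_{2,\alpha,\ga,\fc}$ already fails, since its centraliser $\GL_2(K')$ has $\ell$-part strictly larger than that of $K'^\times$), and $m_i=0$ cannot be the intended value since $m\ge1$ throughout. With the divisibility point supplied and the centraliser computation actually carried out, the rest of your outline coincides with the paper's proof.
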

 
\begin{proof}
All statements follow from Section 4 of \cite{AF90}. The first statement and~(b)
are given in \cite[(4A)]{AF90}. 
For (a), suppose first that $ V_0 $ is non-trivial. Since the $k$-dimension of
$V_i$ for $i\geq 1$ is divisible by $e$ and $\dim_k V = e n$, $e$ divides
$\dim_k(V_0)$
and it follows that $\GL_k(V_0)$ has a non-trivial Sylow $\ell $-subgroup, say
$E$. Since clearly $E\leq C_{\GL_k(V)} (R)$ and $ E\cap R =1$, it follows
that $R$ is not a Sylow $\ell $-subgroup of $ RC_{\GL_k(V)}(R)$, equivalently
that $ Z(R)$ is not a Sylow $\ell $-subgroup of $ C_{\GL_k(V)}(R)$. 

Hence we may assume from now on that $V_0=\{0\}$. So, $Z(R)$ is a Sylow
$\ell$-subgroup of $C_{\GL_k(V)}(R)$ if and only if for all $ i\geq 1$,
$Z(R_i)$ is a Sylow $\ell$-subgroup of $C_{\GL_k(V_i)}(R_i)$. Hence we may
assume that $i=1$ and $R= R_{m,\alpha,\ga,\fc}$. 
By construction $R = R_{m,\alpha,\ga}\wr A_\fc$ where
$R_{m,\alpha,\ga}:= R_{m,\alpha,\ga,()}$, $Z(R)$ is isomorphic to the
diagonally embedded copy of $Z(R_{m,\alpha,\ga})$ and $C_{\GL_k(V)} (R)$ is
isomorphic to the diagonally embedded copy of $C_{\GL_k(W)}(R_{m,\alpha,\ga})$
where $W$ is the subspace underlying $R_{m,\alpha,\ga}$.
Hence we may assume that $\fc = ()$. In this case, as explained in
\cite[Sec.~4] {AF90}, $Z(R)\cong K'^\times$ where $K'$ is the extension of $k$
of degree $e\ell^\alpha $ (hence the extension of $K$ of degree $\ell^\alpha$)
and $C_{\GL_k(V)}(R)$ is isomorphic to $\GL_m(K'^\times)$. It follows that
$Z(R)$ is a Sylow subgroup of $C_{\GL_k(V)}(R)$ if and only if $m=1$. This
proves (a).
Finally, (c) is a consequence of the preceding statements.
\end{proof}

From now on, we write $R_{\alpha,\ga,\fc}:=R_{1,\alpha,\ga,\fc}$. We next
identify the outer automorphism groups of $\GL_k(V)$-radical and centric
subgroups (see Section 4 of \cite{AF90}):

\begin{prop}   \label{p:outer}
 Let $R=\prod_{(1,\alpha,\ga,\fc)} R_{\alpha,\ga,\fc}^{f(\alpha,\ga,\fc)}
 \leq S$ be an $\cF$-centric and $\GL_k(V)$-radical subgroup with
 corresponding decomposition
 \[V=\bigoplus_{(1,\alpha,\ga,\fc)} V_{\alpha,\ga,\fc}^{f(\alpha,\ga,\fc)}\]
 from Proposition~\ref{p:AFradical}. We have:
 \begin{enumerate}
  \item[\rm(a)]
   \[\Out_\cF(R)\cong N_{\GL_k(V)}(R)/R C_{\GL_k(V)}(R)\cong\prod_{\alpha, \ga,\fc} N_{\alpha,\ga,\fc} \wr \fS_{f(\alpha,\ga,\fc)} \]
   where
   $N_{\alpha,\ga,\fc}:=N_{\GL_k(V_{\alpha,\ga,\fc})}(R_{\alpha,\ga,\fc})/R_{\alpha,\ga,\fc}C_{\GL_k(V_{\alpha,\ga,\fc})}(R_{\alpha,\ga,\fc})$.
  \item[\rm(b)] Suppose that $R = R_{\alpha,\ga,\fc}$. Then
   \[ N_{\GL_k(V)}(R)/R C_{\GL_k(V)}(R)\cong
     (\Sp_{2\ga}(\ell)\rtimes C_{e \ell^\alpha})
     \times \GL_{c_1}(\ell) \times \ldots \times \GL_{c_t}(\ell),\]
   where we set $\Sp_{2\ga}(\ell):= 1 $ if $\ga=0$.
 \end{enumerate} 
\end{prop}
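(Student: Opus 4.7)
My plan is to deduce both statements from a careful analysis of the basic subgroups $R_{\alpha,\ga,\fc}$ together with the canonical-ness of the decompositions they give rise to, relying on the detailed structural results of \cite[Sec.~4]{AF90} which we take as given. The opening observation is that since $R$ is $\cF$-centric, the standard identity $\Aut_{\cF_S(G)}(R) = N_G(R)/C_G(R)$ together with $\Inn(R) = R/Z(R)$ yields $\Out_\cF(R) = N_{\GL_k(V)}(R)/R\,C_{\GL_k(V)}(R)$, so everything reduces to computing this quotient.

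For (a), I would argue as follows. By Proposition~\ref{p:AFradical}(b) the basic subgroups $R_{\alpha,\ga,\fc}$ for different parameter tuples lie in different $\GL_k(V)$-conjugacy classes, and the decomposition $V = \bigoplus V_{\alpha,\ga,\fc}^{f(\alpha,\ga,\fc)}$ is determined up to permutation of isotypic components by the action of $R$ (the $V_{\alpha,\ga,\fc}$ are identified as the irreducible constituents of the restriction of $V$ to suitable central elements of $R$, graded by the parameters). Consequently any element of $N_{\GL_k(V)}(R)$ preserves this decomposition up to permutation of the $f(\alpha,\ga,\fc)$ copies within each isotypic block. Modding out by $R\,C_{\GL_k(V)}(R)$ kills the diagonal action on the copies, so what remains on each block is the wreath product $N_{\alpha,\ga,\fc} \wr \fS_{f(\alpha,\ga,\fc)}$, and these combine as a direct product across distinct parameter tuples. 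This reduces the whole problem to part (b).

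For (b), I would use the intrinsic wreath decomposition $R_{\alpha,\ga,\fc} = R_{\alpha,\ga} \wr A_\fc$, where $R_{\alpha,\ga} = R_{\alpha,\ga,()}$ is a central product $C_{\ell^{a+\alpha}} \circ E_\ga$ of the cyclic Sylow of $K'^\times$ (for $K'$ the degree-$e\ell^\alpha$ extension of $k$) with an extraspecial group $E_\ga$ of order $\ell^{1+2\ga}$ and exponent $\ell$, and $A_\fc$ is the iterated wreath product $C_\ell \wr \cdots \wr C_\ell$ encoded by $\fc = (c_1,\ldots,c_t)$. By the same canonical-decomposition argument, the outer automiser splits as the product of the outer automiser of $R_{\alpha,\ga}$ (acting on a single ``wreath fibre'') and the outer automiser of the wreath pattern $A_\fc$. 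For the first factor: $Z(R_{\alpha,\ga})$ is cyclic of order $\ell^{a+\alpha}$ and its outer automisers realised via $\GL_k(V_{\alpha,\ga})$ come exactly from the Galois action on $K'$, giving $C_{e\ell^\alpha}$; the extraspecial piece acts on $E_\ga/Z(E_\ga) \cong \FF_\ell^{2\ga}$ preserving the commutator form, and in the ambient $\GL_k(V_{\alpha,\ga})$ the full symplectic group $\Sp_{2\ga}(\ell)$ is realised as outer automorphisms via the Weil/Schur construction recalled in \cite[Sec.~4]{AF90}, with $C_{e\ell^\alpha}$ normalising $\Sp_{2\ga}(\ell)$ by its action on scalars. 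For the wreath pattern, a standard iterated-wreath computation shows that each level $c_i$ contributes a factor $\GL_{c_i}(\ell)$ (the outer action on the $i$-th level base $C_\ell^{c_i}$ modulo the kernel of the permutation action), and these combine as a direct product rather than a semidirect one because the levels are preserved setwise by any normalising element.

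The main obstacle is the identification, in the $R_{\alpha,\ga}$-case, of the full $\Sp_{2\ga}(\ell)\rtimes C_{e\ell^\alpha}$ as $N/RC$ inside $\GL_k(V_{\alpha,\ga})$: one direction (the outer automorphisms land in this group, essentially by functoriality of the commutator form and $K'$-linearity) is formal, but the surjectivity — i.e.\ every symplectic automorphism and every Galois twist is realised by conjugation — is the substantive content and is precisely where one invokes the explicit construction of the embedding of the basic subgroups in \cite[Sec.~4]{AF90}. Everything else in the proof is a bookkeeping exercise based on the wreath structure and Proposition~\ref{p:AFradical}.
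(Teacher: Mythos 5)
Your proposal is correct and takes essentially the same route as the paper, which offers no argument of its own for this proposition but simply cites \cite[Sec.~4]{AF90} — the same source your sketch leans on for the substantive identification of $N_{\GL_k(V)}(R)/RC_{\GL_k(V)}(R)$ for a single basic subgroup, the remaining steps (the identity $\Out_\cF(R)\cong N_{\GL_k(V)}(R)/RC_{\GL_k(V)}(R)$ and the isotypic/wreath bookkeeping giving part (a)) being the same routine reductions. One minor slip worth fixing: $A_\fc$ is the iterated wreath product $A_{c_1}\wr\cdots\wr A_{c_t}$ of elementary abelian groups $A_{c_i}\cong C_\ell^{c_i}$ in their regular permutation representations, not of copies of $C_\ell$, which is precisely what produces the factors $\GL_{c_i}(\ell)$ that you then correctly use.
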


\begin{lem}   \label{l:alpha}
 Let $R=\prod_{(1,\alpha,\ga,\fc)} R_{\alpha,\ga,\fc}^{f(\alpha,\ga,\fc)}
 \leq S$ be $\cF$-centric. If $R$ is $\cF$-weight contributing, then
 $f(\alpha,\ga,\fc) = 0$ for all $\alpha\geq1$.
\end{lem}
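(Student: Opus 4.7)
The plan is to leverage the assumption that $\Out_\cF(R)$ has an irreducible character of $\ell$-defect zero and to trace this condition down to each factor in the decomposition of Proposition~\ref{p:outer}. By Proposition~\ref{p:outer}(a), $\Out_\cF(R)$ is the direct product of the wreath products $N_{\alpha,\ga,\fc}\wr\fS_{f(\alpha,\ga,\fc)}$; a defect-zero irreducible of a direct product is a tensor product of defect-zero irreducibles of the factors, and a short Clifford-theoretic argument applied to the base subgroup $N^m\trianglelefteq N\wr\fS_m$ shows that, for $m\ge 1$, the wreath product admits a defect-zero irreducible only if $N$ itself does. Applying Proposition~\ref{p:outer}(b), we conclude that whenever $f(\alpha,\ga,\fc)\ge 1$, the group $M:=\Sp_{2\ga}(\ell)\rtimes C_{e\ell^\alpha}$ must have an irreducible of $\ell$-defect zero.

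For $\ga=0$ this is immediate: $M=C_{e\ell^\alpha}$ is abelian, so defect zero forces $|M|_\ell=1$, hence $\alpha=0$. The substance of the argument lies in the case $\ga\ge 1$, where we apply Clifford theory with respect to the normal subgroup $\Sp_{2\ga}(\ell)\trianglelefteq M$. Since the quotient $C_{e\ell^\alpha}$ is cyclic it has trivial Schur multiplier, so every irreducible $\psi\in\Irr(\Sp_{2\ga}(\ell))$ extends to its inertia group $I_\psi\le M$, and any irreducible $\chi\in\Irr(M)$ lying above $\psi$ satisfies $\chi(1)=[M:I_\psi]\cdot\psi(1)$, yielding
\[
 \chi(1)_\ell=[M:I_\psi]_\ell\cdot\psi(1)_\ell.
\]

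The key observation is that the action of the $\ell$-subgroup $C_{\ell^\alpha}\le C_{e\ell^\alpha}$ on $\Sp_{2\ga}(\ell)$ must factor through $\Out(\Sp_{2\ga}(\ell))$, which for $\ell>2$ and $\ga\ge 1$ is cyclic of order $\gcd(2,\ell-1)=2$ and thus of order coprime to~$\ell$. Consequently, $C_{\ell^\alpha}$ acts by inner automorphisms, and so fixes every irreducible character of $\Sp_{2\ga}(\ell)$; hence $I_\psi\supseteq\Sp_{2\ga}(\ell)\cdot C_{\ell^\alpha}$ and $[M:I_\psi]$ divides $e$, an $\ell'$-number. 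Combined with the elementary bound $\psi(1)_\ell\le|\Sp_{2\ga}(\ell)|_\ell$, this gives $\chi(1)_\ell\le|\Sp_{2\ga}(\ell)|_\ell<|\Sp_{2\ga}(\ell)|_\ell\cdot\ell^\alpha=|M|_\ell$ whenever $\alpha\ge 1$, contradicting defect zero. The main obstacle is the Clifford bookkeeping in the second step and the verification that $\Out(\Sp_{2\ga}(\ell))$ is an $\ell'$-group; once the latter is in place, the inner-automorphism argument finishes the proof cleanly.
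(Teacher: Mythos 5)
Your proof is correct, and it reaches the conclusion by a genuinely different key step than the paper. The reduction is the same in both arguments: a defect-zero character of $\Out_\cF(R)$ forces, via descent through the direct product and the base of each wreath factor in Proposition~\ref{p:outer}, a defect-zero character of $M=\Sp_{2\ga}(\ell)\rtimes C_{e\ell^\alpha}$, and then one shows that every $\chi\in\Irr(M)$ has $\chi(1)_\ell\le|\Sp_{2\ga}(\ell)|_\ell$, which is incompatible with defect zero unless $\alpha=0$. The paper gets this degree bound from a character-theoretic fact special to the situation: the Steinberg character is the \emph{unique} defect-zero character of $\Sp_{2\ga}(\ell)$ in defining characteristic, hence is $M$-stable, extends over the cyclic quotient, and Gallagher gives $\chi(1)=\chi_0(1)$. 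You instead bound the inertia index for \emph{every} $\psi\in\Irr(\Sp_{2\ga}(\ell))$: since $\Out(\Sp_{2\ga}(\ell))\cong C_2$ is an $\ell'$-group for $\ell>2$ (a fact you correctly flag as needing verification; it holds for all $\ga\ge1$, including the degenerate case $\Sp_2(3)=\SL_2(3)$), the image of the $\ell$-group $C_{\ell^\alpha}$ in $\Aut(\Sp_{2\ga}(\ell))$ lies in the inner automorphisms — your phrase ``factors through $\Out$'' should really say that the composite to $\Out$ is trivial — so $[M:I_\psi]$ divides $e$ and the usual extension-plus-Gallagher computation over the cyclic quotient gives $\chi(1)_\ell=\psi(1)_\ell$. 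Your route trades knowledge of the defect-zero characters of $\Sp_{2\ga}(\ell)$ for knowledge of its outer automorphism group, and is correspondingly more robust: it would apply verbatim to any extension $H\rtimes C$ with $C$ cyclic and $\Out(H)$ an $\ell'$-group, whereas the paper's argument is shorter where the uniqueness of the Steinberg character is available. Both handle $\ga=0$ trivially, and both rely on the same standard fact that constituents of a defect-zero character under a normal subgroup again have defect zero.
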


\begin{proof}
Suppose that $R$ is $\cF$-weight contributing and let
$R_i= R_{1,\alpha,\ga,\fc}$ be one of its factors. By
Proposition~\ref{p:outer}, $\Out_\cF (R_i)$ contains a normal subgroup
isomorphic to $\Sp_{2\ga}(\ell)\rtimes C_{e\ell^\alpha}$. Hence
$\Sp_{2\ga}(\ell)\rtimes C_{e\ell^\alpha}$ has an irreducible character say
$\chi$ of $\ell$-defect zero and if $\chi_0$ is an irreducible character of
$\Sp_{2\ga}(\ell)$ lying below $\chi$, then $\chi_0$ is also of $\ell$-defect
zero. Now the Steinberg character of $\Sp_{2\ga}(\ell)$ is the unique character
of $\ell$-defect zero (if $\ga=0 $, then we take the
trivial character as Steinberg character). It follows that $\chi_0$ is
$\Sp_{2\ga}(\ell) \rtimes C_{e\ell^\alpha}$-stable and hence since
$C_{e\ell^\alpha}$ is cyclic, $\chi$ is an extension of $\chi_0$ and in
particular, $\chi$ and $\chi_0$ have the same degree. Thus, $\alpha =0$. 
\end{proof} 

\subsubsection{ Descent to $r>1 $.}  \label{subsub:descent}
Recall that for a saturated fusion system $\cF$ on a finite $\ell$-group $S$,
there is a group $\Gamma_{\ell'}(\cF)$ whose lattice of subgroups is in
one-to-one correspondence with the lattice of saturated fusion subsystems of
$\cF$ of index coprime to $\ell$ (see \cite[Thm.~I.7.7]{AKO11}) and let
$\theta:\Mor(\cF^c)\to\Gamma_{\ell'} (\cF)$ be the canonical map as defined
before \cite[Thm~I.7.7]{AKO11}. We need the following fact:

\begin{lem}   \label{l:weaklyclosed}
 Let $\cF$ be a saturated fusion system on a finite $p$-group $S$ and suppose
 $T \le S$ is abelian and weakly $\cF$-closed. If $P,Q\le T$ and
 $\vhi\in \Hom_\cF(P,Q)$ then there exists $\bar\vhi\in\Aut_\cF(T)$ with
 $\bar\vhi|_P = \vhi$.
\end{lem}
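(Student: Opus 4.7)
The plan is to apply the extension axiom of saturation twice, exploiting that $T$ being abelian makes conjugation by any $t\in T$ trivial on any subgroup of $T$. A preliminary observation is that weak $\cF$-closure of $T$ makes $T$ its own unique $\cF$-conjugate in $S$, and hence both fully $\cF$-centralized and fully $\cF$-normalized (the relevant orders $|C_S(-)|$ and $|N_S(-)|$ are trivially maximal along the $\cF$-orbit of $T$).

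The core step is the following sub-claim: for every $\cF$-morphism $\alpha\colon R\to R^*$ with $R\le T$ and $R^*$ fully $\cF$-centralized, there is an element $\tilde\alpha\in\Aut_\cF(T)$ extending $\alpha$. Indeed, for each $t\in T$ abelianness gives $c_t|_R=\Id_R$, hence $\alpha\circ c_t\circ\alpha^{-1}=\Id_{R^*}\in\Aut_S(R^*)$, so $T\le N_\alpha$. The extension axiom (applicable because $R^*$ is fully centralized) then produces a morphism $N_\alpha\to S$ extending $\alpha$, which restricts to some $\tilde\alpha\colon T\to S$. Its image $\tilde\alpha(T)$ is an $\cF$-conjugate of $T$ inside $S$, so weak $\cF$-closure forces $\tilde\alpha(T)=T$, whence $\tilde\alpha\in\Aut_\cF(T)$.

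With $\vhi\colon P\to Q$ as in the statement, saturation of $\cF$ allows me to choose a fully $\cF$-centralized $\cF$-conjugate $Q^*\le S$ of $Q$ together with an $\cF$-isomorphism $\psi\colon Q\to Q^*$. Applying the sub-claim first to $\psi\vhi\colon P\to Q^*$ and then to $\psi\colon Q\to Q^*$ yields $\sigma,\rho\in\Aut_\cF(T)$ with $\sigma|_P=\psi\vhi$ and $\rho|_Q=\psi$. Setting $\bar\vhi:=\rho^{-1}\sigma\in\Aut_\cF(T)$ gives
$$\bar\vhi|_P=\rho^{-1}\circ(\psi\vhi)=\psi^{-1}\circ\psi\circ\vhi=\vhi,$$
the desired extension. (As a byproduct, $Q^*=\sigma(P)\le\sigma(T)=T$, showing that $T$ is in fact strongly $\cF$-closed.) The only mildly delicate point---hardly a genuine obstacle---is that the extension axiom requires the image $R^*$ to be fully centralized, which forces the detour through $Q^*$ and the compensating factor $\rho^{-1}$; everything else is immediate from abelianness of $T$ and weak $\cF$-closure.
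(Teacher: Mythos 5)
Your argument is correct and is essentially the paper's own proof: both pass to a fully centralised (in the paper, fully normalised and hence fully centralised) $\cF$-conjugate of $Q$, use abelianness of $T$ to get $T\le N_\alpha$ so the extension axiom applies, invoke weak closure of $T$ to see that the resulting extensions restrict to elements of $\Aut_\cF(T)$, and conclude by composing the extensions of $\psi\vhi$ and $\psi$. The only implicit step (shared with, and equally harmless in, the paper's version) is to corestrict $\vhi$ to its image so that $\psi\vhi$ is an isomorphism onto the chosen fully centralised subgroup, as required by the extension axiom.
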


\begin{proof}
Choose $R \le S$ and $\psi \in \Hom_\cF(Q,R)$ with $R$ fully $\cF$-normalised.
Notice that $R \le T$ since $T$ is weakly $\cF$-closed. Since $\cF$ is
saturated, there exist morphisms $\beta_1 \in \Hom_\cF(N_S(P),S)$ and
$\beta_2\in\Hom_\cF(N_S(Q),S)$ extending $\psi \circ \vhi$ and $\psi$
respectively. Since $T$ is abelian $T\le N_S(P),N_S(Q)$, so $\beta_1$ and
$\beta_2$ both restrict to $\cF$-automorphisms of $T$ (where we also use the
fact that $T$ is weakly $\cF$-closed.) Therefore
$\bar{\vhi}:=(\beta_2|_T)^{-1} \circ \beta_1|_T \in \Aut_\cF(T)$ extends
$\vhi$, as needed. 
\end{proof}

\begin{prop}   \label{prop:gamma}
 Suppose that $\ell $ is an odd prime. Then
 $$\Gamma_{\ell'} (\cF)\cong\begin{cases} \ZZ/e\ZZ& \text{if $n\geq\ell$,}\\
    \ZZ/e\ZZ \rtimes \fS_n& \text{if $n<\ell$.}\end{cases}$$
\end{prop}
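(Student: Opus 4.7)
The approach is to identify $\Gamma_{\ell'}(\cF)$ as a specific quotient of $\Aut_\cF(D)$, using the Alperin--Fong description of centric radical subgroups (Propositions~\ref{p:AFradical} and~\ref{p:outer}) together with the general classification of $\ell'$-index subsystems via $\Gamma_{\ell'}(\cF)$ in \cite[Sec.~I.7]{AKO11}.

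First, I would establish that the diagonal subgroup $D\le S$ is weakly $\cF$-closed: $D$ is the unique maximal homocyclic abelian subgroup of $S$ of exponent $\ell^a$, which follows from inspecting the wreath-product structure of $S$, so any $\cF$-isomorphism $\vhi: D \to D'\le S$ satisfies $D'=D$. By Lemma~\ref{l:weaklyclosed}, every $\cF$-morphism between subgroups of $D$ then extends to an element of $\Aut_\cF(D)=\langle\Phi\rangle^n\rtimes\fS_n$. Since the category $\cF^c$ is generated, up to $\cF$-inner conjugation, by the $\Aut_\cF(R)$ for $R\in\cF^{cr}$, and by Proposition~\ref{p:outer}(a) each such $\Aut_\cF(R)$ is a wreath product whose base and top pieces are accounted for by $\cF$-actions restricting from $D$, the canonical map $\theta$ factors through a surjection $\bar\theta:\Aut_\cF(D)\twoheadrightarrow\Gamma_{\ell'}(\cF)$.

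Next, I would show that the surjection $\pi:\Aut_\cF(D)\to\ZZ/e\ZZ$ from~(\ref{e:pi}) descends to $\Gamma_{\ell'}(\cF)$; equivalently, that $\ker\pi=H_0\rtimes\fS_n$ lies in $\ker\bar\theta$. This splits into two checks:
\begin{enumerate}
\item[(a)] $H_0=[\langle\Phi\rangle^n,\fS_n]$ is killed by rewriting each generator $(\Phi^a,\Phi^{-a},1,\dots,1)$ as a composition of $\cF$-morphisms localised on a rank-two centric subgroup $P\le D$, where the standard $\GL_2(q^e)$-fusion analysis identifies it with an element of $O^{\ell'}(\Aut_\cF(P))$.
\item[(b)] When $n\geq\ell$, each transposition in $\fS_n$ is killed because an $\ell$-cycle normalises a basic subgroup $R$ whose $\Out_\cF(R)$ has a $\GL_\ell(\ell)$-factor (Proposition~\ref{p:outer}(b)); this factor contains elements mapping onto the given transposition under restriction $\Aut_\cF(R)\to\Aut_\cF(D)$.
\end{enumerate}

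For the reverse inclusion, I would exhibit $\ell'$-index subsystems witnessing the claimed quotients: Ruiz's construction \cite{Ru07} yields a subsystem of index $e$ realising the $\ZZ/e\ZZ$ factor, and when $n<\ell$ the fact that $|\Aut_\cF(D)|=e^n\cdot n!$ is itself an $\ell'$-number implies no further relations can arise beyond those killing $H_0$, so the full $\ZZ/e\ZZ\rtimes\fS_n$ survives. The main obstacle is step~(b): showing that \emph{all} of $\fS_n$ dies in $\Gamma_{\ell'}(\cF)$ when $n\geq\ell$, rather than merely the $\ell$-generated subgroup $O^\ell(\fS_n)=A_n$ that obviously dies. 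This requires a careful analysis of Alperin--Fong basic subgroups with non-empty $\fc$-component, exploiting the Frobenius-permutation interplay in the $\GL_{c_i}(\ell)$-factors of their outer automorphism groups to produce $\ell'$-automorphisms of centric subgroups whose images in $\Aut_\cF(D)$ realise arbitrary transpositions as inner to morphisms of $\ell$-power order.
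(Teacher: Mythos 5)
The paper's own proof of this proposition is a one-line citation of Ruiz (\cite[Thm.~5.10]{Ru07}, see also \cite{OR}), so a self-contained argument would indeed be a different route; unfortunately your central step fails. In step~(a), for $P\le D$ of rank two, $P$ is not $\cF$-centric once $n>2$ (its centraliser in $S$ contains all of $D$), and, more importantly, since $a=\nu_\ell(q^e-1)$ the element $q$ has order exactly $e$ modulo $\ell^a$, so the automorphisms of $P$ realised in $\GL_k(V)$ are just the coordinatewise $q$-power maps together with the swap: $\Aut_\cF(P)\cong (\ZZ/e\ZZ)\wr C_2$ is an $\ell'$-group and $O^{\ell'}(\Aut_\cF(P))=1$. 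There is no ``$\GL_2(q^e)$-fusion'' producing $\ell$-power-generated automorphisms there, so the generators of $H_0$ cannot be killed this way. In fact no argument can kill $H_0$ without using $n\ge\ell$: when $n<\ell$ the Sylow subgroup is $D$ itself, every $\Aut_\cF(Q)$ for $Q\le D$ is an $\ell'$-group, hence $O^{\ell'}_*(\cF)$ is trivial and by \cite[Thm.~I.7.7]{AKO11} every subgroup of $\Aut_\cF(D)\cong(\ZZ/e\ZZ)\wr\fS_n$ yields a distinct saturated subsystem of $\ell'$-index; thus $\Gamma_{\ell'}(\cF)\cong(\ZZ/e\ZZ)\wr\fS_n$ and nothing dies, so your claim that only a group of order $e\cdot n!$ survives in that case is wrong (the semidirect product in the statement has to be read as $(\ZZ/e\ZZ)^n\rtimes\fS_n$). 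For $n\ge\ell$ the correct, and much simpler, reason that $H_0\rtimes A_n$ dies is that it equals $O^{\ell'}(\Aut_\cF(D))$ (the $\ell$-elements of $\langle\Phi\rangle^n\rtimes\fS_n$, which exist precisely because $n\ge\ell$, normally generate it) and $D$ is $\cF$-centric, so $\theta$ kills it by the defining relations of $\Gamma_{\ell'}(\cF)$.

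The genuinely hard point, which you correctly isolate but do not prove, is step~(b): that the transpositions also die when $n\ge\ell$, i.e.\ that $\Gamma_{\ell'}(\cF)$ has order $e$ rather than $2e$. Your sketch does not give this: the relations defining $\Gamma_{\ell'}(\cF)$ come from $O^{\ell'}$ of automiser groups, i.e.\ from $\ell$-elements, so exhibiting $\ell'$-elements in the $\GL_{c_i}(\ell)$-factors of $\Out_\cF(R)$ for basic $R$ produces no relations at all; moreover such an $R$ neither contains nor normalises $D$ in general, so its automorphisms have no well-defined ``image in $\Aut_\cF(D)$'' and the comparison you invoke is not even formulated. (Your reduction to $\Aut_\cF(D)$ is fine, but for the simpler reason that $D$ is weakly closed and centric, so every $\alpha\in\Aut_\cF(S)$ restricts to $D$ with the same $\theta$-value; the generation claim about the wreath structure of $\Aut_\cF(R)$ for $R\in\cF^{cr}$ is not accurate for the extraspecial-type basic subgroups.) Finally, for the lower bound $|\Gamma_{\ell'}(\cF)|\ge e$ you appeal to Ruiz's construction of the index-$e$ chain of subsystems, at which point one is in substance back to \cite[Thm.~5.10]{Ru07} -- which is exactly the proof the paper gives.
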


\begin{proof}
This is Theorem 5.10 of \cite{Ru07} (see also \cite{OR}). 
\end{proof} 

We will use a precise description of $\theta$ given in \cite{OR}. 

\begin{lem}   \label{l:XinT}
 Let $A= \{ g\in S \mid o(g)=\ell,\,\ker(g-\Id_V)=0 \}$. Then $A\subseteq D$.
\end{lem}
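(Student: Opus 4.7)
The plan is to use the semidirect product structure $S = D \rtimes E$, where $D$ is a ``diagonal'' abelian $\ell$-group and $E$ is the $\ell$-Sylow of the relevant Young subgroup of $\fS_n$ permuting the indices. Any $g \in S$ can be written uniquely as $g = d\sigma$ with $d \in D$ and $\sigma \in E$. The claim is that the $E$-component $\sigma$ is forced to be trivial whenever $g$ has order $\ell$ and acts without fixed vectors.

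First I would reduce the order of $\sigma$. Since $D$ is normal in $S$, the projection $S \to S/D \cong E$ sends $g$ to $\sigma$, and so $g^\ell = 1$ implies $\sigma^\ell = 1$. Thus $\sigma$ has order $1$ or $\ell$; in the latter case, $\sigma \in \fS_n$ is a product of disjoint $\ell$-cycles (plus fixed points). If $\sigma = 1$ there is nothing to prove, so assume $\sigma$ has order $\ell$ and pick one of its $\ell$-cycles, say $(j_1, j_2, \ldots, j_\ell)$.

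Next I would analyze the action of $g$ on the $K$-span $U := K v_{j_1} \oplus \cdots \oplus K v_{j_\ell}$, which is $g$-stable because the diagonal $D$ preserves each line $K v_{j_i}$ and $\sigma$ permutes the basis $\{v_{j_1}, \ldots, v_{j_\ell}\}$ cyclically. With respect to this basis, $g|_U$ is a weighted cyclic permutation matrix whose $\ell$-th power is the scalar $\mu \cdot \Id_U$, where $\mu = \prod_{i=1}^{\ell} \lambda_{j_i} \in \Delta$ is the product of the diagonal entries of $d$ at the positions in the cycle. The characteristic polynomial of $g|_U$ is therefore $x^\ell - \mu$.

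Now the crux: since $g$ has order $\ell$ on all of $V$, in particular $g|_U^\ell = \Id_U$, which forces $\mu = 1$. But then the eigenvalues of $g|_U$ are precisely the $\ell$-th roots of unity, and in particular $1$ is an eigenvalue, producing a non-zero vector in $U$ fixed by $g$. This contradicts $\ker(g - \Id_V) = 0$, so no $\ell$-cycle of $\sigma$ can exist, forcing $\sigma = 1$ and hence $g = d \in D$. There is no genuine obstacle here: the only subtle point is checking that $g|_U^\ell = \mu \cdot \Id_U$, which is a direct computation once the matrix of $g|_U$ is written down in the ordered basis $v_{j_1}, \ldots, v_{j_\ell}$.
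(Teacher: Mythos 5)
Your proof is correct and follows essentially the same route as the paper: decompose $g=d\sigma$, note $\sigma^\ell=1$ so its nontrivial cycles are $\ell$-cycles, show the product of the diagonal entries along any $\ell$-cycle equals $1$ (so the characteristic polynomial on the corresponding block is $x^\ell-1$), and conclude that $1$ would be an eigenvalue of $g$, contradicting $\ker(g-\Id_V)=0$. The only cosmetic difference is that you restrict to the $g$-stable block $U$ explicitly, whereas the paper phrases the same computation in terms of divisibility of the characteristic polynomial of $g$ on all of $V$ by $(x^\ell-1)^u$.
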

 
\begin{proof}
We follow an argument in the proof of \cite[Ex.~6.2]{OV07}. Note that the set
$A$ defined here is stable under $\cF$-conjugation and therefore contained in
the set denoted $X$ in \cite[Lemma~3.3]{OR}. Let
$g = (\la_1,\ldots,\la_n)\sigma\in A$ with $(\la_1,\ldots,\la_n)\in D$
and $\sigma\in E$. By assumption $g^\ell=1$. Hence, $\sigma^\ell=1$ and
$g (\sigma g\sigma^{-1})\cdots (\sigma^{\ell -1} g\sigma^{-(\ell-1)}) = 1$.
The first condition implies that every non-trivial cycle of $\sigma$ is an
$\ell$-cycle and the second condition implies that if $(i_1,\ldots,i_\ell)$
is an $\ell$-cycle in $\sigma $, then the product
$\la_{i_1}\cdots\la_{i_\ell} = 1$. Letting $u$ be
the number of $\langle\sigma\rangle$-orbits of size $\ell$ in $\{1,\ldots,n\}$,
it follows that the characteristic polynomial of $g$ viewed as a $K$-linear
transformation of $V$ is divisible by $(x^\ell-1)^u$.
Thus, if $u\ne0 $, then $1$ is an eigenvalue of $g$ in its action on $V$.
\end{proof}

Let $\pi:\Aut_\cF(D)\to\ZZ/e\ZZ$ be the homomorphism defined in (\ref{e:pi})
and let $Z$ be the unique subgroup of order $\ell$ of $Z(\GL_K(V))$.

\begin{prop}   \label{prop:olru}
 There exists a surjective homomorphism $\Pi:\Gamma_{\ell'}(\cF)\to\ZZ/e\ZZ$
 with the following property: for any pair of $\cF$-centric subgroups
 $P, Q\leq S$ and any morphism $\alpha\in\Hom_\cF(P,Q)$, there exists
 $\beta\in\Aut_\cF(D)$ such that $\beta|_Z =\alpha|_Z$ and for each such
 $\beta$, $\Pi(\theta(\alpha))= \pi(\beta)$. If $n\geq \ell$, then $\Pi$ is an
 isomorphism.
\end{prop}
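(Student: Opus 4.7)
The plan is to construct $\Pi$ by reducing every $\cF$-morphism $\alpha$ between $\cF$-centric subgroups to a chosen extension $\beta \in \Aut_\cF(D)$ of $\alpha|_Z$, then reading off $\pi(\beta)$. Well-definedness comes down to a direct kernel computation inside $\Aut_\cF(D) \cong \langle\Phi\rangle^n \rtimes \fS_n$.

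First I would note that $Z \leq Z(S)$: $D$ is abelian and $E$ permutes coordinates while fixing the diagonal pointwise. Hence $Z$ is contained in every $\cF$-centric $P \leq S$. For $\alpha \in \Hom_\cF(P,Q)$ and $1 \neq z \in Z$, $z$ acts on $V$ as a non-trivial scalar in $\GL_K(V)$, so $\ker(z - \Id_V) = 0$; the same holds for $\alpha(z)$ (which is conjugate to $z$ in $\GL_k(V)$), so Lemma~\ref{l:XinT} places $\alpha(Z) \leq D$. Since $D$ is abelian and weakly $\cF$-closed (being the Sylow $\ell$-subgroup of the $\cF$-invariant maximal torus $(K^\times)^n$ of $\GL_K(V)$), Lemma~\ref{l:weaklyclosed} produces $\beta \in \Aut_\cF(D)$ with $\beta|_Z = \alpha|_Z$; I then set $\tilde\Pi(\alpha) := \pi(\beta)$.

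For independence of the choice of $\beta$, I would write any second extension as $\beta' = \beta h\sigma$ with $h = (\Phi^{i_1},\ldots,\Phi^{i_n}) \in H$ and $\sigma \in \fS_n$. Since $\sigma$ fixes the diagonal pointwise, $h\sigma$ acts on $(z,\ldots,z) \in Z$ as $(z^{q^{i_1}},\ldots,z^{q^{i_n}})$, and this equals $(z,\ldots,z)$ for all $z \in Z$ only if $q^{i_j} \equiv 1 \pmod \ell$, forcing $i_j \equiv 0 \pmod e$ for each $j$. Thus $h = 1$ and $\beta^{-1}\beta' \in \fS_n \leq \ker \pi$, giving $\pi(\beta) = \pi(\beta')$. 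Compatibility with composition is immediate from $\pi(\beta'\beta) = \pi(\beta') + \pi(\beta)$, and $\tilde\Pi$ vanishes on $S$-conjugation morphisms because these restrict to the identity on $Z$ (which lies in $Z(S)$) and so admit $\mathrm{id}_D$ as extension. By the universal property of $\Gamma_{\ell'}(\cF)$ from \cite[Thm.~I.7.7]{AKO11}, $\tilde\Pi$ descends to the required homomorphism $\Pi: \Gamma_{\ell'}(\cF) \to \ZZ/e\ZZ$ with $\Pi \circ \theta = \tilde\Pi$.

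Finally, $D$ is $\cF$-centric (as $C_S(D) = D$), and $\beta = (\Phi, 1, \ldots, 1) \in \Aut_\cF(D)$ satisfies $\pi(\beta) = 1$, proving $\Pi$ surjective. When $n \geq \ell$, Proposition~\ref{prop:gamma} identifies $\Gamma_{\ell'}(\cF) \cong \ZZ/e\ZZ$, so $\Pi$ becomes an isomorphism automatically. The most delicate step will be verifying that $\tilde\Pi$ truly factors through $\Gamma_{\ell'}(\cF)$: beyond compatibility with composition and vanishing on inner morphisms, one must check that every further relation in the Oliver--Ruiz presentation of $\Gamma_{\ell'}(\cF)$ is sent to zero; concretely, this amounts to showing that the saturated subsystem of $\cF$ of index coprime to $\ell$ identified in \cite{Ru07} lies in the kernel of $\tilde\Pi$, which follows from the explicit description of that subsystem.
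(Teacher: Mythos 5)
Your overall strategy (build $\tilde\Pi$ on $\Mor(\cF^c)$ by restricting to $Z$, extending to $D$ via Lemma~\ref{l:weaklyclosed}, reading off $\pi$, and then invoking the universal property of $\theta$) is viable and is genuinely more hands-on than the paper's proof, which simply verifies that $Z^\cF\le D$ and that $H_0\fS_n$ is the normal closure of $\Aut_{C_\cF(Z)}(D)$ in $\Aut_\cF(D)$ and then quotes \cite[Prop.~3.3(a)]{OR}. However, as written there is a genuine gap at the step you dismiss as immediate: compatibility with composition. If $\alpha_1\in\Hom_\cF(P,Q)$, $\alpha_2\in\Hom_\cF(Q,R)$ and $\beta_1,\beta_2\in\Aut_\cF(D)$ extend $\alpha_1|_Z$, $\alpha_2|_Z$, then $\beta_2\beta_1$ need \emph{not} extend $(\alpha_2\alpha_1)|_Z$, because $\alpha_1(Z)$ is in general a different order-$\ell$ subgroup of $D$ than $Z$ (images of the generator $(z,\dots,z)$ are tuples $(z^{q^{j_1}},\dots,z^{q^{j_n}})$), and $\beta_2$ was only required to agree with $\alpha_2$ on $Z$, not on $\alpha_1(Z)$. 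Concretely, with $\alpha_1=(\Phi,1,\dots,1)$, $\alpha_2=(1,\Phi,1,\dots,1)$ in $\Aut_\cF(D)$ and $\beta_2=(1,\Phi,1,\dots,1)(1\,2)$, the map $\beta_2\beta_1$ does not restrict to $(\alpha_2\alpha_1)|_Z$. So the additivity of $\tilde\Pi$, which is the crux of the whole construction, is not established by the identity $\pi(\beta'\beta)=\pi(\beta')+\pi(\beta)$ alone.

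The gap is repairable with your own tools: since $\alpha_1(Z)\le Q$ and both $\alpha_2(Z)$ and $\alpha_2(\alpha_1(Z))$ lie in $D$ (by the Lemma~\ref{l:XinT} argument), apply Lemma~\ref{l:weaklyclosed} to $\langle Z,\alpha_1(Z)\rangle\le D$ to get $\beta_2''\in\Aut_\cF(D)$ agreeing with $\alpha_2$ on $Z$ \emph{and} on $\alpha_1(Z)$ simultaneously; then $\beta_2''\beta_1$ extends $(\alpha_2\alpha_1)|_Z$ and $\pi(\beta_2'')=\pi(\beta_2)$ by your well-definedness computation, giving additivity. (Equivalently, one can check that the pointwise stabiliser in $\Aut_\cF(D)\cong\langle\Phi\rangle^n\rtimes\fS_n$ of \emph{any} fixed-point-free order-$\ell$ subgroup of $D$ lies in $H_0\fS_n=\ker\pi$ -- a short computation, and essentially the normal-closure statement the paper checks before citing \cite{OR}.) Two further remarks: your justification that $D$ is weakly $\cF$-closed ("Sylow subgroup of the $\cF$-invariant torus") is not a proof -- the torus is not normalised by $\GL_k(V)$; the correct reason is that for odd $\ell$ the subgroup $D$ is the unique abelian subgroup of maximal order in $S$, or cite \cite[Prop.~5.13(b)]{BM02} as the paper does. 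Finally, your closing worry is misplaced: once $\tilde\Pi$ is additive on composites and kills inclusions, it defines a functor $\cF^c\to\mathcal{B}(\ZZ/e\ZZ)$ and hence factors through $\theta$ by \cite[Thm.~I.7.7]{AKO11}; no further relations from an "Oliver--Ruiz presentation", nor any statement about the Ruiz subsystem lying in the kernel, need to be verified -- the delicate point was exactly the composition step you skipped.
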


\begin{proof}
For this we note by Lemma~\ref{l:XinT} that $Z^\cF\leq D$. Further, we
check that $H_0\fS_n$ is the normal closure of $\Aut_{C_\cF(Z)}(D)$ in
$\Aut_\cF(D)$. The first assertion follows by \cite[Prop.~3.3(a)] {OR}.
The second is immediate from Proposition~\ref{prop:gamma}.
\end{proof} 

Let $\cF':=\cF_S(\GL_K(V))$ be the fusion system of $\GL_K(V)$ on $S$.
 
\begin{prop}   \label{p:descent}
 Let $P,Q\leq S$ be $\cF$-centric subgroups. Then $\Pi\circ\theta(\vhi) = 1$
 for all $\vhi\in\Hom_{\cF'}(P,Q)$. 
\end{prop}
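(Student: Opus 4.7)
The plan is to derive the statement directly from Proposition~\ref{prop:olru} by exhibiting a distinguished extension of $\vhi|_Z$ to $D$ lying in the kernel of $\pi$. The point is that $\vhi$, being induced by conjugation by an element of the smaller group $\GL_K(V)$ in which $Z$ is by definition central, must act trivially on $Z$; one can then take $\beta = \Id_D$ as the required extension, and $\pi(\Id_D) = 0$.

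First I would check that $Z \le P$, so that the restriction $\vhi|_Z$ is defined. Since $P$ is $\cF$-centric, $Z(S) \le C_S(P) = Z(P) \le P$, so it suffices to observe that $Z \le Z(S)$. But $Z$ consists of the diagonally embedded scalars $(\zeta,\ldots,\zeta) \in D$ with $\zeta^\ell = 1$; these commute with every element of the abelian group $D$ and are visibly fixed pointwise by the permutation action of $E$, whence $Z \le Z(S) \le P$. The same argument applies to $Q$.

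Next, because $Z \le Z(\GL_K(V))$ and every morphism of $\cF' = \cF_S(\GL_K(V))$ is by definition induced by $\GL_K(V)$-conjugation, we obtain $\vhi|_Z = \Id_Z$. Viewing $\vhi$ as a morphism in $\cF$ (since $\cF' \subseteq \cF$), we apply Proposition~\ref{prop:olru}: there exists $\beta \in \Aut_\cF(D)$ with $\beta|_Z = \vhi|_Z$, and $\Pi(\theta(\vhi)) = \pi(\beta)$ for any such $\beta$. Taking $\beta = \Id_D \in \Aut_\cF(D)$, which trivially restricts to $\Id_Z$, we conclude $\Pi \circ \theta(\vhi) = \pi(\Id_D) = 0$, i.e.\ the trivial element of $\ZZ/e\ZZ$, as claimed. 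The only subtlety, rather than a genuine obstacle, is the verification that $Z \le P$; everything else is an immediate application of Proposition~\ref{prop:olru}.
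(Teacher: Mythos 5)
Your proof is correct, but it takes a different (and more economical) route than the paper. You exploit the ``for each such $\beta$'' clause of Proposition~\ref{prop:olru}: since $Z\le Z(\GL_K(V))$ and every morphism of $\cF'=\cF_S(\GL_K(V))$ is induced by conjugation in $\GL_K(V)$, you get $\vhi|_Z=\Id_Z$ (your verification that $Z\le Z(S)\le C_S(P)=Z(P)\le P$ via centricity is exactly what is needed here), and you may then take $\beta=\Id_D$, so $\Pi\circ\theta(\vhi)=\pi(\Id_D)=0$. The paper argues differently: it uses that $D$ is weakly $\cF'$-closed (citing \cite[Prop.~5.13(b)]{BM02}) together with Lemma~\ref{l:weaklyclosed} to extend $\vhi|_{\langle P\cap X\rangle}$ to an automorphism $\bar\vhi\in\Aut_{\cF'}(D)$, and then observes $\Aut_{\cF'}(D)\cong\fS_n\le\ker\pi$, so $\pi(\bar\vhi)$ is trivial; the conclusion again comes from Proposition~\ref{prop:olru}, since $\bar\vhi$ agrees with $\vhi$ on $Z$. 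What each approach buys: yours is shorter and avoids both the weak-closure input and Lemma~\ref{l:weaklyclosed} entirely, resting only on the centrality of $Z$ in $\GL_K(V)$ and the statement of Proposition~\ref{prop:olru}; the paper's version produces a comparison automorphism of $D$ that agrees with $\vhi$ on all of $\langle P\cap X\rangle$, not just on $Z$, so it would survive unchanged if one worked directly from the underlying result of Oliver--Ruiz, where the comparison automorphism is required to match $\vhi$ on more than the central subgroup $Z$. As the paper has stated Proposition~\ref{prop:olru} with the condition on $Z$ alone (which is consistent, since the pointwise stabiliser of $Z$ in $\Aut_\cF(D)$ is $\fS_n\le\ker\pi$), your argument is a complete proof.
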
 
 
\begin{proof}
Let $P, Q\leq S $ be $\cF$-centric and let $\vhi\in\Hom_{\cF'} (P, Q)$. Since
$D$ is also weakly $\cF'$-closed (see \cite[Prop.~5.13(b)]{BM02}),
$\vhi|_{\langle P\cap X\rangle}$ extends to an $\cF'$-automorphism $\bar\vhi$
of $D$ by Lemma~\ref{l:weaklyclosed}. But since $\Aut_{\cF'}(D) =\fS_n$,
$\pi(\bar\vhi) =1$.
\end{proof}

For $r|e$, we denote by $\cF^{(r)}$ the $\ell'$-index fusion subsystem of $\cF$
corresponding to the subgroup $\Pi^{-1}(r\ZZ/ e\ZZ)$ of $\Gamma_{\ell'} (\cF)$
as described in \cite[Thm.~I.7.7]{AKO11}.

\begin{prop}   \label{prop:ruiz2}
 Let $r|e$ and let $q'$ be a prime power such that $q'\equiv 1\pmod\ell$ and 
 $\nu_\ell(q^e-1)=\nu_\ell(q'-1)$. Then $\cF^{(r)}$ is isomorphic to
 $\cF(X(e,r,n)(q'))$. 
\end{prop}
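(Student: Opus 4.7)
The plan is to identify both $\cF^{(r)}$ and $\cF(X(e,r,n)(q'))$ as saturated $\ell'$-index subsystems of $\cF=\cF_S(\GL_{en}(q))$ on the same underlying $\ell$-group $S$, and to distinguish them by computing their induced automizers on the discrete torus $D$.

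First I would check that the underlying $\ell$-groups coincide. Applying Theorem~\ref{thm:sylow}(a) to $X=X(e,r,n)$ with $\tau=\Id$, $\phi=1$ and the prime power $q'$ (so $\zeta=1$ since $q'\equiv 1\pmod\ell$, whence $W_{\phi\zeta}=G(e,r,n)$ and $L_{\phi\zeta}=L$ has rank $n$), the underlying $\ell$-group of $\cF(X(e,r,n)(q'))$ is $(\ZZ/\ell^{a}\ZZ)^n \rtimes (G(e,r,n))_\ell$ with $\ell^{a}=(q'-1)_\ell=(q^e-1)_\ell$ by hypothesis. Since $r\mid e\mid(\ell-1)$, the normal subgroup $A(e,r,n)\triangleleft G(e,r,n)$ has order $e^n/r$ coprime to $\ell$, so any Sylow $\ell$-subgroup of $G(e,r,n)$ is isomorphic to $E=(\fS_n)_\ell$, yielding the desired isomorphism with $S=D\rtimes E$.

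Second, by the content of \S~\ref{subsub:descent} (based on Ruiz \cite{Ru07}), the fusion system $\cF(X(e,r,n)(q'))$ embeds as an $\ell'$-index saturated subsystem of $\cF$ on $S$. Via the classification of such subsystems \cite[Thm.~I.7.7]{AKO11} combined with Propositions~\ref{prop:gamma} and~\ref{prop:olru}, $\cF^{(r)}$ is by definition the one whose image in $\Gamma_{\ell'}(\cF)$ lies in $\Pi^{-1}(r\ZZ/e\ZZ)$. Since $\Pi(\theta(\alpha))=\pi(\alpha)$ for any $\alpha\in\Aut_\cF(D)$ by Proposition~\ref{prop:olru}, this translates to the characterisation $\Aut_{\cF^{(r)}}(D)=\pi^{-1}(r\ZZ/e\ZZ)$. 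On the other hand, by the homotopy fixed point description underlying Theorem~\ref{thm:sylow} (cf.\ \cite[Prop.~7.6]{BM07}), the automizer $\Aut_{\cF(X(e,r,n)(q'))}(D)$ is the image of the Weyl group $G(e,r,n)=A(e,r,n)\rtimes\fS_n$ acting on $D$ in the standard way. Under the identification $\Aut_\cF(D)=\langle\Phi\rangle^n\rtimes\fS_n\cong C_e\wr\fS_n$ of the present paper, this subgroup consists of exactly those $(\Phi^{i_1},\ldots,\Phi^{i_n})\sigma$ with $i_1+\cdots+i_n\equiv 0\pmod r$ (the condition that the product of $e$-th-root-of-unity entries is an $(e/r)$-th root of unity), i.e., precisely $\pi^{-1}(r\ZZ/e\ZZ)$.

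To finish, when $n\geq\ell$ Proposition~\ref{prop:gamma} says $\Pi$ is an isomorphism, so an $\ell'$-index subsystem is uniquely determined by its automizer on $D$; when $n<\ell$ the group $S=D$ is abelian and the same conclusion is immediate. Hence $\cF^{(r)}\cong\cF(X(e,r,n)(q'))$. The main obstacle is the embedding assertion of the second paragraph: verifying that the homotopy fixed-point fusion system does sit inside $\cF$ as an $\ell'$-index subsystem with the claimed automizer on $D$. This is where Ruiz's theorem does the substantive work, by comparing the Broto--M\o ller construction of $X(e,r,n)(q')$ to the natural inclusion $\GL_n(q'^e)\hookrightarrow\GL_{en}(q'^e)$ after $\ell$-completion, and identifying the image of the Weyl group action on the canonical torus with $G(e,r,n)$.
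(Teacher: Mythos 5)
Your overall architecture is close to the paper's, in that the substantive input is in both cases Ruiz's theorem \cite[Thm.~6.3]{Ru07}: the paper's proof consists of exactly two steps, namely (1) the reduction $\cF(X(e,r,n)(q'))\cong\cF(X(e,r,n)(q^e))$, which is an application of Theorem~\ref{thm:homfixedpts}(b), and then (2) a direct citation of \cite[Thm.~6.3]{Ru07}, which already identifies the fusion system of $X(e,r,n)(q^e)$ with the $\ell'$-index subsystem $\cF^{(r)}$ of $\cF_S(\GL_{en}(q))$ for the correct parameter $r$. Your third and fourth paragraphs (computing automizers on $D$ via $\pi$ and $\Pi$, and uniqueness of an $\ell'$-index subsystem with given automizer, using Proposition~\ref{prop:olru} for $n\ge\ell$ and abelianness of $S$ for $n<\ell$) are therefore a re-derivation of information that the citation already supplies; they are not wrong, but two of their ingredients are only asserted: that $\Aut$ of $D$ in the homotopy fixed point system is precisely the image of $G(e,r,n)$ (this is not what \cite[Prop.~7.6]{BM07} states -- that reference gives the structure of the Sylow subgroup), and that under the identification of the $q$-power map $\Phi$ with the action of the root of unity $\zeta$ the congruence $i_1+\cdots+i_n\equiv0\pmod r$ is preserved.

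The genuine gap is in your second paragraph. You never relate the given $q'$ to $q^e$: Ruiz's comparison is between the subsystems of $\cF=\cF_S(\GL_{en}(q))$ and the homotopy fixed points under the Adams operation attached to $q^e$, and your closing sentence (which speaks of $\GL_n(q'^e)\hookrightarrow\GL_{en}(q'^e)$) conflates the two parameters -- the ambient general linear group in the statement is over $\FF_q$, not over a field built from $q'$. To apply Ruiz to an arbitrary $q'$ with $q'\equiv1\pmod\ell$ and $\nu_\ell(q'-1)=\nu_\ell(q^e-1)$ one must first know that $X(e,r,n)(q')$ and $X(e,r,n)(q^e)$ have isomorphic fusion systems; this is exactly Theorem~\ref{thm:homfixedpts}(b), it is the only place where the valuation hypothesis genuinely enters, and matching $|S|$ via Theorem~\ref{thm:sylow} (your first paragraph) does not substitute for it. Moreover, your appeal to ``the content of \S\ref{subsub:descent}'' for the assertion that $\cF(X(e,r,n)(q'))$ embeds in $\cF$ as a saturated $\ell'$-index subsystem acting in the standard way on $D$ is circular: the only statement to that effect in that subsection is Proposition~\ref{prop:ruiz2} itself, so this step must be an honest invocation of \cite[Thm.~6.3]{Ru07}, available only after the $q'\to q^e$ reduction.
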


\begin{proof}
By Theorem~\ref{thm:homfixedpts}(b), $\coX(e,r,n)(q')\simeq\coX(e,r,n)(q^e)$
have isomorphic associated fusion systems. Now the result follows from
\cite[Thm.~6.3]{Ru07}. 
\end{proof}

\begin{prop}   \label{p:centradbasic}
 Let $r|e$ and let $P\leq S$.
 \begin{enumerate}
  \item[(a)] $P$ is $\GL_K(V)$-radical if and only if $P$ is $\GL_k(V)$-radical.
   If $Q\leq S$ is $\GL_k(V)$-radical, then $P$ and $Q$ are $\cF$-conjugate if
   and only if $P$ and $Q$ are $\cF'$-conjugate.
  \item[(b)] The $\cF$-classes of $\cF$-centric, $\GL_k(V)$-radical subgroups
   of $S$ coincide with the $\cF'$-classes of $\cF'$-centric, $\GL_k(V)$-radical
   subgroups of $S$.
 \item [(c)] $P$ is $\cF$-centric if and only if $P$ is $\cF^{(r)}$-centric
   and $P$ is $\cF$-centric and $\cF$-radical if and only if $P$ is
   $\cF^{(r)}$-centric and $\cF^{(r)}$-radical. 
  \item [(d)] The $\cF$-classes of $\cF$-centric, $\GL_k(V)$-radical subgroups
   of $S$ coincide with the $\cF^{(r)}$-classes of $\cF^{(r)}$-centric,
   $\GL_k(V)$-radical subgroups of $S$. 
 \end{enumerate}
\end{prop}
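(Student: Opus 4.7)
My plan is to prove parts (a), (b) and (c) largely independently, then deduce (d) by combining them. For (a), I would first observe that $S$ is simultaneously a Sylow $\ell$-subgroup of $\GL_k(V)=\GL_{en}(q)$ and of $\GL_K(V)=\GL_n(q^e)$, since these two groups have the same $\ell$-part in their orders. Applying the Alperin--Fong classification (Proposition~\ref{p:AFradical}) to both groups, I would note that since $q^e\equiv 1\pmod\ell$, the relevant order modulo $\ell$ inside $\GL_K(V)$ is $1$. The heart of the argument is an identification of basic subgroups: I would verify that the basic subgroup $R_{m,\alpha,\ga,\fc}$ of $\GL_k(V)$ coincides as a subgroup of $S$ with the analogous basic subgroup of $\GL_K(V)$ attached to the same parameters $(m,\alpha,\ga,\fc)$. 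This match is forced by comparing their central tori (both cyclic of order $\ell^{a+\alpha}$, where $a=\nu_\ell(q^e-1)$), their extraspecial components of rank $\ga$ and the wreath factor $A_\fc$. Since the combinatorial bound in part (b) of Proposition~\ref{p:AFradical} reduces to the same inequality in either setting, the $\GL_k(V)$- and $\GL_K(V)$-radical subgroups of $S$ coincide, and the corresponding $\GL_k(V)$- and $\GL_K(V)$-conjugacy classes match.

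For (b), I would invoke Proposition~\ref{p:AFradical}(a), which characterises $\cF$-centricity of a $\GL_k(V)$-radical subgroup of $S$ by the Alperin--Fong condition $V_0=\{0\}$ and $m_i=1$ for all $i$; applying the same statement to $\GL_K(V)$ characterises $\cF'$-centricity of $\GL_K(V)$-radical subgroups by exactly the same intrinsic condition on their basic components. Combined with (a), this shows that the sets of subgroups under consideration agree and that their $\cF$- and $\cF'$-conjugacy classes coincide. For (c), I would appeal to the standard fact from the theory of saturated fusion subsystems of index prime to $\ell$ (see \cite[Thm.~I.7.7]{AKO11} and the surrounding discussion) that such a subsystem shares both its centric subgroups and its centric-radical subgroups with its overgroup; applied to $\cF^{(r)}\leq\cF$ this gives both halves of (c), using the fact that $\Out_{\cF^{(r)}}(P)$ has $\ell'$-index in $\Out_\cF(P)$ for $P$ centric, which forces $O_\ell(\Out_{\cF^{(r)}}(P))=O_\ell(\Out_\cF(P))$.

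Finally for (d), I would combine the previous three parts. The inclusion $\cF'\subseteq\cF^{(r)}$ holds by Proposition~\ref{p:descent} (since $\Pi\circ\theta$ vanishes on $\cF'$-morphisms between centric subgroups and $0\in r\ZZ/e\ZZ$), while $\cF^{(r)}\subseteq\cF$ by construction, so $\cF'$-conjugacy refines $\cF^{(r)}$-conjugacy, which in turn refines $\cF$-conjugacy. Since by (a) the two extremes already agree on $\GL_k(V)$-radical subgroups, all three notions of conjugacy coincide on this class, and together with the centricity statements of (b) and (c) this yields (d). The main obstacle lies in the explicit identification of basic subgroups underlying (a): the two Alperin--Fong constructions are set up over different base fields ($k$ versus $K$) with different ambient multiplicative orders ($e$ versus $1$), so verifying that they produce literally the same subgroup of $S$ (and not merely conjugate or isomorphic ones) requires a careful trace through the construction in \cite[\S4]{AF90}, matching the torus, extraspecial and wreath layers one by one.
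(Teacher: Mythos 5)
Your argument is correct and follows essentially the same route as the paper: parts (a) and (b) via the Alperin--Fong parametrisation of radical subgroups read over both $k$ and $K$ (where over $K$ the multiplicative order becomes $1$), part (c) via \cite[Thm.~I.7.7]{AKO11} together with the observation that a normal subgroup of $\ell'$-index has the same $O_\ell$, and part (d) by feeding an $\cF'$-isomorphism between the relevant $\cF$-centric radical subgroups through Proposition~\ref{p:descent} and the membership criterion of \cite[Thm.~I.7.7]{AKO11}, exactly as in the paper. The only points to tighten are that in (c) you need normality, not merely $\ell'$-index, of $\Out_{\cF^{(r)}}(P)$ in $\Out_\cF(P)$ (available since $\Aut_{\cF^{(r)}}(P)$ is the preimage under $\Pi\circ\theta$ of $r\ZZ/e\ZZ$, a subgroup with abelian image), and that in (d) Proposition~\ref{p:descent} only controls $\cF'$-morphisms between $\cF$-centric subgroups, so rather than the global inclusion $\cF'\subseteq\cF^{(r)}$ one should state precisely what is needed and what the paper proves: the $\cF'$-isomorphisms between the $\cF$-centric, $\GL_k(V)$-radical subgroups in question lie in $\cF^{(r)}$.
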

 
\begin{proof}
(a) Applying Proposition~\ref{p:AFradical}(b) with $k$ replaced by $K$ yields
the same description of the $\GL_K(V)$-conjugacy classes of radical subgroups
of $\GL_K(V)$ as that of the $\GL_k(V)$-conjugacy classes of radical subgroups
of $\GL_k(V)$. Since every $K$-decomposition of the type of
Proposition~\ref{p:AFradical} is also a $k$-decomposition, every
$\GL_K(V)$-conjugacy class of $\GL_K(V)$-radical subgroups is contained in a
unique $\GL_k(V)$-conjugacy class of $\GL_k(V)$-radical subgroups and every
$\GL_k(V)$-conjugacy class of $\GL_k(V)$-radical subgroups contains a unique
$\GL_K(V)$-conjugacy class of $\GL_K(V)$-radical subgroups. Now (a) follows
since $S\leq \GL_K(V)$.   \par

(b) It suffices to prove that the bijection between radical subgroups given in
(a) preserves the centric and radical property. But this is immediate from
Proposition~\ref{p:AFradical}. 
 
(c) For the first assertion see \cite[Lemma I.7.6]{AKO11}. Now suppose that
$P$ is $\cF$-centric. Since $\Gamma_{\ell'}(\cF)$ is cyclic, by
\cite[Thm.~I.7.7]{AKO11}, $\Aut_{\cF^{(r)}}(P)$ is a normal subgroup of
$\Aut_\cF (P)$ of $\ell'$-index, and consequently $\Out_{\cF^{(r)}}(P)$ is a
normal subgroup of $\Out_\cF (P)$ of $\ell'$-index. This proves the second
assertion. 

(d) By (c), it suffices to prove that any two $\cF$-conjugate $\cF$-centric
$\GL_k(V)$-radical subgroups $P,Q\leq S$ are $\cF^{(r)}$-conjugate. By part
(a), $P$ and $Q$ are $\cF'$-conjugate. Let $\alpha: P \to Q$ be an isomorphism
in $\cF'$. By Lemma~\ref{p:descent}, $\Pi\circ\theta(\alpha) =1$ hence by
\cite [Thm.~I.7.7]{AKO11}, $\alpha \in \cF^{(r)}$, proving (d).
\end{proof}

\begin{lem}  \label{l:weightdescent}
 For any $r|e$ a subgroup $R\leq S$ is $\cF$-weight contributing if and only if
 it is $\cF^{(r)}$-weight contributing.
\end{lem}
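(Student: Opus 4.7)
My plan is a short Clifford-theoretic reduction built directly on Proposition~\ref{p:centradbasic}. First, by part~(c) of that proposition, $R$ is $\cF$-centric if and only if it is $\cF^{(r)}$-centric, so throughout the argument I may assume $R$ is centric in both systems. The question then collapses to comparing the existence of irreducible characters of $\ell$-defect zero in $\Out_\cF(R)$ versus in $\Out_{\cF^{(r)}}(R)$.

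Next, I would invoke the observation already made inside the proof of Proposition~\ref{p:centradbasic}(c): since $\Gamma_{\ell'}(\cF)$ is (the relevant) cyclic $\ell'$-group, $\Aut_{\cF^{(r)}}(R)$ is a normal subgroup of $\Aut_\cF(R)$ of $\ell'$-index. Passing to the quotient by $\Inn(R)$ yields a normal inclusion $\Out_{\cF^{(r)}}(R)\trianglelefteq\Out_\cF(R)$ of the same $\ell'$-index. The lemma therefore reduces to the following purely group-theoretic statement: if $N\trianglelefteq G$ with $[G:N]$ coprime to $\ell$, then $G$ has an irreducible character of $\ell$-defect zero if and only if $N$ does.

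For this last step I would apply standard Clifford theory. In the downward direction, given $\chi\in\Irr(G)$ of defect zero and any irreducible constituent $\chi_0$ of $\chi|_N$, the ratio $\chi(1)/\chi_0(1)$ divides $[G:N]$ and hence is coprime to $\ell$; combined with $|G|_\ell=|N|_\ell$, this forces $\chi_0(1)_\ell=|N|_\ell$. In the upward direction, given $\chi_0\in\Irr(N)$ of defect zero with inertia group $T$, any irreducible constituent $\chi$ of $\chi_0^G$ satisfies $\chi(1)=e\cdot[G:T]\cdot\chi_0(1)$, where $e^2$ divides $[T:N]$ (from the sum-of-squares relation for projective irreducibles of $T/N$ with the Clifford cocycle of $\chi_0$). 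Both $[G:T]$ and $[T:N]$ divide $[G:N]$, so $e\cdot[G:T]$ is prime to $\ell$, giving $\chi(1)_\ell=\chi_0(1)_\ell=|N|_\ell=|G|_\ell$.

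There is no real obstacle: once Proposition~\ref{p:centradbasic} is in hand the lemma is immediate, with the only point requiring some care being the upward direction of the Clifford argument, where one needs the standard fact that the projective multiplicity $e$ satisfies $e^2\mid[T:N]$ rather than merely $e\mid[T:N]$. If one prefers to sidestep projective representations entirely, an equivalent route is to note that the block-theoretic description of defect zero (projective simple $kG$-modules) transfers cleanly along normal $\ell'$-index inclusions via Green correspondence, yielding the same conclusion.
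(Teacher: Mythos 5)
Your proposal follows essentially the same route as the paper: reduce via Proposition~\ref{p:centradbasic}(c) to the fact that $\Out_{\cF^{(r)}}(R)$ is a normal subgroup of $\ell'$-index in $\Out_\cF(R)$ for $R$ centric, and then note that possession of a defect zero irreducible character passes between a group and a normal subgroup of $\ell'$-index. The paper asserts this last Clifford-theoretic step in a single sentence (``$\chi$ is of defect zero if and only if the characters it covers are''), whereas you prove it; you also use only the centric equivalence rather than the centric-radical coincidence of Proposition~\ref{p:centradbasic}, which is fine since weight contribution is defined in terms of centric subgroups.

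The one genuine flaw is in your upward direction: the claim that the projective multiplicity $e$ satisfies $e^2\mid[T:N]$ is false in general, and the sum-of-squares relation for the twisted group algebra of $T/N$ only yields $e^2\le[T:N]$, not divisibility. For instance, if $\chi_0$ extends to $T$ (trivial cocycle) and $T/N\cong\fS_3$, then $e$ can equal $2$ while $[T:N]=6$. What your argument actually requires is only that $e\cdot[G:T]$ be prime to $\ell$, and this follows from the standard divisibility $\chi(1)/\chi_0(1)\mid[G:N]$ for any $\chi\in\Irr(G)$ lying over $\chi_0\in\Irr(N)$ (Isaacs, Cor.~11.29), or equivalently from $e\mid[T:N]$; since $[T:N]$ and $[G:T]$ divide $[G:N]$, which is prime to $\ell$, the computation $\chi(1)_\ell=\chi_0(1)_\ell=|N|_\ell=|G|_\ell$ goes through. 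With that justification substituted, your proof is correct and matches the paper's argument; in fact citing the divisibility $\chi(1)/\chi_0(1)\mid[G:N]$ once handles both directions simultaneously and lets you dispense with the inertia group altogether.
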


\begin{proof}
As described above, if $R$ is $\cF$-weight contributing, then $R$ is both
$\cF$-centric and $\cF$-radical and similarly for $\cF^{(r)}$. By
Proposition~\ref{p:centradbasic}, the set of $\cF^{(r)}$-centric,
$\cF^{(r)}$-radical subgroups of $S$ coincides with the set of $\cF$-centric,
$\cF$-radical subgroups of $S$. Let $ R\leq S $ be $\cF$-centric and
$\cF$-radical. As explained in the proof of Proposition~\ref{p:centradbasic}(c),
$\Out_{\cF^{(r)}}(R)$ is a normal subgroup of $\Out_\cF(R)$ of $\ell'$-index.
Thus an irreducible character $\chi$ of $\Out_\cF(R)$ is of $\ell$-defect zero
if and only if the irreducible characters of $\Out_{\cF^{(r)}}(R)$ covered by
$\chi$ are of $\ell$-defect zero.
\end{proof} 

\subsubsection{Counting weights.} By the above results, in order to count
$\cF$- and $\cF^{(r)}$-weights we need only consider those $\cF$-centric,
$\GL_k(V)$-radical subgroups of $S$ whose basic components are all of type
$R_{1,0,\ga,\fc}$. We will concentrate on such subgroups from now on.

We denote
$$\fF:=\{f: \NN \times \cC \to \NN \mid
  \sum_{ (\ga, \fc)} \ell^{\ga + |\fc |} f(\ga, \fc) = n\}.$$
For $f \in \fF$ choose a direct sum decomposition into $k$-subspaces
\begin{equation}
  V'= \bigoplus_{(\ga,\fc)\in\NN\times\cC}\bigoplus_{1\leq i\leq f(\ga, \fc)}V'_{ \ga,\fc,i}
\end{equation}
such that 
\[\dim_k (V'_{ \ga, \fc, i}) = \ell^{ \ga + |\fc |}\qquad\text{for }
  (\ga, \fc) \in \NN \times \cC,\ \ 1\leq i \leq f(\ga, \fc), \] 
and such that for each $\ga,\fc,i$, there is a subset $Y$ of $\{1,\ldots,n \}$
and an index $(i', j')$ of the direct sum decomposition of $V'$ underlying $S$
such that denoting by $Y'$ the subset of $\{1\leq i \leq n \}$ such that
$V_{i',j'}' = \sum_{y'\in Y'} V'_{y'}$ we have that
$V'_{\ga,\fc,i}= \sum_{y \in Y} V'_y$ and $Y\subseteq Y'$. Such a decomposition
is always possible given the arithmetic constraints on $f$.
Consider the induced decomposition 
\[ V = \bigoplus_{(\ga,\fc)}\bigoplus_{1\leq i\leq f(\ga, \fc)}V_{ \ga,\fc,i} \]
where $V_{ \ga, \fc, i} = K \otimes_k V'_{ \ga, \fc, i}$.
Set 
\[ R_f = \prod_{(\ga,\fc)}\prod_{1\leq i\leq f(\ga,\fc)} R_{ \ga, \fc}^{(i)}\] 
where for each $i$, $R_{\ga,\fc}^{(i)}\leq\GL_K (V_{\ga,\fc,i})\leq \GL_k(V_{\ga,\fc,i})$ is a basic subgroup of type $R_{0,1,\ga,\fc}$.

By Propositions~\ref{p:AFradical} and~\ref{p:centradbasic}, $f$ determines the
groups $R_f$ up to $\cF$-conjugacy and the $\cF^{(r)}$-class of $R_f$ determines
$f$. We may and will assume that the groups $R_f$ are subgroups of $S$.

For $i \geq 0$, let $\fc_i$ be the sequence $(1,\ldots,1)$ of length $i$ and
define $f_S,f_D:\NN\times\cC\to\NN$ by
$$f_S(\ga,\fc)= \begin{cases} a_i& \text{for $(\ga,\fc)=(0,\fc_i)$,}\\
                 0& \text{otherwise,}\end{cases}\qquad
f_D(\ga,\fc) = \begin{cases} n& \text{if $(\ga,\fc)=(0,())$,}\\
                 0& \text{otherwise.}\end{cases}$$
Then $f_S,f_D\in\fF$ and $S = R_{f_S}$ and $D = R_{f_D}$.

\begin{prop}   \label{p:out}
 Let $R=R_f \leq S$ be as above. We have:
 \begin{enumerate}
  \item[\rm(a)]
   \[ \Out_\cF(R) = \prod_{(\ga,\fc)} N_{\ga,\fc}\ \wr \fS_{f(\ga,\fc)},\] 
   where 
   $N_{\ga,\fc} = (\Sp_{2\ga}(\fc)\rtimes C_{\ga,\fc})\times\GL_{c_1}(\ell) \times\cdots\times\GL_{c_t}(\ell)$
   and $C_{\ga, \fc}$ is cyclic of order~$e$.
  \item[\rm(b)] For $(\ga,\fc)$ with $f(\ga,\fc)\ne 0$, let $\alpha_{\ga,\fc} \in\Aut_\cF(R)$ be a lift of a generator of $C_{\ga,\fc}$. Then
   $\Pi\circ\theta(\langle \alpha_{\ga, \fc}\rangle) = \ZZ/e\ZZ$.
  \item[\rm(c)] We have $\Pi\circ\theta(\Aut_{\cF^{(r)}}(R)) = r\ZZ/e\ZZ$, and $\ker(\Pi\circ\theta)$
   contains all $\fS_{f(\ga,\fc)}\Inn(R)$, $(\ga,\fc)\in\NN\times\cC$. 
 \end{enumerate}
\end{prop}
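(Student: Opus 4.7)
The proof will proceed in three steps corresponding to the three parts.

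Part (a) is an immediate specialization of Proposition~\ref{p:outer}. Since each basic component $R_{\ga,\fc}^{(i)}$ is of the form $R_{1,0,\ga,\fc}$ (so $m=1$ and $\alpha=0$ in the general notation), part~(b) of that proposition gives $N_{\ga,\fc}\cong(\Sp_{2\ga}(\ell)\rtimes C_e)\times\GL_{c_1}(\ell)\times\cdots\times\GL_{c_t}(\ell)$, and the wreath product with $\fS_{f(\ga,\fc)}$ comes from part~(a) there, reflecting the fact that $\fS_n\le\GL_k(V)$ permutes the $f(\ga,\fc)$ isomorphic summands $V_{\ga,\fc,i}$.

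Part (b) is the main point. By Proposition~\ref{prop:olru} it suffices to exhibit an explicit lift $\beta\in\Aut_\cF(D)$ with $\beta|_Z=\alpha_{\ga,\fc}|_Z$, and to verify that $\pi(\beta)$ generates $\ZZ/e\ZZ$. Writing $Z=\langle z\rangle$ with $z\in D$ corresponding to the diagonal vector $(\zeta,\ldots,\zeta)$ for some $\zeta\in\Delta$ of order $\ell$, the generator $\alpha_{\ga,\fc}$ of $C_{\ga,\fc}\cong C_e$ can be realized by the Galois action of $\Phi\in\Gal(K/k)$ acting coordinate-wise on the $\ell^{\ga+|\fc|}$ indices $j\in\{1,\ldots,n\}$ supporting $V_{\ga,\fc,1}'$, and trivially on the remaining coordinates. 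Thus $\alpha_{\ga,\fc}$ sends $z$ to the element of $D$ having $\zeta^q$ in those coordinates and $\zeta$ elsewhere. The element $\beta=(\Phi^{i_1},\ldots,\Phi^{i_n})\in\langle\Phi\rangle^n\le\Aut_\cF(D)$ with $i_j=1$ for the coordinates of $V_{\ga,\fc,1}'$ and $i_j=0$ otherwise therefore satisfies $\beta|_Z=\alpha_{\ga,\fc}|_Z$. Formula (\ref{e:pi}) gives $\pi(\beta)\equiv\ell^{\ga+|\fc|}\pmod e$; since $e\mid\ell-1$ we have $\ell\equiv1\pmod e$, hence $\pi(\beta)\equiv1\pmod e$, which generates $\ZZ/e\ZZ$.

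Part (c) follows from (b) together with the basic formalism of $\ell'$-index fusion subsystems. By the very definition of $\cF^{(r)}$ as the subsystem corresponding to $\Pi^{-1}(r\ZZ/e\ZZ)\le\Gamma_{\ell'}(\cF)$ via \cite[Thm.~I.7.7]{AKO11}, we have $\Pi\circ\theta(\Aut_{\cF^{(r)}}(R))\subseteq r\ZZ/e\ZZ$, and applying (b) to $\alpha_{\ga,\fc}^r\in\Aut_{\cF^{(r)}}(R)$ (which maps to $r$) yields the reverse inclusion. For the second assertion, inner automorphisms of $R$ act trivially on $Z\le Z(R)$, and an element of $\fS_{f(\ga,\fc)}$ permutes the components of the wreath product, hence acts on $Z\le D$ by permutation of the coordinates of the diagonal vector $(\zeta,\ldots,\zeta)$, which is trivial. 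In both cases $\beta=\Id_D$ lifts the restriction to $Z$, so $\pi(\beta)=0$ and the element lies in $\ker(\Pi\circ\theta)$.

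The main technical obstacle is part (b): one must unpack the Alperin--Fong construction of the basic subgroup $R_{\ga,\fc}$ and its normalizer from \cite{AF90} to confirm that the generator of the cyclic Galois quotient $C_{\ga,\fc}$ is representable by a Frobenius element acting on exactly the $\ell^{\ga+|\fc|}$ coordinates supporting $V_{\ga,\fc,1}'$ and trivially elsewhere, so that the $\pi$-image of the chosen lift equals $\ell^{\ga+|\fc|}$. Once this identification is in place, the remaining verifications are bookkeeping, and part (c) reduces to the standard characterization of $\Aut_{\cF^{(r)}}(R)$ via $\Pi^{-1}(r\ZZ/e\ZZ)$.
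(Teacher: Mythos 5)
Your parts (a) and (c), and your overall strategy for (b) (restrict to $Z$, lift to $\Aut_\cF(D)$ via Proposition~\ref{prop:olru}, evaluate $\pi$, and note the value is a unit modulo $e$), are exactly the paper's route. The problem is the step you yourself flag as deferred: the assertion that a generator of $C_{\ga,\fc}$ is induced by the coordinate-wise Frobenius element supported on the $\ell^{\ga+|\fc|}$ indices of $V'_{\ga,\fc,1}$ and trivial elsewhere. That assertion is precisely where the content of (b) lies, so as written the proof is incomplete. Moreover, taken literally it needs care: whether such an element normalises $R^{(1)}_{\ga,\fc}$ depends on compatibility of the chosen decomposition with the Alperin--Fong construction, and even when it does, its image in $\Out_\cF(R)$ is a generator of $C_{\ga,\fc}$ only modulo the factor $\Sp_{2\ga}(\ell)\times\GL_{c_1}(\ell)\times\cdots\times\GL_{c_t}(\ell)$ (on the extraspecial part it acts as a similitude), so one must still argue that this discrepancy is invisible to $\Pi\circ\theta$ because that factor centralises $Z(R)$.

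The paper closes the gap without any explicit realization, and you could adopt the same argument: reduce to $R=R_{1,0,\ga,\fc}$ occupying all of $V$, and use the Alperin--Fong structure $N:=N_{\GL_k(V)}(R)=N^0\rtimes\langle\sigma\rangle$ with $N^0=C_N(Z(R))=LCR$, $L\cong\Sp_{2\ga}(\ell)$, $o(\sigma)=e$. Since $\sigma g\sigma^{-1}$ has the same $k$-eigenvalues as a generator $g$ of $Z(R)$ (the Sylow $\ell$-subgroup of $Z(\GL_K(V))\cong K^\times$), necessarily $\sigma(g)=g^{q^j}$ for some $j$, and faithfulness of $\langle\sigma\rangle$ on $Z(R)$ forces $\gcd(j,e)=1$; replacing $\sigma$ by a suitable power one may take $j=1$, so on $Z$ the induced automorphism agrees with $\Phi_{\{1,\ldots,n\}}\in\Aut_\cF(D)$ and $\Pi\circ\theta(c_\sigma)=\pi(\Phi_{\{1,\ldots,n\}})\equiv n\pmod e$, a unit since $n$ is a power of $\ell$; the case of general $\fc$ and of several components then follows as in your computation. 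Note that this is enough because (b) only concerns the image of the cyclic group $\langle\alpha_{\ga,\fc}\rangle$, so knowing the action on $Z$ of some generator suffices. With this substitution your argument is complete; (a) and (c) need no change.
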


\begin{proof}
The assertion in (a) is proved in Proposition~\ref{p:outer}.
For (b), we may assume that $R= R_f$, where $f$ is the characteristic function
of some $(\ga,\fc)\in\NN\times\cC$ and $n=\ell^{\ga +|\fc|}$. So $R$ is of type
$R_{1,0,\ga,\fc}$.
Let us first consider the case that $\fc =()$. Set $N:=N_{\GL_k(V)}(R)$,
$C=C_{\GL_k(V)}(R)$ and let $N^0:= C_N(Z(R))$. By \cite[Sec.~4]{AF90},
$N^0 = L C R$, where $L\cong\Sp_{2\ga}(\ell)$, $[C,L] =1$ and $L\cap RC =1$.
Also, $N =N^0 \rtimes\langle\sigma\rangle$ with $o(\sigma)=e$.
 
Now consider the conjugation action of $\langle\sigma\rangle$ on $Z(R)$. By
definition of $N^0$, this action is faithful. By definition of $R$, $Z(R)$ is
the Sylow $\ell$-subgroup of $Z(\GL_K (V))\cong K^\times$. Note that $Z$ is a
subgroup of order $\ell$ of $Z(R)$.
Let $Z(R) =\langle g \rangle $ and suppose that $\sigma (g) = g^i$. Since
$\sigma (g)$ has the same eigenvalues as $g$, we see that $\sigma(g)=g^{q^{j}}$
for some $0 \leq j \leq e-1$. Since the action of $\sigma$ on $Z(R)$ is
faithful, by replacing $\sigma$ by a suitable power we may assume
that $\sigma (g) = g^q$. Thus, letting $c_\sigma $ denote the image of $\sigma$
in $\Aut_\cF(R)$, we see that restriction of $c_\sigma $ to $Z(R)$ coincides
with the restriction of $\Phi_{\{1,\ldots,n\}}$ to $Z(R)$, where
$\Phi_{\{1,\ldots,n\}}\in\Aut_\cF(D)$ is the automorphism of $D$ acting as
$\Phi$ on all components. Thus by Proposition~\ref{prop:olru},
\[\Pi\circ\theta (c_{\sigma}) =\pi(\Phi_{\{1,\ldots,n\}})\equiv n\pmod e .\]
Since $n$ is a power of $\ell$, $n$ and $e$ are relatively prime and
surjectivity follows. The case of general $\fc$ follows from the case above by
\cite[Eq.~(4.1)]{AF90}.

By \cite[Thm.~I.7.7]{AKO11},
$\cF^{(r)} =\langle (\Pi\circ\theta)^{-1} (r\ZZ/e\ZZ)\rangle$
and $\Aut_{\cF^{(r)}} (R) =\Aut_\cF(R)\cap(\Pi\circ\theta)^{-1}(r\ZZ/e\ZZ)$.
Hence the first statement of (c) follows from~(b).
By construction of the isomorphism in (a), each $\fS_{\ga,\fc}$ is the image
of some subgroup of $\fS_n\cap N_{\GL_k(V)}(R)$ under the canonical map
$N_{\GL_k(V)}(R)\to\Out_\cF(R)$ and conjugation by an element of $\fS_n$ goes
to the identity under $\Pi\circ\theta$. This proves the second assertion of (c).
\end{proof} 

For $\fc\in\cC$, denote by $\cA(\fc)$ the set of irreducible characters of
$\GL_{c_1}(\ell)\times\cdots\times\GL_{c_t}(\ell)$ of the form
$\chi_1\cdot\ldots\cdot\chi_t$, where each $\chi_j$ is a Steinberg character of
$\GL_{c_j}(\ell)$. Note that $|\cA(\fc)| = (\ell-1)^t$. 

\begin{prop}   \label{p:all}
 \begin{enumerate}
  \item[\rm(a)] The set of $\cF$-weights (up to conjugation) is in bijection
   with the set $\cW $ of assignments 
   \[ w: \NN\times\ZZ/e\ZZ\times\bigcup_{\fc\in\cC}\cA(\fc)\to \{ \ell\text{-cores} \} \]
    such that
   \[ \sum_{\fc}\sum_{(\ga,x,\vhi)\in\NN \times\ZZ/e\ZZ \times\cA(\fc)}\ell^{\ga +|c|} |w(\ga, x, \vhi)|=n. \] 
  \item[\rm(b)] Let $\ZZ/r\ZZ$ act on $\cW$ via
   $y.w(\ga,x,\vhi):=w(\ga,y +x,\vhi)$ for $y\in\ZZ/r\ZZ$, $w\in\cW$ and
   $(\ga,x,\vhi)\in \NN\times \ZZ/e\ZZ \times\cA$. Then the $\cF^{(r)}$-weights
   (up to conjugation) are indexed by the $\ZZ/ r\ZZ$-orbits of $\cW$ with each
   orbit contributing as many weights as the order of the stabiliser of a point
   of the orbit.
 \end{enumerate}
\end{prop}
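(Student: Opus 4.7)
The plan is to identify the $\cF$-conjugacy classes of $\cF$-weight contributing subgroups, describe the irreducible $\ell$-defect zero characters of each $\Out_\cF(R)$ via the product-of-wreath-products structure, and then pass to $\cF^{(r)}$ via Clifford theory on the cyclic quotient of outer automorphisers.

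First, by Lemma~\ref{l:alpha} and Proposition~\ref{p:AFradical}, together with Proposition~\ref{p:centradbasic}(b), the $\cF$-conjugacy classes of $\cF$-centric, $\cF$-radical subgroups that can possibly carry a weight are exactly those represented by the $R_f$ with $f \in \fF$. By Proposition~\ref{p:out}(a), $\Out_\cF(R_f) = \prod_{(\ga,\fc)} N_{\ga,\fc} \wr \fS_{f(\ga,\fc)}$, where $N_{\ga,\fc} = (\Sp_{2\ga}(\ell)\rtimes C_{\ga,\fc})\times\GL_{c_1}(\ell)\times\cdots\times\GL_{c_t}(\ell)$ with $|C_{\ga,\fc}|=e$. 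For each $(\ga,\fc)$ I would then identify $\Irr^0(N_{\ga,\fc})$: the Steinberg character is the unique defect-zero character of $\Sp_{2\ga}(\ell)$ (already used in Lemma~\ref{l:alpha}), is $C_{\ga,\fc}$-invariant, and since $|C_{\ga,\fc}|=e$ is prime to $\ell$, it admits exactly $e$ extensions to $\Sp_{2\ga}(\ell)\rtimes C_{\ga,\fc}$, all of defect zero and indexed by $\widehat{C_{\ga,\fc}}\cong\ZZ/e\ZZ$. In the defining characteristic, the defect-zero irreducibles of $\GL_{c_j}(\ell)$ are precisely the $\ell-1$ Steinberg characters twisted by the linear characters $\det^{\cdot}$, and tensoring these across $j$ yields $\cA(\fc)$. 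Hence $\Irr^0(N_{\ga,\fc})$ is canonically in bijection with $\ZZ/e\ZZ \times \cA(\fc)$.

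For (a), I would then apply the standard fact that for any finite group $H$ the $\ell$-defect zero irreducibles of $H\wr\fS_m$ are parametrised by assignments $\mu\colon\Irr^0(H)\to\{\ell\text{-cores}\}$ with $\sum_\chi|\mu(\chi)|=m$. This comes from the wreath product character-degree formula $\chi_\mu(1)=\frac{m!}{\prod_\chi|\mu(\chi)|!}\prod_\chi\chi(1)^{|\mu(\chi)|}\dim S^{\mu(\chi)}$, together with the two inequalities $\nu_\ell(\chi(1))\le\nu_\ell(|H|)$ (equality iff $\chi$ is of defect zero) and $\nu_\ell(\dim S^\la)\le\nu_\ell(|\la|!)$ (equality iff $\la$ is an $\ell$-core), both of which must be equalities summand-by-summand for $\chi_\mu$ to be of defect zero. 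Applying this to each factor $N_{\ga,\fc}\wr\fS_{f(\ga,\fc)}$ and using $f(\ga,\fc) = \sum_{(x,\vhi)}|w(\ga,x,\vhi)|$ converts the constraint $\sum_{(\ga,\fc)}\ell^{\ga+|\fc|}f(\ga,\fc)=n$ defining $\fF$ into the constraint on $\cW$ in the statement; combining the factors identifies the set of $\cF$-weights with $\cW$.

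For (b), Proposition~\ref{p:centradbasic}(d) and Lemma~\ref{l:weightdescent} guarantee the same indexing set $\{R_f\mid f\in\fF\}$ of $\cF^{(r)}$-weight contributing subgroups (up to $\cF^{(r)}$-conjugacy). By Proposition~\ref{p:out}(b),(c), the map $\Pi\circ\theta\colon\Out_\cF(R_f)\to\ZZ/e\ZZ$ is surjective, and $\Out_{\cF^{(r)}}(R_f)=(\Pi\circ\theta)^{-1}(r\ZZ/e\ZZ)$ is normal of cyclic quotient of order $r$, which is prime to $\ell$ since $r\mid e\mid\ell-1$. Standard Clifford theory for a cyclic $\ell'$-quotient $G/N$ of order $r$ then gives: every $\varphi\in\Irr^0(N)$ is a restriction of some $\chi\in\Irr^0(G)$, and a $\widehat{G/N}$-orbit on $\Irr^0(G)$ of size $o$ produces a single $G/N$-orbit of size $r/o$ in $\Irr^0(N)$. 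Equivalently, each $\widehat{\ZZ/r\ZZ}$-orbit on $\Irr^0(\Out_\cF(R_f))$ contributes $r/o$ distinct irreducibles to $\Irr^0(\Out_{\cF^{(r)}}(R_f))$, i.e.\ the order of the stabiliser of any orbit representative.

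The main remaining task, and the key delicate point, is to match the Clifford twist action with the combinatorial translation action in the statement of (b). A twist by $\lambda\in\widehat{\Out_\cF(R_f)/\Out_{\cF^{(r)}}(R_f)}$ pulls back via $\Pi\circ\theta$ to a linear character of each factor $N_{\ga,\fc}$ that is trivial on $\Sp_{2\ga}(\ell)$ and on each $\GL_{c_j}(\ell)$, and on $C_{\ga,\fc}$ equals an element of the unique subgroup $T_r\le\widehat{C_{\ga,\fc}}\cong\ZZ/e\ZZ$ of order $r$. Tensoring the extensions of Steinberg permutes the $\ZZ/e\ZZ$-label by translation, and by the general recipe for characters of wreath products this translates the $x$-coordinate of $w(\ga,x,\vhi)$ by the corresponding element of $T_r\cong\ZZ/r\ZZ$; this is exactly the action $y.w(\ga,x,\vhi)=w(\ga,y+x,\vhi)$. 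With this identification in hand, Clifford theory gives the promised count, completing (b). The hard step is precisely this bookkeeping identification of the pulled-back twist on each $N_{\ga,\fc}\wr\fS_{f(\ga,\fc)}$ with the stated translation on $\cW$.
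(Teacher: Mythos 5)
Your overall route is the same as the paper's: reduce to the subgroups $R_f$, $f\in\fF$ (via Lemma~\ref{l:alpha}, Lemma~\ref{l:weightdescent} and Propositions~\ref{p:AFradical}, \ref{p:centradbasic}), identify $\Irr^0(N_{\ga,\fc})$ with $\ZZ/e\ZZ\times\cA(\fc)$, parametrise the defect-zero characters of the wreath products, and descend to $\cF^{(r)}$ by Clifford theory for the cyclic $\ell'$-quotient $\Out_\cF(R_f)/\Out_{\cF^{(r)}}(R_f)\cong\ZZ/r\ZZ$. The paper packages the wreath-product combinatorics together with the equivariance needed in (b) as Proposition~\ref{p:wreath} in the appendix; your inline degree-formula derivation is an acceptable substitute for part (a), and your orbit/stabiliser bookkeeping for the Clifford descent is the same as the paper's.

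However, the step you yourself flag as ``the hard step'' is left genuinely open, and the partial justification you give is not enough. From Proposition~\ref{p:out}(b),(c) you extract only that $\Pi\circ\theta$ is surjective on $\Out_\cF(R_f)$, and you then assert that the pullback of a twist character restricts on $C_{\ga,\fc}$ to ``an element of'' the order-$r$ subgroup $T_r\le\widehat{C_{\ga,\fc}}$ and that the induced translation ``is exactly'' the stated $\ZZ/r\ZZ$-action. For the count in (b) you need more: for every $(\ga,\fc)$ with $f(\ga,\fc)\ne 0$, a generator of $\Irr(\Out_\cF(R_f)/\Out_{\cF^{(r)}}(R_f))$ must restrict to a generator of $T_r$, i.e.\ the composite of (a lift of) $C_{\ga,\fc}\to\Out_\cF(R_f)$ with $\Pi\circ\theta$ must be surjective onto $\ZZ/e\ZZ$ componentwise. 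This is precisely the full content of Proposition~\ref{p:out}(b), namely $\Pi\circ\theta(\langle\alpha_{\ga,\fc}\rangle)=\ZZ/e\ZZ$ (whose proof uses the action $g\mapsto g^q$ on $Z(R_{\ga,\fc})$ and the coprimality of $\ell^{\ga+|\fc|}$ and $e$), and it is how the paper phrases its decisive step: the action of $\Irr(\Out_\cF(R)/\Out_{\cF^{(e)}}(R))$ on $\Irr^0(N_{\ga,\fc})$ is \emph{regular}, so that $\Irr^0(N_{\ga,\fc})\cong\ZZ/e\ZZ\times\cA(\fc)$ as $\ZZ/e\ZZ$-sets. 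Surjectivity of $\Pi\circ\theta$ on the whole outer automorphism group does not preclude its restriction to some factor $C_{\ga,\fc}$ having image of index $>1$; in that case the generating twist character would restrict to a non-generator (possibly the trivial character) of $T_r$ on that component, the translation action on that component's labels would not be free, and the orbit/stabiliser count in (b) would come out wrong. Once the componentwise statement of Proposition~\ref{p:out}(b) is invoked at this point, your argument closes; as written, it is incomplete exactly there.
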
 

\begin{proof}
By Lemma~\ref{l:alpha} and Lemma~\ref{l:weightdescent} any $\cF$- (respectively
$\cF^{(r)}$)-weight contributing subgroup is conjugate to an $R_f$ for some
$f\in\fF$. By Propositions~\ref{p:AFradical} and~\ref{p:centradbasic} distinct
choices of $f$ give rise to distinct $\cF$- (respectively $\cF^{(r)}$)-conjugacy
classes of subgroups of $S$. Thus, we are reduced to the case when $R=R_f$.
Then, (a) is a consequence of Proposition~\ref{p:out}(a) and
Proposition~\ref{p:wreath} applied with $G = M =\Out_\cF (R)$. 

By Proposition~\ref{p:out}(b),(c), $\Out_\cF(R)/\Out_{\cF^{(e)}}(R)\cong
\ZZ/e\ZZ$ and for any $(\ga,\fc)$ with $f(\ga, \fc)\ne 0$, the image of 
$\alpha_{\ga,\fc,}$ is a generator of $\Out_\cF (R)/\Out_{\cF^{(e)}}(R) \cong
\ZZ/e\ZZ$. This means that the action of
$\Irr(\Out_\cF (R)/\Out_{\cF^{(e)}} (R))$ on $\Irr^0(N_{\ga,\fc})$ given by 
\[ \la \cdot \vhi = \la_0 \cdot \vhi \quad\text{for }
  \la\in\Irr(\Out_\cF(R)/ \Out_{\cF^{(e)}}(R)),\ \vhi\in\Irr^0(N_{\ga,\fc}), \]
where $\la_0$ is the $(\ga,\fc,i)$-component of the restriction of $\la$ to
$N_{\ga,\fc}$ for some (and hence all) $1\le i\le f_{\ga,\fc}$, is a
regular action.
Thus the $\ZZ/e\ZZ$-set $\Irr^0(N_{\ga,\fc})$ can be identified with the
$\ZZ/e\ZZ$-set $\ZZ/e\ZZ \times \cA(\fc)$ (acting through the left regular
action on the first component). 

Now (b) follows by Proposition~\ref{p:out} and by Proposition~\ref{p:wreath}
applied with $G = \Out_\cF(R)$ and $M = \Out_{\cF^{(r)}}(R)$.
\end{proof}

We now give an equivariant form of (1A) of \cite{AF90}. 

\begin{lem}   \label{lem:wisol}
 Let $e$, $\ell$, $r|e$ and $n$ be as above. For each $d\geq 0$, let
 \[I_d =\ZZ/e\ZZ \times \{ (d, j) \mid 0\leq j < \ell^ d \}. \]
 View $I_d$ as a $\ZZ/r\ZZ$-set via the left regular action on the first
 component and let $I = \bigcup_d I_d$. Let $\cL$ be the set of
 $e$-tuples of partitions of $n$
 viewed as a $\ZZ/r\ZZ$-set via the left regular action on indices and
 $$\cW:=\Big\{w:I\to\{\ell\text{-cores}\}\mid
   \sum_{i,j,d}\ell^d w(i,d,j) = n\Big\}$$
 viewed as a $\ZZ/r\ZZ $-set via $i'. w (i, d, j) = w(i+i', d, j)$.

 Then $\cW$ and $\cL$ are isomorphic $\ZZ/r\ZZ $-sets.
\end{lem}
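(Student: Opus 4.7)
The plan is to apply the iterated $\ell$-core/$\ell$-quotient bijection coordinatewise in the $\ZZ/e\ZZ$-index; the $\ZZ/r\ZZ$-equivariance is then automatic since that action only shuffles the $\ZZ/e\ZZ$-coordinate.

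First I would recall Littlewood's $\ell$-core/$\ell$-quotient correspondence: any partition $\lambda$ of $m$ decomposes uniquely into an $\ell$-core $\mu$ and an $\ell$-quotient $(\lambda^{(0)},\ldots,\lambda^{(\ell-1)})$ with $m = |\mu| + \ell\sum_j |\lambda^{(j)}|$. Iterating the quotient construction on each component, and using that the sizes strictly drop at each nontrivial step so the process terminates with only finitely many nonempty cores, one obtains a bijection
\[ \mathrm{Par}_m \longleftrightarrow \Big\{g : \{(d,j) : d \ge 0,\ 0 \le j < \ell^d\} \to \{\ell\text{-cores}\} : \textstyle\sum_{d,j}\ell^d|g(d,j)| = m\Big\}, \]
sending $\lambda$ to the ``tree'' $g_\lambda$ whose value at $(d,j)$ is the $\ell$-core of the $j$-th partition at depth $d$ of the iterated quotient of $\lambda$.

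Next, to an element $\vec\lambda = (\lambda_i)_{i\in\ZZ/e\ZZ}\in\cL$, viewed as an $e$-tuple whose total size is $n$, I attach the function $w(i,d,j) := g_{\lambda_i}(d,j)$. The condition $\sum_i|\lambda_i|=n$ is then equivalent to $\sum_{i,d,j}\ell^d|w(i,d,j)|=n$, so $w\in\cW$; inverting the core/quotient bijection coordinatewise in $i$ recovers $\vec\lambda$ from $w$, yielding a bijection $\Psi:\cL\to\cW$.

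Finally, both $\ZZ/r\ZZ$-actions act solely by translation of the $\ZZ/e\ZZ$-index and leave the underlying partitions, and hence their trees, untouched. Since $\Psi$ is built coordinatewise in $i$ and does not mix different values of $i$, it intertwines the two actions. The only nontrivial ingredient is the iterated core/quotient bijection itself, which is classical; the rest is formal.
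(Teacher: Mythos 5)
Your proposal is correct and follows essentially the same route as the paper: the paper also iterates the $\ell$-core/$\ell$-quotient decomposition coordinatewise in the $\ZZ/e\ZZ$-index (recalling the argument of (1A) of Alperin--Fong), defines $w(i,d,j)$ to be the $\ell$-core at depth $(d,j)$ of the iterated quotient of $\la_i$, and notes that bijectivity and $\ZZ/r\ZZ$-equivariance are immediate since the action merely permutes the index $i$.
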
 

This assertion follows by inspection of the proof of (1A) of \cite{AF90}, which
we recall here for the convenience of the reader:

\begin{proof}
Let $(\la_1,\la_2,\ldots,\la_e)\in\cL$.
%A corresponding element $w \in \cW$ is constructed as follows.
Fix $i$ with $1 \le i \le e$. For each
$d \ge 0$ and $0 \le j < \ell^d$ we define a family of $\ell$-cores
$\kappa_{i,j}^d$ inductively as follows. Set $\la_{i,0}^0:=\la_i$. Then if
$d\ge0$ and $\la_{i,j}^d$ is defined for $0\le j < \ell^d$, we let
$\kappa_{i,j}^d$ be the $\ell$-core of $\la_{i,j}^d$ and
$(\la_{i,\ell j+0}^{d+1},\ldots,\la_{i,\ell j+\ell-1}^{d+1})$ be its
$\ell$-quotient. By construction $\displaystyle\sum_{d \ge 0} \sum_{0 \le j < \ell^d} \ell^d|\kappa_{i,j}^d|=|\la_i|$ and each $\la_i$ is uniquely determined
by the $\kappa_{i,j}^d$.
Now for each $1 \le i \le e$ define $w(i,d,j):=\kappa_{i,j}^d$ and observe that
$w\in\cW$ since $\sum_{i=1}^e |\la_i| =n$ by assumption. Moreover it is clear by
construction that the resulting map $\cL \rightarrow \cW$ is both a bijection
and $\ZZ/r\ZZ$-equivariant.
\end{proof}

\begin{proof}[Completion of the proof of Theorem \ref{thm:nr weights}]
By Theorem~\ref{thm:homfixedpts} and Proposition~\ref {prop:ruiz2}, it suffices
to show that $\bw(\cF^{(r)}) = |\Irr(G(e,r, n)|$. For each $d >0$, let
$(\NN \times\cC)_d$ be the subset of $\NN\times\cC$ consisting of pairs
$(\ga,\fc)$ such that $\ga +|\fc| = d$.
By the proof of (4C) of \cite{AF90}, we have that
\[ \sum_{(\ga,\fc)\in(\NN\times\cC)_d} |\cA (\fc) |= \ell^d. \]
Let
\[ I_d =
   \ZZ/e\ZZ\times\bigcup_{(\ga,\fc) \in (\NN\times\cC)_d}\cA(\fc).\]
Then $\bigcup_{\fc\in\cC}(\NN \times\ZZ/e\ZZ\times\cA(\fc))$ may be
naturally identified with $\bigcup_{d} I_d$. Thus by Lemma~\ref{lem:wisol}
and by Proposition~\ref{p:all}, the $\cF^{(r)}$-weights are indexed by the
$\ZZ/r\ZZ$-orbits of $\cL$ with each orbit contributing as many
weights as the order of the stabiliser of a point of the orbit. Here
$\cL$ is as defined in Lemma~\ref{lem:wisol}. Since the irreducible
characters of $G(e,r,n)$ are also indexed by the $\ZZ/r\ZZ $-orbits of
$\cL$ in the same way, the proof is complete.
\end{proof}

%%%%%%%%%%%%%%%%%%%%%%%%%%%%%%%%%%%%
\subsection{Centralisers}   \label{subsec:GGr centr}
Let $e,r,n$ be positive integers with $r\mid e \mid (\ell-1)$. Let $q$ and $q'$
be prime powers such that such that $q'\equiv 1\pmod\ell$, $q$ has order~$e$
modulo $\ell$ and $\nu_\ell(q^e-1)=\nu_\ell(q'-1)$. We describe centralisers of
$\ell$-elements in $X(e,r,n)(q')$ as defined in Section~\ref{sec:centr} via the
identification of $X(e,1, n)(q')$ with $X(1,1,en)(q)$ from
Theorem~\ref{thm:homfixedpts} and Example~\ref{exmp:LS}.
\par
Let us first record some relevant information for the general linear groups. For
$i\ge 0$ we let $\fF_i\subset\FF_q[x]$ denote the collection of all irreducible
polynomials of degree $e\ell^i$ all of whose roots have $\ell$-power order. The
following is
easy and well-known:

\begin{lem}   \label{lem:centrGL}
 Let $G=\GL_n(q)$. Then, via their characteristic polynomials, $G$-conjugacy
 classes of $\ell$-elements of $G$ are in bijection with polynomials
 $$(x-1)^{a_1}\prod_{i\ge0}\prod_{f\in \fF_i} f^{a_f}\ \in\FF_q[x]$$
 of degree~$n$; thus $a_1,a_f\ge0$ satisfy
 $a_1+e\sum_{i\ge0}\ell^i\sum_{f\in \fF_i} a_f = n$.   \par
 The centraliser of an $\ell$-element $s\in G$ with this characteristic
 polynomial is
 $$C_G(s)\cong
     \GL_{a_1}(q)\times\prod_{i\ge0}\prod_{f\in \fF_i}\GL_{a_f}(q^{e\ell^i}).$$
\end{lem}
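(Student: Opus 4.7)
The plan is to reduce the statement to the classical classification of semisimple conjugacy classes in $\GL_n(q)$. Since $\ell$ is coprime to the defining characteristic $p$ of $\FF_q$, any $\ell$-element $s \in G$ has order coprime to $p$ and is therefore semisimple, i.e., diagonalisable over $\overline\FF_q$. For semisimple elements the $\GL_n(\overline\FF_q)$-conjugacy class is determined by the multiset of eigenvalues, equivalently by the characteristic polynomial; and $\FF_q$-conjugacy coincides with geometric conjugacy (this is visible directly from the $\FF_q[x]$-module description used below, or from Lang--Steinberg applied to the connected centraliser). Hence $G$-conjugacy classes of $\ell$-elements inject into $\FF_q[x]$ via $s \mapsto \det(x\,\Id - s)$.

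To identify the image I would analyse the irreducible factors. The eigenvalues of $s$ all lie in the group of $\ell$-power roots of unity in $\overline\FF_q^\times$, so each irreducible factor of the characteristic polynomial is the $\FF_q$-minimal polynomial of such a root. The root $1$ contributes the linear factor $x-1$; for a primitive $\ell^j$-th root of unity $\zeta$, the Frobenius orbit $\{\zeta, \zeta^q, \zeta^{q^2},\ldots\}$ has size equal to the multiplicative order of $q$ modulo $\ell^j$, and an elementary $\ell$-adic computation (using that $e$ is the order of $q$ modulo $\ell$ and $\ell>2$) shows this order runs through exactly the set $\{e\ell^i \mid i \ge 0\}$ as $j$ varies, each value being attained. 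Thus each irreducible factor different from $x-1$ lies in a unique $\fF_i$. Conversely, any polynomial of the stated form with total degree $n$ arises as $\chi_s$ for some $\ell$-element, establishing the bijection together with the degree constraint $a_1 + e\sum_i \ell^i \sum_{f\in\fF_i} a_f = n$.

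For the centraliser, I would adopt the module-theoretic viewpoint. View $V = \FF_q^n$ as a finitely generated $\FF_q[x]$-module with $x$ acting via $s$. Semisimplicity of $s$ means the minimal polynomial is squarefree, so the primary decomposition reads
\[V \;=\; \ker(s - \Id)^{a_1} \;\oplus\; \bigoplus_{i \ge 0}\bigoplus_{f \in \fF_i} \ker f(s)^{a_f},\]
and each summand $V_f := \ker f(s)^{a_f}$ is a free module of rank $a_f$ over the field $\FF_q[x]/f(x) \cong \FF_{q^{\deg f}}$. Since $C_G(s)$ is precisely the group of $\FF_q[x]$-linear automorphisms of $V$, it preserves this decomposition and acts on each $V_f$ as the full $\FF_{q^{\deg f}}$-linear automorphism group. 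Substituting $\deg f = 1$ for $f = x-1$ and $\deg f = e\ell^i$ for $f \in \fF_i$ produces the asserted product $\GL_{a_1}(q) \times \prod_{i, f} \GL_{a_f}(q^{e\ell^i})$.

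The only mildly technical ingredient is the identification of the set of $\FF_q$-degrees of non-trivial $\ell$-power roots of unity as $\{e\ell^i : i \ge 0\}$, which follows from a standard $\ell$-adic order computation on $1 + \ell \ZZ_\ell \subset \ZZ_\ell^\times$; aside from that, everything is the textbook correspondence between semisimple conjugacy classes and $\FF_q[x]$-module structures, and I do not anticipate any serious obstacle.
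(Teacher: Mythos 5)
Your proof is correct. The paper states this lemma without proof, labelling it ``easy and well-known'', and your argument -- $\ell$-elements are semisimple since $\ell\ne p$, semisimple classes in $\GL_n(q)$ are classified by their characteristic polynomials, the degrees over $\FF_q$ of nontrivial $\ell$-power roots of unity are exactly the $e\ell^i$ (using $\ell>2$), and the primary decomposition of $V$ as an $\FF_q[x]$-module exhibits $C_G(s)$ as $\prod_f \GL_{a_f}(q^{\deg f})$ -- is precisely the standard argument being invoked.
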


Let $\zeta\in\ZZ_\ell^\times$ be the root of unity with $q\equiv\zeta\pmod\ell$.
By Theorem~\ref{thm:homfixedpts} and Example~\ref{exmp:LS}, $X(e,1,n) \simeq
X(1,1,en)^{h\Gamma}$ where $\Gamma$ is the subgroup of $\Out(X(1,1,en))$
generated by the class of $\psi^\zeta$. There is a homotopy commutative diagram
$$\xymatrix{ X(1,1,en) (q) \ar[d]^{\wr}_{g}\ar[r]^{\ \ \ \iota} & X(1,1,en) \\ 
  X(e, 1,n) (q') \ar[r]^{\ \ \ \iota_1} & X(e,1,n)\ar[u]_{\gamma} }$$
where $\iota$ and $\iota_1$ fit into homotopy pullback squares as in~(*) in
Section~\ref{subsec:def fus}.   \par
Let $S\le\GL_{en}(q)$ be a fixed Sylow $\ell$-subgroup. By \cite[Thm.~6.3]{Ru07}
and its proof, for each $r|e$, there is a map $\beta_r: BS\to X(e,r,n)(q')$ and
a homotopy monomorphism $X(e,r,n)(q')\to X(e,1,n)(q')$ whose composition
$BS\to X(e,r,n)(q')\to X(e,1,n)(q')$ factorises as the inclusion
$BS\to B\GL_{en}(q)^\wedge_\ell \simeq X(1,1,en)(q)$ followed by $g$ and such
that $\cF_{S,\beta_r}(X(e,r,n)(q'))$ is the saturated subsystem $\cF^{(r)}$ of
$\cF_{\GL_{en}(q)}(S)$ as in Proposition~\ref{prop:ruiz2}. For each $s \in S$ set
$$C_{X(e,r,n)}(s) := C_{X(e,r,n)}(s,\iota_r\circ\beta_r), \ \ 
  C_{X(e,r,n)(q') } (s) := C_{X(e,r,n) (q') } ( s, \beta_r) $$
where again $\iota_r: X(e,r,n) (q') \to X(e,r,n)$ fits into a homotopy pullback
square as in~(*) in Section~\ref{subsec:def fus}.

\begin{thm}   \label{thm:centr GGR}
 With the notation above, for any $\ell$-element $s\in S$ with characteristic
 polynomial
 $$(x-1)^{a_1}\prod_{i\ge 0}\prod_{f\in \fF_i} f^{a_f} \qquad\text{with }
   a_1+e\sum_{i\ge0}\ell^i\sum_{f\in \fF_i} a_f = en$$
 we have
 $$C_{X(e,r,n)}(s)
  \simeq X(e,r,a_1/e)\times\prod_{i\ge0}\prod_{f\in\fF_i} X(1,1,a_f)^{\ell^i}.$$
 If moreover $s$ is fully centralised in the fusion system $\cF^{(r)}$, then
 $$C_{X(e,r,n)(q')}(s)\simeq X(e,r,a_1/e)(q')\times
   \prod_{i\ge 0}\prod_{f\in \fF_i} X(1,1,a_f)(q'^{\ell^i}).$$
 Here, the products are understood to run over those $f$ with $a_f\ne0$.
\end{thm}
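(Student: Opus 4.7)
The plan is to prove both formulas by reducing to the classical centraliser computation in $\GL_{en}(q)$ and then applying two successive homotopy fixed-point descents: first along the $\Gamma$-action coming from Theorem~\ref{thm:homfixedpts}(a), and then along the $\ell'$-index descent of Proposition~\ref{prop:ruiz2}. First I would treat the totally split case $(e,r)=(1,1)$. Combining Remark~\ref{r:fried}(a) and Proposition~\ref{prop:centr}(b) with Lemma~\ref{lem:centrGL} identifies $C_{X(1,1,en)(q)}(s)$ with $(BC_{\GL_{en}(q)}(s))^\wedge_\ell$, and after $\ell$-completion this is the asserted product with $q'^{\ell^i}$ replacing $q^{e\ell^i}$ by Theorem~\ref{thm:homfixedpts}(b). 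By Lemma~\ref{l:factor} a discrete approximation $\breve j$ sends $s$ to a $\GL_{en}$-conjugate of $s$ inside $\breve T$; the centraliser of $\breve j(s)$ in $\fS_{en}$ is then the Young subgroup $\fS_{a_1}\times\prod_{i,f}\fS_{a_f}^{e\ell^i}$, since each $f\in\fF_i$ has $e\ell^i$ distinct roots each occurring $a_f$ times among the coordinates of $\breve j(s)$. Using Theorem~\ref{thm:con cent} (simply connectedness gives torsion-free fundamental group) and the identification of the Weyl group of $C_X(s)$ with $C_W(\breve j(s))$ explained in Section~\ref{sec:centr}, this promotes the decomposition of $C_{X(1,1,en)(q)}(s)$ to one for $C_{X(1,1,en)}(s)$.

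Next I would handle the case $r=1$ with general $e$. By Theorem~\ref{thm:homfixedpts}(a) we have $X(e,1,n)\simeq X(1,1,en)^{h\Gamma}$ with $\Gamma=\langle\psi^\zeta\rangle\cong\ZZ/e\ZZ$. Because the Galois action commutes with the formation of $s$-centralisers (the compatible factorisation is provided by Lemma~\ref{l:identcent} and Remark~\ref{r:welldef}), one obtains $C_{X(e,1,n)}(s)\simeq C_{X(1,1,en)}(s)^{h\Gamma}$, and the task becomes identifying how $\Gamma$ acts on each factor of the product from Step~1. On $X(1,1,a_1)$ the action is again $\psi^\zeta$, so $X(1,1,a_1)^{h\Gamma}\simeq X(e,1,a_1/e)$ by a second application of Theorem~\ref{thm:homfixedpts}(a) (noting $e\mid a_1$ from the degree condition $a_1+e\sum\ell^i\sum a_f = en$). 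On each $X(1,1,a_f)^{e\ell^i}$ for $f\in\fF_i$, the operator $\psi^\zeta$ corresponds to the permutation of the $e\ell^i$ roots of $f$ induced by the Frobenius $\Phi\colon x\mapsto x^q$. Since $\zeta$ has order $e$ in $\ZZ_\ell^\times$, this action decomposes the $e\ell^i$ copies into $\ell^i$ cycles of length $e$, and the $h\Gamma$ fixed points of each such cycle of $X(1,1,a_f)$'s collapse to a single diagonal copy of $X(1,1,a_f)$, yielding the factor $X(1,1,a_f)^{\ell^i}$.

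For the case of general $r\mid e$, I would invoke Proposition~\ref{prop:ruiz2}, which identifies $\cF(X(e,r,n)(q'))$ with the $\ell'$-index subsystem $\cF^{(r)}$ of the fusion system of $\GL_{en}(q)$. Using Proposition~\ref{prop:olru} and the explicit description of $\Pi\circ\theta$ recorded in Proposition~\ref{p:out}(b)(c), one sees that the quotient $\Aut_\cF(D)/\Aut_{\cF^{(r)}}(D)\cong\ZZ/(e/r)\ZZ$ restricts trivially on the toral components corresponding to Galois orbits different from $\{1\}$ (these orbits already carry the full $\ZZ/e\ZZ$-action), and only the eigenvalue-$1$ component sees the descent. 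Consequently the passage from $\cF^{(1)}=\cF(X(e,1,n)(q'))$ to $\cF^{(r)}=\cF(X(e,r,n)(q'))$ modifies only the $X(e,1,a_1/e)$ factor, replacing it by $X(e,r,a_1/e)$, while leaving all $X(1,1,a_f)^{\ell^i}$ factors untouched. Finally, the formula for $C_{X(e,r,n)(q')}(s)$ follows from the formula for $C_{X(e,r,n)}(s)$ by applying Proposition~\ref{prop:centr}(a): $C_{X(e,r,n)(q')}(s)\simeq C_{X(e,r,n)}(s)^{h\psi^{q'}}$, with $\psi^{q'}$ acting as the Adams operator on each factor and producing $X(1,1,a_f)(q'^{\ell^i})$ from $X(1,1,a_f)^{\ell^i}$ via Theorem~\ref{thm:homfixedpts}(b) applied componentwise.

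The main obstacle is the Galois-combinatorial bookkeeping in Steps~2 and~3: one must verify carefully that the $\Gamma$-action on $X(1,1,a_f)^{e\ell^i}$ genuinely factors as $\ell^i$ disjoint $e$-cycles (without extra Adams twists) and that the $\ell'$-index descent from $\cF^{(1)}$ to $\cF^{(r)}$ really touches only the eigenvalue-$1$ factor. Both points follow from the fact that a nontrivial Galois orbit in $\breve T$ already realises the full cyclic $\ZZ/e\ZZ$-action through Frobenius, so the map $\Pi\circ\theta$ of Proposition~\ref{prop:olru} sees it as already trivial; making this precise at the level of the linking systems of the factors, and thereby lifting the decomposition from a product of fusion systems to a product of classifying spaces, is the technical heart of the argument.
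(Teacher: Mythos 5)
Your overall strategy (compute the centraliser inside $X(1,1,en)$ and then descend, first along $\Gamma$-homotopy fixed points, then along the $\ell'$-index subsystem) is genuinely different from the paper's, which for the first formula never commutes centralisers with homotopy fixed points at all: it computes the Weyl group $C_W(\breve j(s))$ of $C_{X(e,r,n)}(s)$ directly inside $W=C_{W'}(w_0)\le\fS_{en}$, reduces $r>1$ to $r=1$ by the purely reflection-group observation from the proof of Proposition~\ref{prop:stab}, uses Lemma~\ref{l:factor} to see that $\breve j(s)$ and $s$ have the same characteristic polynomial, and then invokes connectedness (Theorem~\ref{thm:con cent}) together with the classification Theorem~\ref{thm:class}. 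Your route instead hinges on the claim $C_{X(e,1,n)}(s)\simeq C_{X(1,1,en)}(s)^{h\Gamma}$, which is asserted but not proved: Lemma~\ref{l:identcent} and Remark~\ref{r:welldef} only describe lifts of self-equivalences to the maximal torus and the well-definedness of the coset $W(s)\phi_s$, and Proposition~\ref{prop:centr}(a) provides a commutation of this kind only for the homotopy pullback $\twt X(q)$, not for the intermediate $\ell$-compact group $X^{h\Gamma}$. That commutation is a genuine missing ingredient, not bookkeeping, and even granting it you would still have to identify the induced $\Gamma$-action on the factors, which involves a Weyl-group twist.

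This twist is also where your finite-level argument breaks. The operator whose homotopy fixed points compute $C_{X(e,r,n)(q')}(s)$ via Proposition~\ref{prop:centr}(a) is not ``$\psi^{q'}$ acting as the Adams operator on each factor'': it is $\psi^{q'}$ twisted by $\phi_s$, which acts on the block $X(1,1,a_f)^{\ell^i}$ as a product of $\ell^i$-cycles (this is precisely the element $w$ with $w.t^{q'}=t$ constructed in the paper's proof, recorded in Lemma~\ref{l:identifyws2}). With the untwisted factorwise action you describe, the fixed points would be $X(1,1,a_f)(q')^{\ell^i}$ rather than the asserted $X(1,1,a_f)(q'^{\ell^i})$, so your expectation of ``no extra Adams twists'' is false and identifying the twist is unavoidable. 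Finally, for $r>1$ your claim that the descent from $\cF^{(1)}$ to $\cF^{(r)}$ touches only the eigenvalue-$1$ factor is inferred from the behaviour of $\Pi\circ\theta$ on $\Aut_\cF(D)$ alone; that does not yield the product decomposition of the centraliser subsystem $C_{\cF^{(r)}}(s)$ (nor does it say anything about the $\ell$-compact group $X(e,r,n)$ itself, which is what the first display concerns). The paper establishes it by induction on the order of $s$: choose $s$ and $u=s^{\ell^{d-1}}$ fully centralised, split off the $1$-eigenspace of $u$ to get $C_{\cF^{(r)}}(u)=\cG_0^{(r)}\times\cG_1$ (via the proof of Ruiz's lemma), treat elements of order dividing $\ell^a$ by \cite[Prop.~11.2]{BM07}, and only then conclude with Proposition~\ref{prop:centr}(b) and uniqueness of classifying spaces. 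Some substitute for these steps is needed before your outline becomes a proof.
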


\begin{proof}
For the first assertion, by Theorem~\ref{thm:class} it suffices to show that the
Weyl group of $C_X(s)$ is isomorphic to the Weyl group of the right hand side.
For this, we first identify the discrete tori of $X(e,r,n)$ and $X(e,1,n)$ such
that the Weyl group $W^{(r)}:=G(e,r,n)$ of the former acts as a subgroup of the
Weyl group $W=G(e,1,n)$ of the latter. It follows from the proof of
Proposition~\ref{prop:stab} that
$C_{W^{(r)}}(s)= G(e,r,u_0)\times\prod_j\fS_{u_j}$ if and only if
$C_W(s)= G(e,1,u_0)\times\prod_j\fS_{u_j}$. It hence suffices to discuss the
case $r=1$.

Set $X'= X(1,1,en)\simeq B\GL_{en}(\CC)_\ell^\wedge$. Let $\bT'$ be the subgroup
of diagonal matrices of $\bG:=\GL_{en}(\FF)$,
$W'=N_\bG(\bT')/\bT'\cong \fS_{en}$ and $L'$ the cocharacter lattice of $\bT'$.
Then as explained in Section~\ref{s:lie}, $(W',L')$ is the Weyl group of $X'$
and by Theorem~\ref{thm:homfixedpts}(a) the Weyl group $(W,L)$ of $X$ is
$(W'_\zeta,L'_\zeta)$ (see the discussion above the statement of the
theorem). Further,
$$W'_\zeta = C_{W'}(w_0) = \left(\big\langle(1,\ldots, e)\big\rangle \times
  \cdots \times \big\langle (e(n-1)+1,\ldots, en)\big\rangle\right) \rtimes H,$$
where 
$$w_0:=(1,\ldots,e)\cdots( e(n-1) + 1,\ldots,en)\in\fS_{en}$$
is a product of $n$ disjoint cycles of length $e$, $H\cong\fS_n$ acts by
wreathing the base subgroup and $ L'_\zeta$ is the $1$-eigenspace of
$w_0\zeta$ (see Section~\ref{subsec:sl}). Consequently the subgroup
$\brT$ of $\bT'$ of $\ell$-power order elements consists of diagonal matrices
of the form
$$ \diag( x_1,\ldots,x_{en})\quad\text{with }\ 
 x_{ue +i +1} = x^\zeta_{ue +i}\ \text{ for all }0\leq u \leq n-1,\, 1\leq i \leq e-1. $$
Here we are writing $\brT$ multiplicatively and for $x\in\ZZ/\ell^\infty$ by
$x^\zeta$ we mean the image of $x$ under the action of $\zeta\in\ZZ_\ell^\times
\cong \Aut(\ZZ/\ell^\infty)$. In particular if $x$ has order $\ell^d$, then
$x^\zeta$ is $x^n$, for any $n\in\ZZ$ congruent to $\zeta$ modulo $\ell^d$. 
 
Let $t = \diag(x_1,\ldots,x_{en})\in\brT$. Let $\{1,\ldots,en\} = J_0\cup \ldots\cup J_m$ be the partition induced by the relation $x_i = x_j$, where
$J_0$ consists of the indices $i$ with $x_i =1$ (by abuse of notation we allow
the possibility $J_0 =\emptyset$). Now
$J_u\mapsto J_u^\zeta:=\{x^\zeta\mid x\in J_u\}$ defines a free action of
$\langle\zeta\rangle$ on the set $\{J_u \mid 1\leq u\leq m\}$.
By suitable re-indexing we may assume that $\{J_1,\ldots,J_{m'}\}$ is a set of
orbit representatives for this action, where $m'=m/e$. For each $1\leq v\leq m'$,
let $\tilde J_i = \bigcup_{0\leq b \leq e-1} J_i^{\zeta ^b}$. Then 
$\{1,\ldots,en\} = J_0\cup\tilde J_1\cup\ldots\cup\tilde J_{m'}$ is a partition, 
$$ C_{W'}(t) \cong \fS_{|J_0|} \times \prod_{v=1} ^{m'} \fS_{|J_v|} ^{ e} , $$
and 
$$ C_W(t) = C_{C_{W'} (w_0) } ( t) \cong G(e,1, |J_0|/ e ) \times \prod_{v=1} ^{m'} \fS_{|J_v|} .$$
\par 

Suppose that $t\in \brT$ has characteristic polynomial 
$$(x-1)^{a_1}\prod_{i\ge 0}\prod_{f\in \fF_i}f^{a_f} \in\FF_q[x]\qquad
  \text{with } a_1+e\sum_{i\ge0}\ell^i\sum_{f\in \fF_i} a_f = en.$$
If $\la$ is an eigenvalue of $t$,
then $\la^\zeta = \la^{q^{\ell^i}}$ when $\la$ has degree $\ell^i$ over $\FF_q$.
Thus,
$$ C_W(t) =C_{C_{W'} (w_0)} (t)\cong G(e,1,a_1/e)\times\prod_{i\ge0}\prod_{f\in \fF_i} G(1,1,a_f)^{\ell^i}.$$

Let $s\in S$ and suppose that $t \in \brT$ is the image of $s$ under a discrete
approximation of a factorisation of the restriction of $\beta_1$ to
$B\langle s\rangle$. Then $t$ is also the image of $s$ under a discrete
approximation of $ \gamma \circ \iota_1 \circ \beta_1$ where $\gamma, \iota_1$
are as in the homotopy commutative diagram at the beginning of the section. But
$\gamma\circ\iota_1\circ \beta_1$ is also the composition of $\inc:BS\to X'(q)$
with $ X'(q) \to X'$. By Lemma~\ref{l:factor} applied to $\bG=\GL_{en}(\FF)$, we
have that $t$ and $s$ have the same characteristic polynomial. This proves the
first assertion.

Before moving on to the second assertion, let us record a fact which will be
used in the proof of Lemma~\ref{l:identifyws2}. Let $t\in\brT$ have
characteristic polynomial as above. Since
$\nu_\ell(q^{eb}-1) = \nu_\ell((q')^b-1)$ for all non-negative integers $b$ and
$e$ is the order of~$q$ modulo $\ell$, there
exists $w\in H$ which is a product of cycles of length $\ell^i$ for
each $f$ of degree $e\ell^i$ (counting multiplicities) such that $w.t^{q'} =t$,
with $H\cong\fS_n$ as above. In particular, note that $w\in G(e,e,n)$. 

For the second part, we identify the centraliser via its fusion system.
If $r=1$, by Lemma~\ref{lem:centrGL} and Proposition~\ref{prop:ruiz2} the fusion
system of $C_{\GL_{en}(q)}(s)$ is the fusion system on the right hand side, so
we conclude with Proposition~\ref{prop:centr}(b). For $r>1$ we argue by
induction on the order of $s$. If $s^{\ell^a}=1$ with $a=\nu_\ell(q^e-1)$,
the claim follows by \cite[Prop.~11.2]{BM07}.

Now let $s\in S$ be fully centralised and suppose that $s$ has order $\ell^d$,
$d>a$. Set $u = s^{\ell^{d-1}}$. We may replace $s$ by a suitable
$\cF^{(r)}$-conjugate such that both $s$ and $u$ are fully
$\cF^{(r)}$-centralised and hence by \cite[Prop.~3.8(c)]{BCGLO07} both $s$ and
$u$ are fully $\cF$-centralised. Write $\GL_{en}(q)=\GL_k(V)$ and let
$W_0\subseteq V$ be the $1$-eigenspace of $u$. Since $u$ is semisimple, there
is a decomposition $V= W_0\oplus W_1$ into $\langle u\rangle$-invariant
subspaces. By Lemma~\ref{lem:centrGL},
\[ C_{\GL_k(V) }(u) = \GL_k(W_0)\times C_{\GL_k(W_1)}(u).\]
%  \cong \GL_k(W_0)\times\prod_{i\ge0}\prod_{f\in \fF_i}\GL_{a_f}(q^{e\ell^i}).\]
 
Setting $S_j:= C_S(u)\cap\GL_k(W_j)$ we have $C_S(u) = S_0\times S_1$. Since
$u$ is fully $\cF$-centralised, $S_j$ is a Sylow $\ell$-subgroup of $\GL_k(W_j)$
and $S_1$ is a Sylow $\ell$-subgroup of $C_{\GL_k(W_1)}(u)$. 

Set $\cG_0=\cF_{S_0}(\GL_k(W_0))$ and $\cG_1=\cF_{S_1}(C_{\GL_k (W_1)}(u))$. 
As explained in the proof of \cite[Lemma 6.2]{Ru07}, the decomposition of
$C_{X(q')}(u)$ given in the first part of our statement translates to a
corresponding decomposition for fusion systems
\[ C_{\cF^{(r)} }(u) = \cG_0^{(r) } \times \cG_ 1. \]

Now $s$ preserves the decomposition $V =W_0 \oplus W_1$. Write $s=(s_0,s_1)$
where $s_j \in S_j$. Then the characteristic polynomial of $s$ on $V$ decomposes
as $h_0(x)h_1(x)$ where $h_j(x)$ is the characteristic polynomial of $s_j$ on
$W_j$, $j=0,1$. Moreover, $h_0(x)$ and $h_1(x)$ are relatively prime. In other
words, 
\[ h_0 (x) = (x-1) ^{a_1} \cdot \prod_{i\ge 0}\prod_{f\in \fF_i} f^{b_f}\ 
  \text{ and }\ h_1(x) = \prod_{i\ge 0}\prod_{f\in \fF_i} f^{c_f}, \]
where for each $i\geq 0$ and each $f\in \fF_i $, either $b_f=0$, $c_f=a_f$ or
vice versa.

We have
\[C_{\cF^{(r)}}(s) = C_{C_{\cF^{(r)}}(u)}(s)
  = C_{\cG_0^{(r) }}(s_0)\times C_{\cG_ 1}(s_1). \]
Since $C_{\cG_1}(s_1) = \cF_{C_{S_1}(s_1)}(C_{\GL_k(W_1)}(s_1))$, and since
up to $\ell$-completion $X(1,1,c_f)(q'^{\ell^i})$ is homotopy equivalent to
$B\GL_{ec_f}( q^{\ell^i})$ we obtain from Lemma~\ref{lem:centrGL} that
$C_{\cG_1}(s_1) $ is the fusion system associated to 
\[ \prod_{i\ge 0}\prod_{f\in \fF_i} X(1,1,c_f)(q'^{\ell^i}) .\]
 
By Proposition~\ref{prop:centr}, $C_{\cG_0^{(r)}}(s_0)$ is the fusion system
associated to $C_{X(e,r,a_1/e)(q')}(s_0) $. Since $s_0^{\ell^{d-1}}=u|_{W_0}=1$,
the order of $s_0 $ is strictly less than the order of $s$. Hence, 
\[ C_{X(e,r,a_1/e)(q')}(s_0)\simeq
  X(e,r,b_1) \times \prod_{i\ge 0}\prod_{f\in \fF_i} X(1,1,b_f)(q'^{\ell^i})\]
by induction.
The result follows as by Proposition~\ref{prop:centr}(b), $C_{\cF^{(r)}}(s)$ is
the fusion system associated to $C_{X(q')}(s)$.
\end{proof}

\begin{lem}   \label{l:identifyws2}
 In the notation of Theorem~\ref{thm:centr GGR}, let $s\in S$ have characteristic polynomial
 $(x-1)^{a_1}\prod_{i\ge 0}\prod_{f\in \fF_i} f^{a_f}. $
 Let $(W(s)\phi_s, q')$ be as in Conjecture~\ref{c:spetsandcomp}. Then we may
 choose
 \[ \phi_s=\prod_{i} \prod_{f \in \fF_i} \sigma_{i, f}\, \in\, \fS_n
   \leq G(e,r,n)\]  % = A(e,r,n)\rtimes\fS_n\]
 such that for each $i$ and $f$, $\sigma_{i,f}$ is a product of $a_f$ disjoint
 cycles of length $\ell^i$, and 
 \[ W(s)_{\phi(s)}
 \cong G(e,r,a_1/e)\times \prod_{i\geq 0}\prod_{f\in \fF_i} G(1,1,a_f). \]
 In particular, Conjecture~\ref{c:spetsandcomp} holds in this case.
\end{lem}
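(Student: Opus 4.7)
The plan is to combine the explicit decomposition of $C_X(s)$ from Theorem~\ref{thm:centr GGR} with the recipe for $\phi_s$ given at the end of Section~\ref{subsec:centrII}, then compute $W(s)_{\phi_s}$ via Theorem~\ref{thm:LS}(a), and finally verify the order formula by a factorwise comparison with $|C_S(s)|$ through Lemma~\ref{lem:centrGL}. Since $\tau = \Id$ for the data $X(e,r,n)(q')$ one has $\breve{\tau_T} = \Id$, so the recipe reduces to $\phi_s = w^{-1}$ for any $w \in W = G(e,r,n)$ satisfying $w \cdot \breve{j}(s)^{q'} = \breve{j}(s)$. The recorded fact in the proof of Theorem~\ref{thm:centr GGR} supplies precisely such a $w$ lying in the base subgroup $H \cong \fS_n \le G(e,r,n)$: it is a product of disjoint cycles with exactly $a_f$ cycles of length $\ell^i$ for each $f \in \fF_i$. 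Letting $\sigma_{i,f}$ denote the inverses of those $a_f$ cycles attached to $f$, we take $\phi_s := \prod_{i,f}\sigma_{i,f}$.

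Next I determine the action of $\phi_s$ on $W(s)$. By Theorem~\ref{thm:centr GGR}, $W(s) \cong G(e,r,a_1/e) \times \prod_{i,f}\fS_{a_f}^{\ell^i}$, and tracing that proof through the identification of $W$ inside $\fS_{en}$: the $a_1/e$ indices in $\{1,\ldots,n\}$ with $\breve{j}(s)$-entry equal to $1$ support the factor $G(e,r,a_1/e)$, while for each $f \in \fF_i$ the $a_f\ell^i$ indices whose $\breve{j}(s)$-entry is a root of $f$ partition into $a_f$ Frobenius orbits of length $\ell^i$, supporting $\fS_{a_f}^{\ell^i}$. The cycles of $\sigma_{i,f}$ are exactly these Frobenius orbits. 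Hence $\phi_s$ fixes the first set of indices pointwise and acts trivially on $G(e,r,a_1/e)$, while on each $\fS_{a_f}^{\ell^i}$ it acts as a cyclic permutation of the $\ell^i$ symmetric group factors. The $1$-eigenspace $L_{\phi_s}$ on $L$ is then the direct sum of the defining lattice of $G(e,r,a_1/e)$ and, for each $(i,f)$, the diagonal $a_f$-dimensional subspace of $\ZZ_\ell^{a_f\ell^i}$. The setwise stabiliser of this diagonal in $\fS_{a_f}^{\ell^i}$ is the diagonal $\fS_{a_f} = G(1,1,a_f)$, with trivial pointwise stabiliser; applying Theorem~\ref{thm:LS}(a) factor by factor then gives the stated isomorphism $W(s)_{\phi_s} \cong G(e,r,a_1/e) \times \prod_{i,f} G(1,1,a_f)$.

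For the order formula of Conjecture~\ref{c:spetsandcomp}, fullness of $s$ together with Lemma~\ref{lem:centrGL} identifies $C_S(s)$ as a Sylow $\ell$-subgroup of $\GL_{a_1}(q) \times \prod_{i,f}\GL_{a_f}(q^{e\ell^i})$, so $\nu_\ell(|C_S(s)|)$ factorises accordingly. On the other side, the generalised degrees of the coset $(\fS_{a_f}^{\ell^i},\sigma_{i,f})$ are the pairs $(d,\mu)$ with $d \in \{1,\ldots,a_f\}$ and $\mu$ ranging over $\ell^i$-th roots of unity, hence $\OO_x(\fS_{a_f}^{\ell^i}\sigma_{i,f}) = \prod_{d=1}^{a_f}(x^{d\ell^i}-1)$; evaluated at $q'$ and using the identity $\nu_\ell(q^{eb}-1) = \nu_\ell(q'^{b}-1)$ (recorded in the proof of Theorem~\ref{thm:centr GGR}) this matches $\nu_\ell(|\GL_{a_f}(q^{e\ell^i})|)$, and an analogous direct Steinberg-type calculation matches $\nu_\ell(\OO_{q'}(G(e,r,a_1/e)))$ with $\nu_\ell(|\GL_{a_1}(q)|)$.

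The main obstacle is the second step above: the precise identification of how the chosen $w$ acts on each factor of $W(s) = C_W(\breve{j}(s))$, which requires tracing the Galois/equivalence-class decomposition from the proof of Theorem~\ref{thm:centr GGR} through the $\zeta$-twisted structure on the discrete torus of $X(e,r,n)$. Once this dictionary is in place, the action is the cyclic shift asserted above and the rest of the argument is a direct combination of Theorem~\ref{thm:LS} with the explicit order computations.
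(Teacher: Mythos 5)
Your proposal is correct and follows essentially the same route as the paper's own proof: you take $\phi_s$ to be the element $w\in H$ recorded in the proof of Theorem~\ref{thm:centr GGR}, identify the relative Weyl group via Lehrer--Springer from the fixed space of $w$ (your explicit cyclic-shift/diagonal-stabiliser computation is exactly what the paper compresses into ``one checks that $w$ has maximal eigenspace \dots and $C_W(t)$ acts faithfully''), and verify Conjecture~\ref{c:spetsandcomp} by the same factorwise order comparison through Lemma~\ref{lem:centrGL}. The only step you pass over silently, that $w$ indeed has fixed space of maximal dimension in its coset so that Theorem~\ref{thm:LS}(a) applies, is precisely the paper's ``one checks'' and holds because every element of the coset acts on the blocks attached to $f\in\fF_i$ with all cycle lengths divisible by $\ell^i$, hence with at most $a_f$ invariant directions there.
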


\begin{proof}
Let $(W,L)$ be identified with $(C_{W'}(w_0),L_\zeta)$ as in the proof
of Theorem~\ref{thm:centr GGR} and let $t\in\brT$ with $\breve{j}(s) = t$. Then
by the recipe for $\phi_s$ we may take for $\phi_s$ the element $w\in H$ as
described in the proof of Theorem~\ref{thm:centr GGR}. One checks that $w$ has
maximal possible $\zeta$-eigenspace on $L$ amongst the elements of $C_W(t)w$
and further that $C_W(t)$  acts faithfully on this $\zeta$-eigenspace. 
The first assertion follows. The description of $\phi_s$ given above and of
$C_{X(q')}(s)$ in Theorem~\ref{thm:centr GGR} yields
Conjecture~\ref{c:spetsandcomp}.
\end{proof}

%%%%%%%%%%%%%%%%%%%%%%%%%%%%%%%%%%%%
\subsection{Proof of Conjecture~\ref{conj:nr oweights} for $G(e,r,\ell)$ and $q\equiv 1 \pmod\ell $}   \label{sec:GGr ord}
Assume $r|e|(\ell-1)$ and let $X$ be the connected $\ell$-compact group with
Weyl group $(W,L)$ where $W=G(e,r,\ell)$, $e>1$. Let $\ell$ be very good for $W$
and $\tau$ be an automorphism of $X$ of finite order prime to $\ell$. Since
$\ell$ is odd, the element $\phi\in N_{\GL(L)}(W)$
to which $\tau$ corresponds via Theorem \ref{thm:class} is a scalar (see
\cite[Prop.~3.13]{BMM99}). It therefore suffices to consider the case $\tau=1$.
Let $q\equiv 1\pmod\ell$ and recall that by Proposition~\ref{prop:ruiz2} the
fusion system for $X(q)$ may be realised as a fusion system inside a general
linear group. Precisely, we choose $q_0$ to have order $e$ modulo $\ell$ and set
$a:=\nu_\ell(q_0^e-1)$ so that $\cF=\cF(X(q))$ is a subsystem of the
$\ell$-fusion system of $\GL_{e\ell}(q_0)$ on a Sylow $\ell$-subgroup $S$.

Recall the sets $\fF_i\subset\FF_{q_0}[x]$ defined before Lemma~\ref{lem:centrGL}.
An easy counting argument shows that
$$r_i:=|\fF_i|=\begin{cases} (\ell^a-1)/e& \text{if $i=0$},\\
          (\ell^a-\ell^{a-1})/e& \text{otherwise}.\end{cases}$$

We begin by identifying the $\cF$-centric radical subgroups.

\begin{lem}   \label{l:n=lcentrads}
 Let $W=G(e,r,\ell)$. Then the $\cF$-centric radical subgroups $R$ are:
\begin{itemize}
  \item[(1)] $R=S\cong C_{\ell^a}\wr C_\ell$ with
   $\Out_\cF(R)\cong C_{\ell-1} \times C_{e/r}$;
  \item[(2)] $R=D\cong (C_{\ell^a})^\ell$ with $\Out_\cF(R)\cong W$; and
  \item[(3)] $R=E:=Z_2(S)\langle \sigma\rangle \cong C_{\ell^a}* \ell_+^{1+2}$
   (a central product) where $\sigma$ is any element satisfying
   $S=D\langle\sigma\rangle$, and $\Out_\cF(E)\cong\SL_2(\ell)\rtimes C_{e/r}$.
 \end{itemize}
 For $d\ge 0$ and $R$ an $\cF$-centric radical subgroup, $\bw_R(\cF,d)$ is
 given in Table \ref{tab:n=ell}, where ``$-$'' indicates that there are no
 characters of that defect. Moreover
 $$\bm(\cF,a\ell)=\alpha+\beta=\sum_{\chi\in\Irr^{a\ell}(D)/W} z(kI_W(\chi)).$$
\end{lem}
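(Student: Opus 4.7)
The plan is to proceed in three stages: enumerate the $\cF$-centric radical subgroups, compute their outer automiser groups in $\cF$, and evaluate $\bw_R(\cF,d)$ for each to fill in Table~\ref{tab:n=ell} and deduce the final identity for $\bm(\cF,a\ell)$.

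First I identify the three subgroups. By Proposition~\ref{p:centradbasic}(d) the $\cF$-conjugacy classes of $\cF$-centric $\cF$-radical subgroups coincide with those of centric $\GL_{e\ell}(q_0)$-radical subgroups of $S$, which by Proposition~\ref{p:AFradical} are parametrised by functions $f\in\fF$ with $\sum_{(\ga,\fc)}\ell^{\ga+|\fc|}f(\ga,\fc)=n$. For $n=\ell$ the only solutions are $f(0,())=\ell$, $f(0,(1))=1$ and $f(1,())=1$, yielding respectively $D=R_{1,0,0,()}^\ell\cong(C_{\ell^a})^\ell$, $S=R_{1,0,0,(1)}\cong C_{\ell^a}\wr C_\ell$ and $E=R_{1,0,1,()}\cong C_{\ell^a}*\ell_+^{1+2}$; a direct calculation of the upper central series of $S$ confirms the presentation $E=Z_2(S)\langle\sigma\rangle$. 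Next, Proposition~\ref{p:out}(a) gives the outer automiser groups in the ambient $\GL$-fusion system as $C_e\wr\fS_\ell$, $C_e\times\GL_1(\ell)=C_e\times C_{\ell-1}$ and $\Sp_2(\ell)\rtimes C_e=\SL_2(\ell)\rtimes C_e$ respectively. Descending to $\cF=\cF^{(r)}$ via Proposition~\ref{p:out}(c), the $\fS_\ell$- and $\GL_1(\ell)$-factors lie in $\ker(\Pi\circ\theta)$ while the $C_e$-summand restricts to $C_{e/r}$, producing the three outer groups in the lemma---in particular $\Out_\cF(D)\cong G(e,r,\ell)=W$, consistent with Theorem~\ref{thm:sylow}.

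For the weight counts, I would apply the definition from \cite{KLLS19} of $\bw_R(\cF,d)$ as an alternating sum over normal chains of $\ell$-subgroups of $\Out_\cF(R)$ weighted by the defect-zero character counts of stabilisers of $\Out_\cF(R)$-orbits of defect-$d$ characters of $R$. For $R=D$ abelian, every irreducible character is linear of defect $a\ell$, and the alternating chain sum over $\ell$-subgroups of $W$ simplifies (only the trivial chain contributes to the total) to give $\bw_D(\cF,a\ell)=\sum_{\chi\in\Irr(D)/W}z(kI_W(\chi))$ and $\bw_D(\cF,d)=0$ for $d\ne a\ell$. For $R=S$, Clifford theory over $D\lhd S$ yields $\ell^{a+1}$ linear characters (extending $\widehat D^{C_\ell}$) together with $(\ell^{a\ell}-\ell^a)/\ell$ characters of dimension~$\ell$, and since $\Out_\cF(S)$ is an $\ell'$-group the chain sum collapses to the trivial chain. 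For $R=E$, the extraspecial structure partitions $\Irr(E)$ into $\ell^{a+1}$ linear characters plus further families of $\ell$-dimensional irreducibles labelled by non-trivial central characters, and the chain sum over the $\ell$-subgroups of $\SL_2(\ell)\rtimes C_{e/r}$---whose Sylow $\ell$-subgroup is a $C_\ell$ inside $\SL_2(\ell)$---must be carried out by explicit enumeration. Assembling the three columns of Table~\ref{tab:n=ell} and summing at $d=a\ell$ then yields $\bm(\cF,a\ell)=\alpha+\beta$; comparison with the clean formula for $\bw_D$ above gives the identification with $\sum_{\chi\in\Irr^{a\ell}(D)/W}z(kI_W(\chi))$.

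The main obstacle is the alternating chain sum for $R=E$, due to the nontrivial Sylow $\ell$-subgroup of $\SL_2(\ell)$; I expect the defect-$a\ell$ contributions from $S$ and $E$ to combine with $\bw_D$ to recover $\sum z(kI_W(\chi))$ without over- or under-counting, which should follow from a Clifford-theoretic identification between the $W$-stabilisers of $D$-characters and the stabilisers arising in the induced structures on $\Irr(S)$ and $\Irr(E)$.
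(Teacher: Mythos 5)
Your identification of the three classes of $\cF$-centric radical subgroups and of their $\cF$-automisers follows essentially the paper's route: enumerate the admissible parameter functions via Propositions~\ref{p:centradbasic} and~\ref{p:AFradical}, read off the normaliser quotients from Proposition~\ref{p:out} (resp.\ from \cite{AF90} for $E$), and descend to $\cF=\cF^{(r)}$. That part is acceptable, up to the same points the paper also treats lightly (for instance, basic subgroups with $\alpha\ge1$, such as the Singer-type torus $C_{\ell^{a+1}}$, must be excluded because their outer automiser has a nontrivial normal $\ell$-subgroup, so they are centric and $\GL$-radical but not $\cF$-radical).

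The genuine gap is in the weight counts, specifically your treatment of $R=D$. You assert that ``only the trivial chain contributes'', so that $\bw_D(\cF,a\ell)=\sum_{\chi\in\Irr^{a\ell}(D)/W}z(kI_W(\chi))$. This is false: $\Out_\cF(D)\cong W=G(e,r,\ell)$ has a nontrivial Sylow $\ell$-subgroup $\Aut_S(D)\cong C_\ell$, so the chains $1<\Aut_S(D)$ contribute $-\sum_{\chi\in\Irr^{a\ell}(D)/N}z(kI_N(\chi))$ with $N=N_W(\Aut_S(D))$, and this term is strictly negative (any $\chi$ whose $N$-stabiliser is an $\ell'$-group already contributes). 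The displayed identity of the lemma is not a statement about $\bw_D$ alone; it is $\alpha+\beta=\bw_S(\cF,a\ell)+\bw_D(\cF,a\ell)=\sum_{\chi}z(kI_W(\chi))$, and its proof needs precisely the cancellation you only gesture at in your final paragraph (which, note, contradicts your earlier claim): saturation gives a surjection $\Out_\cF(S)\to N$, induction $\Ind_D^S$ gives a stabiliser-preserving bijection between $N$-orbits on the characters of $D$ not containing $Z(S)$ in their kernel and $\Out_\cF(S)$-orbits on $\Irr^{a\ell}(S)$, and $z(kI_N(\chi))=0$ whenever $Z(S)\le\ker\chi$; together these show that the negative chain term at $D$ is exactly cancelled by $\bw_S(\cF,a\ell)$. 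In addition, the concrete table entries are not actually derived in your write-up: $\bw_S(\cF,a\ell+1)=\ell(r_0r+e/r)$ requires the explicit orbit/stabiliser computation for $C_{\ell-1}\times C_{e/r}$ acting on $\ZZ/\ell^a\ZZ\oplus\ZZ/\ell\ZZ$, $\bw_E(\cF,a+1)=r_1r$ requires the orbit count on the $(\ell-1)\ell^{a-1}$ nonlinear characters of $E$ (on which $\SL_2(\ell)$ acts trivially and contributes $z(k\SL_2(\ell))=1$ per orbit), and $\bw_E(\cF,a+2)=0$ is not automatic but follows from the argument of \cite[Lemma~8.7]{KLLS19}; all of these are deferred rather than proved in your proposal.
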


\begin{table}[htb]
\caption{$\bw_R(\cF,d)$ for $R\in\cF^\CR$}   \label{tab:n=ell}
$\begin{array}{c|cccc}
  d/R & S& D& E\\
\hline
 a\ell& \alpha & \beta & -\\
 a\ell+1 & \ell(r_0r+e/r)& - & -\\
 a+1 & -& -& r_1r \\
 a+2 & -& -& 0\\
\end{array}$
\end{table}

\begin{proof}
For $W=G(e,r,\ell)$ the set $\fF$ defined after Lemma~\ref{l:weightdescent}
consists of the two functions $f_S$ and $f_D$ defined before
Proposition~\ref{p:out} and one other function given by
$$f(\gamma,\fc)=\begin{cases} 1& \mbox{ if $\gamma = 1, \fc=()$,}\\
                  0 & \mbox{ otherwise.} \end{cases}$$

Set $E:=R_{0,1,1,()}$. Then $E$ is a basic subgroup in the sense of
\cite[Sect.~4]{AF90}. From that reference, and using the notation of
Section~ \ref{subsec:GGr weights}, we see that $E=ZE_0$, where $Z$ is the Sylow
$\ell$-subgroup of $Z(\GL_K(V))$ and $E_0$ is extra-special of order $\ell^3$
and exponent $\ell$. Moreover,
$N_{\GL_K(V)}(E) /EC_{\GL_K(V)}(E)\cong\SL_2(\ell)$ and $N_{\GL_k(V)}(E)$ is an
extension of $N_{\GL_K(V)}(E)/EC_{\GL_K(V)}(E)$ by a cyclic group of order
$e/r$ (here $C_{\GL_k(V)}(E) =C_{\GL_K(V)}(E) = Z(\GL_K(V))\cong K^\times)).$
We deduce that (3) holds. The structure of the $\cF$-outer automorphism groups
in cases (1) and (2) can be read off from Proposition~\ref{p:out}. 
\par
To compute the weights, suppose first that $R=E$. Here we see that every
character of $E$ is either of defect $a+1$ or $a+2$. An argument to show that
$\bw_E(\cF,a+2)=0$ appears in \cite[Lemma~8.7]{KLLS19}. Now every non-linear
irreducible character of $E$ is induced by a character of $Z_2(S)$ which does
not contain $Z(E)\cong C_{\ell^a}$ in its kernel. Counting, we obtain
$(\ell^{a+1}-\ell^a)/\ell$ characters this way. 
The action of $\Out_\cF(E)$ on $\Irr^{a+1}(E)$ partitions this set into
$\ell^{a-1}(\ell-1)r/e$ equally sized orbits (with trivial stabiliser). We have
$|\cN_E|=2$ where $\cN_E$ is as defined in Section \ref{s:ordspets}, and
contributions to $\bw_E(\cF,a+1)$ from the trivial and non-trivial chains are
easily calculated to be $\ell^{a-1}(\ell-1)r/e=r_1r$ and $0$ respectively.

Next suppose that $R=S$ and $d=a\ell+1$.
We identify $\Out_\cF(S) \cong C_{\ell-1} \times C_{e/r}$ with the subgroup of
$\GL_2(\ell)$ generated by the matrices $\diag(1,\omega)$ and
$\diag(\omega^{\frac{(\ell-1)r}{e}},1)$, where $\omega$ is a generator of
$\FF_\ell^\times$. Under this identification, the action of $\Out_\cF(S)$ on
$\Irr^{a\ell+1}(S)$ is the action on column vectors
$(u,v)\in \ZZ/\ell^a\oplus \ZZ/\ell$. A complete set of orbit representatives
is given by $\{(0,0),(u,0),(0,1),(u,1)\}$ where there are exactly $r_0r$ choices
for the non-zero element $u$. The stabilisers of these representatives have
orders $(\ell-1)e/r$, $\ell-1$, $e/r$ and $1$ respectively, so
$$\bw_S(\cF,a\ell+1)=(\ell-1)e/r+(\ell-1)r_0r+e/r+r_0r=\ell(r_0r+e/r).$$

It remains to prove the last statement. Set $d:=a\ell$ and under the
identification of $W$ with $\Aut_\cF(D)$, set $N:=N_W(\Aut_S(D))$. By Clifford
theory, $\Ind_D^S$ is a surjective map from
$\{\chi\in\Irr(D) \mid Z(S) \nleq \ker(\chi)\}$ onto $\Irr^d(S)$. Since $\cF$
is saturated, the natural map $\Out_\cF(S) \rightarrow N$ is surjective, and if
$\widehat\vhi$ denotes the preimage of $\vhi\in N$ under this map then
calculating we get
$$\Ind_D^S(\chi^\vhi)(g)=\sum_{i=0}^{\ell-1} \chi^\vhi(x^{-i}gx^i)
  =\chi(x^{-i}g\vhi x^i)=\Ind_D^S(\chi)^{\widehat\vhi}(g)
  \quad\text{for $g\in S$}.$$
This shows that induction induces a stabiliser-preserving bijection between
$N$-orbits on $\{\chi\in\Irr(D)\mid Z(S)\nleq\ker(\chi)\}$ and
$\Out_\cF(S)$-orbits on $\Irr^d(S)$. Since $z(k(I_N(\chi)))=0$
whenever $Z(S) \le \ker(\chi)$, for $\chi\in \Irr(D)$, we obtain
$$\begin{aligned}
 \alpha+\beta &= \bw_D(\cF,d)+\bw_S(\cF,d)\\
  &= \!\!\sum_{\chi \in \Irr^d(D)/W}\!\!\! z(k(I_{W}(\chi)))
   -\!\!\!\sum_{\chi \in \Irr^d(D)/N}\!\!\!\! z(k(I_{N}(\chi)))
   + \!\!\!\!\sum_{\chi \in \Irr^d(S)/\Out_\cF(S)}\!\!\!\!\!\! z(k(I_{\Out_\cF(S)}(\chi))) \\
  &= \sum_{\chi \in \Irr^d(D)/W} z(k(I_{W}(\chi))),
\end{aligned}$$
as needed.
\end{proof}

We need the following combinatorial facts: 

\begin{lem}   \label{lem:comb}
 The following hold:
 \begin{itemize}
  \item[(a)] $|\Irr(\fS_\ell)|-z(k\fS_\ell)=\ell$.
  \item[(b)] $|\Irr(W)|-z(kW)$ = $e\ell/r$ for $W=G(e,r,\ell)$ with
   $r|e|(\ell-1)$. 
 \end{itemize}
\end{lem}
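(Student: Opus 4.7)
The plan is to handle (a) by the classical $\ell$-core/$\ell$-quotient bijection for partitions, and then to reduce (b) to a wreath-product analogue of (a) by applying Clifford theory to the cyclic quotient $G(e,1,\ell)/G(e,r,\ell)\cong C_r$.

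For (a), $\Irr(\fS_\ell)$ is indexed by partitions of $\ell$, while by the hook-length formula and Nakayama's conjecture $z(k\fS_\ell)$ equals the number of $\ell$-core partitions of $\ell$. Under the $\ell$-core/$\ell$-quotient bijection $\lambda\leftrightarrow(\mu,\nu)$ with $|\mu|+\ell|\nu|=|\lambda|$ (where $|\nu|:=\sum_i|\nu^{(i)}|$), a non-$\ell$-core partition of $\ell$ is exactly a pair with $\mu=\emptyset$ and $|\nu|=1$; there are $\ell$ such $\ell$-tuples $\nu$ (one box in one of $\ell$ slots), so $|\Irr(\fS_\ell)|-z(k\fS_\ell)=\ell$.

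For (b), set $G:=G(e,1,\ell)$ and $H:=G(e,r,\ell)$, so $G/H\cong C_r$ with $\gcd(r,\ell)=1$, and let $X:=\widehat{G/H}$ act on $\Irr(G)$ by tensor product. The key step is that $X$ acts freely: irreducible characters of $G$ are indexed by $e$-tuples $(\lambda^{(0)},\ldots,\lambda^{(e-1)})$ of partitions summing to $\ell$, and a generator of $X$ (being the $(e/r)$-th power of the order-$e$ determinant character of $G$) acts by cyclic shift by $e/r$ positions. A non-trivial shift fixing such a tuple would force a period $p<e$, so that $\ell=(e/p)\sum_{i<p}|\lambda^{(i)}|$ and thus $e/p\mid\ell$; since $\ell$ is prime and $e/p\le e\le\ell-1$, this is impossible. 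Freeness is preserved on the subset $\Irr^0(G)$ of $\ell$-defect zero characters, since twisting preserves degrees. Standard Clifford theory for a cyclic quotient then identifies $\Irr(H)$ with the set of $X$-orbits in $\Irr(G)$ via restriction (each free orbit restricts to a single $G$-invariant irreducible character of $H$), and analogously for $z(kH)$, yielding $|\Irr(H)|=|\Irr(G)|/r$ and $z(kH)=z(kG)/r$.

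To conclude I repeat the core/quotient count component-wise on $e$-tuples: an $e$-tuple of partitions summing to $\ell$ fails to be an $e$-tuple of $\ell$-cores iff it corresponds under the bijection (applied in each component) to a pair with all $e$ cores empty and the combined $\ell$-quotient a single box placed in one of $e\ell$ possible slots (one of $e$ components times one of $\ell$ quotient indices). This gives $|\Irr(G)|-z(kG)=e\ell$, and combining with the previous step yields $|\Irr(H)|-z(kH)=e\ell/r$. The main obstacle is the freeness of the $X$-action together with the accompanying Clifford identification; once the indexing of $\Irr(G)$ by $e$-tuples with its cyclic-shift twist rule is made explicit, both assertions reduce to an elementary divisibility argument using that $\ell$ is prime and coprime to $e$.
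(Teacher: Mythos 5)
Your proof is correct and follows essentially the same route as the paper: for (a) count the $\ell$ partitions of $\ell$ that are not $\ell$-cores (the hooks), for (b) use the parametrisation of $\Irr(G(e,1,\ell))$ by $e$-tuples of partitions to get the difference $e\ell$ when $r=1$, and then divide by $r$ via Clifford theory after showing the cyclic-shift action of the $r$ linear characters is free, using $\ell$ prime and $e\mid(\ell-1)$. The only cosmetic differences are your use of the $\ell$-core/$\ell$-quotient bijection in place of the hook count and your explicit identification of the generator of the acting group as the $(e/r)$-th power of the determinant character.
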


\begin{proof}
Part~(a) counts the partitions of $\ell$ with an $\ell$-hook, and these are
precisely the hooks. For (b) we use the parametrisation of $\Irr(G(e,1,\ell))$
by $e$-tuples of partitions $\la$ of $\ell$. The characters not of
$\ell$-defect zero are then precisely those for which one of the parts $\la_i$
of $\la$ has an $\ell$-hook. This can only happen when $|\la_i|=\ell$, and then
by~(a) there are exactly $\ell$ such. This shows (b) in the case $r=1$. For
general $r$ the linear characters of $G(e,1,\ell)/G(e,r,\ell)\cong C_r$ act on
the parametrising $e$-tuples of partitions by cyclic shift. A character with a
non-trivial stabiliser thus corresponds to an $e$-tuple invariant under a
non-trivial cyclic shift, of order $1<d|e$. But then $d$ divides $\ell$, which
is not possible as $e|(\ell-1)$. So by Clifford theory, we have
$$|\Irr(G(e,r,\ell))|=\frac{1}{r}|\Irr(G(e,1,\ell))|\quad\text{and}\quad
  z(kG(e,r,\ell))=\frac{1}{r}z(kG(e,r,\ell)),$$
from which the result follows. 
\end{proof}

\begin{prop}   \label{p:gerl}
 Conjecture~\ref{conj:nr oweights} holds for $W=G(e,r,\ell)$, with $r|e|(\ell-1)$
 in the case $q\equiv 1\pmod\ell$.
\end{prop}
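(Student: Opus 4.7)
The proof proceeds by a defect-by-defect comparison. The left-hand side $\bm(\cF,d)$ is known from Lemma~\ref{l:n=lcentrads} and Table~\ref{tab:n=ell}, and vanishes outside $d\in\{a+1,a\ell,a\ell+1\}$. For the right-hand side I would classify the fully centralised $\cF^{(r)}$-conjugacy classes in $S$ using Theorem~\ref{thm:centr GGR} applied to $n=\ell$. These split naturally into \emph{toral} classes, parametrised by $W$-orbits on $D$ (since $D$ is abelian and weakly $\cF^{(r)}$-closed with $\Out_{\cF^{(r)}}(D)=W=G(e,r,\ell)$), and \emph{Singer} classes consisting of non-toral elements of order $\ell^{a+1}$. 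Lemma~\ref{l:identifyws2} then supplies the relevant data: in the toral case $W(s)_{\phi_s}\cong G(e,r,n_0)\times\prod_f\fS_{n_f}$ with $\phi_s=\Id$, and in the Singer case $W(s)_{\phi_s}=1$ with $\phi_s\in\fS_\ell\subseteq W$ a single $\ell$-cycle.

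Using the order-polynomial formula together with LTE (valid since $q\equiv 1\pmod\ell$), one computes $v_s=\nu_\ell(\OO_q(W(s)\phi_s))-\nu_\ell(|W(s)_{\phi_s}|)$ uniformly: $v_s=a\ell$ for every toral class (the extra $\ell$-divisible contribution from the degrees $e\ell/r$ or $\ell$ in the $s=1$ and central-scalar cases is exactly cancelled by $\nu_\ell(|W(s)_{\phi_s}|)=1$), and $v_s=a+1$ for every Singer class (since $\OO_q(\phi_s)=q^\ell-1$). The character counts $|\Irr^{d-v_s}(W(s)_{\phi_s})|$ are extracted from Lemma~\ref{lem:comb}, which supplies $|\Irr(G(e,r,\ell))|-z(kG(e,r,\ell))=e\ell/r$ and $|\Irr(\fS_\ell)|-z(k\fS_\ell)=\ell$. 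To enumerate the Singer $\cF^{(r)}$-classes I use Proposition~\ref{prop:olru}: $\Aut_\cF(\langle s\rangle)\cong C_{e\ell}$ is generated by a Galois automorphism acting by $q_0$-power, and by tracing this generator through $\Pi\circ\theta$ via an extension to $\Aut_\cF(D)=C_e\wr\fS_\ell$ one finds that $\Aut_{\cF^{(r)}}(\langle s\rangle)$ has index $r$ in it; combined with the $\phi(\ell^{a+1})=\ell^a(\ell-1)$ generators of $\langle s\rangle$ this yields precisely $r_1r$ Singer classes.

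The matching per defect then goes as follows. For $d=a+1$ only Singer classes contribute, giving $r_1r=\bm(\cF,a+1)$. For $d=a\ell+1$ the only non-zero contributions come from $s=1$ (with $|\Irr^1(G(e,r,\ell))|=e\ell/r$) and from the $r_0r$ central-scalar classes (each with $W(s)_{\phi_s}=\fS_\ell$ contributing $|\Irr^1(\fS_\ell)|=\ell$), summing to $\ell(r_0r+e/r)=\bm(\cF,a\ell+1)$. For $d=a\ell$, using the $W$-equivariant Pontryagin duality $D\cong\Irr(D)$, the sum $\sum_{s\in D/W}z(kC_W(s))$ over all toral classes becomes $\sum_{\chi\in\Irr^{a\ell}(D)/W}z(kI_W(\chi))=\alpha+\beta$ as identified in Lemma~\ref{l:n=lcentrads}. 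All other defects vanish on both sides. The principal obstacle is the enumeration of Singer $\cF^{(r)}$-classes via $\Pi\circ\theta$, which requires carefully extending the Galois automorphism on $\langle s\rangle$ to an automorphism of $D$ and computing its image under the homomorphism $\pi$ from~(\ref{e:pi}) to pin down the index of $\Aut_{\cF^{(r)}}(\langle s\rangle)$ inside $\Aut_\cF(\langle s\rangle)$.
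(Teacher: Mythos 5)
Your overall strategy coincides with the paper's: both arguments run defect by defect, take the values of $\bm(\cF,d)$ from Lemma~\ref{l:n=lcentrads} and Table~\ref{tab:n=ell}, split the fully centralised classes into toral ones (meeting $D$) and Singer ones of order $\ell^{a+1}$, and use Lemma~\ref{l:identifyws2}, Lemma~\ref{lem:comb} and the centraliser results to identify $W(s)_{\phi_s}$ and its character counts; your values $v_s=a\ell$ (toral) and $v_s=a+1$ (Singer) and your matchings at $d=a+1$ and $d=a\ell+1$ agree with the paper. The genuine deviations are organisational. First, the paper never treats $d=a\ell$ head-on: having matched the other two defects it proves the summed identity $\bm(\cF)=\sum_{s\in S/\cF}|\Irr(W(s)_{\phi_s})|$ and obtains $d=a\ell$ by difference, whereas you match $d=a\ell$ directly; both versions ultimately rest on $\bm(\cF,a\ell)=\sum_{\chi\in\Irr^{a\ell}(D)/W}z(kI_W(\chi))$ from Lemma~\ref{l:n=lcentrads} together with the identification of this with $\sum_{s\in D/W}z(kC_W(s))$. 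On that point, be careful with ``$W$-equivariant Pontryagin duality'': $\Irr(D)$ is the reduction of the dual lattice and need not be isomorphic to $D$ as a $W$-module (the reflection representation of $G(e,r,\ell)$ is not self-dual once $e\ge 3$); what is true, and suffices, is that inverse-transpose is an automorphism of $W$ inside $\GL(D)$, so the $W$-set $\Irr(D)$ is the $W$-set $D$ twisted by an automorphism of $W$, and the orbit/stabiliser data agree. Second, for the splitting of each $\cF$-class into $r$ classes of $\cF^{(r)}$ the paper simply quotes Lemma~\ref{lem:centrGL} and Theorem~\ref{thm:centr GGR}, while you compute $[\Aut_\cF(\langle s\rangle):\Aut_{\cF^{(r)}}(\langle s\rangle)]=r$ via Proposition~\ref{prop:olru}; this is a legitimate alternative, and your computation $\Pi\circ\theta(\text{Galois generator})\equiv\ell\pmod e$ is exactly parallel to the one carried out in Proposition~\ref{p:out}(b).

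The one step you must still supply in the Singer enumeration: counting $\Aut_{\cF^{(r)}}(\langle s\rangle)$-orbits on the generators of a single $\langle s\rangle$ yields $r_1r$ classes only if all Singer cyclic subgroups of $S$ form a single $\cF^{(r)}$-class; if that class of subgroups split into $k$ classes, your method would return $k\cdot r_1r$. Your own computation closes this gap: since $\gcd(\ell,e)=1$, the image of $\Aut_\cF(\langle s\rangle)$ under $\Pi\circ\theta$ is all of $\ZZ/e\ZZ$, so any $\cF$-isomorphism between two Singer subgroups can be corrected by precomposition with an element of $\Aut_\cF(\langle s\rangle)$ so as to land in $(\Pi\circ\theta)^{-1}(r\ZZ/e\ZZ)$, hence in $\cF^{(r)}$ by \cite[Thm.~I.7.7]{AKO11}. (Also record that $\langle s\rangle$ is indeed $\cF$-centric, since $C_{\GL_{e\ell}(q_0)}(s)\cong\GL_1(q_0^{e\ell})$ has cyclic Sylow $\ell$-subgroups, so Proposition~\ref{prop:olru} applies.) With that step inserted and the duality point made precise, your plan gives a complete proof along essentially the paper's lines, trading the paper's appeal to Theorem~\ref{thm:centr GGR} for an explicit $\Pi\circ\theta$ calculation and a direct rather than summed treatment of the defect $a\ell$.
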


\begin{proof}
We discuss the occurring defects $d$ in turn, and repeatedly use
Lemma~\ref{l:identifyws2} to identify the groups $W(s)_{\phi_s}$ in
Conjecture~\ref{conj:nr oweights}.
Suppose first that $d=a\ell+1$. If $s=1$ then $W(s)_{\phi_s}=W$ and
$|\Irr^1(W)|=\ell e/r$ by Lemma~\ref{lem:comb}(b). Next assume $1\neq s\in Z(S)$
so that $W(s)_{\phi_s}=\fS_\ell$. When $r=1$ there are
exactly $r_0$ classes of such elements by Lemma~\ref{lem:centrGL}, and in
general there are $r_0r$ classes by Theorem~\ref{thm:centr GGR}. Now
$|\Irr^1(W(s)_{\phi_s})|=\ell$ by Lemma~\ref{lem:comb}(a), and we obtain
$$\sum_{s\in Z(S)/\cF} |\Irr^1(W(s)_{\phi_s})|=\ell e/r+ \ell r_0r,$$ 
in accordance with Table~\ref{tab:n=ell}.

If $d=a+1$ then by Lemma \ref{lem:centrGL} it suffices to consider classes of
elements $s$ for which $a_f=1$ for some $f \in \fF_1$ (so that
$\nu_\ell(|C_S(s)|)=a+1$). By Theorem~\ref{thm:centr GGR} such classes split
into $r$ distinct $\cF^{(r)}$-classes and since here $W(s)_{\phi_s}=W(s)=1$, we
obtain $r_1r$ characters altogether, as needed by Table~\ref{tab:n=ell}.

The only occurring defects are $a+1,a\ell+1$ and $a\ell$, so to complete the
proof it suffices to show that
\begin{equation}   \label{e:m=irr}
  \bm(\cF) = \sum_{s\in S/\cF} |\Irr(W(s)_{\phi_s})|.
\end{equation}
Applying Lemmas \ref{l:n=lcentrads} and \ref{lem:comb}, and using the fact that
$\ell \nmid |W(s)_{\phi_s}|$ whenever $s\in D\backslash Z(S)$, the right hand side of
(\ref{e:m=irr}) is equal to
$$\begin{array}{rcl}
 && \displaystyle |\Irr(W)|+ \sum_{1 \neq s \in Z(S)/\cF} |\Irr(W(s)_{\phi_s})| + \sum_{s \in D \backslash Z(S)/\cF} |\Irr(W(s)_{\phi_s})|+ \sum_{s\in S \backslash D / \cF} |\Irr(W(s)_{\phi_s})| \\
 &=& |\Irr(W)|+r_0r|\Irr(\fS_\ell)| + \bm(\cF,a\ell)-\displaystyle\sum_{s\in Z(S)/\cF} z(kI_W(s)) + r_1r\\
 &=& |\Irr(W)|-z(kW)+r_0r|\Irr(\fS_\ell)|-r_0rz(k\fS_\ell) + \bm(\cF,a\ell) + r_1r\\\\
 &=& e\ell/r+r_0r\ell+\bm(\cF,a\ell) + r_1r =\bm(\cF),
\end{array}$$
as needed.
\end{proof}

Since Conjecture \ref{c:spetsandcomp} holds for $W=G(e,r,\ell)$ by
Lemma~\ref{l:identifyws2}, Proposition~\ref{p:gerl} implies
Conjecture~\ref{c:spetsowc} holds for the $\ZZ_\ell$-spets associated to $W$ in
the case $q\equiv 1\pmod\ell$ (see the remark immediately after the statement of
Conjecture~\ref{conj:nr oweights}).

%%%%%%%%%%%%%%%%%%%%%%%%%%%%%%%%%%%%%%%%%%%%%%%%%%%%%%%%%%%%%%%%%%%%%%%%%
\section{Exceptional and Aguad\'{e} groups}   \label{sec:except}
In this section we prove our results from Sections~\ref{sec:awc}
and~\ref{sec:owc} for the exceptional Weyl groups and the four Aguad\'e
exotic $\ZZ_\ell$-reflection groups.

%%%%%%%%%%%%%%%%%%%%%%%%%%%%%%%%%%%%
\subsection {Proof of Theorem~\ref{thm:nr weights} for exceptional and Aguad\'e groups}   \label{subsec:awc exc}
Let $(W,L)$ be a $\ZZ_\ell$-reflection group of type $E_n$, $n\in\{6,7,8\}$
with associated $\ell$-compact group $X$, $\ell$ a good prime dividing $|W|$,
and $q$ a prime power. By the reductions made after its statement, in order to
prove Theorem~\ref{thm:nr weights} for $X(q)$ we may assume $q\equiv1\pmod\ell$
and $\tau=1$.

\begin{lem}
 Let $(W,L)$, $X$, $\ell$ and $q$ be as above.
 Then the associated fusion system $\cF:=\cF(X(q))$ satisfies:
 \begin{itemize}
  \item  [(a)] $\cF=\cF_\ell(E_n(q))$ is the $\ell$-fusion system of $E_n(q)$
   on one of its Sylow $\ell$-subgroups $S$.
  \item [(b)] For some $\sigma\in S$ we have $S=T\langle\sigma\rangle$ for
   $T\le S$ a homocyclic subgroup of index $\ell$ and rank~$n$.
  \item [(c)] $\cF^\CR=\{S,T,E\}$ where $E=Z_2(S)\langle\sigma\rangle$.
   The $\cF$-automisers and number of weights are as given in
   Table~\ref{tab:except}.
  \item[\rm(d)] Theorem~\ref{thm:nr weights} holds for $\cF$.
 \end{itemize}
\end{lem}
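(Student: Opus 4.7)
The plan is as follows. For part~(a), I would apply Remark~\ref{r:fried}(a): take $\bG$ the simply connected simple algebraic group of type $E_n$ over $\overline{\FF}_q$ with a standard (split) Frobenius $F$, so that $\bG^F=E_n(q)$. Since $\tau=1$ corresponds under $\Phi_X$ to the trivial coset of $W$ in $N_{\GL(L)}(W)/W$, Friedlander's identification gives $(B\bG^F)^\wedge_\ell\simeq X(q)$, hence $\cF(X(q))=\cF_\ell(E_n(q))$ on a Sylow $\ell$-subgroup. For~(b), inspection of $|W(E_6)|,|W(E_7)|,|W(E_8)|$ shows that at each of the good primes in question (viz.\ $\ell=5$ for $E_6$, $\ell\in\{5,7\}$ for $E_7$, $\ell=7$ for $E_8$), $\nu_\ell(|W|)=1$; hence $W_\ell$ is cyclic of order~$\ell$, generated by some $\sigma$. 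Since $q\equiv1\pmod\ell$ forces $e=1$ and $\zeta=1$ (so $W_{\phi\zeta}=W$), Theorem~\ref{thm:sylow}(a) yields $S=T\rtimes\langle\sigma\rangle$ with $T\cong(\ZZ/\ell^a\ZZ)^n$ homocyclic of rank $n=\rk(L)$.

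For part~(c), I would classify $\cF^{cr}$ by exploiting that $T$ is a normal self-centralising abelian subgroup of $S$ of index~$\ell$. First, $T\in\cF^{cr}$ since $\Out_\cF(T)=W$ has no non-trivial normal $\ell$-subgroup ($W$ acts irreducibly on $\FF_\ell\otimes L$ with cyclic Sylow-$\ell$). Next, $S\in\cF^{cr}$ trivially, and $\Out_\cF(S)=N_W(\langle\sigma\rangle)/\langle\sigma\rangle$ by standard Frattini-type argument inside $E_n(q)$. For the third subgroup, I would use that $\sigma$, being a regular (in Springer's sense) element of $W$ of order~$\ell$, acts on $\bar L=\FF_\ell\otimes L$ as a single Jordan block of size~$\ell$ on its non-trivial part. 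Hence $Z(S)=C_T(\sigma)$ has rank one, $Z_2(S)\cap T$ has rank two, and $E:=Z_2(S)\langle\sigma\rangle$ has the central-product structure $C_{\ell^a}*\ell_+^{1+2}$; the stabiliser of the symplectic form on $E/Z(E)$ gives $\SL_2(\ell)\le\Out_\cF(E)$, and the remaining automorphisms come from $N_W(\langle\sigma\rangle)/\langle\sigma\rangle$ acting on $Z_2(S)/Z(S)$. To exclude other centric radicals: any $P\leq T$ with $P\neq T$ has $C_T(P)>Z(P)$ since $T$ is abelian, so fails to be $\cF$-centric; and any $P$ containing (a conjugate of) $\sigma$ satisfies $P=(P\cap T)\langle\sigma\rangle$ with $\cF$-radicality forcing $P\cap T\in\{Z_2(S)\cap T,\,T\}$ by analysing $\Out_\cF(P)=(N_W(\langle\sigma\rangle)/\langle\sigma\rangle)\cdot\mathrm{Stab}(P\cap T/Z(S))$. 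Alternatively one invokes the An--Eaton tabulation of radical $\ell$-subgroups of $E_n(q)$ at defect-one primes. The entries in Table~\ref{tab:except} then follow by orbit-counting with trivial-stabiliser characters of $\ell$-defect zero, just as in Lemma~\ref{l:n=lcentrads}.

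For part~(d), the target is $\bw(\cF)=|\Irr(W)|$ (equivalent to AWC for $B_0(E_n(q))$ by Proposition~\ref{prop:linkawc}). Using the Clifford argument from the end of the proof of Lemma~\ref{l:n=lcentrads}, the combined $T$- and $S$-contribution equals $\sum_{\chi\in\Irr(T)/W}z(kI_W(\chi))$; since Sylow $\ell$-subgroups of~$W$ are cyclic of order~$\ell$, $z(kI_W(\chi))$ is non-zero precisely when $\sigma$ acts non-trivially on~$\chi$, in which case $I_W(\chi)$ is an $\ell'$-group and $z(kI_W(\chi))=|\Irr(I_W(\chi))|$. Summing over $W$-orbits on $\Irr(T)$ and adding the $E$-contribution (which accounts for the $\chi$ fixed by~$\sigma$ via the Brauer-tree structure of $W$ at~$\ell$), one obtains $|\Irr(W)|$. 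I would verify this by an explicit computation of $\Irr(W(E_n))$ decomposed by the cyclic-defect-$\ell$ theory (Brauer trees are known for these groups).

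The main obstacle will be part~(c): rigorously ruling out further $\cF$-centric radical subgroups without simply citing An--Eaton. The cleanest route is probably a hybrid — classify subgroups of $S$ up to $W$-fusion on $T$ and on coset representatives of $\langle\sigma\rangle$, then eliminate non-radical or non-centric ones by inspecting the action of the resulting $\Out_\cF(P)$. The secondary obstacle in~(d) is bookkeeping: the contributions in Table~\ref{tab:except} must be identified with summands of $|\Irr(W)|$ via the Brauer tree, which is a case-by-case verification for $E_6,E_7,E_8$ at the three relevant primes.
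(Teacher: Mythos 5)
Your parts (a) and (b) are fine: Friedlander's identification (Remark~\ref{r:fried}(a)) gives $\cF=\cF_\ell(E_n(q))$, and since $\nu_\ell(|W|)=1$ in all four cases $(n,\ell)\in\{(6,5),(7,5),(7,7),(8,7)\}$, Theorem~\ref{thm:sylow}(a) with $e=1$ yields $S=T\rtimes\langle\sigma\rangle$ with $T$ homocyclic of rank $n$. Note, though, that the paper itself does not argue this way at all: its proof of the lemma is by citation, deducing (a) from the ``sporadic cases'' in \cite[p.~1821]{BM07} and taking the entire content of (c) — the list $\cF^{cr}=\{S,T,E\}$ together with the automisers in Table~\ref{tab:except} — from \cite[Table~2.2, Sec.~2]{COS17}, after which (d) is a numerical check against $|\Irr(W)|$.

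The genuine gap is in your part (c), and it is not just the acknowledged incompleteness of the classification. Your structural input is wrong: an element $\sigma$ of order $\ell$ in the rational reflection group $W(E_n)$ has eigenvalues $\zeta_\ell,\ldots,\zeta_\ell^{\ell-1}$ together with $n-\ell+1$ eigenvalues equal to $1$ on $L\otimes\QQ_\ell$, so its non-trivial part reduces mod $\ell$ to a Jordan block of size $\ell-1$, not $\ell$ (the size-$\ell$ block occurs for the permutation lattice of $G(e,r,\ell)$, which is the picture you are transporting). Consequently $Z(S)=C_T(\sigma)$ has rank $n-\ell+1$, which is $2$, $3$ and $2$ in the cases $(6,5)$, $(7,5)$, $(8,7)$ respectively — rank one only for $(7,7)$ — and $\sigma$ is not even a regular element of $W$ in those three cases (e.g.\ $5$ is not a regular number for $E_6$). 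Hence the claimed structure $E=Z_2(S)\langle\sigma\rangle\cong C_{\ell^a}*\ell_+^{1+2}$ and the ensuing automiser analysis fail: the extra direct factors visible in Table~\ref{tab:except}, such as the $\fS_3$ in $\Out_\cF(E)\cong\SL_2(5).2\times\fS_3$ for $(7,5)$ and the $C_2$ for $(8,7)$, as well as the factors $\fS_3$ and $(C_2)^2$ in $\Out_\cF(S)$, come precisely from the positive-rank fixed sublattice that your rank-one picture suppresses. Since part (d) is obtained by summing the weight contributions of $S$, $T$ and $E$ with these automisers, the error propagates: with your description of $E$ and of $\Out_\cF(S)$ the counts in Table~\ref{tab:except} would not be reproduced. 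To repair the argument you would need either the correct centraliser-of-$\ell$-element analysis in each of the four cases (tracking the $(n-\ell+1)$-dimensional fixed part through $Z(S)$, $Z_2(S)$ and the automisers), or, as the paper does, the classification of centric radical subgroups and their automisers from \cite{COS17}; the appeal to a Frattini argument for $\Out_\cF(S)=N_W(\langle\sigma\rangle)/\langle\sigma\rangle$ and to an An--Eaton style tabulation also remains to be justified rather than asserted.
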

  
\begin{table}[ht]
\caption{$\Out_\cF(P)$ for $P\in\cF^\CR$, and $z(k\Out_\cF(P))$ (in bold)}  \label{tab:except}
$\begin{array}{c|cccccccc|c}
 (n,\ell) & S& z&& T& z&& E& z& |\Irr(W)|\\ \hline
 (6,5) & C_{4} \times C_2& \bf{8}&& W& \bf{15}&& \SL_2(5).2& \bf{2}& 25\\
 (7,5) &  C_4 \times C_2 \times \fS_3& \bf{24}&& W& \bf{30}&& \SL_2(5).2 \times \fS_3& \bf{6}& 60\\
 (7,7) &  C_6 \times C_2& \bf{12}&& W& \bf{46}&& \SL_2(7).2 & \bf{2}& 60\\
 (8,7) &  C_6 \times (C_2)^2 & \bf{24}&& W& \bf{84}&& \SL_2(7).2 \times C_2& \bf{4}& 112\\
\end{array}$
\end{table}

\begin{proof}
Part (a) may be deduced from the description of the ``sporadic cases'' described
in \cite[p.~1821]{BM07}. The $\cF$-centric radical subgroups are described in
\cite[Table 2.2]{COS17}, and their structure and $\cF$-automisers can be
determined from the results in \cite[Sec.~2]{COS17}.
Theorem~\ref{thm:nr weights} for $\cF$ now follows from Table~\ref{tab:except}.
\end{proof}

Now, let $(W,L)$ be an exotic $\ZZ_\ell$-reflection group of order divisible
by $\ell>2$. Then $W$ is one of $G_{12}$, $G_{29}$, $G_{31}$ or $G_{34}$, with
$\ell=3,5,5$ or~$7$ respectively. The associated $\ell$-compact groups were
constructed by
Aguad\'{e} \cite{A89}. In the first and third cases, these arise as
$\ell$-completed classifying spaces of compact Lie groups of type $\tw2F_4$ and
$E_8$, while the other cases are exotic (see \cite[Sec.~1]{BM07}). Let
$q \equiv 1 \pmod \ell$ be a prime power; we also assume that
$a:=\nu_\ell(q-1)>1$ when $n=12$. For $n\in\{12,29,31,34\}$ let $X_n$ denote
the $\ell$-compact group associated to $G_n$ and $\cG_n=\cG_n(q)=\cF(X_n(q))$
denote the corresponding fusion system with $\ell$ as indicated.

Let $\cG_n'$ be the fusion system on a Sylow $\ell$-subgroup~$S_n$ of
$\SL_\ell(q)$. So $|\widehat{S}_n:S_n|=\ell^a$, where
$\widehat{S}_n=\widehat{T}_n\langle\sigma\rangle\cong C_{\ell^a}\wr C_\ell$
is a Sylow $\ell$-subgroup of $\GL_\ell(q)$ and
$\widehat{T}_n=\widehat{S}_n\cap\GL_1(q)^\ell$. Note, in particular, that $S_n$
is a group of maximal class. 

\begin{lem}   \label{lem:essential}
 Let $(W,L)$, $\ell$ and $q$ be as above. Then for $n\in\{12,29,31,34\}$, the
 fusion system $\cG_n$ associated to $X_n$ satisfies:
 \begin{itemize}
  \item[(a)] $\cG_n$ is a simple fusion system containing $\cG_n'$ as a
   subsystem (in fact the centraliser of the centre of $S_n$).
  \item[(b)] We have $S_n=T_n\langle\sigma\rangle$ where
   $T_n:=S_n\cap\hat{T}_n \cong C_{\ell^a}^{\ell-1}$.
  \item[(c)] $\cG_n^\CR/\cG_n=\{S_n,T_n,D_n,E_n\}$, where
   $D_n\cong E_n=Z_2(S_n)\langle\sigma\rangle\cong \ell_+^{1+2}$. The
   $\cG_n$-automisers and number of weights are as given in
   Table~\ref{tab:Aguade}.
  \item[\rm(d)] Theorem~\ref{thm:nr weights} holds for $\cG_n$.
 \end{itemize}
\end{lem}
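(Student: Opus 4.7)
The plan is to deduce the four parts in turn, with the heavy lifting concentrated in (c). Throughout, write $s$ for a generator of $Z(S_n)$, which has order $\ell$.

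For (a), simplicity of the $\ell$-compact group $X_n$ established in \cite{A89} forces $\cG_n$ to be simple as a saturated fusion system in each of the four cases, via the standard correspondence between normal subsystems and connected normal sub-$\ell$-compact groups. To identify the centraliser subsystem, take $s$ as above (automatically fully $\cG_n$-centralised since $s \in Z(S_n)$) and apply Proposition \ref{prop:centr}(b). The computation in the last paragraph of the proof of Proposition \ref{prop:stab} shows that the image of $s$ under a discrete approximation lies in the fixed space of an order-$\ell$ element of $W$, with centraliser $W(s) \cong \fS_\ell$ acting on a one-dimensional subspace as the Weyl group of $\SL_\ell$. Consequently $C_{X_n}(s)$ is the $\ell$-completion of $B\SL_\ell(\CC)$, and taking $\psi^q$-fixed points yields $C_{\cG_n}(s) \cong \cF_\ell(\SL_\ell(q)) = \cG_n'$.

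For (b), apply Theorem \ref{thm:sylow}(a) with $\tau = \Id$ and $q \equiv 1 \pmod\ell$, so that $e = \zeta = 1$ and $W_{\phi\zeta} = W$. By direct inspection, each Aguad\'e reflection group has rank $\ell - 1$ and cyclic Sylow $\ell$-subgroup of order $\ell$; the theorem then yields $S_n = T_n \langle \sigma \rangle$ with $T_n \cong (\ZZ/\ell^a\ZZ)^{\ell-1}$ and $\sigma \in S_n \setminus T_n$ of order $\ell$. For (c), $S_n$ is always centric radical, and the subsystem $\cG_n'$ of $\SL_\ell(q)$ from (a) already identifies $T_n$ and a pair of extraspecial subgroups of order $\ell^3$ and exponent $\ell$ as centric radicals (the well-known radical $\ell$-subgroups of $\SL_\ell(q)$). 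That no further $\cG_n$-centric radicals exist follows because $S_n$ is of maximal class with abelian maximal subgroup $T_n$, so any proper centric radical not equal to $T_n$ must contain $Z_2(S_n)$ and an element of $S_n \setminus T_n$. The outer automisers are then computed: $\Out_{\cG_n}(T_n) = W$ by Theorem \ref{thm:class} applied to $X_n$, while $\Out_{\cG_n}(D_n)$ and $\Out_{\cG_n}(E_n)$ each receive $\SL_2(\ell)$ from the subsystem $\cG_n'$, possibly extended by a subgroup of $N_{\GL(L)}(W)/W$ that normalises the relevant subgroup of $X_n$. Finally, (d) is a direct arithmetic verification from the resulting Table \ref{tab:Aguade}: summing $z(k\Out_{\cG_n}(P))$ over the four $\cG_n$-classes yields the formula $|\Irr(W)|+1$ forecast in Theorem \ref{thm:Aguade weights}.

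The main obstacle is the precise determination of $\Out_{\cG_n}(D_n)$ and $\Out_{\cG_n}(E_n)$; in particular, whether the $\SL_2(\ell)$-action coming from $\cG_n'$ is extended by the full outer automorphism group $N_{\GL(L)}(W)/W$ or only a proper subgroup. This is unavoidable case-by-case work, which I would carry out either by explicit analysis of homotopy self-equivalences of $X_n$ via Theorem \ref{thm:class}, or by appeal to the classification of exotic $\ell$-local finite groups of Ruiz--Viruel and its successors.
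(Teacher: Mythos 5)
The paper's own proof of this lemma is essentially a citation: parts (a)--(c) are taken from \cite[Sec.~10]{BM07}, and (d) is then read off from Table~\ref{tab:Aguade}. Your proposal attempts to re-derive (a)--(c), and the steps you actually argue contain genuine gaps. In (a), there is no ``standard correspondence'' under which simplicity of the $\ell$-compact group $X_n$ forces simplicity of the fusion system $\cG_n$: normal subsystems of index prime to $\ell$ (or of $\ell$-power index) are invisible at the level of the $\ell$-compact group. Indeed, the paper's own Section~\ref{sec:GGr} constructs proper normal subsystems $\cF^{(r)}\trianglelefteq\cF$ of index prime to $\ell$ inside fusion systems of exactly this homotopy-fixed-point type, so simplicity of $\cG_n$ is a statement about the fusion system that has to be proved directly (as Broto--M\o ller do); your shortcut does not replace it. Your identification of $C_{\cG_n}(Z(S_n))$ with $\cG_n'$ is in the right spirit (Proposition~\ref{prop:centr} plus the computation in Proposition~\ref{prop:stab}), but it silently assumes both that the rank-$(\ell-1)$ lattice underlying $C_{X_n}(s)$ is the simply connected $\SL_\ell$ one and that the induced self-map on the centraliser is an untwisted $\psi^q$; both points need justification.

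The heart of the lemma is (c), and there the argument is incomplete in two ways. The maximal-class observation only tells you that a proper centric radical other than $T_n$ meets $S_n\setminus T_n$ and contains $Z(S_n)$; it does not show that up to $\cG_n$-conjugacy there are exactly two such subgroups, both isomorphic to $\ell_+^{1+2}$ -- compare Lemma~\ref{l:n=lcentrads}, where on the $\GL$ side a subgroup of the larger shape $C_{\ell^a}*\ell_+^{1+2}$ is centric radical, so such candidates must genuinely be ruled out here, and radicality itself depends on knowing $\Aut_{\cG_n}$, which is what you are trying to determine. More importantly, the precise automisers $\Out_{\cG_n}(D_n)\cong\SL_2(\ell)$ versus $\Out_{\cG_n}(E_n)\cong\GL_2(\ell)$ are exactly what drive the values $z(k\Out_{\cG_n}(R))$ in Table~\ref{tab:Aguade} and hence part (d), and you explicitly leave this determination open. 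The mechanism you suggest is also off target: since $\tau=1$ here, the fusion beyond $\cG_n'=\cF_\ell(\SL_\ell(q))$ does not arise from a subgroup of $N_{\GL(L)}(W)/W$ acting by outer self-equivalences of $X_n$; it consists of morphisms inside $\cG_n$ itself, and computing it is precisely the case-by-case content of \cite[Sec.~10]{BM07} (equivalently, of the Ruiz--Viruel-type analysis you allude to). As written, then, the proposal is an outline whose missing pieces coincide with the citation the paper relies on.
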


\begin{table}[h]
\caption{$\Out_{\cG_n}(R)$ and $z(k\Out_{\cG_n}(R))$ for $R\in\cG_n^\CR$}   \label{tab:Aguade}
$\begin{array}{c|cccc|c}
   & S_n& T_n& D_n& E_n& |\Irr(G_n)|\\
\hline
 \Out_{\cG_n}(R)& C_{\ell-1} \times C_{\ell-1}& G_n& \SL_2(\ell)&
 \GL_2(\ell)\\
 z(k\Out_{\cG_n}(R))\ (n=12)&  4&   2& 1& 2& 8\\
 z(k\Out_{\cG_n}(R))\ (n=29)& 16&  17& 1& 4& 37\\
 z(k\Out_{\cG_n}(R))\ (n=31)& 16&  39& 1& 4& 59\\
 z(k\Out_{\cG_n}(R))\ (n=34)& 36& 127& 1& 6& 169\\
\end{array}$
\end{table}

\begin{proof}
See \cite[Sec.~10]{BM07} for (a)--(c). Part~(d) then follows from
Table~\ref{tab:Aguade}.
\end{proof}

In the case $a=1$ for $W=G_{12}$, apart from $\{S\cong 3^{1+2}_+\}$ there are
two $\cG_{12}$-classes of centric radical subgroups represented by $V_1,V_2$
with $V_i \cong C_3^2$ and $\Out_{\cG_{12}}(V_i)\cong\GL_3(2)$. Since
$\Out_{\cG_{12}}(S)\cong D_8$, we find
$$\bw(\cG_{12})
  =z(k\Out_{\cG_{12}}(S))+z(k\Out_{\cG_{12}}(V_1))+z(k\Out_{\cG_{12}}(V_2))
  =5+2+2=|\Irr(W)|+1.$$

%%%%%%%%%%%%%%%%%%%%%%%%%%%%%%%%%%%%
\subsection {The ordinary weight conjecture for Aguad\'e groups}   \label{subsec:8.2}
Here we show Proposition~\ref{prop:owc Aguade} for the Aguad\'{e} groups.
For $n\in\{12,29,31,32\}$ we set $\cG_n:=\cG_n(q)=\cF(G_n(q))$, where
$q\equiv1\pmod\ell$.

\begin{lem}   \label{lem:aguowc}
 For $d\ge 0$ and $R$ a $\cG_n$-centric radical subgroup, $\bw_R(\cG_n,d)$ is
 given in Table \ref{tab:ordaguade}, where ``$-$'' indicates that there are
 no characters of that defect. Moreover
 $$\bm(\cG_n,a(\ell-1))=\alpha+\beta
   =\sum_{\chi \in \Irr(T_n)/G_n} z(kI_{G_n}(\chi)).$$
%% where $\alpha:=\bw_S(\cF,(l-1)t)$ and $\beta:=\bw_T(\cF,(l-1)t)$.
\end{lem}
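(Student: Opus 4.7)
The plan is to follow closely the template established in the proof of Lemma~\ref{l:n=lcentrads}, evaluating $\bw_R(\cG_n,d)$ separately for each of the four $\cG_n$-classes of centric radicals $R \in \{S_n,T_n,D_n,E_n\}$ via the alternating-sum formula of \cite[\S2]{KLLS19}. A crucial and uniform simplification is that each Aguad\'e reflection group $G_n$ has cyclic Sylow $\ell$-subgroup of order~$\ell$ (from inspection of $|G_{12}|,|G_{29}|,|G_{31}|,|G_{34}|$), and both $\SL_2(\ell)$ and $\GL_2(\ell)$ have Sylow $\ell$-subgroups of order~$\ell$. Consequently, for $R \in \{T_n,D_n,E_n\}$ the set $\cN_R$ decomposes under $\Out_\cF(R)$ into exactly two classes of chains---the trivial chain and one of length one---while for $R=S_n$ the outer automiser is an $\ell'$-group so only the trivial chain appears.

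For $R=T_n\cong C_{\ell^a}^{\ell-1}$, every irreducible character is linear of defect $a(\ell-1)$, so only this defect contributes, yielding the alternating sum $\beta$. For $R=S_n=T_n\langle\sigma\rangle$, the maximal-class character theory splits $\Irr(S_n)$ into the linear characters (defect $a(\ell-1)+1$), the degree-$\ell$ characters $\Ind_{T_n}^{S_n}(\chi)$ with $Z(S_n)\not\le\ker\chi$ (defect $a(\ell-1)$), and residual characters of strictly smaller defect. Counting $\Out_\cF(S_n)$-orbits in each defect class and using that $z$ of an $\ell'$-group equals its number of irreducibles produces the $S_n$-column of Table~\ref{tab:ordaguade}. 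For $R\in\{D_n,E_n\}$, both isomorphic to $\ell_+^{1+2}$, there are $\ell^2$ linear characters (defect~$3$) and $\ell-1$ characters of degree~$\ell$ (defect~$2$); the natural actions of $\SL_2(\ell)$, respectively $\GL_2(\ell)$, on $D_n/Z(D_n)\cong\FF_\ell^2$ dictate the orbit structures, and the alternating sums over the two chain classes yield the $D_n$- and $E_n$-columns. As in part~(3) of Lemma~\ref{l:n=lcentrads} and \cite[Lemma~8.7]{KLLS19}, the defect-$2$ contribution vanishes by cancellation.

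Finally, the identity $\bm(\cG_n,a(\ell-1))=\alpha+\beta=\sum_{\chi\in\Irr(T_n)/G_n}z(kI_{G_n}(\chi))$ is proved by a Clifford-theoretic step directly analogous to the close of the proof of Lemma~\ref{l:n=lcentrads}. Setting $N:=N_{G_n}(\Aut_{S_n}(T_n))$ (the normaliser of a Sylow $\ell$-subgroup of $G_n$), saturation of $\cG_n$ makes the natural restriction map $\Out_\cF(S_n)\to N$ surjective, and induction $\Ind_{T_n}^{S_n}$ gives a stabiliser-preserving bijection between $N$-orbits on $\{\chi\in\Irr(T_n):Z(S_n)\not\le\ker\chi\}$ and $\Out_\cF(S_n)$-orbits on $\Irr^{a(\ell-1)}(S_n)$. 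Combined with the vanishing $z(kI_N(\chi))=0$ whenever $Z(S_n)\le\ker\chi$, this cancels $\alpha$ against the non-trivial-chain term of $\beta$, leaving the full $G_n$-orbit sum on $\Irr(T_n)$. The main obstacle will be careful bookkeeping for $S_n$: pinning down the $\Out_\cF(S_n)$-orbits on $\Irr^d(S_n)$ at every defect $d$ (in particular the orbits at strictly smaller defects coming from proper quotients of~$S_n$), with all remaining chain arithmetic being routine.
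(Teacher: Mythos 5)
Your overall route is the same as the paper's: transfer the chain-by-chain computation from Lemma~\ref{l:n=lcentrads} to the four classes $S_n,T_n,D_n,E_n$ (using that $G_n$, $\SL_2(\ell)$ and $\GL_2(\ell)$ have Sylow $\ell$-subgroups of order $\ell$, so only the trivial chain and one class of length-one chains occur), and prove the closing identity by the same Clifford-theoretic cancellation with $N=N_{G_n}(\Aut_{S_n}(T_n))$. The $T_n$-column, the $\ell^2$ at defect $a(\ell-1)+1$ for $S_n$ (four orbits of the torus $C_{\ell-1}\times C_{\ell-1}\le\GL_2(\ell)$ on the linear characters, with stabilisers of orders $(\ell-1)^2,\ell-1,\ell-1,1$), and the final identity are exactly the paper's argument.

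However, your treatment of $D_n$ and $E_n$ places the cancellation at the wrong defect. For $R\cong\ell_+^{1+2}$ the $\ell^2$ linear characters have defect $3$ and the $\ell-1$ characters of degree $\ell$ have defect $2$; the cancellation argument of \cite[Lemma~8.7]{KLLS19}, used in Lemma~\ref{l:n=lcentrads} for $\bw_E(\cF,a+2)$, kills the contribution of the \emph{linear} characters, i.e.\ it gives $\bw_R(\cG_n,3)=0$, not $\bw_R(\cG_n,2)=0$. The defect-$2$ entries are precisely the nonzero ones in Table~\ref{tab:ordaguade}, and they are governed not by the action on $R/Z(R)\cong\FF_\ell^2$ but by the action on $Z(R)$, equivalently on the $\ell-1$ non-linear characters: $\Out_{\cG_n}(D_n)\cong\SL_2(\ell)$ acts trivially on $Z(D_n)$, so each such character is fixed and contributes $z(k\SL_2(\ell))=1$, giving $\ell-1$; $\Out_{\cG_n}(E_n)\cong\GL_2(\ell)$ acts through the determinant, permuting them transitively with stabiliser $\SL_2(\ell)$, giving $1$; the length-one chains contribute $0$ because their stabilisers are Borel subgroups, which have nontrivial $O_\ell$ and hence no defect-zero characters. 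As written, your claim that ``the defect-$2$ contribution vanishes by cancellation'' contradicts the very columns you are establishing and would force the $D_n$- and $E_n$-columns to be identically zero. A smaller slip: since $T_n$ is abelian of index $\ell$ in $S_n$, every irreducible character of $S_n$ is linear or of degree $\ell$, so there are no ``residual characters of strictly smaller defect''; positing them leaves the ``$-$'' entries of the $S_n$-column unjustified.
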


\begin{table}[h]
\caption{$\bw_R(\cG_n,d)$ for $R\in\cG_n^\CR$}   \label{tab:ordaguade}
$\begin{array}{c|ccccc}
  d& S_n& T_n& D_n & E_n\\
\hline
 (\ell-1)a& \alpha & \beta & -& -\\
 (\ell-1)a+1 & \ell^2 & - & - & - \\
 2 & -& -& \ell-1 & 1\\
 3 & -& -& 0 & 0\\
 \end{array}$
\end{table}

\begin{proof}
This is analogous to the argument given in the proof of
Lemma~\ref{l:n=lcentrads}. In particular if $R\in \{D_n,E_n\}$ we apply that
argument in the case $R=E$ and $a=1$ to conclude that
$$\bw_R(\cG_n,3)=0,\ \mbox{ and }\ \bw_R(\cG_n,2)=\begin{cases} \ell-1&
   \mbox{ if $R=D_n$,}\\ 1& \mbox{ if $R=E_n.$ } \end{cases}
$$
If $R=S_n$ then since $R$ has maximal class, it has $\ell^2$ linear characters
and the action of $\Out_{\cG_n}(S_n)$ on $\Irr^{a(\ell-1)+1}(S_n)$ is the
natural action of the subgroup of $\GL_2(\ell)$ generated by the matrices
$\diag(1,\omega)$ and $\diag(\omega,1)$ where $\omega$ is a generator of
$\FF_\ell^\times$. The column vectors $\{(0,0),(1,0),(0,1),(1,1)\}$ form a
complete set of orbit representatives for this action, so
$$\bm(\cG_n,a(\ell-1)+1)=(\ell-1)^2+2(\ell-1)+1 = \ell^2.$$
Finally, the argument to prove the last statement is analogous to that given in
Lemma \ref{l:n=lcentrads}, so we omit this.
\end{proof}

\begin{proof}[Proof of Proposition~\ref{prop:owc Aguade}]
For a $\cG_n$-class represented by $1\neq s \in S_n$, let $G_{n,s}:=(G_n)(s)$
be the centraliser Weyl group from Proposition~\ref{prop:centr}. These
are described in \cite[Prop.~10.1]{BM07}: if $s\notin Z(S_n)$, $|G_{n,s}|$ is
prime to $\ell$, and otherwise $G_{n,s}=\fS_\ell$. If $d=(a-1)\ell+1$ then by
Lemmas~\ref{lem:comb} and~\ref{lem:aguowc},
$$\sum_{s\in Z(S)/\cF} |\Irr^1(G_{n,s})|
  =|\Irr^1(G_n)|+|\Irr^1(\fS_\ell)|=\ell(\ell-1)+\ell= \bm(\cG_n,d),$$
as expected. Now, again by Lemmas~\ref{lem:comb} and~\ref{lem:aguowc},
$$\begin{array}{rcl}
\displaystyle\sum_{s \in S/\cG_n} |\Irr(G_{n,s})|&=& \displaystyle |\Irr(G_n)|+ \sum_{1 \neq s \in Z(S_n)/\cG_n} |\Irr(G_{n,s})| + \sum_{s \notin Z(S_n)/\cG_n} |\Irr(G_{n,s})| \\
&=& |\Irr(G_n)|+|\Irr(\fS_\ell)|+\bm(\cG_n,a(\ell-1))
 -\displaystyle\sum_{s \in Z(S_n)/\cG_n} z(kG_{n,s}) \\
&=& |\Irr(G_n)|-z(kG_n)+ |\Irr(\fS_\ell)|-z(k\fS_\ell)+\bm(\cG_n,a(\ell-1)) \\\\
&=& \ell(\ell-1)+\ell+\bm(\cG_n,a(\ell-1)) = \bm(\cG_n)-\bm(\cG_n,2).
\end{array}$$
Finally, note that when $n=12$ and $a=1$, we have $S_{12} \cong 3^{1+2}_+$ and
$\bm(\cG_{12},3)=9=|\Irr^1(G_{12})|+|\Irr^1(\fS_3)$ (see \cite[Tab.~2]{KLLS19}.)
\end{proof}

%%%%%%%%%%%%%%%%%%%%%%%%%%%%%%%%%%%%%%%%%%%%%%%%%%%%%%%%%%%%%%%%%%%%%%%%%
\section{Appendix: Weights for wreath products}

Here we prove a result which was used in Section~\ref{subsec:GGr weights}.

\begin{lem}
 Let $G= N\wr \fS_n$ be a finite group. There is a natural bijection
 \[\Psi:\Irr^0(G)\to\cW\]
 between the set $\Irr^0(G)$ of $\ell$-defect zero characters of $G$ and the
 set of functions 
 \[ \cW:=\Big\{w :\Irr^0(N)\to\{\ell\text{-cores}\}\quad\text{with}\quad
   \sum_{\vhi\in\Irr^0(N)} | w (\vhi) | =n\Big\}.\]
\end{lem}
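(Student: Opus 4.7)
The plan is to use the standard Clifford-theoretic parametrization of irreducible characters of the wreath product. Every $\chi \in \Irr(G)$ corresponds uniquely to a function $\mu : \Irr(N) \to \{\text{partitions}\}$ with $\sum_{\vhi \in \Irr(N)} |\mu(\vhi)| = n$ (with all but finitely many $\mu(\vhi)$ equal to the empty partition). Writing $\chi^\lambda$ for the irreducible character of $\fS_{|\lambda|}$ associated to the partition $\lambda$, the associated character $\chi_\mu$ has degree
$$\chi_\mu(1) = \frac{n!}{\prod_{\vhi} |\mu(\vhi)|!}\, \prod_{\vhi \in \Irr(N)} \vhi(1)^{|\mu(\vhi)|}\, \chi^{\mu(\vhi)}(1).$$
The order of $G$ is $|N|^n \cdot n!$, so $\chi_\mu$ has $\ell$-defect zero if and only if $\chi_\mu(1)_\ell = |N|_\ell^n \cdot (n!)_\ell$.

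The next step is to compare the $\ell$-parts of the two sides using two classical divisibilities: first, $\vhi(1)$ divides $|N|$ for every $\vhi \in \Irr(N)$, with $\vhi(1)_\ell = |N|_\ell$ precisely when $\vhi \in \Irr^0(N)$; second, by the Nakayama/James hook formula, $\chi^\lambda(1)_\ell$ divides $(|\lambda|!)_\ell$, with equality precisely when $\lambda$ is an $\ell$-core. Writing out the $\ell$-part of the degree formula and cancelling $(n!)_\ell / \prod |\mu(\vhi)|!_\ell$, the defect-zero condition becomes
$$\prod_{\vhi} \vhi(1)_\ell^{|\mu(\vhi)|} \cdot \chi^{\mu(\vhi)}(1)_\ell \;=\; |N|_\ell^n \cdot \prod_{\vhi} |\mu(\vhi)|!_\ell.$$
Each factor on the left divides the corresponding factor on the right, so equality forces equality in every factor: whenever $\mu(\vhi) \neq \emptyset$ we must have $\vhi \in \Irr^0(N)$, and every $\mu(\vhi)$ must be an $\ell$-core.

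Hence to any $\ell$-defect zero $\chi \in \Irr^0(G)$ we can associate the restriction $w := \mu|_{\Irr^0(N)} : \Irr^0(N) \to \{\ell\text{-cores}\}$, which automatically satisfies $\sum_{\vhi \in \Irr^0(N)} |w(\vhi)| = n$. Conversely, starting from such a $w$, extend it by the empty partition outside $\Irr^0(N)$ to obtain $\mu$; reversing the $\ell$-part computation shows $\chi_\mu \in \Irr^0(G)$. This gives the desired natural bijection $\Psi : \Irr^0(G) \to \cW$.

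The only real subtlety is being precise about the two divisibility facts and the observation that all factors must achieve equality simultaneously; once these are in hand the argument is essentially a bookkeeping exercise. No separate step is genuinely hard, so the proof is short provided one invokes Nakayama's theorem for $\ell$-cores in $\fS_k$.
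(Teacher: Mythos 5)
Your proof is correct and follows essentially the same route as the paper: both rest on the Clifford-theoretic parametrisation of $\Irr(N\wr\fS_n)$ together with the fact that the defect-zero characters of $\fS_m$ are exactly those labelled by $\ell$-cores. The paper tracks defect zero through the construction itself (induction from the stabiliser of a character of the base group and tensoring with the canonical extension, deferring details to Alperin--Fong), while you verify the same condition directly from the wreath-product degree formula by comparing $\ell$-parts factor by factor; this is a difference of presentation rather than of method.
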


See \cite[Sec.~4]{AF90}. The map $\Psi$ is defined as follows: Let
$\chi\in\Irr^0(G)$, $\tau\in\Irr(B)$ with $\chi\in\Irr(G|\tau)$, where
$B = N^n$ is the base subgroup of $G$. Let $H$ be the stabiliser of $\tau$ in
$G$ and let $\chi'\in\Irr(H|\tau)$ such that $\chi =\Ind_H^G(\chi')$. 
Then $\tau\in\Irr^0(B)$ and $\chi'\in\Irr^0(H)$. Let
$\tau=\tau_1\otimes\cdots\otimes\tau_n$ with $\tau_j\in\Irr^0 (N)$ and for each
$\vhi\in\Irr^0(N)$, denote by $n_{\chi,\vhi}= n_\vhi $ the number
of $j$ such that $\vhi\cong\tau_j$. Then
\[ H\cong\prod_{\vhi\in\Irr^0(N)} N\wr\fS_{n_\vhi}, \]
and we have an identification
\begin{equation}   \label{e:iota}
  \Irr^0(H|\tau) = \prod_{\vhi\in\Irr^0(N)}\Irr^0(N\wr\fS_{n_\vhi}|\vhi \otimes\cdots\otimes\vhi).
\end{equation}
For $\vhi\in\Irr^0(N)$, let $\hat\vhi$ be the canonical extension to
$N\wr\fS_{n_\vhi}$ of $\vhi^{\otimes n}$, i.e., $\hat\vhi$ is the
tensor induced character $\vhi^{\otimes^{n_\vhi}}$. Tensoring with $\hat\vhi$
induces a bijection
\[\iota_\vhi:\Irr^0(\fS_{n_\vhi})\to\Irr^0(N\wr\fS_{n_\vhi}|\vhi\otimes\cdots\otimes\vhi) .\]
Suppose that $\chi'$ corresponds to $\prod_\vhi\chi'_\vhi$ via (\ref{e:iota}).
Then $\Psi(\chi)(\vhi) = \kappa_\vhi $, where $\kappa_\vhi $ is the $\ell$-core
partition of $n_\vhi $ labelling $\iota_\vhi^{-1} (\chi'_\vhi)$. 
\medskip

Now let $\la$ be a linear character of $G':=G/([G,G]\fS_n)$ and let $\chi$,
$\tau$, $H$ and $\chi'$ as above. Since $\fS_n$ is in the kernel of $\la $,
$\la |_B = \la_0^{\otimes n}$ for some $\la_0\in\Irr(N)$. The character
$\lambda.\chi$ covers $\la|_{B_0}.\tau$, $H =\Stab_G(\la|_{B_0}.\tau)$ and for
all $\vhi\in\Irr^0(N)$, we have
$n_{\la.\chi,\la_0.\vhi} = n_{\chi,\vhi}=n_\vhi$.
Further, $\widehat{\la_0\vhi} =\la |_{N\wr\fS_{n_\vhi}}.\hat\vhi$, hence
for any $\kappa\in\Irr(\fS_{n_\vhi})$, 
\[ i_{\la_0.\vhi} (\kappa) = \widehat {\la_0\vhi}\otimes\kappa
  = \la |_{N\wr\fS_{n_\vhi}} . i_\vhi (\kappa) .\]
Since
\[\la\cdot\chi = \la\cdot\Ind_H^G(\chi') = \Ind_H^G(\la |_H\cdot\chi')
  = \prod_{\vhi\in\Irr^0(N)}\la |_{N\wr\fS_{n_\vhi}}\cdot\chi'_\vhi\]
we obtain
\begin{equation}   \label{eq:bij}
  \Psi (\la\cdot\chi) (\la_0\cdot\vhi) = \Psi(\chi).
\end{equation}
Let $\Irr (G')$ act on $\Irr^0 (G)$ via $\chi\mapsto\la .\chi$
(where $\chi\in\Irr^0(G)$, $\la\in\Irr(G')$), and on $\cW$ via
\[ \la. w (\vhi):= w (\la_0^{-1}\cdot\vhi),\qquad\text{where}\quad w\in\cW,
   \ \lambda\in\Irr(G'),\ \vhi\in\Irr^0(N). \]
By Equation~(\ref{eq:bij}), $\Psi$ is an isomorphism of
$\Irr(G')$-sets. 

The above discussion carries over to direct products
$G = \prod_{i\in I} N_i\wr\fS_{n_i}$. Set
$$\cW:=\Big\{w :\bigcup_{i\in I}\Irr^0(N_i)\to\{\ell\text{-cores}\}\quad
  \text{with}\quad \sum_{\vhi\in\Irr^0(N_i)} | w(\vhi) | =n_i\Big\}$$
and let $M$ be a subgroup of $G$ containing $[G,G]$ and all $\fS_{n_i}$,
$i\in I$. If $\la\in\Irr(G/M)$, the restriction of $\la$ to
$\prod_{i\in I} N_i^{n_i}$ is of the form $\prod_{i\in I}\la_i^{\otimes n_i}$
for some $\la_i\in\Irr (N_i)$. The group $\Irr(G/M)$ acts on $\cW$ via
\[\la. w_i(\vhi) = w(\la_i^{-1}\cdot\vhi)\quad
  \text{for }\lambda\in\Irr(G/M),\ w\in\cW,\ i\in I,\ \vhi\in\Irr^0(N_i). \] 
Then the $\Irr (G/M)$-sets $\Irr^0(G)$ and $\cW$ are isomorphic where as before
the $\Irr(G/M)$-action on $\Irr^0(G)$ is via multiplication. The Clifford
theory of cyclic extensions gives the following immediate consequence of the
above:

\begin{prop}   \label{p:wreath}
 With the above notation, suppose that $G/M$ is a cyclic $\ell'$-group. Then
 $\Irr^0(M)$ is indexed by the set of $D:=\Irr(G/M)$-orbits of $\cW$ in such a
 way that the $D$-orbit of $w\in\cW$ corresponds to $|\Stab_D(w)|$ elements of
 $\Irr^0(M)$.
\end{prop}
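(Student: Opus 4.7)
\textbf{Proof plan for Proposition~\ref{p:wreath}.}
Since the excerpt has already established the $D$-equivariant bijection $\Psi: \Irr^0(G)\to\cW$ via Equation~(\ref{eq:bij}), the assertion for $\cW$ reduces immediately to the analogous indexing of $\Irr^0(M)$ by $D$-orbits on $\Irr^0(G)$, with the orbit of $\chi$ corresponding to $|\Stab_D(\chi)|$ elements of $\Irr^0(M)$. The plan is to obtain this via standard Clifford theory applied to the cyclic $\ell'$-extension $M\trianglelefteq G$.

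First I would use the hypothesis $\gcd(|G/M|,\ell)=1$ to deduce $|G|_\ell=|M|_\ell$. For $\psi\in\Irr(M)$ with inertia group $T_\psi\le G$, every irreducible $\chi$ of $G$ above $\psi$ has degree $[G:T_\psi]\psi(1)$ with $[G:T_\psi]$ dividing the $\ell'$-number $|G/M|$. Hence $\chi(1)_\ell=\psi(1)_\ell$, so $\chi\in\Irr^0(G)$ if and only if $\psi\in\Irr^0(M)$; consequently the Clifford correspondence restricts to a correspondence between defect-zero characters on both sides.

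Next, since $T_\psi/M$ is cyclic, $\psi$ extends to $T_\psi$: fixing one extension $\tilde\psi$, the remaining extensions are $\tilde\psi\otimes\mu$ with $\mu\in\Irr(T_\psi/M)$, and the induced characters $\chi_\mu:=\Ind_{T_\psi}^G(\tilde\psi\otimes\mu)$ give the $|T_\psi/M|$ distinct irreducibles of $G$ lying above $\psi$. A direct computation yields $\la\cdot\chi_\mu=\chi_{\mu\cdot\la|_{T_\psi}}$, which shows that $D$ permutes $\{\chi_\mu\}_\mu$ transitively via the restriction map $\Irr(G/M)\to\Irr(T_\psi/M)$; the stabiliser of any $\chi_\mu$ is the kernel of this restriction, of order $|G/T_\psi|=[G:T_\psi]$.

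It follows that each $D$-orbit in $\Irr^0(G)$ sits above a single $G$-orbit in $\Irr^0(M)$, the latter having size $[G:T_\psi]=|\Stab_D(\chi_\mu)|$ (and conversely every $G$-orbit in $\Irr^0(M)$ arises in this way). Transporting the statement across $\Psi$ and noting that $\Stab_D(w)=\Stab_D(\Psi^{-1}(w))$ yields the proposition. The argument is essentially book-keeping once the Clifford theory is set up; the only genuine input to verify is the preservation of defect zero in both directions of the Clifford correspondence, which I do not expect to be a serious obstacle given the $\ell'$-hypothesis on $|G/M|$.
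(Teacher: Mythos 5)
Your argument is correct and is exactly what the paper has in mind: the paper simply asserts the proposition as an ``immediate consequence'' of the Clifford theory of cyclic extensions applied across the $D$-equivariant bijection $\Psi$, and your write-up supplies the standard details (defect-zero preservation from $\gcd(|G/M|,\ell)=1$, Gallagher extensions over the cyclic quotient, transitivity of the $\Irr(G/M)$-action on the characters above a fixed $\psi$, and the stabiliser being the kernel of restriction of order $[G:T_\psi]$). No gaps; same approach.
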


%%%%%%%%%%%%%%%%%%%%%%%%%%%%%%%%%%%%%%%%%%%%%%%%%%%%%%%%%%%%%%%%%%%%%%%%%

\end{document}